\documentclass[11pt]{article}
\usepackage{amsmath}
\usepackage{amsthm}
\usepackage{amssymb}
\usepackage{graphicx}
\usepackage{adjustbox}
\usepackage{mathtools}
\usepackage{bbm}
\usepackage[margin=2.5cm]{geometry}
\usepackage{setspace}
\usepackage{epstopdf}
\usepackage{subfig}
\usepackage{empheq}
\usepackage{algorithm}
\usepackage{algorithmicx}
\usepackage{algpseudocode}
\usepackage{multirow}
\usepackage{url}
\usepackage[title]{appendix}
\usepackage{enumitem}
\usepackage[svgnames]{xcolor}

\makeatletter

\theoremstyle{plain}
\newtheorem{thm}{\protect\theoremname}
  \theoremstyle{definition}
  \newtheorem{defn}[thm]{\protect\definitionname}
  
  \theoremstyle{plain}
  \newtheorem{lem}[thm]{\protect\lemmaname}
  \newtheorem{prop}[thm]{\protect\propname}
  \newtheorem{cor}[thm]{\protect\corollaryname}
  \newtheorem{example}{Example}
  \newtheorem{assump}{Assumption}
  
  \providecommand{\propname}{Proposition}
  \providecommand{\condname}{Condition}

  \providecommand{\keywords}[1]
{
  \small	
  \textbf{\textit{Keywords---}} #1
}

\makeatother

  \providecommand{\definitionname}{Definition}
  \providecommand{\lemmaname}{Lemma}
  \providecommand{\corollaryname}{Corollary}
\providecommand{\theoremname}{Theorem}

\usepackage{endnotes}
\usepackage{color}
\usepackage{xparse}

\title{The Dyson Equalizer: Adaptive Noise Stabilization for Low-Rank Signal Detection and Recovery}

\author{ Boris Landa${^{1,2,*}}$~~~~Yuval Kluger${^{2,3,4}}$\\
\small{${^1}$Department of Electrical and Computer Engineering, Yale University}\\
\small{${^2}$Program in Applied Mathematics, Yale University}\\
\small{${^3}$Interdepartmental Program in Computational Biology and Bioinformatics, Yale University}\\
\small{${^4}$Department of Pathology, Yale University School of Medicine}\\
\small{${^*}$Corresponding author. Email: boris.landa@yale.edu}
}

\begin{document}

\maketitle

\begin{abstract}
Detecting and recovering a low-rank signal in a noisy data matrix is a fundamental task in data analysis. Typically, this task is addressed by inspecting and manipulating the spectrum of the observed data, e.g., thresholding the singular values of the data matrix at a certain critical level. This approach is well-established in the case of homoskedastic noise, where the noise variance is identical across the entries. However, in numerous applications, the noise can be heteroskedastic, where the noise characteristics may vary considerably across the rows and columns of the data. In this scenario, the spectral behavior of the noise can differ significantly from the homoskedastic case, posing various challenges for signal detection and recovery.
To address these challenges, we develop an adaptive normalization procedure that equalizes the average noise variance across the rows and columns of a given data matrix. Our proposed procedure is data-driven and fully automatic, supporting a broad range of noise distributions, variance patterns, and signal structures. Our approach relies on recent results in random matrix theory, which describe the resolvent of the noise via the so-called Dyson equation. By leveraging this relation, we can accurately infer the noise level in each row and each column directly from the resolvent of the data. We establish that in many cases, our normalization enforces the standard spectral behavior of homoskedastic noise -- the Marchenko-Pastur (MP) law, allowing for simple and reliable detection of signal components. Furthermore, we demonstrate that our approach can substantially improve signal recovery in heteroskedastic settings by manipulating the spectrum after normalization. Lastly, we apply our method to single-cell RNA sequencing and spatial transcriptomics data, showcasing accurate fits to the MP law after normalization.
\end{abstract}

\keywords{principal components analysis, matrix denoising, rank estimation, noise stabilization,  heteroskedastic noise, rank selection, matrix scaling, heterogeneous noise}

\section{Introduction}
Low-rank approximation, typically realized by PCA or SVD, is a ubiquitous tool for compressing and denoising large data matrices before downstream analysis. A common approach to studying low-rank approximation of noisy data is to assume a signal-plus-noise model, where a low-rank signal matrix is observed under noise and the goal is to detect and recover the signal. In this work, we consider a data matrix $Y\in\mathbb{R}^{m\times n}$ modeled as
\begin{equation}
    Y = X + E, \label{eq: signal-plus-noise}
\end{equation}
where $X$ is a signal matrix of rank $r\ll \min\{m,n\}$ and $E$ is a random noise matrix whose entries $E_{ij}$ are independent with zero mean and variance $S_{ij} = \mathbb{E}[ E_{ij}^2] > 0$. For simplicity of presentation, we assume that $m\leq n$ (otherwise, we can always replace $Y$ with $Y^T$). 
Given the data matrix $Y$, common tasks of interest include identifying the presence of the low-rank signal $X$, estimating its rank $r$, and recovering $X$ from the noisy observations. We refer to these tasks broadly as signal detection and recovery.

Many existing methods for signal detection and recovery rely on inspecting and manipulating the spectrum of the observed data; see, e.g.,~\cite{cattell1966scree,wold1978cross,hoff2007model,owen2009bi,gavish2014optimal,chatterjee2015matrix,fan2014principal,johnstone2018pca,fan2020estimating,hong2020selecting,choi2017selecting,kritchman2008determining,johnstone2017roy} and references therein. In particular, in order to detect the signal and estimate its rank, the singular values of $Y$, or functions thereof, are often compared against analytical or empirical thresholds. Then, to recover the signal matrix $X$, the singular values of the data are typically thresholded or shrunk towards zero, while retaining the original singular vectors.

The above approach for signal detection and recovery is well-established in the case of homoskedastic noise, where the noise variance $S_{ij} = \sigma^2$ is identical across all entries.
In this case, under mild regularity conditions, the noise matrix $E$ satisfies the celebrated Marchenko-Pastur (MP) law~\cite{marvcenko1967distribution}, which describes the eigenvalue density of $E E^T/n$ in the asymptotic regime $m,n\rightarrow\infty$ with $m/n\rightarrow \gamma \in (0,1]$.
Further, in this regime, the largest eigenvalue of $E E^T/n$ converges almost surely to $\beta_+ = \sigma^2(1 + \sqrt{\gamma})^2$ ~\cite{geman1980limit,yin1988limit}, which is the upper edge of the MP density. Consequently, a simple approach for identifying the presence of a signal $X$ and estimating its rank is to count how many eigenvalues of $Y Y^T/n$ exceed $\beta_+$. This approach can be justified further by the BBP phase transition~\cite{baik2005phase,baik2006eigenvalues,benaych2011eigenvalues,nadler2008finite,paul2007asymptotics}. The results therein show that in a suitable signal-plus-noise model and the same asymptotic regime as for the MP law, the eigenvalues of $YY^T/n$ that exceed $\beta_+$ admit a one-to-one analytic correspondence to nonzero eigenvalues of $X X^T/n$, and the respective clean and noisy eigenvectors admit nonzero correlations. These results have been extended to support homoskedastic noise with general distributions under mild moment conditions; see, e.g.,~\cite{ding2021singular,ding2020high}.
Utilizing such results, refined techniques of singular value thresholding and shrinkage were developed for recovering $X$ optimally according to a prescribed loss function; see~\cite{gavish2017optimal,gavish2014optimal,leeb2022optimal,ding2020high} and references therein. 

In many practical situations, the noise can be heteroskedastic, where the noise variance $S_{ij}$ differs between the entries. A notable example is count or nonnegative data, where the entries are typically modeled by, e.g., Poisson, binomial, negative binomial, or gamma distributions. In these cases,  the noise variance inherently depends on the signal, leading to heteroskedasticity. Such data is commonly found in network traffic analysis~\cite{shen2005analysis}, photon imaging~\cite{salmon2014poisson}, document topic modeling~\cite{wallach2006topic}, single-cell RNA sequencing~\cite{hafemeister2019normalization}, spatial transcriptomics~\cite{burgess2019spatial}, and high-throughput chromosome conformation capture~\cite{johanson2018genome}, among many other applications. Heteroskedastic noise also arises when data is nonlinearly transformed, e.g., in natural image processing due to spatial pixel clipping~\cite{foi2009clipped}, or in experimental procedures where conditions vary during data acquisition, such as in spectrophotometry and atmospheric data analysis~\cite{cochran1977statistically,tamuz2005correcting}. Another common reason for heteroskedasticity is when datasets are merged from different sources, e.g., sensors or measurement devices with different levels of technical noise. Lastly, heteroskedastic noise can be caused by abrupt deformations or technical errors during data collection and storage, leading to severe corruption in certain entries of the matrix or even entire rows and columns. Due to the many forms of heteroskedastic noise prevalent in applications, it is important to develop robust methods for signal detection and recovery under general heteroskedastic noise. 

When the noise is heteroskedastic, the spectral behavior of the noise can differ significantly from the homoskedastic case, posing various challenges for signal detection and recovery. First, if the noise variance $S_{ij}$ is abnormally high in a few rows or columns, then some of the noise eigenvalues may depart from the bulk, creating a false impression of signal components. Second, if $S_{ij}$ varies considerably across the rows and columns, then the eigenvalue density of the noise $E E^T/n$ can become much more spread out than the MP law, potentially masking weak signal components of interest. These two fundamental issues are illustrated in Figures~\ref{fig: outlier rows and columns, sorted eigenvalues, before},~\ref{fig: outlier rows and columns, eigenvalue density, before},~\ref{fig: general heteroskedastic noise, sorted eigenvalues, before}, and~\ref{fig: general heteroskedastic noise, eigenvalue density, before}, where we exemplify the sorted eigenvalues of $Y Y^T/n$ and its eigenvalue density for two simulated scenarios with $m=1000$ and $n=2000$. The signal $X$ is identical in both scenarios; it is of rank $r=20$ and contains $10$ strong components and $10$ weak components. In the first scenario, depicted in Figures~\ref{fig: outlier rows and columns, sorted eigenvalues, before} and~\ref{fig: outlier rows and columns, eigenvalue density, before}, the noise matrix $E$ was generated as Gaussian homoskedastic with variance $1$, except that we amplified its last $5$ rows and $5$ columns by factors of $\sqrt{10}$ and $10$, respectively. In the second scenario, depicted in Figures~\ref{fig: general heteroskedastic noise, sorted eigenvalues, before} and~\ref{fig: general heteroskedastic noise, eigenvalue density, before}, the noise variance matrix $S$ was generated randomly in such a way that its entries fluctuate considerably across the rows and columns, while its average across all entries is $1$. More details on these experiments can be found in Appendix~\ref{appendix: reproducibility details}.

\begin{figure} 
  \centering
  	{
  	\subfloat[][Sorted eigenvalues of $Y Y^T/n$]
  	{
    \includegraphics[width=0.35\textwidth]{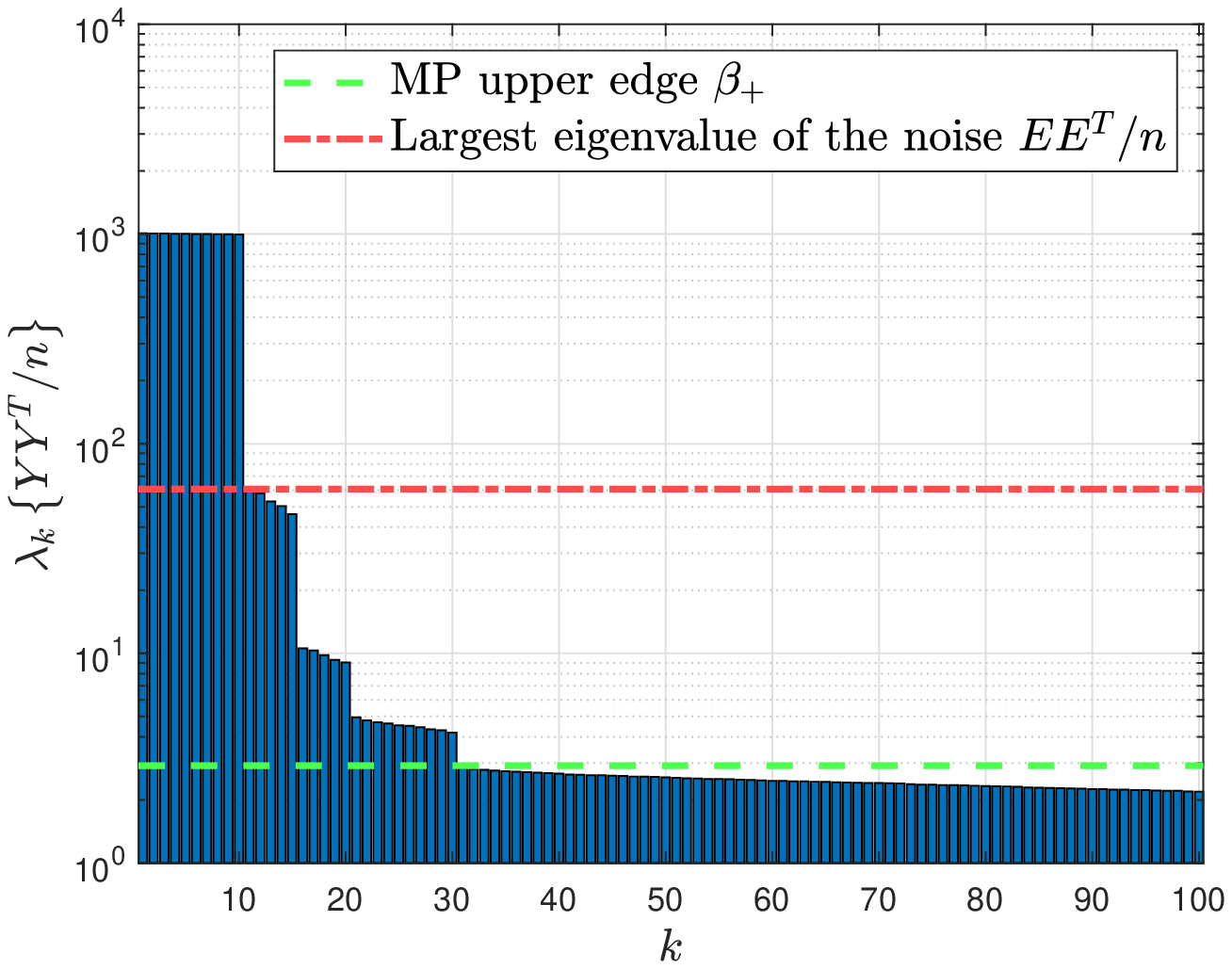} 
    \label{fig: outlier rows and columns, sorted eigenvalues, before}
    }
    \subfloat[][Sorted eigenvalues of $\hat{Y} \hat{Y}^T/n$] 
  	{
    \includegraphics[width=0.35\textwidth]{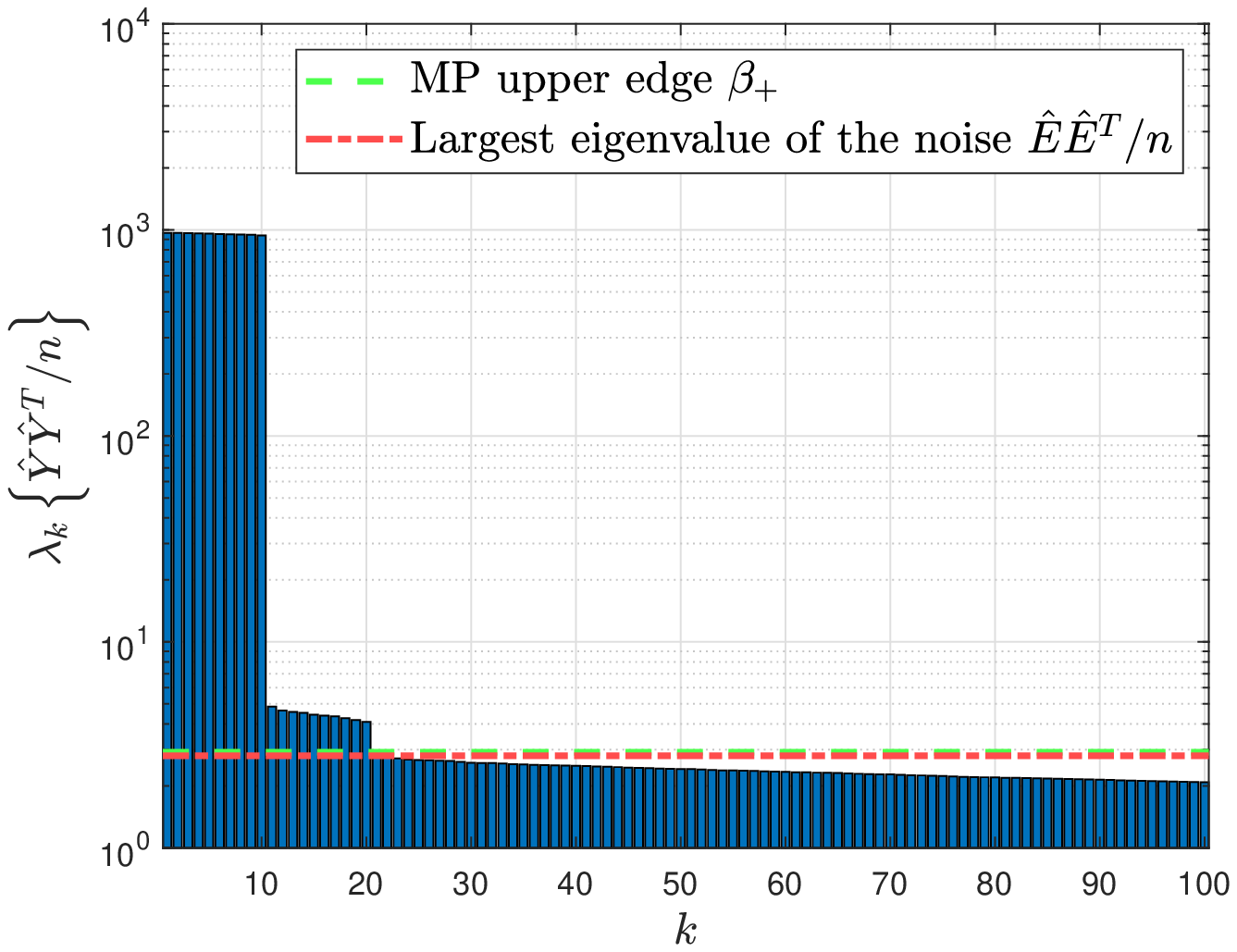} \label{fig: outlier rows and columns, sorted eigenvalues, after}
    }
    \\
    \subfloat[][Eigenvalue density of $Y Y^T/n$]  
  	{
    \includegraphics[width=0.35\textwidth]{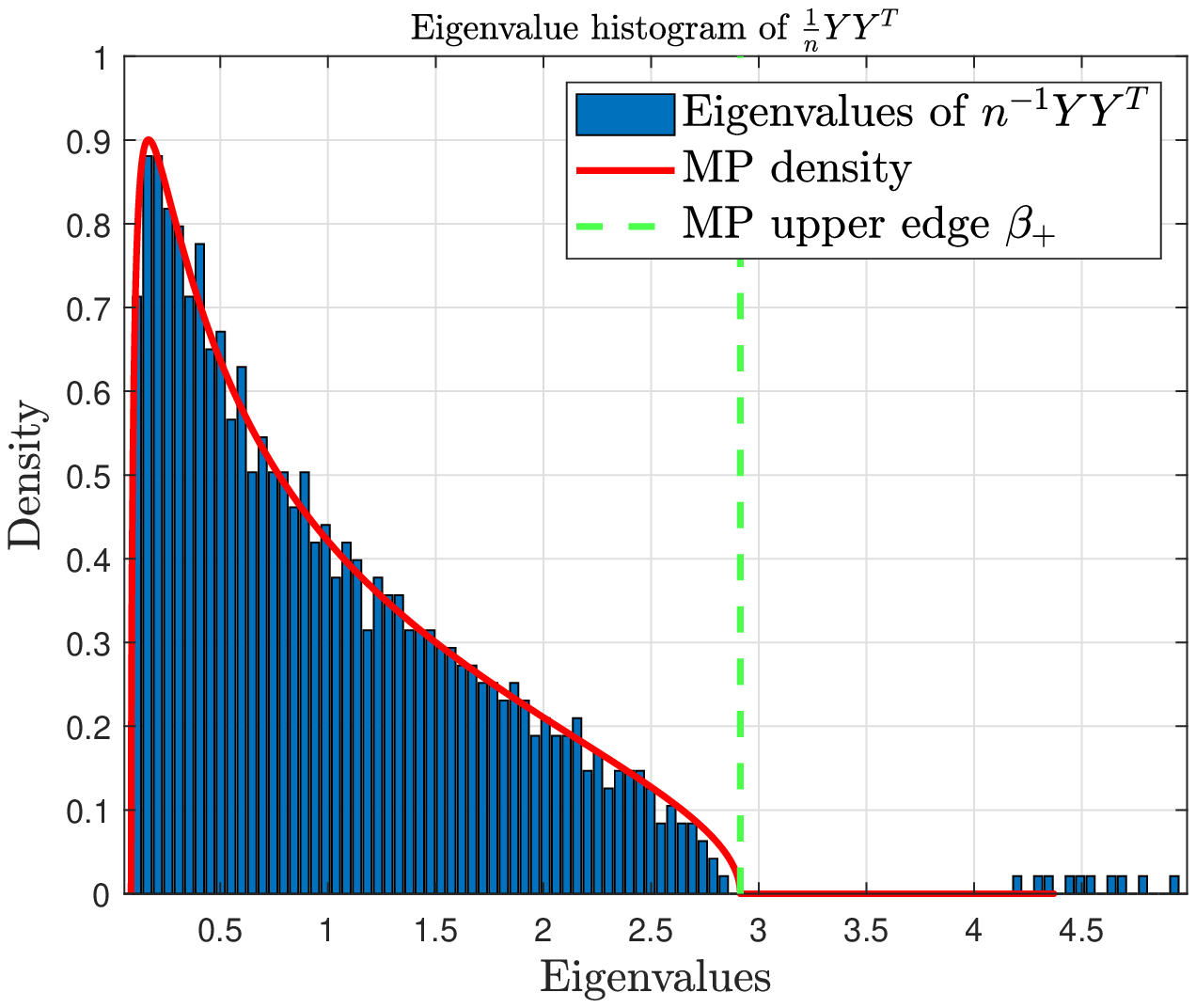} \label{fig: outlier rows and columns, eigenvalue density, before}
    }
    \subfloat[][Eigenvalue density of $\hat{Y} \hat{Y}^T/n$] 
  	{
    \includegraphics[width=0.35\textwidth]{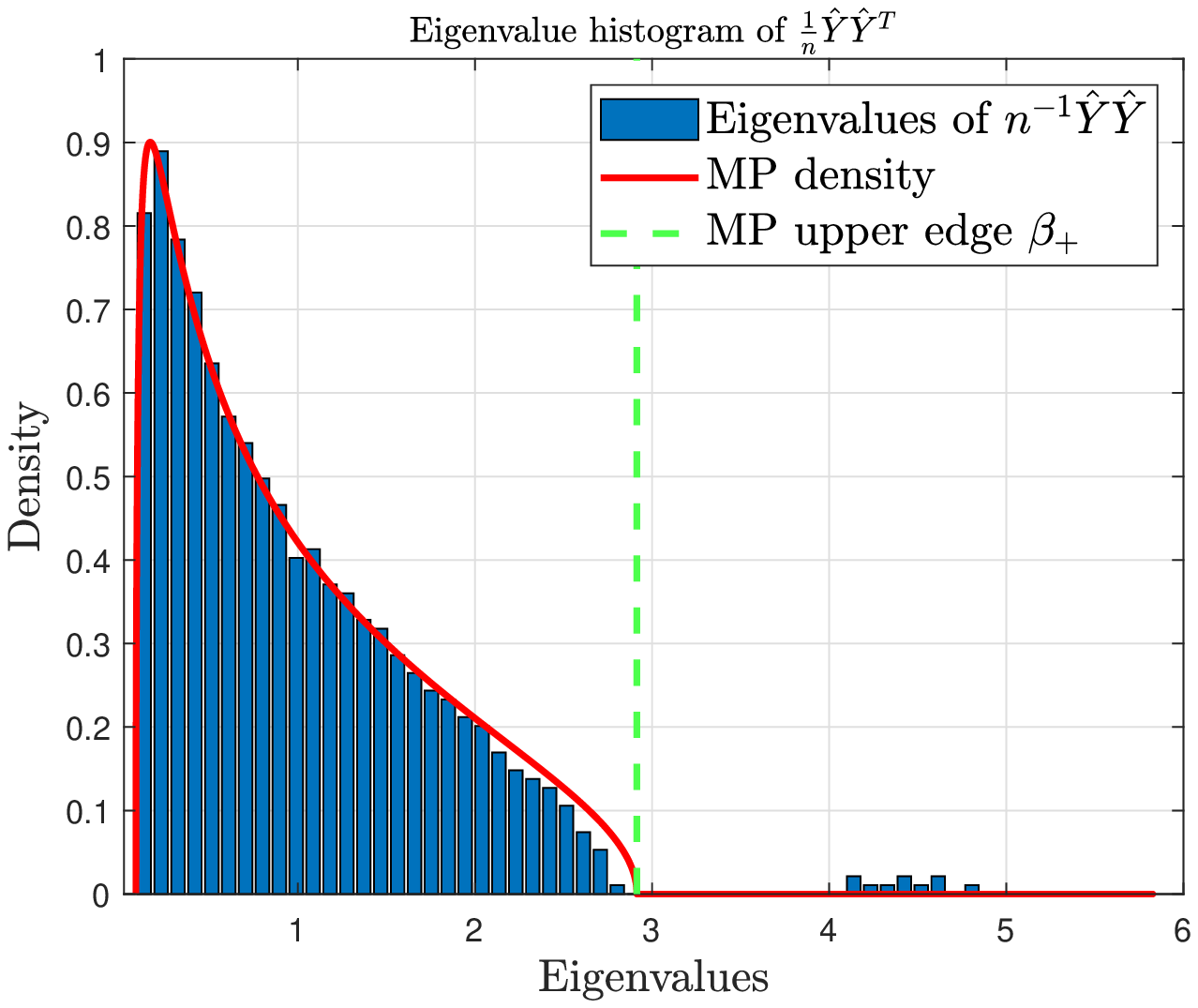} \label{fig: outlier rows and columns, eigenvalue density, after}
    } 

    }
    \caption
    {The spectrum of the original data (panels (a) and (c)) and after applying Algorithm~\ref{alg:noise standardization} (panels (b) and (d)), where $m=1000$, $n=2000$, and $r=20$. The signal $X$ consists of $10$ strong components and $10$ weak components. The noise matrix $E$ is homoskedastic almost everywhere -- except for the last $5$ rows and $5$ columns where the noise is abnormally strong. 
    } 
    \label{fig: spectrum before and after normalization, outlier rows anc columns}
    \end{figure}

\begin{figure} 
  \centering
  	{
  	\subfloat[][Sorted eigenvalues of $Y Y^T/n$]
  	{
    \includegraphics[width=0.35\textwidth]{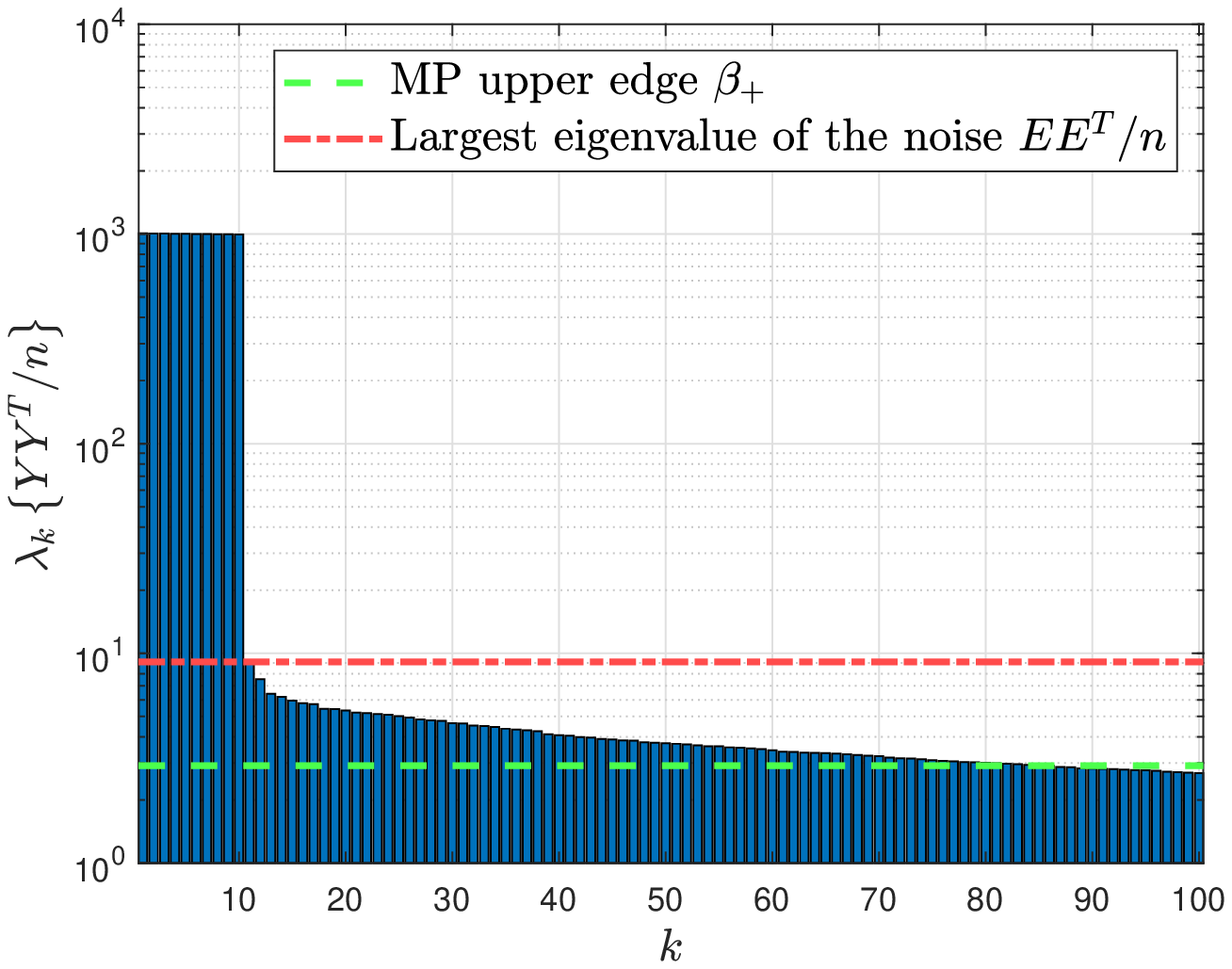} \label{fig: general heteroskedastic noise, sorted eigenvalues, before}
    }
    \subfloat[][Sorted eigenvalues of $\hat{Y} \hat{Y}^T/n$] 
  	{
    \includegraphics[width=0.35\textwidth]{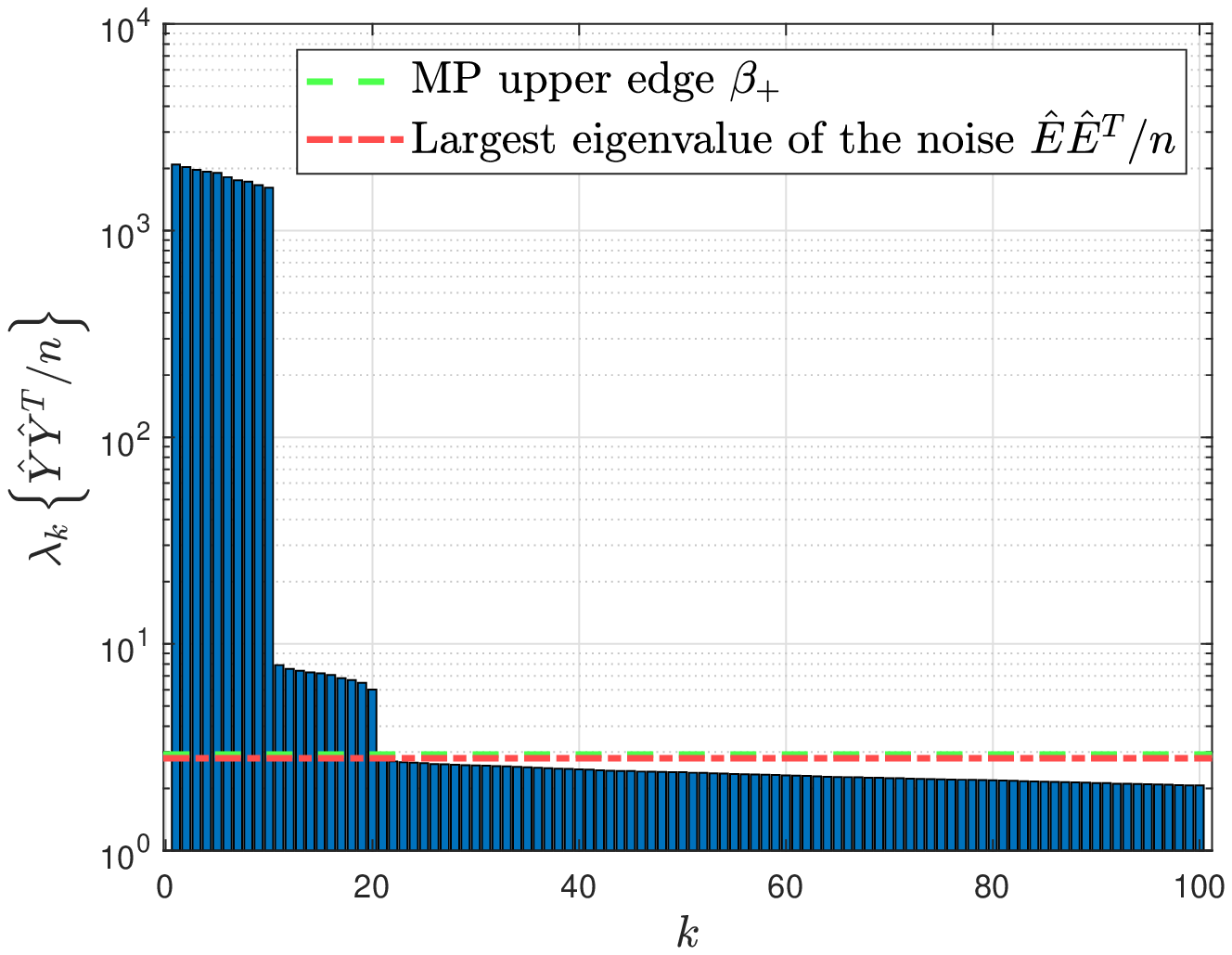} \label{fig: general heteroskedastic noise, sorted eigenvalues, after}
    }
    \\
    \subfloat[][Eigenvalue density of $Y Y^T/n$]  
  	{
    \includegraphics[width=0.35\textwidth]{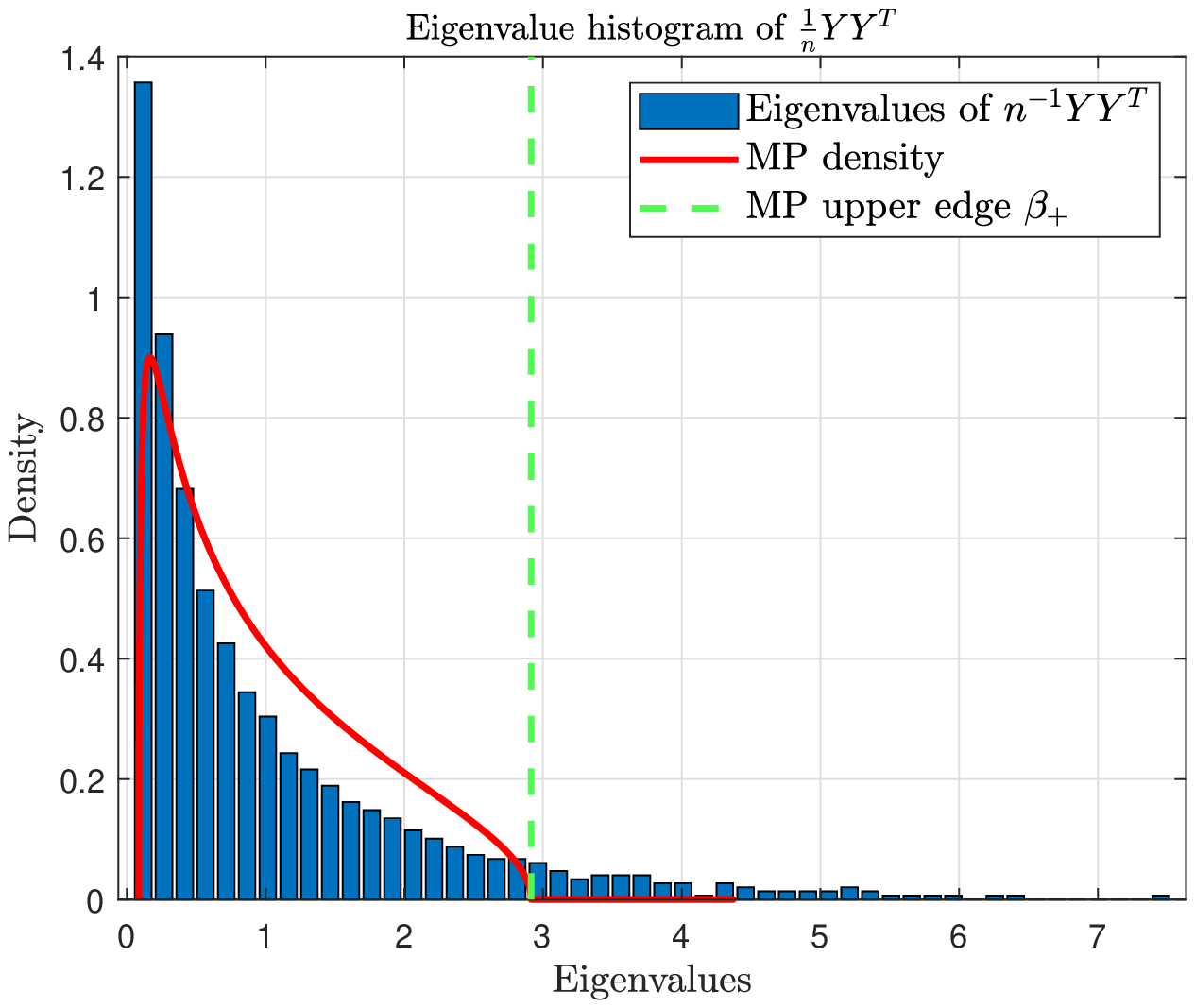}  \label{fig: general heteroskedastic noise, eigenvalue density, before}
    }
    \subfloat[][Eigenvalue density of $\hat{Y} \hat{Y}^T/n$] 
  	{
    \includegraphics[width=0.35\textwidth]{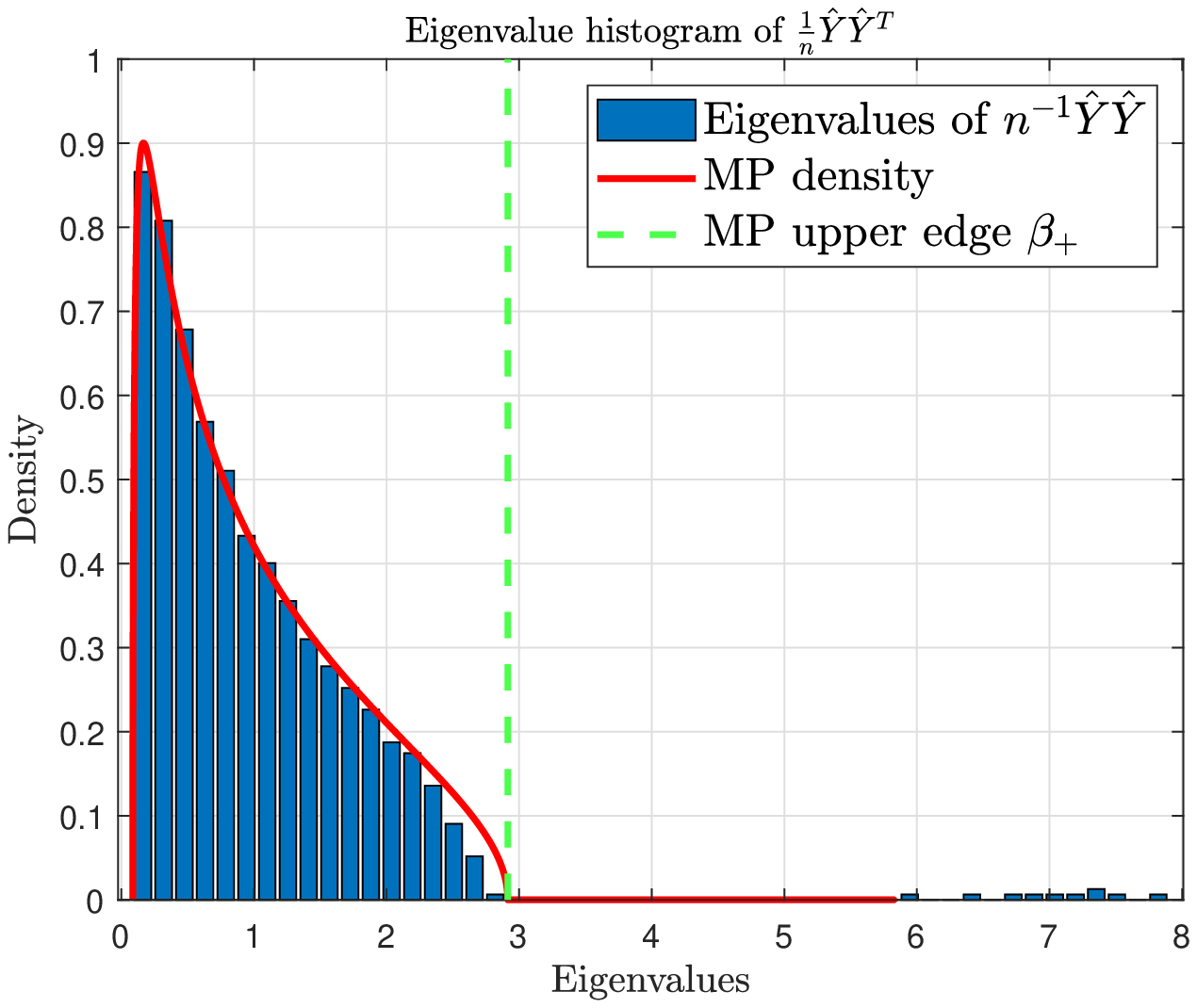} \label{fig: general heteroskedastic noise, eigenvalue density, after}
    } 

    }
    \caption
    {The spectrum of the original data (panels (a) and (c)) and after applying Algorithm~\ref{alg:noise standardization} (panels (b) and (d)), where $m=1000$, $n=2000$, and $r=20$. The signal $X$ is identical to the one in the setting of Figure~\ref{fig: spectrum before and after normalization, outlier rows anc columns}. The noise matrix $E$ is generated as Gaussian heteroskedastic, where $S_{ij}$ varies considerably across the rows and columns.} \label{fig: spectrum before and after normalization, general heteroskedastic noise}
    \end{figure}

It is evident that in the first simulated scenario (Figures~\ref{fig: outlier rows and columns, sorted eigenvalues, before} and~\ref{fig: outlier rows and columns, eigenvalue density, before}), the bulk of the eigenvalues of $Y Y^T/n$ is well-approximated by the MP law since the noise is mostly homoskedastic. However, the severely corrupted rows and columns contribute $10$ significant components to the spectrum of the data. In this case, the first $10$ eigenvalues of $Y Y^T/n$ correspond to the strong signal components in the data, the next $10$ eigenvalues correspond to the $5$ noisy columns and $5$ noisy rows (in this order), and the last $10$ eigenvalues correspond to the weak signal components. Notably, there are $30$ eigenvalues of $Y Y^T/n$ that exceed the MP upper edge, while the signal's rank is only $20$. 
Ideally, we would like to detect and recover all $20$ signal components (weak and strong) while filtering out the $10$ components arising from the corrupted rows and columns. However, this cannot be achieved by thresholding the singular values since the corrupted rows and columns are stronger in magnitude than the weak signal components. For the second simulated scenario (Figures~\ref{fig: general heteroskedastic noise, sorted eigenvalues, before} and~\ref{fig: general heteroskedastic noise, eigenvalue density, before}), we see that the bulk of the eigenvalues no longer fits the MP law and is much more spread out. In particular, the noise covers the weak signal components completely, prohibiting their detection and recovery via traditional approaches. 
Hence, relying on the spectrum of the observed data for signal detection and recovery can be restrictive under heteroskedastic noise. 
This challenge remains valid even if the eigenvalue density of $E E^T/n$ and its largest eigenvalue are known or can be inferred from the data; see, e.g.,~\cite{nadakuditi2014optshrink,gavish2022matrix,donoho2023screenot,ke2021estimation,hong2020selecting,su2022adaptive}.

Instead of using the spectrum of the observed data directly for signal detection and recovery, an alternative approach is to first normalize the data appropriately to stabilize the behavior of the noise.
In the literature on robust covariance estimation under heavy-tailed distributions, it is well known that reweighting the data -- by putting less weight on extremal observations -- can lead to substantial improvement in recovery accuracy~\cite{tyler1987distribution,zhang2014novel,goes2020robust}. Data normalizations were also proposed and analyzed in~\cite{leeb2021optimal,hong2018asymptotic,gavish2022matrix,hong2018optimally} under one-side heteroskedasticity, i.e., across the rows (observations) or columns (features). The results therein show that applying a suitable linear transformation to the heteroskedastic dimension (either the rows or the columns) can be highly beneficial for covariance estimation and matrix denoising. 
To account for more general variance patterns,~\cite{leeb2021matrix} and~\cite{landa2022biwhitening} recently considered normalizing the data by scaling the rows and columns simultaneously. Specifically,
\begin{equation}
    \widetilde{Y} = (D\{\mathbf{{x}}\})^{-1/2} Y (D\{\mathbf{{y}}\})^{-1/2} = \widetilde{X} + \widetilde{E}, \label{eq: scaling rows and columns}
\end{equation}
where $\mathbf{x}\in\mathbb{R}^m$ and $\mathbf{y}\in\mathbb{R}^n$ are positive vectors, $D\{\nu\}$ is a diagonal matrix with $\nu$ on its main diagonal, and $\widetilde{X} = (D\{\mathbf{{x}}\})^{-1/2} X (D\{\mathbf{{y}}\})^{-1/2}$ is the scaled signal while $\widetilde{E} = (D\{\mathbf{{x}}\})^{-1/2} E (D\{\mathbf{{y}}\})^{-1/2}$ is the scaled noise. 

One special case of interest is when the noise variance matrix $S$ is of rank one, which allows for heteroskedasticity across both rows and columns. Such variance matrices lead to separable covariance matrices, whose spectral properties have been extensively studied from the perspective of random matrix theory~\cite{ding2023local,ding2021spiked,yang2019edge}. Importantly, in the case of $S = \mathbf{x}\mathbf{y}^T$, the normalization~\eqref{eq: scaling rows and columns} makes the noise $\widetilde{E}$ completely homoskedastic with variance one. In this case,~\cite{leeb2021matrix} showed that under a suitable signal-plus-noise model with Gaussian noise and a signal that is sufficiently generic and delocalized, the normalization~\eqref{eq: scaling rows and columns} enhances the spectral signal-to-noise ratio, namely, it increases the ratios between the signal's singular values and the operator norm of the noise. Then, to improve signal recovery,~\cite{leeb2021matrix} developed a spectral denoiser for $\widetilde{Y}$ that is optimally tuned for recovering $X$, followed by unscaling the rows and columns of the denoised matrix. Since the noise variance matrix $S = \mathbf{x} \mathbf{y}^T$ is typically unknown in applications,~\cite{leeb2021matrix} proposed to estimate $\mathbf{x}$ and $\mathbf{y}$ directly from the magnitudes of the rows and columns of the data, i.e., from the sums of $Y_{ij}^2$ across the rows and columns. This approach requires the signal $X$ to be sufficiently weak compared to the noise $E$ and spread out across the entries. 

In~\cite{landa2022biwhitening}, the authors proposed to use the normalization~\eqref{eq: scaling rows and columns} for rank estimation under heteroskedastic noise, utilizing the fact that the scaled signal $\widetilde{X}$ preserves the rank of the original signal $X$. It was shown that for general variance matrices $S$ (not necessarily of the form $S = \mathbf{x} \mathbf{y}^T$), the normalization~\eqref{eq: scaling rows and columns} can still enforce the standard spectral behavior of homoskedastic noise, namely the MP law for the spectrum of $\widetilde{E} \widetilde{E}^T/n$ and the convergence of its largest eigenvalue to the MP upper edge. The main idea is that by scaling the rows and columns of $S$ judiciously, we can control its average entry in each row and each column~\cite{sinkhorn1967concerning,sinkhorn1967diagonal}. Specifically, the normalization~\eqref{eq: scaling rows and columns} can make the average entry in each row and each column of $\mathbb{E}[\widetilde{E}_{ij}^2]$ precisely one. This property is sufficient to enforce the spectral behavior of homoskedastic noise under non-restrictive conditions (see Proposition 4.1 in~\cite{landa2022biwhitening}). Then, the rank of $X$ is estimated simply by comparing the eigenvalues of $\widetilde{Y} \widetilde{Y}^T/n$ to the MP upper edge $(1 + \sqrt{m/n})^2$. It was demonstrated in~\cite{landa2022biwhitening} that this method can accurately estimate the rank in challenging regimes, including in severe heteroskedasticity with general variance patterns and when strong signal components are present in the data. If $S$ is known, then the required scaling factors $\mathbf{x}$ and $\mathbf{y}$ can be obtained from $S$ by the Sinkhorn-Knopp algorithm~\cite{sinkhorn1967concerning,sinkhorn1967diagonal}. Otherwise, ~\cite{landa2022biwhitening} derived a procedure to estimate the required $\mathbf{x}$ and $\mathbf{y}$ from the data under the assumption that the variance of $Y_{ij}$ is a quadratic polynomial in its mean. This assumption holds for data sampled from prototypical count and nonnegative random variables such as Poisson, binominal, negative binomial, and Gamma~\cite{morris1982natural}.

\subsection{Our results and contributions}
In this work, we develop a data-driven version of the normalization~\eqref{eq: scaling rows and columns} and utilize it for improved detection and recovery of low-rank signals under general heteroskedastic noise. Similarly to~\cite{leeb2021matrix} and~\cite{landa2022biwhitening}, the scaling factors $\mathbf{x}$ and $\mathbf{y}$ in our setting are designed to equalize the average noise variance across the rows and columns, i.e., make the average entry of \sloppy$\widetilde{S} = (\mathbb{E}[\widetilde{E}_{ij}^2]) = D^{-1}\{\mathbf{x}\} S D^{-1}\{\mathbf{y}\}$ in each row and each column to be precisely one. 
However, distinctly from~\cite{leeb2021matrix} and~\cite{landa2022biwhitening}, our approach allows us to estimate the required $\mathbf{x}$ and $\mathbf{y}$ directly from the observed data $Y$ in a broad range of settings without prior knowledge on the signal or the noise. Specifically, our main contribution is to derive estimators for $\mathbf{x}$ and $\mathbf{y}$ that support general distributions of the noise entries $E_{ij}$, diverse patterns of the variance $S_{ij}$ across the rows and columns, and a signal $X$ whose components can be strong and possibly localized in subsets of the entries; see Section~\ref{sec: method derivation and analysis}. Then, relying on the normalization~\eqref{eq: scaling rows and columns} with our estimated scaling factors, we propose suitable techniques for signal detection and recovery that adapt to general heteroskedastic noise. We provide theoretical justification for our proposed techniques and demonstrate their advantages in simulations; see Section~\ref{sec: application to signal detection and recovery}. 
In Section~\ref{sec: examples on real data}, we exemplify the favorable performance of our normalization procedure on real data from single-cell RNA sequencing and spatial transcriptomics. Lastly, in Section~\ref{sec: discussion}, we discuss our results and some future research directions. 

Our proposed data normalization procedure is described in Algorithm~\ref{alg:noise standardization} below, where $\hat{\mathbf{x}}$ and $\hat{\mathbf{y}}$ denote our estimators for $\mathbf{x}$ and $\mathbf{y}$, respectively. The advantage of this procedure is demonstrated in Figures~\ref{fig: spectrum before and after normalization, outlier rows anc columns} and~\ref{fig: spectrum before and after normalization, general heteroskedastic noise}, where we compare the spectrum of the observed data in the two simulated scenarios discussed previously (in the context of the challenges posed by heteroskedastic noise) to the spectrum after Algorithm~\ref{alg:noise standardization}, i.e.,  $\hat{Y}=(D\{\mathbf{\hat{x}}\})^{-1/2} Y (D\{\mathbf{\hat{y}}\})^{-1/2}$, where we denote the normalized noise as $\hat{E} = (D\{\mathbf{\hat{x}}\})^{-1/2} E (D\{\mathbf{\hat{y}}\})^{-1/2}$. It is evident that in the first scenario, our proposed normalization removes the spurious eigenvalues arising from the severely corrupted rows and columns. As a consequence, only the true signal components exceed the MP upper edge. In the second scenario, our proposed normalization lowers the noise level in the spectrum and reveals the weak signal components under the noise. In both scenarios, our proposed normalization stabilizes the spectral behavior of the noise. In particular, the bulk of the eigenvalues after normalization is described accurately by the MP density, and moreover, the largest eigenvalue of $\hat{E} \hat{E}/n$ is very close to the MP upper edge, which is much smaller than the largest eigenvalue of $E E^T/n$ before normalization. Notably, our approach accurately infers the noise levels in the rows and columns of the data despite the fact that the signal is much stronger than the noise (due to the presence of the strong signal components). 

\begin{algorithm}
\caption{The Dyson Equalizer}\label{alg:noise standardization}
\begin{algorithmic}[1]
\Statex{\textbf{Input:} Data matrix $Y\in \mathbb{R}^{m\times n}$ with $m\leq n$ and no rows or columns that are entirely zero.}
\State \label{alg: step 1}Compute the SVD of $Y$ and let the columns of $U\in\mathbb{R}^{m\times m}$, columns of $V\in\mathbb{R}^{n\times n}$, and $\{\sigma_k\}_{k=1}^m$ denote the left singular vectors, right singular vectors, and singular values of $Y$, respectively.
\State \label{alg: step 2}Set $\eta$ as the median singular value of $Y$, i.e., $\eta = \operatorname{Median}\{\sigma_1,\ldots,\sigma_m\}$.
\State \label{alg:step 3}Compute the vectors $\hat{\mathbf{g}}^{(1)}\in\mathbb{R}^m$ and $\hat{\mathbf{g}}^{(2)}\in\mathbb{R}^n$ given by
\begin{equation}
    \hat{\mathbf{g}}_i^{(1)} = \sum_{k=1}^m \frac{\eta}{\sigma_k^2 + \eta^2} U_{ik}^2, \qquad\qquad \hat{\mathbf{g}}_j^{(2)} = \frac{1}{\eta} + \sum_{k=1}^m \left( \frac{\eta}{\sigma_k^2 + \eta^2} - \frac{1}{\eta} \right) V_{jk}^2,
\end{equation}
for all $i\in [m]$ and $j\in [n]$.
\State \label{alg:step 4}Compute the vectors $\hat{\mathbf{x}}\in\mathbb{R}^m$ and $\hat{\mathbf{y}}\in\mathbb{R}^n$ according to
\begin{equation}
    \hat{\mathbf{x}}_i = \frac{1}{\sqrt{m - \eta \Vert \hat{\mathbf{g}}^{(1)} \Vert_1 }} \left( \frac{1}{\hat{\mathbf{g}}_i^{(1)}} - \eta \right), \qquad\qquad
    \hat{\mathbf{y}}_j = \frac{1}{\sqrt{n - \eta \Vert \hat{\mathbf{g}}^{(2)} \Vert_1 }} \left( \frac{1}{\hat{\mathbf{g}}_j^{(2)}} - \eta \right),
    \end{equation}
for all $i\in [m]$ and $j\in [n]$.
\State \label{alg:step 5}Form the normalized data matrix $\hat{Y} = (D\{\mathbf{\hat{x}}\})^{-1/2} Y (D\{\mathbf{\hat{y}}\})^{-1/2}$.
\end{algorithmic}
\end{algorithm}

Our approach for estimating the scaling factors $\mathbf{x}$ and $\mathbf{y}$ relies on recent results in random matrix theory, which establish that the main diagonal of the resolvent of the noise (see eq.~\eqref{eq: resolvent def}) concentrates around the solution to a nonlinear equation known as the Dyson equation (see eq.~\eqref{eq: Dyson eq}); see~\cite{alt2017local,erdHos2019random} and references therein. Importantly, this phenomenon is universal, i.e., it holds for general noise distributions under certain moment assumptions, where the Dyson equation and its solution depend only on the variance matrix $S$; see Section~\ref{sec: Dyson equations}. Building on these results, we show that in a broad range of settings, there is a tractable relation between the resolvent of the observed data (see eq.~\eqref{eq: resolent of data}) and the variance pattern of the noise. To the best of our knowledge, our approach is the first to exploit this relation to infer the noise variance structure in the signal-plus-noise model~\eqref{eq: signal-plus-noise}. Below is a detailed account of our results and contributions.

We begin by considering the case of a rank-one variance matrix $S = \mathbf{x} \mathbf{y}^T$ in Section~\ref{sec: rank one case}. We observe that in this case, there is an explicit formula for $\mathbf{x}$ and $\mathbf{y}$ in terms of the solution to the aforementioned Dyson equation; see Proposition~\ref{prop: formula for x and y in terms of g}. Since we do not have access to this solution (as it depends on $S$, which is unknown), we propose to estimate it directly from the main diagonal of the resolvent of the observed data, which acts as a surrogate to the resolvent of the noise. One of our main technical contributions is to show that the resolvent of the data is robust to the presence of the low-rank signal $X$ regardless of the signal's magnitude; see Theorem~\ref{thm: robustness of the resolvent} and Corollaries~\ref{cor: convergence of g_tilde to g in l_1} and~\ref{cor: convergence of g_tilde to g in l_infty}. Intuitively, this favorable property follows from the fact that the resolvent of the data is inversely proportional to the singular values of $Y$ (see Proposition~\ref{prop: formula for g_1_tilde and g_2_tilde} and its proof). Hence, strong signal components that correspond to large singular values in the data have a limited influence on the resolvent. Building on these results, we provide estimators for $\mathbf{x}$ and $\mathbf{y}$ via the resolvent of the data and characterize their accuracy in terms of the dimensions of the matrix, the signal's rank, and the localization of the singular vectors of the signal $X$; see Theorem~\ref{thm: convergence of estimated variance factors}. Our theoretical guarantees allow $m$ and $n$ to grow disproportionally, enable the rank of the signal to increase at certain rates with the dimensions, and support regimes where the singular vectors of $X$ concentrate in a vanishingly small proportion of the entries, while the signal's magnitude can be arbitrary. The convergence of our estimators to the true scaling factors in these settings is demonstrated numerically in Figure~\ref{fig: convergense of estimated scaling factors, rank-one case} in Section~\ref{sec: rank one case}.

In Section~\ref{sec: variance matrices with general rank}, we extend the results described above to support more general variance matrices $S$ beyond rank-one. Similarly to~\cite{landa2022biwhitening}, we rely on the fact that any positive variance matrix $S$ can be written as $S = D\{\mathbf{x}\} \widetilde{S} D\{\mathbf{y}\}$, where $\widetilde{S}$ is doubly regular, meaning that the average entry in each row and each column of $\widetilde{S}$ is $1$; see Definition~\ref{def: doubly regular matrix} and Proposition~\ref{prop: matrix scaling}. We investigate which variance matrices $S$ allow us to accurately estimate $\mathbf{x}$ and $\mathbf{y}$ using the procedure developed previously in Section~\ref{sec: rank one case}. We show that the same procedure can be used to estimate $\mathbf{x}$ and $\mathbf{y}$ accurately if $\widetilde{S}$ is sufficiently incoherent with respect to $\mathbf{x}$ and $\mathbf{y}$; see Theorem~\ref{thm: convergence of estimated scaling fators for generl S} and Lemma~\ref{lem: g is close to h under incoherence}. We discuss this incoherence condition and demonstrate that it is satisfied by several prototypical random constructions of $\widetilde{S}$, for which $S$ does not need to be rank-one or even low-rank. We demonstrate numerically the convergence of our proposed estimators in this case in Figure~\ref{fig: convergense of estimated scaling factors, general rank case} in Section~\ref{sec: variance matrices with general rank}.

In Section~\ref{sec: application to signal detection and recovery}, we describe how to utilize our proposed normalization for automating and improving signal detection and recovery under heteroskedastic noise. We first consider the task of rank estimation in Section~\ref{sec: rank estimation}. We establish that under suitable conditions, our proposed normalization from Algorithm~\ref{alg:noise standardization} enforces the standard spectral behavior of homoskedastic noise, i.e., the MP law for the eigenvalue density of $\hat{E}\hat{E}/n$ and the convergence of its largest eigenvalue to the MP upper edge; see Theorem~\ref{thm: MP law for E_hat}. This fact allows for a simple and reliable procedure for rank estimation by comparing the eigenvalues of $\hat{Y}\hat{Y}^T/n$ to the MP upper edge; see Algorithm~\ref{alg:rank estimation} in Section~\ref{sec: rank estimation} and the relevant discussion. We also show that our proposed normalization can significantly reduce the operator norm of the noise and enhance the spectral signal-to-noise ratio, thereby improving the detection of weak signal components under heteroskedasticity; see Figure~\ref{fig: spectrum before and after biwhitening} and the discussion in Section~\ref{sec: rank estimation}.

We proceed in Section~\ref{sec: weighted low-rank approximation} to address the task of recovering the signal $X$. Instead of the traditional approach of thresholding the singular values of the observed data $Y$, we propose to threshold the singular values of the data after normalization, i.e., $\hat{Y}$ from Algorithm~\ref{alg:noise standardization}, followed by unscaling the rows and columns correspondingly; see Algorithm~\ref{alg:weighted low-rank approximation} in Section~\ref{sec: weighted low-rank approximation}. We show that this approach arises naturally from maximum-likelihood estimation of $X$ under Gaussian heteroskedastic noise with variance $S = \mathbf{x} \mathbf{y}^T$. For Gaussian noise with a more general variance matrix $S$ (beyond rank-one), we show that the proposed approach provides an appealing approximation to the true maximum-likelihood estimation problem; see Proposition~\ref{prop: optimal approximate ML estimation} and the relevant discussion. Importantly, our approach is applicable  beyond Gaussian noise and is naturally interpretable; it is equivalent to solving a weighted low-rank approximation problem~\cite{srebro2003weighted} -- seeking the best low-rank approximation to the data in a weighted least-squares sense -- where the weights adaptively penalize the rows and columns according to their inherent noise levels. As a refined alternative to thresholding the singular values after our proposed normalization, we also consider the denoising technique of~\cite{leeb2021matrix}. We demonstrate in simulations that our normalization, when combined with singular value thresholding or the denoising technique of~\cite{leeb2021matrix}, can substantially improve the recovery accuracy of the signal $X$ over alternative approaches under heteroskedasticity; see Figure~\ref{fig: comparison of methods} and the corresponding text.

Lastly, in Section~\ref{sec: examples on real data}, we exemplify our proposed normalization on real data from single-cell RNA sequencing and spatial transcriptomics, comparing our approach to the method of~\cite{landa2022biwhitening}. We demonstrate that our proposed normalization and the method of~\cite{landa2022biwhitening} both result in accurate fits to the MP law when applied to the raw count data; see Figures~\ref{fig: raw scRNA-seq}--\ref{fig: raw ST + Algorithm 1}. However, after applying a certain transformation to the data that is commonly used for downstream analysis, the method of~\cite{landa2022biwhitening} provides unsatisfactory performance, whereas our approach still provides an excellent fit to the MP law; see Figures~\ref{fig: transformed scRNA-seq}--\ref{fig: transformed ST + Algorithm 1}. The reason for this advantage is that our approach supports general noise distributions and is agnostic to the signal, whereas the method of~\cite{landa2022biwhitening} relies on a quadratic relation between the mean and the variance of the data entries, which is hindered by the transformation.

\section{Method derivation and analysis} \label{sec: method derivation and analysis}
\subsection{Preliminaries: the noise resolvent and the quadratic Dyson equation} \label{sec: Dyson equations}
We begin by defining $\mathcal{E},\mathcal{S} \in\mathbb{R}^{(m+n)\times (m+n)}$ as the symmetrized versions of $E$ and $S$, respectively, according to
\begin{equation}
    \mathcal{E} = 
    \begin{bmatrix}
    \mathbf{0}_{m\times m} & E \\
    E^T & \mathbf{0}_{n\times n}
    \end{bmatrix}, \qquad\qquad 
    \mathcal{S} = 
    \begin{bmatrix}
    \mathbf{0}_{m\times m} & S \\
    S^T & \mathbf{0}_{n\times n}
    \end{bmatrix}, \label{eq: definition of E_cal and S_cal}
\end{equation}
where $\mathbf{0}_{m\times m}$ and $\mathbf{0}_{n\times n}$ are $m\times m$ and $n\times n$ zero matrices, respectively. Next, the resolvent of $\mathcal{E}$ is defined as the complex-valued random matrix $R(z) \in \mathbb{C}^{(m+n)\times(m+n)}$ given by
\begin{equation}
    R(z) = (\mathcal{E} - z I)^{-1}, \label{eq: resolvent def}
\end{equation}
where $I$ is the identity matrix and $z\in\mathbb{C}^+$ is a point in the complex upper half plane $\mathbb{C}^+$. A fundamental fact underpinning our approach is that in a broad range of settings, the main diagonal of $R(z)$ concentrates around the solution $\mathbf{f}\in\mathbb{C}^{m+n}$ to the deterministic vector-valued equation
\begin{equation}
    z + \mathcal{S}\mathbf{f} = -\frac{1}{\mathbf{f}}, \label{eq: Dyson eq}
\end{equation} 
for any $z\in \mathbb{C}^+$~\cite{erdHos2019random,alt2017local} (see Lemma~\ref{lem: concentration of bilinear forms} in Section~\ref{sec: analysis for rank-one variance matrices} for a formal statement of this result in our setting). For simplicity of presentation, we use a standard fraction notation, e.g., in the right-hand side of~\eqref{eq: Dyson eq}, to denote entrywise division whenever vectors are involved. Also, the addition of a vector and a scalar, e.g., in the left-hand side of~\eqref{eq: Dyson eq}, describes the entrywise addition of the scalar to all entries of the vector.
The equation~\eqref{eq: Dyson eq} is known as the quadratic Dyson equation and has been extensively studied in the literature on random matrix theory. The following proposition guarantees the existence and uniqueness of the solution $\mathbf{f}$ and states several useful properties (see~\cite{ajanki2019quadratic}). 
\begin{prop} \label{prop: solution to Dyson equation}
There exists a unique holomorphic function $\mathbf{f}:\mathbb{C}^+ \rightarrow \mathbb{C}^{m+n}$ that solves~\eqref{eq: Dyson eq} for all $z\in \mathbb{C}^+$. Moreover, $\operatorname{Im}\{\mathbf{f}(z)\} > 0$ for all $z\in \mathbb{C}^+$ and $\operatorname{Re}\{\mathbf{f}(z)\}$ is an odd function of $\operatorname{Re}\{z\}$.
\end{prop}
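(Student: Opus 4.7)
The plan is to follow the framework of Ajanki–Erdős–Krüger (reference [ajanki2019quadratic]): rewrite the Dyson equation as a fixed-point equation on the open set $(\mathbb{C}^+)^{m+n}$, establish that the natural fixed-point map is a strict contraction in a suitable hyperbolic metric, and then derive holomorphicity and the symmetry properties as consequences. Throughout, it is crucial that $\mathcal{S}$ has nonnegative entries (from the definition in \eqref{eq: definition of E_cal and S_cal}) and that $\mathcal{S}$ is symmetric.

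First, I would rearrange \eqref{eq: Dyson eq} as $\mathbf{f} = \Phi_z(\mathbf{f})$, where $\Phi_z(\mathbf{g}) := -1/(z + \mathcal{S}\mathbf{g})$ (entrywise inversion). The key preliminary observation is that $\Phi_z$ sends $(\mathbb{C}^+)^{m+n}$ into itself: if $\operatorname{Im}\mathbf{g} > 0$ componentwise, then because $\mathcal{S}$ has nonnegative entries, $\operatorname{Im}(\mathcal{S}\mathbf{g}) \geq 0$; combined with $\operatorname{Im} z > 0$, this forces $\operatorname{Im}(z + \mathcal{S}\mathbf{g}) > 0$ and therefore $\operatorname{Im}\Phi_z(\mathbf{g}) > 0$ componentwise. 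Existence then follows from a standard fixed-point argument: for $z = i\tau$ with $\tau$ large, the map $\Phi_{i\tau}$ is an explicit contraction in the sup norm with unique fixed point close to $i/\tau$, and one extends the solution by analytic continuation to all of $\mathbb{C}^+$ using a parameter-continuation/implicit function argument at points where the linearization of \eqref{eq: Dyson eq} is invertible. The derivative constraint produced by differentiating \eqref{eq: Dyson eq} is precisely the matrix $I - \operatorname{diag}(\mathbf{f})^2 \mathcal{S}$, which can be shown to be invertible on $\mathbb{C}^+$ using the fact that any solution already has $\operatorname{Im}\mathbf{f} > 0$ (see below).

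The hard part is the uniqueness and the property $\operatorname{Im}\mathbf{f}(z) > 0$, both of which I would obtain simultaneously via the hyperbolic (Caratheodory) metric on the upper half plane. Each scalar Möbius map $w \mapsto -1/(z + w)$ is a hyperbolic contraction, and when $\operatorname{Im} z > 0$ it is a strict contraction. Lifting this to the product $(\mathbb{C}^+)^{m+n}$ with the metric $d(\mathbf{f}, \mathbf{g}) = \max_i d_H(f_i, g_i)$ and using that $\mathcal{S}$ is a nonnegative contraction in the appropriate sense, I would show that $\Phi_z$ is a strict contraction on this complete metric space, yielding a unique fixed point $\mathbf{f}(z)$ with $\operatorname{Im}\mathbf{f}(z) > 0$. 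Technical care is needed to handle $z$ approaching the real line (where the contraction constant degenerates) and to accommodate the fact that some entries of $\mathcal{S}$ are zero due to its block structure in \eqref{eq: definition of E_cal and S_cal}; here the sparsity pattern of $\mathcal{S}$ is symmetric and connected (in the sense that the associated bipartite graph is irreducible as long as $S$ has no all-zero row or column), which is enough to propagate strict positivity of imaginary parts. Holomorphicity of $\mathbf{f}$ then follows from the implicit function theorem applied to the holomorphic map $(z,\mathbf{f}) \mapsto \mathbf{f} + 1/(z + \mathcal{S}\mathbf{f})$, whose Jacobian in $\mathbf{f}$ is invertible on $\mathbb{C}^+$ by the contraction argument.

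Finally, the symmetry $\operatorname{Re}\mathbf{f}(-\overline{z}) = -\operatorname{Re}\mathbf{f}(z)$ would come from a short uniqueness trick. Given the unique solution $\mathbf{f}(z)$, I set $\mathbf{g}(z) := -\overline{\mathbf{f}(-\overline{z})}$ and verify by direct substitution that
\begin{equation*}
z + \mathcal{S}\mathbf{g}(z) = z - \overline{\mathcal{S}\mathbf{f}(-\overline{z})} = -\overline{-\overline{z} + \mathcal{S}\mathbf{f}(-\overline{z})} = -\overline{-1/\mathbf{f}(-\overline{z})} = -1/\mathbf{g}(z),
\end{equation*}
using that $\mathcal{S}$ is real. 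Moreover $\operatorname{Im}\mathbf{g}(z) = \operatorname{Im}\mathbf{f}(-\overline{z}) > 0$, so $\mathbf{g}(z)$ lies in the same domain as $\mathbf{f}(z)$. By uniqueness, $\mathbf{g}(z) = \mathbf{f}(z)$, i.e., $\mathbf{f}(-\overline{z}) = -\overline{\mathbf{f}(z)}$, which is exactly the statement that $\operatorname{Re}\mathbf{f}$ is an odd function of $\operatorname{Re} z$ (while $\operatorname{Im}\mathbf{f}$ is even). The anticipated main obstacle is the contraction step itself: choosing the correct invariant metric, verifying strict contractivity uniformly on compact subsets of $\mathbb{C}^+$, and handling the degenerate block structure of $\mathcal{S}$ in \eqref{eq: definition of E_cal and S_cal}.
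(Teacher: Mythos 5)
The paper contains no proof of this proposition: it is imported verbatim from the theory of the quadratic vector equation, with the citation to~\cite{ajanki2019quadratic} standing in for the argument. Your proposal reconstructs exactly the strategy of that reference, so in that sense you are on the intended path. The reflection-symmetry step you carry out in full is correct and complete: defining $\mathbf{g}(z) = -\overline{\mathbf{f}(-\overline{z})}$, checking it solves~\eqref{eq: Dyson eq} with $\operatorname{Im}\{\mathbf{g}\}>0$, and invoking uniqueness gives $\mathbf{f}(-\overline{z}) = -\overline{\mathbf{f}(z)}$, which is precisely the oddness of $\operatorname{Re}\{\mathbf{f}\}$ in $\operatorname{Re}\{z\}$.

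The existence/uniqueness/holomorphicity portion, however, is still a roadmap rather than a proof: the asserted strict contraction in the hyperbolic metric is the entire nontrivial content, and your sketch leans on it three times (for uniqueness, for positivity of $\operatorname{Im}\{\mathbf{f}\}$, and for invertibility of $I - D\{\mathbf{f}\}^2\mathcal{S}$ in the continuation step). The standard way to close this, and the one used in~\cite{ajanki2019quadratic}, is the Earle--Hamilton fixed-point theorem: for fixed $z\in\mathbb{C}^+$, the map $\Phi_z(\mathbf{g}) = -1/(z+\mathcal{S}\mathbf{g})$ sends the bounded domain $\{\mathbf{g}\in(\mathbb{C}^+)^{m+n} : \vert \mathbf{g}_i\vert < 1/\operatorname{Im}\{z\}\}$ strictly into its interior, since $\vert \Phi_z(\mathbf{g})_i\vert \leq 1/\operatorname{Im}\{z\}$ and $\operatorname{Im}\{\Phi_z(\mathbf{g})_i\} \geq \operatorname{Im}\{z\}/(\vert z\vert + \Vert \mathcal{S}\Vert_\infty/\operatorname{Im}\{z\})^2 > 0$; hence $\Phi_z$ is a strict contraction in the Carath\'eodory metric of that domain and has a unique fixed point there. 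Since any solution with positive imaginary part automatically satisfies $\vert \mathbf{f}_i\vert = 1/\vert z + (\mathcal{S}\mathbf{f})_i\vert \leq 1/\operatorname{Im}\{z\}$, uniqueness holds in the full class, and holomorphicity follows because the iterates $\Phi_z^k$ applied to a constant starting point are holomorphic in $z$ and converge locally uniformly (no separate implicit-function or continuation argument is needed). Note also that your worry about the zero blocks of $\mathcal{S}$ and irreducibility is unnecessary at this level: nonnegativity of $\mathcal{S}$ alone gives $\operatorname{Im}\{\Phi_z(\mathbf{g})\}>0$ for $\operatorname{Im}\{z\}>0$, and the contraction argument is insensitive to the sparsity pattern; connectivity of the bipartite graph of $S$ only becomes relevant for boundary behavior as $z$ approaches the real axis, which the proposition does not claim.
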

In what follows, we focus on the restriction of $\mathbf{f}$ to the upper half of the imaginary axis, i.e., $z = \imath\eta$ for $\eta>0$. In this case, Proposition~\ref{prop: solution to Dyson equation} asserts that the real part of $\mathbf{f}$ vanishes and its imaginary part is strictly positive, hence
\begin{equation}
    \mathbf{f}(\imath \eta) = \imath \mathbf{g}(\eta), \label{eq: definition of g}
\end{equation}
where $\mathbf{g}(\eta) \in \mathbb{R}^{m+n}$ is a positive vector for all $\eta>0$. By plugging~\eqref{eq: definition of g} into the Dyson equation for $z=\imath \eta$ and taking the imaginary part of both sides, we see that $\mathbf{g}$ satisfies the vector-valued equation
\begin{equation}
    \eta + \mathcal{S}\mathbf{g} = \frac{1}{\mathbf{g}}. \label{eq: Dyson eq imag axis}
\end{equation}
Note that according to the definition of $\mathcal{S}$ in~\eqref{eq: definition of E_cal and S_cal}, the equation~\eqref{eq: Dyson eq imag axis} can also be written as the system of coupled equations 
\begin{equation}
    \eta + {S}\mathbf{g}^{(2)} = \frac{1}{\mathbf{g}^{(1)}}, \qquad \qquad \eta + {S^T}\mathbf{g}^{(1)} = \frac{1}{\mathbf{g}^{(2)}}, \label{eq: coupled Dyson eq imag axis}
\end{equation}
where $\mathbf{g}^{(1)} = [\mathbf{g}_1,\ldots,\mathbf{g}_m]^T$ and $\mathbf{g}^{(2)} = [\mathbf{g}_{m+1},\ldots,\mathbf{g}_{m+n}]^T$.
The system of equations in~\eqref{eq: coupled Dyson eq imag axis} involves only real-valued and strictly positive quantities, making it particularly convenient for our subsequent derivations and analysis. 

\subsection{Variance matrices $S$ with rank one} \label{sec: rank one case}
We begin by considering the case of a rank-one variance matrix, i.e.,
\begin{assump} \label{assump: rank one variance matrix}
$S = \mathbf{x} \mathbf{y}^T$ for positive $\mathbf{x}\in\mathbb{R}^m$ and $\mathbf{y}\in\mathbb{R}^n$.
\end{assump}
As mentioned in the introduction, under Assumption~\ref{assump: rank one variance matrix}, the normalization of the rows and columns in~\eqref{eq: scaling rows and columns} makes the noise completely homoskedastic with variance one, i.e., $\mathbb{E}[{\widetilde{E}}_{ij}^2] = 1$ for all $i\in[m]$ and $j\in[n]$. In what follows, we address the task of estimating $\mathbf{x}$ and $\mathbf{y}$ directly from the data matrix $Y$. We begin by deriving our proposed estimators in Section~\ref{sec: method derivation for rank-one variance matrices} and proceed to analyze their convergence to $\mathbf{x}$ and $\mathbf{y}$ in Section~\ref{sec: analysis for rank-one variance matrices}. The main theoretical result in this section is Theorem~\ref{thm: convergence of estimated variance factors}, which provides probabilistic error bounds for our proposed estimators under suitable conditions.

\subsubsection{Method derivation} \label{sec: method derivation for rank-one variance matrices}
To derive our estimators for $\mathbf{x}$ and $\mathbf{y}$, we observe that under Assumption~\ref{assump: rank one variance matrix}, the vectors $\mathbf{x}$ and $\mathbf{y}$ can be written explicitly in terms of $\mathbf{g}^{(1)}$ and $\mathbf{g}^{(2)}$ from~\eqref{eq: coupled Dyson eq imag axis} up to a trivial scalar ambiguity. Specifically, we have the following proposition, whose proof can be found in Appendix~\ref{appendix: proof of formula for x and y in terms of g}.
\begin{prop} \label{prop: formula for x and y in terms of g}
Under Assumption~\ref{assump: rank one variance matrix}, we have
\begin{equation}
    \mathbf{x} = \frac{\alpha}{\sqrt{m - \eta \Vert \mathbf{g}^{(1)}\Vert_1 }} \left( \frac{1}{\mathbf{g}^{(1)}} - \eta \right), 
    \qquad
    \mathbf{y} =  \frac{\alpha^{-1}}{\sqrt{n - \eta \Vert \mathbf{g}^{(2)} \Vert_1}} \left( \frac{1}{\mathbf{g}^{(2)}} - \eta \right),
    \qquad
    \alpha = \sqrt{\frac{\mathbf{x}^T \mathbf{g}^{(1)}}{\mathbf{y}^T \mathbf{g}^{(2)}}}. \label{eq: formulas for x and y}
\end{equation}
\end{prop}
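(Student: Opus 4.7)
The plan is to substitute $S = \mathbf{x}\mathbf{y}^T$ into the coupled Dyson equations~\eqref{eq: coupled Dyson eq imag axis} and extract $\mathbf{x}$ and $\mathbf{y}$ by pure algebra, with the only subtlety being the scalar gauge ambiguity $(\mathbf{x},\mathbf{y}) \mapsto (\alpha \mathbf{x}, \alpha^{-1}\mathbf{y})$ that leaves $S$ invariant. First I would introduce the two positive scalars $\beta_1 := \mathbf{y}^T\mathbf{g}^{(2)}$ and $\beta_2 := \mathbf{x}^T\mathbf{g}^{(1)}$, and note that under the rank-one assumption the matrix-vector products collapse to $S\mathbf{g}^{(2)} = \beta_1 \mathbf{x}$ and $S^T\mathbf{g}^{(1)} = \beta_2 \mathbf{y}$. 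Plugging these into~\eqref{eq: coupled Dyson eq imag axis} and rearranging yields immediately
\begin{equation*}
    \mathbf{x} = \frac{1}{\beta_1}\left(\frac{1}{\mathbf{g}^{(1)}} - \eta\right), \qquad \mathbf{y} = \frac{1}{\beta_2}\left(\frac{1}{\mathbf{g}^{(2)}} - \eta\right),
\end{equation*}
so everything reduces to identifying $\beta_1$ and $\beta_2$ from the data contained in $\mathbf{g}^{(1)},\mathbf{g}^{(2)}$.

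The next step would be to take the Euclidean inner product of the first equation in~\eqref{eq: coupled Dyson eq imag axis} with $\mathbf{g}^{(1)}$ and of the second with $\mathbf{g}^{(2)}$. The right-hand sides both collapse to the dimensions $m$ and $n$ (since $\mathbf{g}^{(i)}$ is componentwise positive, so $(\mathbf{g}^{(i)})^T(1/\mathbf{g}^{(i)}) = m$ or $n$), while the cross terms produce $\beta_1\beta_2$ in both cases. This gives the two consistency relations
\begin{equation*}
    \eta \Vert \mathbf{g}^{(1)}\Vert_1 + \beta_1\beta_2 = m, \qquad \eta \Vert \mathbf{g}^{(2)}\Vert_1 + \beta_1\beta_2 = n,
\end{equation*}
which pin down the product $\beta_1 \beta_2 = m - \eta\Vert \mathbf{g}^{(1)}\Vert_1 = n - \eta\Vert \mathbf{g}^{(2)}\Vert_1$. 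Positivity of the radicands in~\eqref{eq: formulas for x and y} is then automatic because $\beta_1,\beta_2>0$; alternatively it follows entrywise from $\eta \mathbf{g}^{(1)}_i = 1 - \beta_1 \mathbf{x}_i \mathbf{g}^{(1)}_i < 1$.

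Only the product $\beta_1\beta_2$ is determined by these relations, which is precisely the manifestation of the gauge freedom mentioned above. I would resolve it by defining $\alpha := \sqrt{\beta_2/\beta_1}$, so that $\beta_1 = \sqrt{\beta_1\beta_2}/\alpha$ and $\beta_2 = \alpha\sqrt{\beta_1\beta_2}$. Substituting back into the expressions for $\mathbf{x}$ and $\mathbf{y}$ and replacing $\beta_1\beta_2$ by either of its equivalent forms $m - \eta\Vert\mathbf{g}^{(1)}\Vert_1$ or $n - \eta\Vert\mathbf{g}^{(2)}\Vert_1$ recovers exactly the formulas stated in~\eqref{eq: formulas for x and y}. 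The identity $\alpha = \sqrt{\mathbf{x}^T\mathbf{g}^{(1)}/(\mathbf{y}^T\mathbf{g}^{(2)})}$ is just the definition of $\alpha$ re-expressed in terms of the original vectors.

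The argument is essentially a calculation, so there is no real obstacle. The one point I would be careful about is the consistency step: one must verify that the two inner-product identities yield the \emph{same} value for $\beta_1\beta_2$. This consistency is not accidental, but rather a direct consequence of the rank-one factorization of $S$, and it is what makes the square roots in~\eqref{eq: formulas for x and y} well defined and makes the gauge parameter $\alpha$ the unique degree of freedom in the representation.
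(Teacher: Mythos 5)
Your proposal is correct and follows essentially the same route as the paper's proof: substitute the rank-one factorization into the coupled Dyson equations~\eqref{eq: coupled Dyson eq imag axis}, solve for $\mathbf{x}$ and $\mathbf{y}$ in terms of the scalars $\mathbf{y}^T\mathbf{g}^{(2)}$ and $\mathbf{x}^T\mathbf{g}^{(1)}$, and pin down their product as $m - \eta\Vert\mathbf{g}^{(1)}\Vert_1 = n - \eta\Vert\mathbf{g}^{(2)}\Vert_1$ by contracting against $\mathbf{g}^{(1)}$ and $\mathbf{g}^{(2)}$, with $\alpha$ absorbing the remaining gauge freedom. Your explicit remarks on the consistency of the two product identities and the positivity of the radicands are small additions the paper leaves implicit, but the argument is the same.
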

The scalar ambiguity in Proposition~\ref{prop: formula for x and y in terms of g} stems from the fact that $\mathbf{x}$ and $\mathbf{y}$ can always be replaced with $\alpha^{-1} \mathbf{x}$ and $\alpha \mathbf{y}$, respectively, for any $\alpha>0$, resulting in the same noise variance matrix $S = \mathbf{x}\mathbf{y}^T$. To settle this ambiguity, we set $\alpha = 1$ by requiring that
\begin{equation}
    \mathbf{x}^T \mathbf{g}^{(1)} = \mathbf{y}^T \mathbf{g}^{(2)}, \label{eq: setting alpha=1}
\end{equation}
which can be assumed to hold without loss of generality.

We propose to estimate $\mathbf{x}$ and $\mathbf{y}$ by first estimating $\mathbf{g}^{(1)}$ and $\mathbf{g}^{(2)}$ from the observed data and then utilizing the formulas in Proposition~\ref{prop: formula for x and y in terms of g} with $\alpha=1$. As mentioned in Section~\ref{sec: Dyson equations}, under suitable conditions (which will be stated later on in our analysis), the main diagonal of the noise resolvent $R(z)$ concentrates around the solution $\mathbf{f}(z)$ to the Dyson equation~\eqref{eq: Dyson eq}. Since $\mathbf{g}(\eta) = \operatorname{Im}\{\mathbf{f}(\imath \eta)\}$ according to~\eqref{eq: definition of g}, we expect the imaginary part of the main diagonal of the noise resolvent $R(\imath \eta)$ to approximate $\mathbf{g}(\eta)$. However, since we do not have direct access to the noise resolvent, we replace it with the resolvent constructed analogously from the data. To this end, we define $\mathcal{Y}\in\mathbb{R}^{(m+n)\times(m+n)}$ as the symmetrized version of the data matrix $Y$, i.e.,
\begin{equation}
    \mathcal{Y} = 
    \begin{bmatrix}
    \mathbf{0}_{m\times m} & Y \\
    Y^T & \mathbf{0}_{n\times n}
    \end{bmatrix},
\end{equation}
and denote the resolvent of $\mathcal{Y}$ by $\mathcal{R}(z) \in \mathbb{C}^{(m+n)\times(m+n)}$, namely
\begin{equation}
    \mathcal{R}(z) = (\mathcal{Y} - z I)^{-1}, \label{eq: resolent of data}
\end{equation}
for $z\in \mathbb{C}^+$. We then estimate $\mathbf{g}(\eta)$ from the imaginary part of the main diagonal of the data resolvent $\mathcal{R}(\imath \eta)$ according to
\begin{align}
    \hat{\mathbf{g}}_i (\eta) = \operatorname{Im}\{\mathcal{R}_{ii}(\imath \eta) \}, \label{eq: g_hat def}
\end{align}
for all $i\in [m+n]$. For notational convenience, we define the estimates of $\mathbf{g}^{(1)}$ and $\mathbf{g}^{(2)}$ separately as $\hat{\mathbf{g}}^{(1)} = [\hat{\mathbf{g}}_1,\ldots,\hat{\mathbf{g}}_m]^T$ and $\hat{\mathbf{g}}^{(2)} = [\hat{\mathbf{g}}_{m+1},\ldots,\hat{\mathbf{g}}_{m+n}]^T$, respectively, and omit the explicit dependence on $\eta$. 

The following proposition, whose proof can be found in Appendix~\ref{appendix: proof of formula for g_1_tilde and g_2_tilde}, shows that $\hat{\mathbf{g}}^{(1)}$ and $\hat{\mathbf{g}}^{(2)}$ have simple formulas in terms of the SVD of $Y$ (recalling our assumption that $m\leq n$). In particular, these formulas justify steps~\ref{alg: step 1} and~\ref{alg:step 3} in Algorithm~\ref{alg:noise standardization}.
\begin{prop} \label{prop: formula for g_1_tilde and g_2_tilde}
Let the columns of $U\in\mathbb{R}^{m\times m}$, the columns of $V\in\mathbb{R}^{n\times n}$, and $\{\sigma_k\}_{k=1}^m$ denote the left singular vectors, right singular vectors, and singular values of $Y$, respectively. Then, 
\begin{align}
    \hat{\mathbf{g}}_i^{(1)} = \sum_{k=1}^m \frac{\eta}{\sigma_k^2 + \eta^2} U_{ik}^2, \qquad\qquad \hat{\mathbf{g}}_j^{(2)} = \frac{1}{\eta} + \sum_{k=1}^m \left( \frac{\eta}{\sigma_k^2 + \eta^2} - \frac{1}{\eta} \right) V_{jk}^2, \label{eq: formulas for g_hat_1 and g_hat_2 in terms of the SVD}
\end{align}
for all $i\in[m]$ and $j\in [n]$.
\end{prop}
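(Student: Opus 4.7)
The plan is to compute the resolvent $\mathcal{R}(\imath\eta) = (\mathcal{Y} - \imath\eta I)^{-1}$ explicitly using the spectral decomposition of the symmetrized matrix $\mathcal{Y}$, and then read off the imaginary part of its diagonal. The starting point is the standard correspondence between the SVD of $Y$ and the eigendecomposition of $\mathcal{Y}$: given $Y = U \Sigma V^T$ with $\Sigma$ containing $\{\sigma_k\}_{k=1}^m$, the $2m$ vectors $\frac{1}{\sqrt{2}}[U_k^T, \pm V_k^T]^T$ are orthonormal eigenvectors of $\mathcal{Y}$ with eigenvalues $\pm \sigma_k$, where $U_k$ and $V_k$ denote the $k$-th columns of $U$ and $V$. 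Since $m \leq n$, the matrix $\mathcal{Y}$ has an additional zero eigenvalue of multiplicity $n-m$, with eigenvectors of the form $[\mathbf{0}^T, V_k^T]^T$ for $k = m+1,\ldots,n$ (the right singular vectors associated with the trivial singular values).

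Given this spectral decomposition, I would write
\begin{equation}
    \mathcal{R}(\imath\eta) = \sum_{k=1}^m \frac{1}{\sigma_k - \imath\eta} \cdot \tfrac{1}{2}\begin{bmatrix} U_k \\ V_k \end{bmatrix}\begin{bmatrix} U_k \\ V_k \end{bmatrix}^T + \sum_{k=1}^m \frac{1}{-\sigma_k - \imath\eta} \cdot \tfrac{1}{2}\begin{bmatrix} U_k \\ -V_k \end{bmatrix}\begin{bmatrix} U_k \\ -V_k \end{bmatrix}^T + \sum_{k=m+1}^n \frac{1}{-\imath\eta}\begin{bmatrix} \mathbf{0} \\ V_k \end{bmatrix}\begin{bmatrix} \mathbf{0} \\ V_k \end{bmatrix}^T.
\end{equation}
Combining the $\pm\sigma_k$ pair gives the elementary identity $\tfrac{1}{2}\left(\frac{1}{\sigma_k - \imath\eta} + \frac{1}{-\sigma_k - \imath\eta}\right) = \frac{\imath\eta}{\sigma_k^2 + \eta^2}$, so that the imaginary part of each such contribution to the diagonal is $\frac{\eta}{\sigma_k^2 + \eta^2}$ multiplied by the relevant squared eigenvector entry.

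For $i \in [m]$, only the first two sums contribute on the diagonal (the third has zeros in the top block), and they produce the claimed formula $\hat{\mathbf{g}}_i^{(1)} = \sum_{k=1}^m \frac{\eta}{\sigma_k^2 + \eta^2} U_{ik}^2$. For $j \in [n]$, looking at diagonal entry $m+j$, the first two sums yield $\sum_{k=1}^m \frac{\eta}{\sigma_k^2 + \eta^2} V_{jk}^2$, while the third sum contributes $\operatorname{Im}\{\frac{1}{-\imath\eta}\} \sum_{k=m+1}^n V_{jk}^2 = \frac{1}{\eta}\sum_{k=m+1}^n V_{jk}^2$. Using the orthogonality of $V$ to write $\sum_{k=m+1}^n V_{jk}^2 = 1 - \sum_{k=1}^m V_{jk}^2$ and collecting terms yields precisely $\hat{\mathbf{g}}_j^{(2)} = \frac{1}{\eta} + \sum_{k=1}^m \left(\frac{\eta}{\sigma_k^2 + \eta^2} - \frac{1}{\eta}\right) V_{jk}^2$.

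There is no real obstacle here; the argument is a direct computation once the eigendecomposition of $\mathcal{Y}$ is written down. The only point requiring mild care is the treatment of the $n-m$ kernel directions of $\mathcal{Y}$, which is what produces the extra $1/\eta$ term asymmetry between the two formulas and which justifies the seemingly different forms of $\hat{\mathbf{g}}^{(1)}$ and $\hat{\mathbf{g}}^{(2)}$ in Algorithm~\ref{alg:noise standardization}.
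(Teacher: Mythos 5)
Your proposal is correct and follows essentially the same route as the paper's proof: both diagonalize the symmetrized matrix $\mathcal{Y}$ via the SVD of $Y$ (eigenpairs $\pm\sigma_k$ with eigenvectors $\tfrac{1}{\sqrt{2}}[U_k^T,\pm V_k^T]^T$ plus the $n-m$ kernel directions), combine the $\pm\sigma_k$ resolvent terms, and use $\sum_{k>m}V_{jk}^2 = 1-\sum_{k\leq m}V_{jk}^2$ to obtain the extra $1/\eta$ term in $\hat{\mathbf{g}}^{(2)}$. No gaps.
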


As we shall see in our analysis, an important property of the estimator $\hat{\mathbf{g}}$ of $\mathbf{g}$ is that it is not sensitive to the presence of a possibly strong low-rank signal $X$. This is one of the main advantages of using the data resolvent to infer the structure of the noise variance, as opposed to, e.g., computing the empirical variances of the data matrix $Y$ across its rows and columns, which can be highly sensitive to the presence of strong components in the signal $X$. 

Equipped with an estimator of $\mathbf{g}$, we propose to estimate $\mathbf{x}$ and $\mathbf{y}$ by replacing $\mathbf{g}^{(1)}$ and $\mathbf{g}^{(2)}$ in~\eqref{eq: formulas for x and y} with their estimates $\hat{\mathbf{g}}^{(1)}$ and $\hat{\mathbf{g}}^{(2)}$, respectively, taking $\alpha=1$. Specifically, we define
\begin{equation}
    \hat{\mathbf{x}} = \frac{1}{\sqrt{m - \eta \Vert \hat{\mathbf{g}}^{(1)} \Vert_1 }} \left( \frac{1}{\hat{\mathbf{g}}^{(1)}} - \eta \right), \qquad\qquad
    \hat{\mathbf{y}} = \frac{1}{\sqrt{n - \eta \Vert \hat{\mathbf{g}}^{(2)} \Vert_1 }} \left( \frac{1}{\hat{\mathbf{g}}^{(2)}} - \eta \right), \label{eq: x_hat and y_hat def}
\end{equation}
which agrees with Step~\ref{alg:step 4} in Algorithm~\ref{alg:noise standardization}. 
The following proposition, whose proof can be found in Appendix~\ref{appendix: proof of x and y nonnegative}, establishes the well-posedness of the formulas in~\eqref{eq: x_hat and y_hat def} and the normalization~\eqref{eq: scaling rows and columns} when replacing $\mathbf{x}$ and $\mathbf{y}$ with $\hat{\mathbf{x}}$ and $\hat{\mathbf{y}}$, respectively. In particular, this proposition justifies our requirement at the beginning of Algorithm~\ref{alg:noise standardization}.
\begin{prop} \label{prop: x and y nonnegative}
The vector $\hat{\mathbf{g}}$ from~\eqref{eq: g_hat def} is positive for any $\eta>0$. Further, if $Y$ is not the zero matrix, i.e., $Y\neq \mathbf{0}_{m\times n}$, then $\hat{\mathbf{x}}$ and $\hat{\mathbf{y}}$ from~\eqref{eq: x_hat and y_hat def} are nonnegative vectors for any $\eta>0$, where $\hat{\mathbf{x}}_i = 0$ ($\hat{\mathbf{y}}_j = 0$) if and only if the $i$th row ($j$th column) of $Y$ is entirely zero.
\end{prop}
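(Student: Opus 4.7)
The plan is to prove Proposition~\ref{prop: x and y nonnegative} by direct computation, substituting the explicit SVD formulas of Proposition~\ref{prop: formula for g_1_tilde and g_2_tilde} into the definitions of $\hat{\mathbf{g}}$, $\hat{\mathbf{x}}$, $\hat{\mathbf{y}}$ and exploiting the orthogonality of $U$ and $V$. Throughout, we use $Y = U \Sigma V^T$ with $\Sigma\in\mathbb{R}^{m\times n}$ carrying the singular values $\sigma_1,\ldots,\sigma_m$, and we note that rows of $U$ have unit $\ell_2$ norm, so $\sum_{k=1}^m U_{ik}^2 = 1$, while rows of $V$ satisfy $\sum_{k=1}^m V_{jk}^2 \le 1$ (since $V$ has $n$ columns but we only sum the first $m$).

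First, for positivity of $\hat{\mathbf{g}}$: the formula for $\hat{\mathbf{g}}_i^{(1)}$ is a convex combination with strictly positive weights $\eta/(\sigma_k^2+\eta^2)$ of the quantities $U_{ik}^2$, at least one of which is nonzero (since row $i$ of $U$ has unit norm), so $\hat{\mathbf{g}}_i^{(1)}>0$. For $\hat{\mathbf{g}}_j^{(2)}$, I will rewrite the formula as
\begin{equation*}
\hat{\mathbf{g}}_j^{(2)} \;=\; \frac{1}{\eta}\Bigl(1 - \sum_{k=1}^m V_{jk}^2\Bigr) \;+\; \sum_{k=1}^m \frac{\eta}{\sigma_k^2+\eta^2}\,V_{jk}^2,
\end{equation*}
which is a sum of two nonnegative terms; the first vanishes only when $\sum_{k=1}^m V_{jk}^2 = 1$, in which case some $V_{jk}\neq 0$ forces the second term to be strictly positive, so $\hat{\mathbf{g}}_j^{(2)}>0$ in all cases.

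Next, for nonnegativity of $\hat{\mathbf{x}}$ and $\hat{\mathbf{y}}$, the key identity is
\begin{equation*}
\eta\,\hat{\mathbf{g}}_i^{(1)} \;=\; \sum_{k=1}^m \frac{\eta^2}{\sigma_k^2+\eta^2}\,U_{ik}^2 \;=\; 1 - \sum_{k=1}^m \frac{\sigma_k^2}{\sigma_k^2+\eta^2}\,U_{ik}^2,
\end{equation*}
so $\eta\,\hat{\mathbf{g}}_i^{(1)} \le 1$, giving $1/\hat{\mathbf{g}}_i^{(1)} - \eta \ge 0$, with equality iff $U_{ik}=0$ for every $k$ with $\sigma_k>0$. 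Since the $i$th row of $Y$ equals $\sum_k \sigma_k U_{ik} v_k^T$, the latter condition is exactly that row $i$ of $Y$ vanishes. An entirely analogous manipulation for $\hat{\mathbf{g}}_j^{(2)}$ (using $\sum_k V_{jk}^2 \le 1$) yields $\eta\,\hat{\mathbf{g}}_j^{(2)} \le 1$ with equality iff column $j$ of $Y$ vanishes.

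Finally, I must show the square-root normalizers in~\eqref{eq: x_hat and y_hat def} are well-defined (i.e.\ strictly positive) whenever $Y\neq 0$. Summing the identity above over $i$ and using orthogonality $\sum_i U_{ik}^2 = 1$ gives
\begin{equation*}
m - \eta\,\|\hat{\mathbf{g}}^{(1)}\|_1 \;=\; \sum_{k=1}^m \frac{\sigma_k^2}{\sigma_k^2+\eta^2},
\end{equation*}
and a parallel calculation (with $\sum_j V_{jk}^2 = 1$) yields the same expression for $n - \eta\,\|\hat{\mathbf{g}}^{(2)}\|_1$. Both are strictly positive precisely when some $\sigma_k>0$, i.e.\ when $Y\neq 0$. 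Combining this with the per-entry analysis of the previous paragraph completes the proof. There is no substantial obstacle here; the only subtle point is recognizing the telescoping identity $\eta\,\hat{\mathbf{g}}_i^{(1)} + \sum_k \tfrac{\sigma_k^2}{\sigma_k^2+\eta^2} U_{ik}^2 = 1$ that makes both the per-entry nonnegativity and the normalizer positivity fall out of the same calculation.
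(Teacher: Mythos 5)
Your proof is correct and follows essentially the same route as the paper: both work directly from the SVD formulas of Proposition~\ref{prop: formula for g_1_tilde and g_2_tilde}, establish the entrywise bounds $0<\hat{\mathbf{g}}_i^{(1)},\hat{\mathbf{g}}_j^{(2)}\leq 1/\eta$ via orthogonality of $U$ and $V$, and characterize equality by the vanishing of the corresponding row or column of $Y$. Your explicit identity $m-\eta\Vert\hat{\mathbf{g}}^{(1)}\Vert_1 = n-\eta\Vert\hat{\mathbf{g}}^{(2)}\Vert_1 = \sum_{k=1}^m \sigma_k^2/(\sigma_k^2+\eta^2)$ is a slightly more explicit way of handling the square-root normalizers than the paper's observation that they vanish only when $\hat{\mathbf{g}}^{(1)}=\mathbf{1}_m/\eta$ or $\hat{\mathbf{g}}^{(2)}=\mathbf{1}_n/\eta$, i.e., only when $Y=\mathbf{0}_{m\times n}$, but the underlying argument is the same.
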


Lastly, while our derivation in this section does not restrict the value of $\eta>0$, we propose in practice to set $\eta$ as the median singular value of $Y$; see step~\ref{alg: step 2} in Algorithm~\ref{alg:noise standardization}. The purpose of this choice is to adapt our procedure to the global scaling of the noise in the data. In particular, this choice of $\eta$ enforces a certain scaling of the noise that is required for our analysis in the next section; see Assumptions~\ref{assump: noise moment bound} and~\ref{assump: variance boundedness} in the next section and the discussion in Appendix~\ref{appendix: adapting to unknown global scaling of the noise}.

\subsubsection{Convergence analysis} \label{sec: analysis for rank-one variance matrices}
For our analysis in this section and subsequent ones, we consider $\eta$ as a fixed global constant. All other quantities in our setup, such as the matrix dimensions $m$ and $n$, the distribution of the noise entries $E_{ij}$, and the signal $X$ (its rank $r$ and its singular values and vectors) can be arbitrary, pending that they satisfy our assumptions detailed below. Therefore, in what follows, all constants appearing in our results may depend only on $\eta$ and the relevant global constants defined in our assumptions. 

Before delving into the details of our analysis, we provide an overview of its different steps, intermediate results, and assumptions. Our analysis below begins by presenting an auxiliary result on the concentration of bilinear forms of the noise resolvent $R(\imath \eta)$; see Lemma~\ref{lem: concentration of bilinear forms}. This result is fundamental to our subsequent analysis and requires only an assumption on the upper boundedness of the moments of the noise entries $E_{ij}$; see Assumption~\ref{assump: noise moment bound}. Using this result, we characterize the concentration of the main diagonal of the data resolvent $\mathcal{R}(\imath \eta)$ around the main diagonal of the noise resolvent $R(\imath \eta)$ in terms of the signal's rank $r$, its singular vectors, its largest singular value, and the long dimension $n$; see Theorem~\ref{thm: robustness of the resolvent}. For this result, we allow the rank $r$ to grow with $n$ but limit the growth to be slower than $\sqrt{n}$; see Assumption~\ref{assump: rank growth rate}. Importantly, Theorem~\ref{thm: robustness of the resolvent} establishes the robustness of the main diagonal of the data resolvent to low-rank perturbations, irrespective of the signal's strength. Subsequently, we utilize Theorem~\ref{thm: robustness of the resolvent} to prove the stochastic convergence of the errors $\Vert \hat{\mathbf{g}} - \mathbf{g} \Vert_1 \rightarrow 0$ and $\Vert \hat{\mathbf{g}} - \mathbf{g} \Vert_\infty \rightarrow 0$  as $m,n\rightarrow \infty$ with rates, see Corollaries~\ref{cor: convergence of g_tilde to g in l_1} and~\ref{cor: convergence of g_tilde to g in l_infty}, respectively, where the latter requires an additional assumption on the delocalization of the singular vectors of $X$; see Assumption~\ref{assump: signal delocalization} and the discussion and examples that follows. Finally, using Corollary~\ref{cor: convergence of g_tilde to g in l_infty}, we establish the main result of this section, which is the convergence of the estimated factors $(\hat{\mathbf{x}},\hat{\mathbf{y}})$ to the true factors $(\mathbf{x},\mathbf{y})$; see Theorem~\ref{thm: convergence of estimated variance factors}. Specifically, Theorem~\ref{thm: convergence of estimated variance factors} establishes the stochastic convergence of the relative errors $\Vert(\hat{\mathbf{x}} - \mathbf{x})/\mathbf{x}\Vert_\infty\rightarrow 0$ and $\Vert (\hat{\mathbf{y}} - \mathbf{y})/\mathbf{y}\Vert_\infty\rightarrow 0$ as $m,n\rightarrow \infty$ with rates (where $\cdot / \cdot$ refers to entrwywise division). To obtain the convergence of these error terms, we require an assumption on the lower boundedness of the noise variances $S_{ij}$; see Assumption~\ref{assump: variance boundedness}. For the convergence of the error $\Vert (\hat{\mathbf{y}} - \mathbf{y})/\mathbf{y}\Vert_\infty$, we additionally require that the short dimension $m$ grows sufficiently quickly with $n$; see Assumption~\ref{assump: growth of m} and the discussion that follows. 

To formally state the relation between $\mathbf{g}$ from~\eqref{eq: Dyson eq imag axis} and the noise resolvent $R(\imath \eta)$, we require all moments of the scaled noise variables $\sqrt{n} E_{ij}$ to be upper bounded by global constants. That is,
\begin{assump} \label{assump: noise moment bound}
There exist global constants $\{\mu_q\}_{q=1}^\infty$ such that $\mathbb{E}\vert \sqrt{n} E_{ij} \vert^q \leq \mu_q$ for all $i\in[m]$, $j\in[n]$, $q\in\mathbb{N}$.
\end{assump}
Assumption~\ref{assump: noise moment bound} is non-restrictive and covers many standard noise models, including all sub-Gaussian and sub-exponential distributions~\cite{vershynin2018high}. We now have the following lemma, which characterizes the relation between the noise resolvent on the imaginary axis, i.e., $R(\imath \eta)$, and the vector $\mathbf{g}$ from~\eqref{eq: Dyson eq imag axis}.
\begin{lem} \label{lem: concentration of bilinear forms}
Under Assumption~\ref{assump: noise moment bound}, for any $\epsilon>0$ there exist $C^{'},c^{'}(t)>0$ such that for all deterministic unit vectors $\mathbf{a},\mathbf{b}\in\mathbb{C}^{m+n}$ (i.e., $\Vert \mathbf{a} \Vert_2=\Vert \mathbf{b} \Vert_2 = 1$) and $t>0$, with probability at least $1-c^{'}(t) n^{-t}$ we have that
\begin{equation}
    \left\vert \mathbf{a}^T \left[ R(\imath \eta) -  D\{\imath \mathbf{g}(\eta)\} \right] \mathbf{b} \right\vert \leq C^{'} n^{\epsilon-1/2}.
\end{equation}
\end{lem}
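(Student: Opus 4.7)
The plan is to identify the statement as a standard isotropic local law for the Hermitization matrix $\mathcal{E}$ at the bulk spectral parameter $z=\imath\eta$, and to deduce it from the quadratic Dyson equation literature cited in Section~\ref{sec: Dyson equations}.

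First, I would observe that $\mathcal{E}$ is a centered Hermitian random matrix of dimension $m+n$ whose upper-triangular entries are independent and whose variance profile is exactly $\mathcal{S}$. By Assumption~\ref{assump: noise moment bound}, the rescaled entries $\sqrt{n}\,\mathcal{E}_{ij}$ have all moments bounded by absolute constants, which is precisely the moment hypothesis required by the isotropic local laws for Wigner-type matrices developed in the Alt--Erdős--Krüger framework referenced in Section~\ref{sec: Dyson equations}. Next, the spectral parameter $z=\imath\eta$ satisfies $\operatorname{Im} z=\eta$, a positive constant independent of $n$, so we sit in the bulk at fixed positive distance from the real axis. Proposition~\ref{prop: solution to Dyson equation} already furnishes the Dyson solution $\mathbf{f}(\imath\eta)$ with strictly positive imaginary part, and at such $z$ the stability operator of the Dyson equation is uniformly invertible with bound depending only on $\eta$, independently of fine properties of $\mathcal{S}$.

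Given these ingredients, the isotropic local law for Wigner-type matrices directly yields, for any deterministic unit vectors $\mathbf{a},\mathbf{b}\in\mathbb{C}^{m+n}$, the stochastic domination bound
\[
\bigl| \mathbf{a}^T R(\imath\eta)\mathbf{b} - \mathbf{a}^T D\{\mathbf{f}(\imath\eta)\}\mathbf{b} \bigr| \prec \frac{1}{\sqrt{n}},
\]
meaning that for every $\epsilon,t>0$ the event that the left-hand side exceeds $n^{\epsilon-1/2}$ has probability at most $c'(t)n^{-t}$, where $c'(t)$ depends only on $t$, on the global moment constants $\{\mu_q\}$, and on $\eta$. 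Since $\mathbf{f}(\imath\eta)=\imath\mathbf{g}(\eta)$ by~\eqref{eq: definition of g}, the deterministic equivalent $D\{\mathbf{f}(\imath\eta)\}$ coincides with $D\{\imath\mathbf{g}(\eta)\}$, so this bound is exactly the statement of the lemma.

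The main obstacle is not analytic but bibliographic: one must cite a version of the local law which is (i) isotropic rather than merely averaged or entrywise, (ii) uniform over deterministic complex unit test vectors, (iii) valid for Wigner-type matrices under only a moment condition on the entries and strict positivity of the variance profile, and (iv) applicable at a bulk spectral parameter. Because $\eta$ is a fixed positive constant, the delicate edge and cusp regimes are completely avoided, and the cleanest available isotropic local law in the Alt--Erdős--Krüger framework applies essentially without further adaptation; the polynomial failure probability $c'(t)n^{-t}$ for every $t$ is exactly the standard stochastic domination conclusion of such results.
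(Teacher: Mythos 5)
Your proposal is correct and follows essentially the same route as the paper: the paper's proof likewise invokes the isotropic local law of the Erdős--Krüger--Schröder framework (Theorem 2.1 of the cited reference) at the fixed spectral parameter $z=\imath\eta$, verifies the moment hypotheses from Assumption~\ref{assump: noise moment bound}, notes that independence and zero mean of the entries force the matrix Dyson solution to be diagonal and equal to $D\{\mathbf{f}(\imath\eta)\}=D\{\imath\mathbf{g}(\eta)\}$, and absorbs the ``$n$ large enough'' spectral-domain restriction into the constant $c'(t)$. The only cosmetic difference is that you phrase the deterministic equivalent via the vector Dyson equation for Wigner-type matrices, while the paper passes through the matrix-valued Dyson equation and then observes its solution is diagonal; this does not change the argument.
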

The proof of Lemma~\ref{lem: concentration of bilinear forms} can be found in Appendix~\ref{appendix: proof of concentration of biliniear forms} and follows directly from the results in~\cite{erdHos2019random} adapted to our setting. Lemma~\ref{lem: concentration of bilinear forms} establishes the concentration of bilinear forms of the noise resolvent evaluated on the imaginary axis in terms of the deterministic vector $\mathbf{g}$. In particular, it asserts that under Assumption~\ref{assump: noise moment bound} and for large $n$ (the long dimension), the noise resolvent $R(\imath \eta)$ behaves like a diagonal matrix whose main diagonal is $\imath \mathbf{g}$. 
We mention that under Assumption~\ref{assump: noise moment bound}, the entries of $\mathbf{g}$ are always lower bounded away from zero by a global constant (which may depend on $\eta$); see Lemma~\ref{lem: boundedness of g} in Appendix~\ref{appendix: boundedness of g}. Therefore, we have that $\operatorname{Im}\{R_{ii}(\imath \eta) \} \sim  \mathbf{g}_i(\eta)$ almost surely as $n\rightarrow\infty$ for all $i\in [m+n]$, where the convergence rate is almost $n^{-1/2}$. Consequently, for large $n$, the main diagonal of the noise resolvent provides an accurate approximation to the solution $\mathbf{g}$ to the Dyson equation~\eqref{eq: Dyson eq imag axis} on the imaginary axis. 
We note that the statement in Lemma~\ref{lem: concentration of bilinear forms} on the concentration of bilinear forms of a matrix is stronger than a statement on the concentration of the entries. Indeed, the former implies the latter but not vice-versa. This stronger statement is required for our key result below on the concentration of the main diagonal of the data resolvent.

Let us denote the (compact) SVD of the signal matrix $X$ as
\begin{equation}
    X = \widetilde{U}  D \{s_1,\ldots,s_r\} \widetilde{V}^T,
\end{equation}
where $\widetilde{U}\in\mathbb{R}^{m\times r}$, $\widetilde{V}\in\mathbb{R}^{n\times r}$, and $ s_1\geq \ldots \geq s_r$ stand for the left singular vectors, right singular vectors, and singular values of $X$, respectively. 
To establish that $\hat{\mathbf{g}}$ concentrates around $\mathbf{g}$ in the signal-plus-noise model~\eqref{eq: signal-plus-noise}, we make the following assumption on the behavior of the rank $r$ with respect to the long data dimension $n$.
\begin{assump} \label{assump: rank growth rate}
There exist constants $\widetilde{C}_0>0$ and $\delta_0 \in [0,1/2)$ such that $r \leq \widetilde{C}_0 n^{\delta_0}$.
\end{assump}
In other words, the rank $r$ can be constant ($\delta_0 = 0$) but also allowed to grow with $n$ at most with some fractional power smaller than $1/2$. 
We now have the following theorem, which characterizes the concentration of $\hat{\mathbf{g}}$ around $\mathbf{g}$ in terms of the long dimension $n$ and the SVD of the signal $X$.

\begin{thm} \label{thm: robustness of the resolvent}
Under Assumptions~\ref{assump: noise moment bound} and~\ref{assump: rank growth rate},
for any $\epsilon \in (0,1/2 - \delta_0)$ there exist $C_1^{'},C_2^{'},C_3^{'},c^{'}(t)>0$, such that for all $i\in [m]$, $j\in[n]$, and $t>0$, with probability at least $1-c^{'}(t) n^{-t}$ we have that
\begin{align}
    \vert \hat{\mathbf{g}}_i^{(1)} - {\mathbf{g}}_i^{(1)} \vert
    &\leq C_1^{'} n^{\epsilon-1/2} + \frac{C_2^{'}}{\sqrt{s_1^{-2} + C_3^{'}}} \sum_{k=1}^r \widetilde{U}_{ik}^2, \label{eq: g_hat_1 - g_1 error}\\
    \vert \hat{\mathbf{g}}_j^{(2)} - {\mathbf{g}}_j^{(2)} \vert 
    &\leq C_1^{'} n^{\epsilon-1/2} + \frac{C_2^{'}}{\sqrt{s_1^{-2} + C_3^{'}}} \sum_{k=1}^r \widetilde{V}_{jk}^2. \label{eq: g_hat_2 - g_2 error}
\end{align} 
\end{thm}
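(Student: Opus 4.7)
The plan is to use the Woodbury identity to decompose $\hat{\mathbf{g}}_i - \mathbf{g}_i = \operatorname{Im}\{\mathcal{R}_{ii}(\imath\eta)\} - \mathbf{g}_i$ into a noise-only error, handled by Lemma~\ref{lem: concentration of bilinear forms}, and a signal-driven correction, controlled via the Woodbury formula. Let $\mathcal{X} = W\Sigma W^{T}$ be the spectral decomposition of the symmetrized signal, where $\Sigma = \operatorname{diag}(s_1,\ldots,s_r,-s_1,\ldots,-s_r)$ and $W\in\mathbb{R}^{(m+n)\times 2r}$ has orthonormal columns obtained by stacking $\widetilde U_k$ and $\widetilde V_k$ in the symmetric way familiar from bipartite Hermitization, so that the $i$-th row of $W$ satisfies $\Vert W^{T}\mathbf{e}_i\Vert^2 = \sum_{k=1}^r \widetilde U_{ik}^2$ for $i\leq m$ (and $\sum_{k=1}^r \widetilde V_{jk}^2$ for $i=m+j$). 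Since $\mathcal{Y} - zI = (\mathcal{E} - zI) + W\Sigma W^{T}$, Woodbury gives
\[
\mathcal{R}(z) - R(z) = -R(z)\, W M(z) W^{T} R(z), \qquad M(z) := \bigl(\Sigma^{-1} + W^{T} R(z) W\bigr)^{-1}.
\]
Using $|\operatorname{Im} w| \leq |w|$, it suffices to bound $|\operatorname{Im} R_{ii}(\imath\eta) - \mathbf{g}_i|$ (the noise-only part) and $|[\mathcal{R}(\imath\eta) - R(\imath\eta)]_{ii}|$ (the signal correction). The first is bounded by $C'n^{\epsilon - 1/2}$ with high probability by Lemma~\ref{lem: concentration of bilinear forms} with $\mathbf{a} = \mathbf{b} = \mathbf{e}_i$, delivering the $C_1'n^{\epsilon-1/2}$ contribution.

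For the signal correction, Cauchy-Schwarz yields
\[
\bigl|[\mathcal{R}(\imath\eta) - R(\imath\eta)]_{ii}\bigr| = \bigl|(W^{T} R(\imath\eta) \mathbf{e}_i)^{T} M(\imath\eta)\, (W^{T} R(\imath\eta) \mathbf{e}_i)\bigr| \leq \Vert W^{T} R(\imath\eta) \mathbf{e}_i\Vert_2^{2}\,\Vert M(\imath\eta)\Vert.
\]
Each of the $2r$ coordinates of $W^{T} R(\imath\eta)\mathbf{e}_i$ is a bilinear form that Lemma~\ref{lem: concentration of bilinear forms} places within $C'n^{\epsilon-1/2}$ of $\imath\mathbf{g}_i W_{ik}$. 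Summing the $2r$ squared errors and invoking Assumption~\ref{assump: rank growth rate} with $\epsilon < \tfrac{1}{2}-\delta_0$ to keep the accumulation under control gives
\[
\Vert W^{T} R(\imath\eta)\mathbf{e}_i\Vert_2^{2} \leq 2\mathbf{g}_i^{2}\Vert W^{T}\mathbf{e}_i\Vert^{2} + O\bigl(n^{2\epsilon + \delta_0 - 1}\bigr),
\]
so the leading term produces exactly the factor $\sum_k \widetilde U_{ik}^2$ appearing in the theorem. The additive residual $O(n^{2\epsilon+\delta_0-1})$, once multiplied by the bounded $\Vert M\Vert$, falls below $n^{\epsilon-1/2}$ and is absorbed into the $C_1'n^{\epsilon-1/2}$ term. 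An entrywise application of Lemma~\ref{lem: concentration of bilinear forms}, controlled in spectral norm via a Frobenius bound on the $2r\times 2r$ matrix, likewise yields $W^{T} R(\imath\eta) W = \imath A + o(1)$, where $A := W^{T} D\{\mathbf{g}\}W$.

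The main technical obstacle is the spectral-norm bound $\Vert M(\imath\eta)\Vert \leq C_2'/\sqrt{s_1^{-2} + C_3'}$, which must saturate to a constant for large $s_1$ yet scale like $s_1$ when $s_1$ is small. The difficulty is that $\Sigma^{-1}$ is sign-indefinite and does not commute with $A$, so one cannot directly invoke $\lambda_{\min}(\Sigma^{-2} + A^{2})$. I would resolve this via the congruence factorization
\[
\Sigma^{-1} + \imath A = A^{1/2}\bigl(A^{-1/2}\Sigma^{-1} A^{-1/2} + \imath I\bigr) A^{1/2},
\]
which gives $\Vert(\Sigma^{-1} + \imath A)^{-1}\Vert \leq \Vert A^{-1/2}\Vert^{2}\cdot \Vert(\widetilde{B} + \imath I)^{-1}\Vert$ for the real-symmetric matrix $\widetilde{B} := A^{-1/2}\Sigma^{-1}A^{-1/2}$. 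Since $\widetilde B + \imath I$ is normal, $\Vert(\widetilde B + \imath I)^{-1}\Vert = 1/\sqrt{1 + \sigma_{\min}(\widetilde B)^{2}}$, and Ostrowski's theorem on congruent matrices yields $\sigma_{\min}(\widetilde B) \geq \sigma_{\min}(\Sigma^{-1})/\Vert A\Vert = 1/(s_1\Vert A\Vert)$. Since $\mathbf{g}$ is bounded above and below away from zero by constants depending only on $\eta$ (Lemma~\ref{lem: boundedness of g}), both $\Vert A\Vert$ and $\Vert A^{-1}\Vert$ are bounded by such constants, and the claimed interpolated bound follows. The $o(1)$ spectral perturbation of $W^{T} R(\imath\eta) W$ around $\imath A$ perturbs $M$ by $o(1)$ in operator norm and is absorbed; a union bound over the $O(r^{2})$ bilinear-form events supplies the stated probability.
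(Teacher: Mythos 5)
Your proposal is correct and follows essentially the same route as the paper's proof: a Woodbury expansion of the data resolvent around the noise resolvent, concentration of the relevant bilinear forms via Lemma~\ref{lem: concentration of bilinear forms}, the bound on $\Vert W^{T}R(\imath\eta)\mathbf{e}_i\Vert_2^2$ yielding the $\sum_{k}\widetilde{U}_{ik}^2$ (resp.\ $\sum_k \widetilde V_{jk}^2$) factor, an operator-norm bound on the inverse of the $2r\times 2r$ middle matrix that exploits positive definiteness of its imaginary part $W^{T}D\{\mathbf{g}\}W$, and the rank condition of Assumption~\ref{assump: rank growth rate} to absorb the accumulated fluctuations and the Frobenius-norm perturbation. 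The only real difference is in the middle-matrix bound: the paper proves $\Vert (A_R+\imath A_I)^{-1}\Vert_2 \le \bigl(\Vert A_R^{-1}\Vert_2^{-2}+\Vert A_I^{-1}\Vert_2^{-2}\bigr)^{-1/2}$ by simultaneous diagonalization (Lemma~\ref{lem: operator norm bound on inverse of complex matrix}), whereas you reach the same $C_2'/\sqrt{s_1^{-2}+C_3'}$ interpolation via congruence by $A^{1/2}$, normality of $\widetilde{B}+\imath I$, and Ostrowski's theorem -- slightly lossier (an extra condition-number factor, harmless since $\mathbf{g}$ is bounded above and below) but entirely sufficient; just state the final absorption of the stochastic perturbation multiplicatively, i.e.\ via $\sigma_{\min}\{\Sigma^{-1}+W^{T}RW\}\ge \sigma_{\min}\{\Sigma^{-1}+\imath A\}-\Vert\Psi\Vert$ with the latter lower-bounded by a positive constant, rather than as an additive $o(1)$ on $\Vert M\Vert$.
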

The proof of Theorem~\ref{thm: robustness of the resolvent} can be found in Appendix~\ref{appendix: proof of robustness of the resolvent}. The main idea therein is to utilize the Woodbury matrix identity together with Lemma~\ref{lem: concentration of bilinear forms} to show that the main diagonal of the data resolvent $\mathcal{R}(\imath \eta)$ concentrates around $\imath \mathbf{g}$ plus an error term. This additive error term has a special algebraic structure involving a certain complex-valued $2r\times 2r$ matrix. We conduct a careful spectral analysis of this error term to prove the required result, where Assumption~\ref{assump: rank growth rate} is utilized to guarantee the boundedness of the aforementioned complex-valued matrix in operator norm. It is important to note that Theorem~\ref{thm: robustness of the resolvent} does not rely on Assumption~\ref{assump: rank one variance matrix}. Consequently, the probabilistic bound in Theorem~\ref{thm: robustness of the resolvent} does not depend on the structure of the noise variance matrix $S$. This fact will be important for our analysis in the next section involving general variance matrices.

From Theorem~\ref{thm: robustness of the resolvent}, we see that the bound on the probabilistic error between the entries of $\hat{\mathbf{g}}$ and $\mathbf{g}$ has two terms. The first term depends only on the long data dimension $n$ and converges to zero with a rate of almost $n^{-1/2}$. The second term depends on the largest singular value $s_1$ and the magnitude of the entries of the singular vectors $\widetilde{U}$ and $\widetilde{V}$. Evidently, if the signal is vanishingly small, i.e., $s_1 \rightarrow 0$, then the second error term vanishes and, as expected from Lemma~\ref{lem: concentration of bilinear forms}, $\hat{\mathbf{g}}$ approaches $\mathbf{g}$ with rate almost $n^{-1/2}$ (with probability rapidly approaching $1$). However, even if some or all of the singular values are arbitrarily large, we always have 
\begin{equation}
    \frac{C_2^{'}}{\sqrt{s_1^{-2} + C_3^{'}}} \sum_{k=1}^r \widetilde{U}_{ik}^2 
    \leq \frac{C_2^{'}}{\sqrt{C_3^{'}}} \sum_{k=1}^r \widetilde{U}_{ik}^2, \qquad
    \frac{C_2^{'}}{\sqrt{s_1^{-2} + C_3^{'}}} \sum_{k=1}^r \widetilde{V}_{jk}^2 
    \leq \frac{C_2^{'}}{\sqrt{C_3^{'}}} \sum_{k=1}^r \widetilde{V}_{jk}^2,
\end{equation}
which depends only on the singular vectors of $X$ and not the singular values. Since each singular vector has a unit Euclidean norm, we have the following immediate corollary of Theorem~\ref{thm: robustness of the resolvent}.
\begin{cor} \label{cor: convergence of g_tilde to g in l_1}
Under Assumptions~\ref{assump: noise moment bound} and~\ref{assump: rank growth rate}, for any $\epsilon \in (0, 1/2 - \delta_0)$ there exist $C^{'},c^{'}(t)>0$ such that for all $t>0$, with probability at least $1-c^{'}(t) n^{-t}$ we have that
\begin{equation}
    \frac{1}{m}\Vert \hat{\mathbf{g}}^{(1)} - \mathbf{g}^{(1)} \Vert_1 \leq C^{'}\max\left\{n^{\epsilon-1/2}, r m^{-1} \right\}, \qquad \frac{1}{n}\Vert \hat{\mathbf{g}}^{(2)} - \mathbf{g}^{(2)} \Vert_1 \leq C^{'}n^{\epsilon-1/2},
\end{equation}
\end{cor}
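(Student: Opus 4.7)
The plan is to derive both bounds by summing the pointwise bounds from Theorem~\ref{thm: robustness of the resolvent} and using the orthonormality of the singular vector columns. The result is really an ``averaging'' corollary: the singular-vector-dependent error terms decouple nicely when summed across $i$ or $j$.

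First I would apply Theorem~\ref{thm: robustness of the resolvent} pointwise, observing that because $s_1^{-2} + C_3^{'} \geq C_3^{'}$ uniformly in the signal, the prefactor in front of $\sum_k \widetilde{U}_{ik}^2$ (and $\sum_k \widetilde{V}_{jk}^2$) is bounded by a global constant $C_4^{'} := C_2^{'}/\sqrt{C_3^{'}}$. Thus on a single high-probability event I get, for each $i\in [m]$ and each $j\in[n]$,
\begin{equation*}
    \bigl\vert \hat{\mathbf{g}}_i^{(1)} - \mathbf{g}_i^{(1)} \bigr\vert \leq C_1^{'} n^{\epsilon-1/2} + C_4^{'} \sum_{k=1}^r \widetilde{U}_{ik}^2, \qquad
    \bigl\vert \hat{\mathbf{g}}_j^{(2)} - \mathbf{g}_j^{(2)} \bigr\vert \leq C_1^{'} n^{\epsilon-1/2} + C_4^{'} \sum_{k=1}^r \widetilde{V}_{jk}^2.
\end{equation*}
To make these bounds hold simultaneously for all $i,j$, I apply a union bound over the $m+n \leq 2n$ indices; since the failure probability in Theorem~\ref{thm: robustness of the resolvent} is $c^{'}(t) n^{-t}$ for any $t>0$, absorbing the factor $2n$ simply shifts $t$ by one and rebrands $c^{'}$.

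Next I sum the entrywise bounds. Swapping the order of summation and using that the columns of $\widetilde{U}$ and $\widetilde{V}$ are orthonormal, I get
\begin{equation*}
    \sum_{i=1}^m \sum_{k=1}^r \widetilde{U}_{ik}^2 \;=\; \sum_{k=1}^r \Vert \widetilde{U}_{\cdot k}\Vert_2^2 \;=\; r,
    \qquad
    \sum_{j=1}^n \sum_{k=1}^r \widetilde{V}_{jk}^2 \;=\; r,
\end{equation*}
which yields $m^{-1}\Vert \hat{\mathbf{g}}^{(1)} - \mathbf{g}^{(1)} \Vert_1 \leq C_1^{'} n^{\epsilon-1/2} + C_4^{'} r/m$ and $n^{-1}\Vert \hat{\mathbf{g}}^{(2)} - \mathbf{g}^{(2)} \Vert_1 \leq C_1^{'} n^{\epsilon-1/2} + C_4^{'} r/n$. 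The first inequality is then absorbed into $C^{'}\max\{n^{\epsilon-1/2}, r/m\}$ directly. For the second inequality I invoke Assumption~\ref{assump: rank growth rate}: since $r\leq \widetilde{C}_0 n^{\delta_0}$ and $\delta_0 < 1/2$, we have $r/n \leq \widetilde{C}_0 n^{\delta_0 - 1} \leq \widetilde{C}_0 n^{\epsilon - 1/2}$ for any $\epsilon \in (0, 1/2 - \delta_0)$ (in fact for any $\epsilon > \delta_0 - 1/2$), so $r/n$ is dominated by the first term and can be absorbed into the constant.

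There is essentially no obstacle here — the work was already done in Theorem~\ref{thm: robustness of the resolvent}; the corollary is a bookkeeping exercise in summation, union bounding, and application of the rank growth assumption. The only mild subtlety to get right is to verify that the $r/n$ term genuinely dissolves into $n^{\epsilon-1/2}$ using the constraint $\epsilon < 1/2 - \delta_0$ inherited from the theorem, which ensures the theorem's bound is itself non-vacuous in this regime.
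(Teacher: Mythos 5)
Your proposal is correct and follows exactly the route the paper intends: the paper treats this corollary as immediate from Theorem~\ref{thm: robustness of the resolvent}, using the uniform bound on the prefactor (replacing $s_1^{-2}+C_3^{'}$ by $C_3^{'}$), a union bound over the polynomially many indices, the unit Euclidean norm of each singular vector to sum the error terms to $r$, and Assumption~\ref{assump: rank growth rate} to absorb $r/n$ into $n^{\epsilon-1/2}$. Nothing is missing.
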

Therefore, the average absolute error between $\hat{\mathbf{g}}^{(2)}$ and ${\mathbf{g}}^{(2)}$ always converges to zero as $n\rightarrow \infty$ with probability approaching $1$ and rate almost $n^{-1/2}$. This convergence does not depend on $m$ (the short dimension) nor the low-rank signal $X$. If $m$ also grows with $n$ such that $r/m\rightarrow 0$, then the average absolute error between $\hat{\mathbf{g}}^{(1)}$ and ${\mathbf{g}}^{(1)}$ also converges to zero (with probability approaching $1$). In this case, the convergence rate depends on the growth rates of $r$ and $m$ with respect to $n$. Note that when $m,n\rightarrow \infty$, the convergence of the average absolute errors discussed above implies that almost all entries of $\hat{\mathbf{g}}$ converge to the corresponding entries of $\mathbf{g}$ with probability approaching $1$. In particular, the proportion of indices $i\in[m]$ and $j\in[n]$ for which the errors $\vert \hat{\mathbf{g}}_i^{(1)} - \mathbf{g}_i^{(1)} \vert$ and $\vert \hat{\mathbf{g}}_j^{(2)} - \mathbf{g}_j^{(2)} \vert$ do not converge to zero must be vanishing as $m,n\rightarrow \infty$ (otherwise, it would be a contradiction to the convergence of the average absolute errors). Therefore, the main diagonal of the data resolvent $\mathcal{R}(\imath \eta)$ is robust to the presence of the low-rank signal $X$ in the sense that for large $m$ and $n$, we can use it to accurately estimate almost all entries of $\mathbf{g}$ regardless of $X$ and its structure. 

To proceed, it is convenient to have a stronger control of the element-wise errors between $\hat{\mathbf{g}}$ and $\mathbf{g}$. Specifically, it is desirable to guarantee the stochastic convergence of $\Vert \hat{\mathbf{g}} - \mathbf{g} \Vert_\infty$. To this end, we define the quantity
\begin{equation}
    \kappa = \max\left\{\max_{i\in [m]}\left\{ \sum_{k=1}^r \widetilde{U}_{ik}^2\right\}, \max_{j\in [n]}\left\{\sum_{k=1}^r \widetilde{V}_{jk}^2\right\}\right\},
\end{equation}
and make the assumption that $\kappa$ decays at least with some fractional power of $n$:
\begin{assump} \label{assump: signal delocalization}
There exist constants $\widetilde{C},\delta_1 >0$, such that $\kappa \leq \widetilde{C}_1 n^{-\delta_1}$.
\end{assump}
Assumption~\ref{assump: signal delocalization} can be interpreted as a requirement on the delocalization level of the singular vector matrices $\widetilde{U}$ and $\widetilde{V}$ across the rows. In particular, it prohibits the singular vector matrices from having too few non-zero rows, e.g., a fixed number or a logarithmically growing number of such rows. However, the singular vectors matrices can still be sparse or localized in a vanishingly small proportion of rows. We note that the delocalization level of $\widetilde{U}$ and $\widetilde{V}$ across the rows depends implicitly also on the rank $r$ and the behavior of the short dimension $m$. Below we provide a few examples illustrating Assumption~\ref{assump: signal delocalization}. 
\begin{example} \label{example: signal delocalization example 1}
    Suppose that the rank $r$ is fixed and $m$ is proportional to $n$. Suppose further that the singular vectors are fully delocalized, namely that $\max_{i\in [m]} \widetilde{U}^2_{ik} \lesssim m^{-1}$ and $\max_{j\in [n]} \widetilde{V}^2_{jk} \lesssim n^{-1}$ (where $\lesssim$ implies `less than' up to constants and logarithmic factors) for all $k \in [r]$. For instance, this property is satisfied with high probability if the singular vectors are orthonormalized independent Gaussian random vectors, or more generally, if they are the singular vectors of a random matrix with independent entries and certain moment bounds~\cite{ding2019singular}. Another simple model satisfying this property is when $X$ is a block matrix (with identical entries in each block), where the number of blocks is fixed and the dimensions of each block are proportional to the dimensions of the matrix. In all of these cases, since $\kappa \leq r \max\{ \max_{i\in [m]} \widetilde{U}_{ik}^2, \max_{j\in [n]} \widetilde{V}_{jk}^2 \} \lesssim n^{-1}$, Assumption~\ref{assump: signal delocalization} holds with $\delta_1$ arbitrarily close to $1$, e.g., $\delta_1 = 0.99$. Note that $\delta_1$ can be at most $1$ since we always have $\max_{j\in [n]} \widetilde{V}^2_{jk} \geq n^{-1}$, where equality is attained for singular vectors whose entries have identical magnitudes.
\end{example}

\begin{example} \label{example: signal delocalization example 2}
    Suppose that the singular vectors are fully delocalized as in Example~\ref{example: signal delocalization example 1}, but the rank $r$ is growing as $n^{1/4}$ and $m$ is proportional to $n^{3/4}$. Then, $\max_{i\in [m]}\{ \sum_{k=1}^r \widetilde{U}_{ik}^2\} \lesssim rm^{-1} \lesssim n^{-1/2}$ and $\max_{j\in [n]}\{ \sum_{k=1}^r \widetilde{V}_{jk}^2\} \lesssim rn^{-1} \lesssim n^{-3/4}$. Hence, Assumption~\ref{assump: signal delocalization} holds with $\delta_1$ arbitrarily close to $1/2$, e.g., $\delta_1 = 0.49$.
\end{example}

\begin{example} \label{example: signal delocalization example 3}
    Suppose that the singular vector matrices $\widetilde{U}$ and $\widetilde{V}$ have $\lceil {m}^{1/2} \rceil$ and $\lceil {n}^{1/2} \rceil$ non-zero rows, respectively, where $\lceil \cdot \rceil$ denotes the ceiling function. Note that in this case, the singular vector matrices are highly localized since the proportion of the non-zero rows is vanishing as $n\rightarrow \infty$. Suppose further that the singular vectors are delocalized across the non-zero rows, such that $\max_{i\in [m]} \widetilde{U}^2_{ik} \lesssim m^{-1/2}$ and $\max_{j\in [n]} \widetilde{V}^2_{jk} \lesssim n^{-1/2}$.
    If the rank $r$ is fixed and $m$ is proportional to $n$, then as in Example~\ref{example: signal delocalization example 2}, Assumption~\ref{assump: signal delocalization} holds with $\delta_1$ arbitrarily close to $1/2$, e.g., $\delta_1 = 0.49$.
\end{example}

Generally, Assumption~\ref{assump: signal delocalization} allows the singular vectors of $X$ to be highly sparse, since the non-zero entries can be restricted to subsets with cardinality proportional to any (arbitrarily small) fractional power of $n$, as long as the rank grows sufficiently slowly. We note that singular vectors satisfying Assumption~\ref{assump: signal delocalization} do not have to be strictly sparse with delocalized entries on the non-zero subsets (as in Example~\ref{example: signal delocalization example 3}). For instance, they can be generated from random vectors with i.i.d random variables whose tails simulate various degrees of delocalization. Finally, we remark that Assumption~\ref{assump: signal delocalization} requires the short dimension $m$ to grow at least with some fractional power of $n$ times the rank $r$. Specifically, since we always have $\max_{i\in m} \sum_{k=1}^r \widetilde{U}^2_{ik} \geq r m^{-1}$, Assumption~\ref{assump: signal delocalization} implies that $m \geq r n^{\delta_1}/\widetilde{C}_1$.

Under the additional Assumption~\ref{assump: signal delocalization}, we obtain the following immediate corollary of Theorem~\ref{thm: robustness of the resolvent}.
\begin{cor} \label{cor: convergence of g_tilde to g in l_infty}
Under Assumptions~\ref{assump: noise moment bound}--\ref{assump: signal delocalization}, for any $\epsilon \in (0,1/2-\delta_0)$ there exist $C^{'},c^{'}(t)>0$ such that for all $t>0$, with probability at least $1-c^{'}(t) n^{-t}$ we have that
\begin{equation}
    \left\Vert { \hat{\mathbf{g}} - \mathbf{g} } \right\Vert_\infty \leq C^{'} \max\left\{n^{\varepsilon-1/2},n^{-\delta_1} \right\}. 
\end{equation}
\end{cor}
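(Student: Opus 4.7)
The plan is to deduce this bound as an almost mechanical consequence of Theorem~\ref{thm: robustness of the resolvent} combined with Assumption~\ref{assump: signal delocalization}, the only nontrivial bookkeeping being a union bound over coordinates whose cost is absorbed into the high-probability statement.

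First I would invoke Theorem~\ref{thm: robustness of the resolvent} under Assumptions~\ref{assump: noise moment bound} and~\ref{assump: rank growth rate}, which yields, for any $\epsilon \in (0, 1/2 - \delta_0)$, constants $C_1',C_2',C_3',c'(t)>0$ such that for each fixed $i\in[m]$, $j\in[n]$ and any $t>0$,
\begin{equation*}
\vert \hat{\mathbf{g}}_i^{(1)} - {\mathbf{g}}_i^{(1)} \vert \leq C_1' n^{\epsilon-1/2} + \frac{C_2'}{\sqrt{s_1^{-2}+C_3'}} \sum_{k=1}^r \widetilde{U}_{ik}^2, \qquad \vert \hat{\mathbf{g}}_j^{(2)} - {\mathbf{g}}_j^{(2)} \vert \leq C_1' n^{\epsilon-1/2} + \frac{C_2'}{\sqrt{s_1^{-2}+C_3'}} \sum_{k=1}^r \widetilde{V}_{jk}^2,
\end{equation*}
each with probability at least $1 - c'(t) n^{-t}$.

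Next, since $s_1^{-2} \geq 0$ regardless of the signal magnitude, I use the trivial bound $(s_1^{-2}+C_3')^{-1/2} \leq (C_3')^{-1/2}$, which crucially removes any dependence on the singular values of $X$. Now Assumption~\ref{assump: signal delocalization} gives $\sum_{k=1}^r \widetilde{U}_{ik}^2 \leq \kappa \leq \widetilde{C}_1 n^{-\delta_1}$ uniformly in $i$, and analogously $\sum_{k=1}^r \widetilde{V}_{jk}^2 \leq \widetilde{C}_1 n^{-\delta_1}$ uniformly in $j$. Substituting these into the entrywise bounds from Theorem~\ref{thm: robustness of the resolvent} and combining the two error terms into a single maximum produces a bound of the form $\widetilde{C}' \max\{n^{\epsilon - 1/2}, n^{-\delta_1}\}$ at every fixed index, still with probability at least $1 - c'(t) n^{-t}$.

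Finally, to pass from an individual-coordinate bound to the $\ell_\infty$ norm, I take a union bound over all $m + n \leq 2n$ indices. The bad-event probability becomes at most $2n \cdot c'(t) n^{-t} = 2 c'(t) n^{-(t-1)}$. Since $t$ was arbitrary, I replace $t$ by $t+1$ throughout and redefine $c'(t) := 2 c'(t+1)$, which preserves the form $1 - c'(t) n^{-t}$ of the probability bound and gives exactly the statement of the corollary with some constant $C'$. There is no real obstacle here; the only point worth being careful about is ensuring that the constants $C_1', C_2', C_3'$ supplied by Theorem~\ref{thm: robustness of the resolvent} are genuinely independent of $i, j$, so that the union bound applies with a single $C'$ — and this is guaranteed since those constants depend only on $\eta$ and the global constants in the assumptions.
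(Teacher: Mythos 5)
Your proposal is correct and follows essentially the same route as the paper, which treats the corollary as immediate from Theorem~\ref{thm: robustness of the resolvent}: bound the signal-dependent term via $(s_1^{-2}+C_3')^{-1/2}\leq (C_3')^{-1/2}$, control $\sum_k \widetilde{U}_{ik}^2$ and $\sum_k \widetilde{V}_{jk}^2$ uniformly by $\kappa \leq \widetilde{C}_1 n^{-\delta_1}$ using Assumption~\ref{assump: signal delocalization}, and absorb the union bound over the $m+n$ coordinates into the $1-c'(t)n^{-t}$ probability statement (which the paper's high-probability convention handles automatically). Your explicit handling of the union bound by passing from $t$ to $t+1$ is a valid way to make that absorption precise.
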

Corollary~\ref{cor: convergence of g_tilde to g in l_infty} establishes the stochastic entrywise convergence of $\hat{\mathbf{g}}$ to $\mathbf{g}$ with a rate that depends on the delocalization level of the singular vector matrices according to Assumption~\ref{assump: signal delocalization}. For the three examples described in the text after Assumption~\ref{assump: signal delocalization}, where $\delta_1 \sim 1$ or $\delta_1 \sim 1/2$, the convergence rate of $\hat{\mathbf{g}}$ to $\mathbf{g}$ would remain almost $n^{-1/2}$. However, the convergence rate can be slower if $\delta_1$ is smaller, e.g., if in Examples~\ref{example: signal delocalization example 1}--\ref{example: signal delocalization example 3} the rank grows faster, the individual singular vectors are sparser, or if $m$ grows more slowly. 

Next, we turn to establish the entrywise concentration of $\hat{\mathbf{x}}$ and $\hat{\mathbf{y}}$ around $\mathbf{x}$ and $\mathbf{y}$, respectively.
To this end, we need two additional assumptions. First, we require that the noise variances $S_{ij}$ are lower bounded by a global constant divided by $n$, i.e.,
\begin{assump} \label{assump: variance boundedness}
There exist a global constant $c>0$ such that $S_{ij} \geq {c}{n}^{-1} $ for all $i\in[m]$ and $j\in [n]$.
\end{assump}
Therefore, together with Assumption~\ref{assump: noise moment bound}, we require the noise variances to be lower and upper bounded according to $cn^{-1} \leq S_{ij} \leq \mu_2 n^{-1}$. Since the constants $\mu_2,c>0$ are arbitrary, e.g., $c = 0.01$ and $\mu_2 = 100$, this boundedness requirement is not restrictive and allows the noise variances to differ substantially across the rows and columns of the data. 
We note that the global scaling  of the noise $S_{ij} \propto n^{-1}$ is convenient for analysis but is not required in practice. In particular, we can enforce it automatically by our choice of $\eta$ in step~\ref{alg: step 2} of Algorithm~\ref{alg:noise standardization}; see the discussion in Appendix~\ref{appendix: adapting to unknown global scaling of the noise}.

Our second assumption is that $m$ (the short dimension) grows sufficiently quickly with respect to $n$.
\begin{assump} \label{assump: growth of m}
There exist global constants $\widetilde{C}_2,\delta_2 >0$ such that $m \geq \widetilde{C}_2 \max \{n^{1/2 + \delta_2}, n^{1-\delta_1 + \delta_2}\}$.
\end{assump}
Assumption~\ref{assump: growth of m} requires that $m$ grows with a rate slightly faster than $\sqrt{n}$ and $n^{1-\delta_1}$, where the latter depends on the delocalization level of the signal's singular vectors according to Assumption~\ref{assump: signal delocalization}. Note that Assumption~\ref{assump: growth of m} is always satisfied if $m$ grows proportionally to $n$, i.e., if $\widetilde{C}_2 n \leq m \leq n$, since we can take any $\delta_2 < \min\{1/2,\delta_1\}$. Therefore, Assumption~\ref{assump: growth of m} would immediately hold for Examples~\ref{example: signal delocalization example 1} and~\ref{example: signal delocalization example 3}. In the case of Example~\ref{example: signal delocalization example 2}, the short dimension $m$ is proportional to $n^{3/4}$ and Assumption~\ref{assump: signal delocalization} holds with $\delta_1$ close to $1/2$, hence Assumption~\ref{assump: growth of m} holds with $\delta_2$ close to $1/4$, e.g., $\delta_2 = 0.249$.

We now have the main result of this section, which provides probabilistic bounds on the relative entrywise deviations between the estimated scaling factors $(\hat{\mathbf{x}},\hat{\mathbf{y}})$ (see~\eqref{eq: x_hat and y_hat def} in Section~\ref{sec: method derivation for rank-one variance matrices} or step~\ref{alg:step 4} in Algorithm~\ref{alg:noise standardization}) and the true factors $(\mathbf{x},\mathbf{y})$ from Assumption~\ref{assump: rank one variance matrix}, respectively. 
\begin{thm} \label{thm: convergence of estimated variance factors}
Under Assumptions~\ref{assump: rank one variance matrix}--\ref{assump: variance boundedness}, for any $\epsilon \in (0,1/2-\delta_0)$ there exist $C^{'},c^{'}(t)>0$, such that for all $t>0$, with probability at least $1-c^{'}(t) n^{-t}$ we have that
\begin{equation}
    \left\Vert \frac{\hat{\mathbf{x}} - \mathbf{x}}{\mathbf{x}} \right\Vert_\infty \leq C^{'} \max\left\{n^{\epsilon-1/2}, n^{-\delta_1} \right\}. \label{eq: x_hat - x relative error}
\end{equation}
If additionally Assumption~\ref{assump: growth of m} holds and $\epsilon<\delta_2$, then with probability at least $1-c^{'}(t) n^{-t}$ we have that
\begin{equation}
    \left\Vert \frac{\hat{\mathbf{y}} - \mathbf{y}}{\mathbf{y}} \right\Vert_\infty \leq C^{'} \max\left\{\frac{n^{\epsilon+1/2}}{m}, \frac{n^{1-\delta_1}}{m} \right\}. \label{eq: y_hat - y relative error}
\end{equation}
\end{thm}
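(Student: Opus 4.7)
The plan is to start from the explicit closed forms of Proposition~\ref{prop: formula for x and y in terms of g} (with $\alpha=1$) and the estimator definitions in~\eqref{eq: x_hat and y_hat def}, writing for each $i\in[m]$
\begin{equation*}
    \frac{\hat{\mathbf{x}}_i}{\mathbf{x}_i} \;=\; \underbrace{\sqrt{\frac{m - \eta \Vert \mathbf{g}^{(1)}\Vert_1}{m - \eta \Vert \hat{\mathbf{g}}^{(1)}\Vert_1}}}_{\text{normalization ratio}} \cdot \underbrace{\frac{1/\hat{\mathbf{g}}^{(1)}_i - \eta}{1/\mathbf{g}^{(1)}_i - \eta}}_{\text{entrywise ratio}},
\end{equation*}
and analogously for $\hat{\mathbf{y}}_j/\mathbf{y}_j$. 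The whole argument then reduces to showing each factor differs from $1$ by at most the claimed order on the high-probability event of Corollary~\ref{cor: convergence of g_tilde to g in l_infty}.

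Before invoking any randomness, I would carry out a deterministic analysis of the Dyson system~\eqref{eq: coupled Dyson eq imag axis} under Assumption~\ref{assump: rank one variance matrix}. Plugging $S=\mathbf{x}\mathbf{y}^T$ into~\eqref{eq: coupled Dyson eq imag axis} yields the identities
\begin{equation*}
    \frac{1}{\mathbf{g}^{(1)}_i} - \eta = \mathbf{x}_i (\mathbf{y}^T \mathbf{g}^{(2)}), \qquad \frac{1}{\mathbf{g}^{(2)}_j} - \eta = \mathbf{y}_j (\mathbf{x}^T \mathbf{g}^{(1)}),
\end{equation*}
and combining these with Proposition~\ref{prop: formula for x and y in terms of g} (with $\alpha=1$) gives the compact relation $m - \eta\Vert\mathbf{g}^{(1)}\Vert_1 = (\mathbf{y}^T\mathbf{g}^{(2)})^2 = (\mathbf{x}^T\mathbf{g}^{(1)})^2 = n-\eta\Vert\mathbf{g}^{(2)}\Vert_1$. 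Using Assumption~\ref{assump: variance boundedness} (so $\mathbf{x}_i\mathbf{y}_j \asymp n^{-1}$), the universal lower bound on $\mathbf{g}$ from Lemma~\ref{lem: boundedness of g}, and the evident upper bound $\mathbf{g}^{(1)}_i, \mathbf{g}^{(2)}_j < 1/\eta$, I would derive the two-sided deterministic scalings $\mathbf{x}_i \asymp m^{-1/2}$, $\mathbf{y}_j \asymp m^{1/2}/n$, $1/\mathbf{g}^{(1)}_i - \eta \asymp 1$, $1/\mathbf{g}^{(2)}_j - \eta \asymp m/n$, and $m-\eta\Vert\mathbf{g}^{(1)}\Vert_1 \asymp n-\eta\Vert\mathbf{g}^{(2)}\Vert_1 \asymp m$, with constants depending only on $\eta$, $c$ and the $\mu_q$'s.

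With these scalings in hand, the estimate~\eqref{eq: x_hat - x relative error} follows by pairing the elementary identity
\begin{equation*}
    \frac{1/\hat{\mathbf{g}}^{(1)}_i - \eta}{1/\mathbf{g}^{(1)}_i - \eta} - 1 \;=\; \frac{\mathbf{g}^{(1)}_i - \hat{\mathbf{g}}^{(1)}_i}{\hat{\mathbf{g}}^{(1)}_i\,(1-\eta\mathbf{g}^{(1)}_i)}
\end{equation*}
with Corollary~\ref{cor: convergence of g_tilde to g in l_infty} and the scaling $1-\eta\mathbf{g}^{(1)}_i\asymp 1$, giving an entrywise bound $\lesssim \Vert\hat{\mathbf{g}}-\mathbf{g}\Vert_\infty \lesssim \max\{n^{\epsilon-1/2}, n^{-\delta_1}\}$. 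For the normalization ratio, the crude inequality $\bigl|\Vert\hat{\mathbf{g}}^{(1)}\Vert_1 - \Vert\mathbf{g}^{(1)}\Vert_1\bigr| \leq m\,\Vert\hat{\mathbf{g}}-\mathbf{g}\Vert_\infty$ divided by $m-\eta\Vert\mathbf{g}^{(1)}\Vert_1\asymp m$ yields the same order, and Proposition~\ref{prop: x and y nonnegative} ensures the square roots are well-defined on this event. The triangle inequality combines the two factors into the bound in~\eqref{eq: x_hat - x relative error}.

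The bound~\eqref{eq: y_hat - y relative error} uses the same scheme but is where the main obstacle lies: the key asymmetry $1 - \eta\mathbf{g}^{(2)}_j \asymp m/n$ causes the analogous entrywise identity for $\hat{\mathbf{y}}$ to acquire an extra factor $n/m$, and the analogous normalization ratio bound $\bigl|\Vert\hat{\mathbf{g}}^{(2)}\Vert_1 - \Vert\mathbf{g}^{(2)}\Vert_1\bigr|/(n-\eta\Vert\mathbf{g}^{(2)}\Vert_1) \leq n\Vert\hat{\mathbf{g}}-\mathbf{g}\Vert_\infty / m$ also carries the factor $n/m$ because one sums $n$ entries but divides by $m$. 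Multiplying through yields $\lesssim (n/m)\max\{n^{\epsilon-1/2}, n^{-\delta_1}\} = \max\{n^{\epsilon+1/2}/m, n^{1-\delta_1}/m\}$, and Assumption~\ref{assump: growth of m} is precisely what forces this to be $o(1)$. The technical crux of the proof is therefore the deterministic Dyson-equation analysis establishing the sharp scaling $1-\eta\mathbf{g}^{(2)}_j \asymp m/n$ uniformly in $j$, since everything downstream — including the necessity of Assumption~\ref{assump: growth of m} and the $n/m$ inflation in~\eqref{eq: y_hat - y relative error} — is a direct consequence of that single asymmetric scale.
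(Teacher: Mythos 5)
Your proposal is correct and follows essentially the same route as the paper's proof: the same multiplicative decomposition of $\hat{\mathbf{x}}_i/\mathbf{x}_i$ and $\hat{\mathbf{y}}_j/\mathbf{y}_j$ into an entrywise ratio and a normalization ratio, controlled via Corollary~\ref{cor: convergence of g_tilde to g in l_infty}, with the deterministic scalings you derive ($\mathbf{x}_i \asymp m^{-1/2}$, $\mathbf{y}_j \asymp \sqrt{m}/n$, $1-\eta\mathbf{g}^{(1)}_i \asymp 1$, $1-\eta\mathbf{g}^{(2)}_j \asymp m/n$, $m-\eta\Vert\mathbf{g}^{(1)}\Vert_1 \asymp n-\eta\Vert\mathbf{g}^{(2)}\Vert_1 \asymp m$) being exactly the content of Lemmas~\ref{lem: boundedness of g} and~\ref{lem: boundedness of x and y} combined with Proposition~\ref{prop: formula for x and y in terms of g} as used in Appendix~\ref{appendix: proof of convergence of estimated variance factors}. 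You also correctly identify the $n/m$ asymmetry as the source of the weaker bound for $\hat{\mathbf{y}}$ and of the need for Assumption~\ref{assump: growth of m}, matching the paper.
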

According to Theorem~\ref{thm: convergence of estimated variance factors}, under Assumptions~\ref{assump: rank one variance matrix}--\ref{assump: variance boundedness}, the relative entrywise error between $\hat{\mathbf{x}}$ and $\mathbf{x}$ converges to zero as $n\rightarrow\infty$ with probability approaching $1$ rapidly, regardless of the growth rate of $m$ (the short dimension). The corresponding rate is upper bounded by $n^{-1/2}$ but can be slower -- depending on the parameter $\delta_1$, which is determined by the delocalization of the singular vector matrices $\widetilde{U}$ and $\widetilde{V}$ across the rows according to Assumption~\ref{assump: signal delocalization}. Under the additional Assumption~\ref{assump: growth of m} on the growth rate of $m$, Theorem~\ref{thm: convergence of estimated variance factors} also guarantees that the relative entrywise error between $\hat{\mathbf{y}}$ and $\mathbf{y}$ converges to zero as $m,n\rightarrow \infty$, with probability approaching $1$ rapidly. In this case, the convergence rate depends explicitly on the growth rate of $m$ in addition to $\delta_1$ from Assumption~\ref{assump: signal delocalization}. Importantly, all error bounds are completely oblivious to the singular values of the signal $X$, which can be very large, e.g., growing with $n$, or very small, e.g., decaying with $n$.

The proof Theorem~\ref{thm: convergence of estimated variance factors} can be found in Appendix~\ref{appendix: proof of convergence of estimated variance factors}. It relies on analyzing the relative errors appearing in~\eqref{eq: x_hat - x relative error} and~\eqref{eq: y_hat - y relative error} using a combination of Proposition~\ref{prop: formula for x and y in terms of g}, the estimator formulas~\eqref{eq: x_hat and y_hat def}, and Corollary~\ref{cor: convergence of g_tilde to g in l_infty}. The reason that we consider relative errors in Theorem~\ref{thm: convergence of estimated variance factors} and not absolute errors (i.e., $\Vert \hat{\mathbf{x}} - \mathbf{x}\Vert_\infty$ and $\Vert \hat{\mathbf{y}} - \mathbf{y}\Vert_\infty$) is that the entries of $\mathbf{x}$ and $\mathbf{y}$ tend to zero as $m,n\rightarrow \infty$ under our assumptions; see Lemma~\ref{lem: boundedness of x and y} in Appendix~\ref{appendix: proof of boundedness of x and y}. Hence, relative errors are much more informative in this case than absolute errors. This situation differs from our previous analysis of the deviation between $\hat{\mathbf{g}}$ and $\mathbf{g}$, since the entries of $\mathbf{g}$ are always lower bounded by a positive global constant; see Lemma~\ref{lem: boundedness of g} in Appendix~\ref{appendix: boundedness of g}. 

In Figure~\ref{fig: convergense of estimated scaling factors, rank-one case}, we depict the relative errors from the left-hand sides of~\eqref{eq: x_hat - x relative error} and~\eqref{eq: y_hat - y relative error} as functions of the long dimension $n$ in four simulated scenarios. The results were averaged over $20$ randomized experiments for each scenario. In all scenarios, the signal $X$ has rank $r=10$, where all nonzero singular values are equal to $s$. The noise is Gaussian heteroskedastic with variance matrix $S=\mathbf{x}\mathbf{y}^T$, where the entries of $\mathbf{x}$ and $\mathbf{y}$ are sampled independently and uniformly at random from $[1,10]$. Then, $S$ is normalized by a scalar so that its average entry is $1$. In the first scenario (panel (a)), the dimensions are growing proportionally according to $m = \lceil n/2\rceil$, the magnitudes of the signal's components are fixed at $s^2/n=10$ (i.e., the nonzero eigenvalues of $X X^T/n$ are equal to $10$), and the singular vectors of $X$ are orthonormalized independent Gaussian random vectors. Hence, Assumption~\ref{assump: signal delocalization} holds with high probability for $\delta_1 \sim 1$ for large $n$. The second scenario (panel (b)) is identical to the first scenario except that the signal components are growing according to $s^2/n=10 n$. In this case, the total magnitude of the signal $\Vert X \Vert_F^2 = 100 n^2$ is $200$ times larger than that of the noise $\mathbb{E} \Vert E \Vert_F^2 = mn$. The third scenario is identical to the first scenario except that the dimensions are growing disproportionally, where $m = \lceil 3 n^{3/4} \rceil$. In this case, Assumption~\ref{assump: signal delocalization} holds with high probability for $\delta_1 \sim 3/4$ for sufficiently large $n$. Lastly, the fourth scenario (panel (d)) is identical to the first scenario except that the singular vectors of $X$ are sparse with a diminishing proportion of nonzero entries. Specifically, the nonzero entries of the left and right singular vectors of $X$ are restricted to $\lceil 5{m}^{2/3} \rceil$ and $\lceil 5{n}^{2/3} \rceil$ entries, respectively, implying that Assumption~\ref{assump: signal delocalization} holds with high probability for $\delta_1 \sim 2/3$ for sufficiently large $n$. 

For the first, second, and fourth scenarios, Theorem~\ref{thm: convergence of estimated variance factors} asserts that the convergence rates of the estimation errors of $\mathbf{x}$ and $\mathbf{y}$ are bounded by a rate that is slightly slower than $n^{-1/2}$. For the third scenario, the corresponding rates are $n^{-1/2}$ for the estimation error of $\mathbf{x}$ and $n^{-1/4}$ for the estimation error of $\mathbf{y}$. We see from Figure~\ref{fig: convergense of estimated scaling factors, rank-one case} that in all cases, the empirical estimation errors converge to zero and the rates conform to the bounds in Theorem~\ref{thm: convergence of estimated variance factors}.

\begin{figure} 
  \centering
  	{
  	\subfloat[][$m = \lceil n/2 \rceil$, $s^2/n = 10$, $\delta_1\sim 1$]
  	{
    \includegraphics[width=0.38\textwidth]{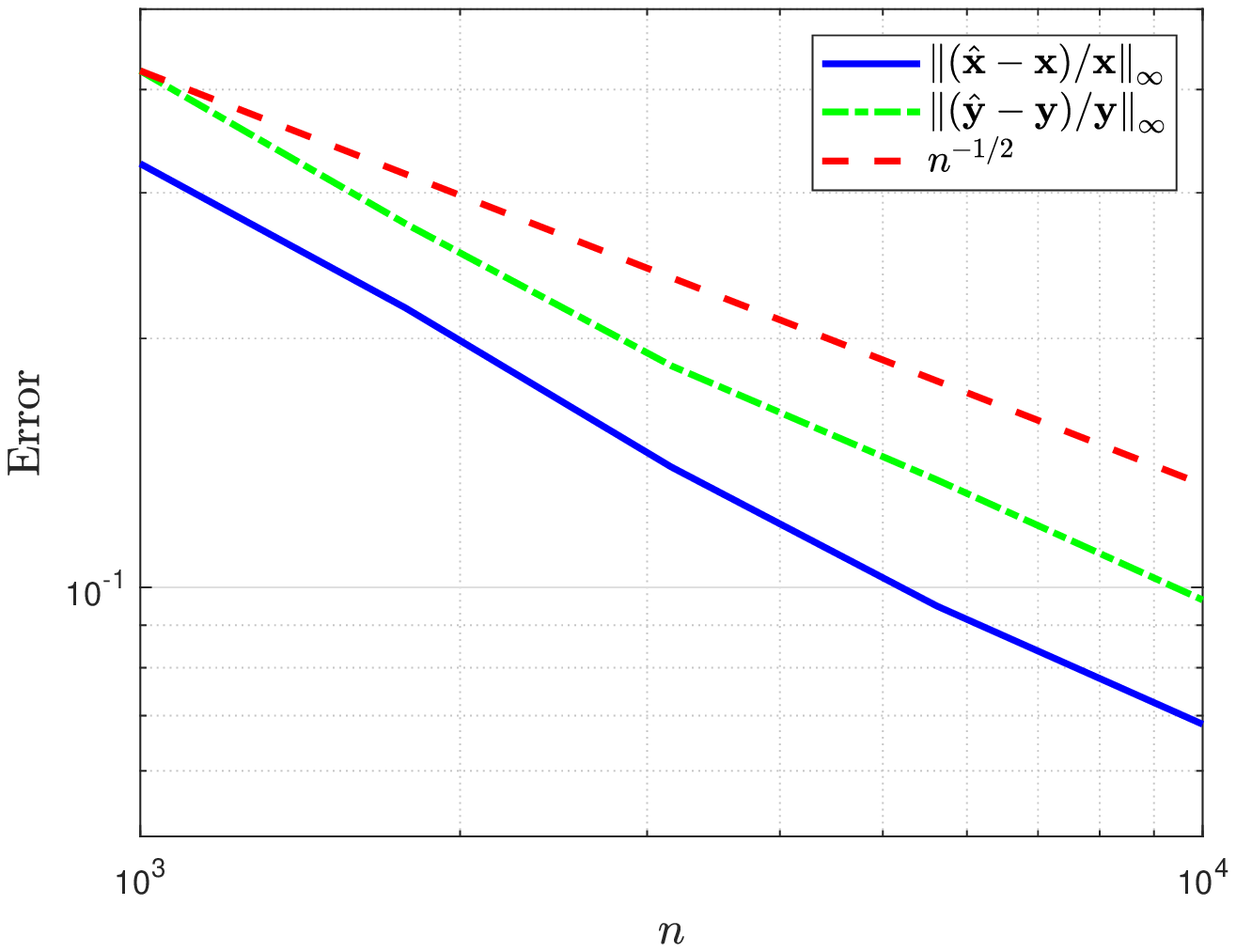} 
    }
    \subfloat[][$m = \lceil n/2 \rceil$, $s^2/n = 10 n$, $\delta_1\sim 1$] 
  	{
    \includegraphics[width=0.38\textwidth]{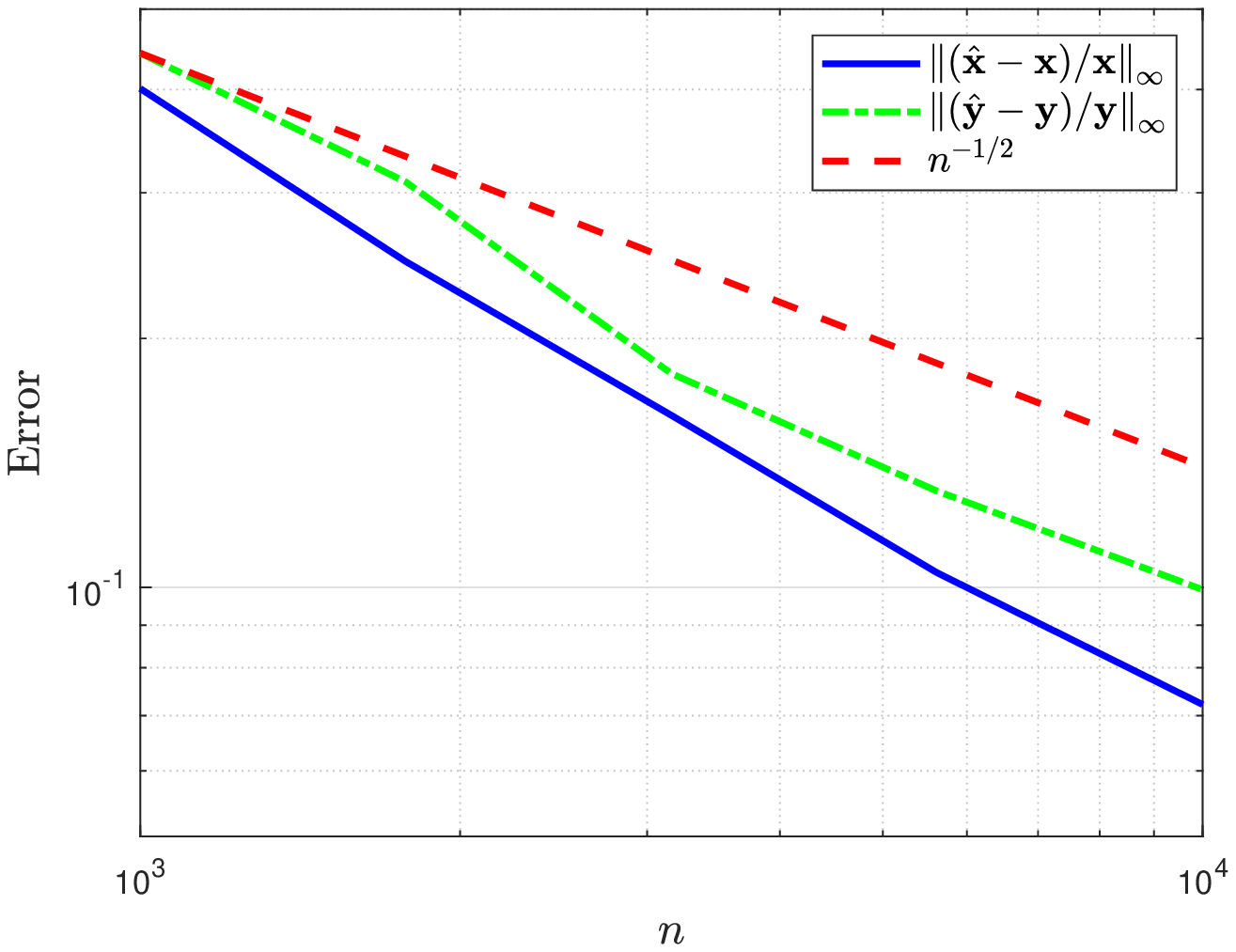} 
    }
    \\
    \subfloat[][$m = \lceil 3 n^{3/4} \rceil$, $s^2/n = 10$, $\delta_1\sim 3/4$]  
  	{
    \includegraphics[width=0.38\textwidth]{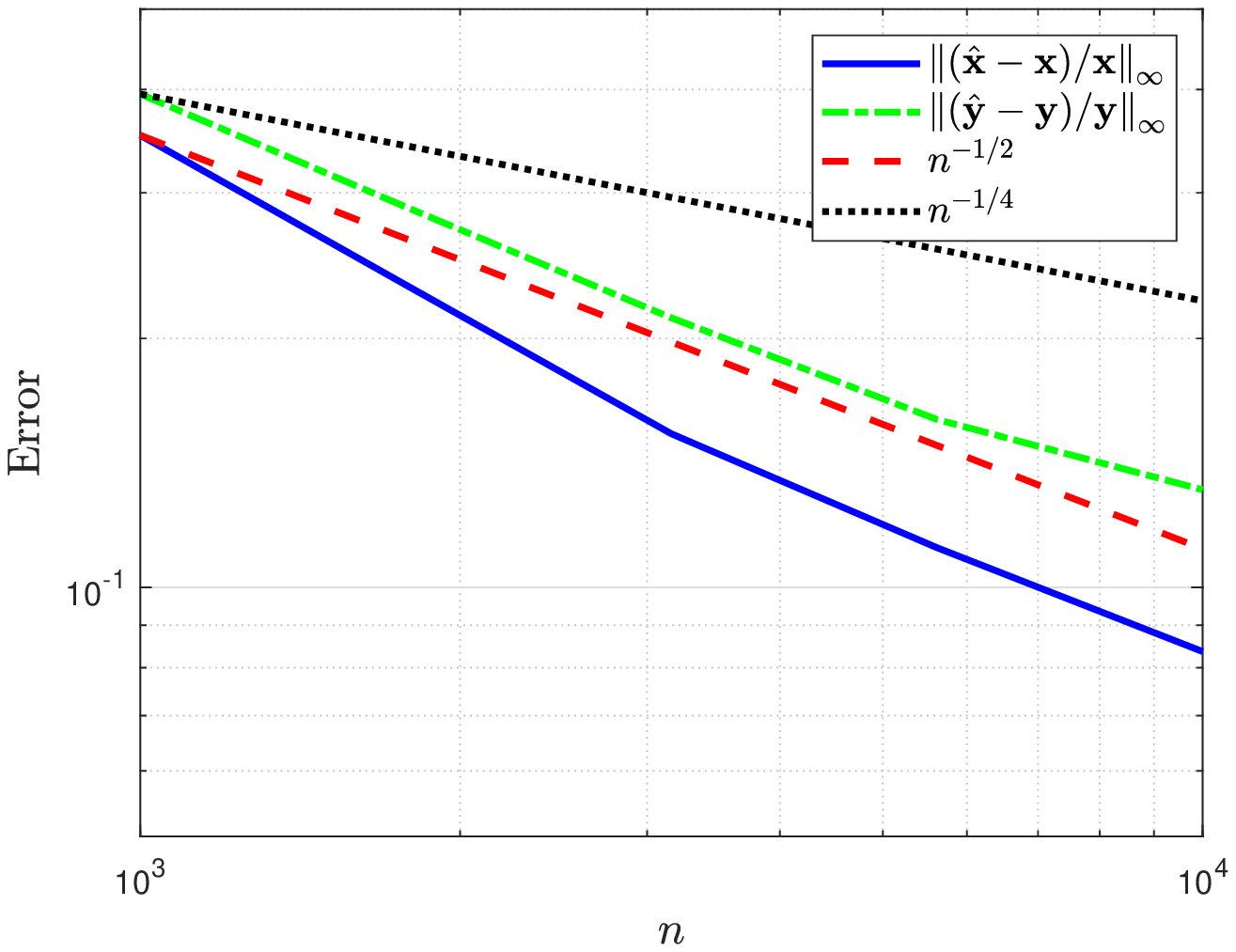}  
    }
    \subfloat[][$m = \lceil n/2 \rceil$, $s^2/n = 10$, $\delta_1\sim 2/3$] 
  	{
    \includegraphics[width=0.38\textwidth]{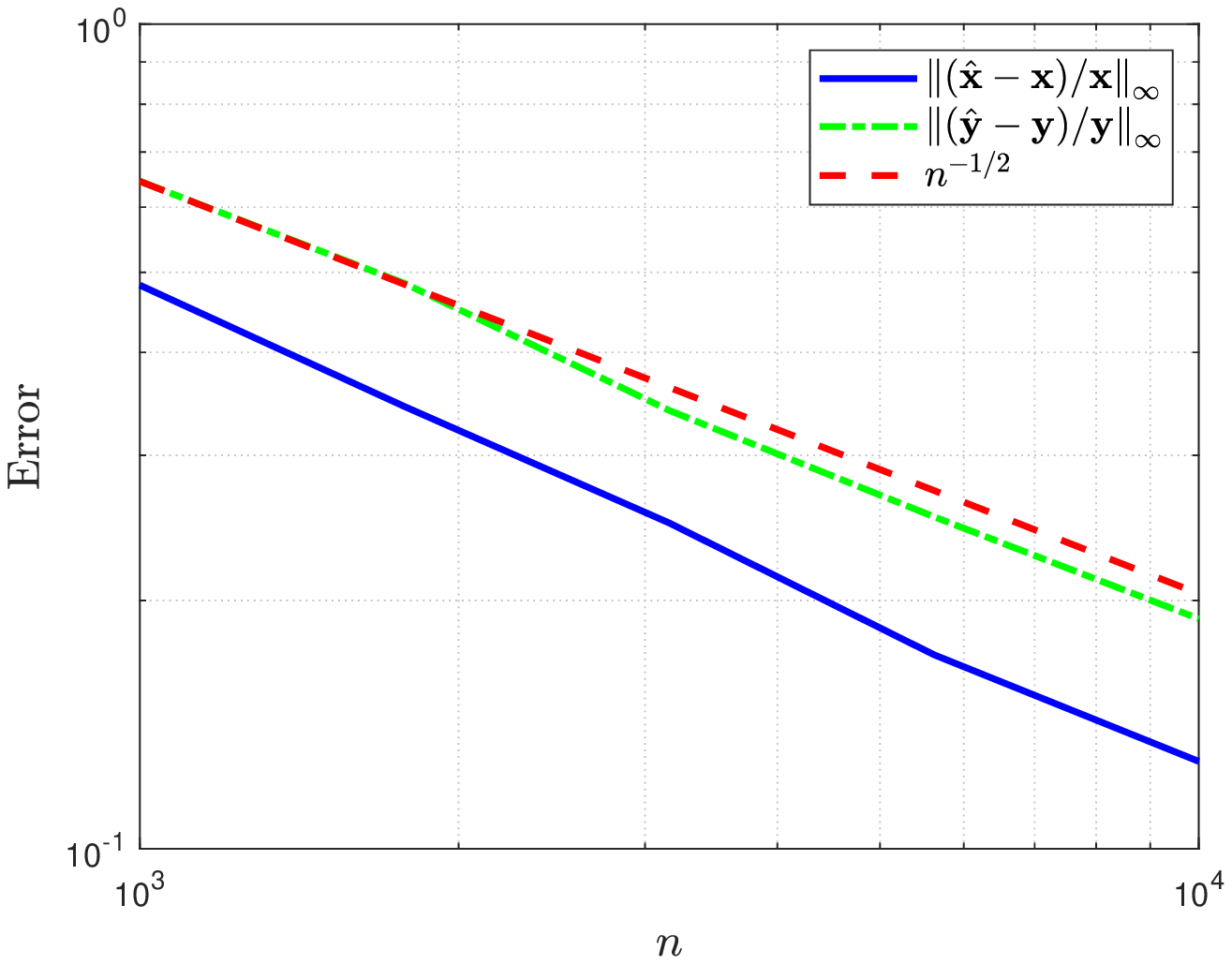} 
    } 

    }
    \caption
    {Maximal relative errors between $\hat{\mathbf{x}}$ and $\mathbf{x}$ and between $\hat{\mathbf{y}}$ and $\mathbf{y}$, as functions of $n$ in four different scenarios, where $S = \mathbf{x} \mathbf{y}^T$. 
    } \label{fig: convergense of estimated scaling factors, rank-one case}
    \end{figure}

\subsection{Variance matrices $S$ with arbitrary rank} \label{sec: variance matrices with general rank}
In the previous section, we established that if $S = \mathbf{x}\mathbf{y}^T$, then $\mathbf{x}$ and $\mathbf{y}$ can be estimated accurately from $Y$ if its dimensions are sufficiently large and under mild delocalization conditions on the singular vectors of the signal $X$, even if the signal's singular values are arbitrarily large.
In this section, we alleviate the rank-one assumption on $S$ and consider more general variance matrices.

We begin with the following definition of a doubly regular matrix.
\begin{defn} \label{def: doubly regular matrix}
A matrix $A\in\mathbb{R}^{m\times n}$ is called \textit{doubly regular} if its average entry in each row and in each column is precisely one, i.e., 
\begin{equation}
    \frac{1}{n} A \mathbf{1}_n = \mathbf{1}_m, \qquad \frac{1}{m} A^T \mathbf{1}_m = \mathbf{1}_n. 
\end{equation}
\end{defn}

A fundamental fact crucial to our approach is that any positive matrix can be made doubly regular by appropriate scaling of its rows and columns. Specifically, since the variance matrix $S$ is positive (see~\eqref{eq: signal-plus-noise} and the subsequent text), we have the following proposition, which is an immediate consequence of~\cite{sinkhorn1967diagonal}. 
\begin{prop} \label{prop: matrix scaling}
There exist positive vectors $\mathbf{x}\in\mathbb{R}^m$ and $\mathbf{y}\in\mathbb{R}^{n}$ such that
\begin{equation}
    S = D\{\mathbf{x}\} \widetilde{S} D\{\mathbf{y}\}, \label{eq: scaling factors def}
\end{equation}
where $\widetilde{S}\in\mathbb{R}^{m\times n}$ is a positive and doubly regular matrix. Moreover, the pair $(\mathbf{x},\mathbf{y})$ is unique up to the trivial scalar ambiguity, i.e., it can only be replaced with $(\alpha \mathbf{x}, \alpha^{-1} \mathbf{y})$ for any $\alpha>0$.
\end{prop}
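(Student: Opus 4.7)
The claim is essentially a translation of the classical Sinkhorn--Knopp matrix scaling theorem for rectangular positive matrices with prescribed row and column sums, so my plan is to reduce to that result. First I would rephrase the doubly regular condition from Definition~\ref{def: doubly regular matrix} as a marginal constraint: a positive matrix $\widetilde{S} \in \mathbb{R}^{m \times n}$ is doubly regular if and only if $\widetilde{S} \mathbf{1}_n = n \mathbf{1}_m$ and $\widetilde{S}^T \mathbf{1}_m = m \mathbf{1}_n$. Crucially, these prescribed marginals are consistent in the sense that the total row mass equals the total column mass ($mn$ in both cases), which is the standard feasibility condition for Sinkhorn scaling of a rectangular positive matrix.

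Next I would invoke Sinkhorn's diagonal-equivalence theorem from~\cite{sinkhorn1967diagonal} (in the form for rectangular positive matrices with arbitrary compatible marginals): since $S$ is strictly positive and the target marginals $(n \mathbf{1}_m, m \mathbf{1}_n)$ are positive and compatible, there exist positive vectors $\mathbf{u} \in \mathbb{R}^m$ and $\mathbf{v} \in \mathbb{R}^n$ such that $\widetilde{S} := D\{\mathbf{u}\} \, S \, D\{\mathbf{v}\}$ realizes exactly those marginals, i.e., $\widetilde{S}$ is positive and doubly regular. Setting $\mathbf{x} = 1/\mathbf{u}$ and $\mathbf{y} = 1/\mathbf{v}$ (entrywise) yields $S = D\{\mathbf{x}\} \widetilde{S} D\{\mathbf{y}\}$, which establishes existence.

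For the uniqueness claim, suppose $S = D\{\mathbf{x}\} \widetilde{S} D\{\mathbf{y}\} = D\{\mathbf{x}'\} \widetilde{S}' D\{\mathbf{y}'\}$ with $\widetilde{S}, \widetilde{S}'$ both positive and doubly regular. Then $D\{\mathbf{x}'/\mathbf{x}\} \, \widetilde{S}' \, D\{\mathbf{y}'/\mathbf{y}\} = \widetilde{S}$, i.e., two positive diagonal scalings of the same positive matrix $\widetilde{S}'$ both produce doubly regular outputs. The uniqueness portion of Sinkhorn's theorem (see~\cite{sinkhorn1967diagonal,sinkhorn1967concerning}) states that positive diagonal scalings achieving prescribed marginals are unique up to replacing $(D_1, D_2)$ with $(\alpha D_1, \alpha^{-1} D_2)$ for some $\alpha > 0$; applying this to the pair $(I_m, I_n)$ and $(D\{\mathbf{x}'/\mathbf{x}\}, D\{\mathbf{y}'/\mathbf{y}\})$ forces $\mathbf{x}'/\mathbf{x} \equiv \alpha^{-1}$ and $\mathbf{y}'/\mathbf{y} \equiv \alpha$ componentwise, which is precisely the trivial scalar ambiguity stated in the proposition.

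Since everything reduces to Sinkhorn's 1967 results, there is no genuine obstacle; the only step requiring care is verifying that the target marginals $(n\mathbf{1}_m, m\mathbf{1}_n)$ satisfy the compatibility condition $\mathbf{1}_m^T (n \mathbf{1}_m) = mn = \mathbf{1}_n^T (m \mathbf{1}_n)$, which is immediate. The strict positivity of $S$ (assumed in the signal-plus-noise model~\eqref{eq: signal-plus-noise}) is what guarantees both existence of the scaling and the positivity of the resulting $\widetilde{S}$, $\mathbf{x}$, and $\mathbf{y}$.
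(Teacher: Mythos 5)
Your proposal is correct and takes essentially the same route as the paper, which simply presents the proposition as an immediate consequence of Sinkhorn's diagonal-equivalence theorem~\cite{sinkhorn1967diagonal}; your reduction to prescribed marginals $(n\mathbf{1}_m, m\mathbf{1}_n)$, the compatibility check, and the uniqueness-up-to-scalar argument are exactly the details that citation is meant to cover.
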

We shall refer to $\mathbf{x}$ and $\mathbf{y}$ as the \textit{scaling factors} of $S$; see~\cite{idel2016review} and the references therein for an extensive review of the topic of matrix scaling.
Note that the case of $S = \mathbf{x} \mathbf{y}^T$, which was investigated in Section~\ref{sec: rank one case}, is a special case of~\eqref{eq: scaling factors def} where $\widetilde{S}$ is a matrix of ones, i.e., $\widetilde{S} = \mathbf{1}_{m\times n}$. In this case, the normalization~\eqref{eq: scaling rows and columns} makes all of the noise variances in the matrix identical. More generally, we cannot make all noise variances identical but we can stabilize the noise variances simultaneously across rows and columns by making the noise variance matrix $(\mathbb{E}[\widetilde{E}_{ij}^2]) = \widetilde{S}$ doubly regular. Hence, after the normalization~\eqref{eq: scaling rows and columns}, the average noise variance in each row and in each column is precisely $1$. As we shall see in Section~\ref{sec: application to signal detection and recovery}, this normalization is particularly advantageous for signal detection and recovery under general heteroskedastic noise.

We now provide a high-level overview of the results and rationale of our analysis in the remainder of this section. Our goal is to investigate which structures of $S$ allow us to accurately estimate the scaling factors $\mathbf{x}$ and $\mathbf{y}$ of $S$ using the previously-derived estimators $\hat{\mathbf{x}}$ and $\hat{\mathbf{y}}$. To this end, we consider the solution to a surrogate Dyson equation obtained by replacing the true variance matrix $S$ with the rank-one variance matrix $\mathbf{x}\mathbf{y}^T$ (where $\mathbf{x}$ and $\mathbf{y}$ are the scaling factors of $S$). We denote this surrogate solution as $\mathbf{h}\in\mathbb{R}^{m+n}$. Our main argument is that the estimators $\hat{\mathbf{x}}$ and $\hat{\mathbf{y}}$ should be accurate as long the surrogate solution $\mathbf{h}$ is sufficiently close to the true solution $\mathbf{g}$ (to the Dyson equation with the variance matrix $S$), regardless of the actual rank of $S$. To identify such cases, we characterize the error $\Vert \mathbf{g} - \mathbf{h}\Vert_\infty$ in terms of the doubly regular matrix $\widetilde{S}$ and certain quantities that can be directly related to the scaling factors $\mathbf{x}$ and $\mathbf{y}$; see Lemma~\ref{lem: g is close to h under incoherence} and Proposition~\ref{prop: relation between x,y and w_1,w_2}. These results show that besides the case of a rank-one variance matrix $S$, the vector $\mathbf{h}$ is close to $\mathbf{g}$ in other cases, such as when $\mathbf{x}$ and $\mathbf{y}$ are close to scalar multiples of all-ones vectors (irrespective of $\widetilde{S}$), and more generally when $\mathbf{x}$ and $\mathbf{y}$ are sufficiently incoherent with respect to $\widetilde{S}$. We discuss such cases in detail and provide several examples using random priors imposed on $\widetilde{S}$; see Examples~\ref{example: random S example 1}--\ref{example: random S example 3}. Finally, under an assumption on the decay of the error $\Vert \mathbf{g} - \mathbf{h}\Vert_\infty$ as $n$ grows (see Assumption~\ref{assump: decay rate of g-h}) and an adjusted assumption on the growth rate of the short dimension $m$ (see Assumption~\ref{assump: growth of m for general S}, which replaces Assumption~\ref{assump: growth of m}), we extend Theorem~\ref{thm: convergence of estimated variance factors} to support more general variance matrices $S$ beyond rank-one; see Theorem~\ref{thm: convergence of estimated scaling fators for generl S}. In particular, Theorem~\ref{thm: convergence of estimated scaling fators for generl S} establishes the stochastic convergence of the relative errors $\Vert(\hat{\mathbf{x}} - \mathbf{x})/\mathbf{x}\Vert_\infty\rightarrow 0$ and $\Vert (\hat{\mathbf{y}} - \mathbf{y})/\mathbf{y}\Vert_\infty\rightarrow 0$ as $m,n\rightarrow \infty$ with rates, for variance matrices $S$ with general rank, including full-rank, as long as $S$ implicitly satisfies Assumption~\ref{assump: decay rate of g-h}.

We begin with several definitions. Let $\mathbf{h} \in\mathbb{R}^{(m+n)}$ be the positive vector that solves~\eqref{eq: Dyson eq imag axis} when replacing $S$ with $\mathbf{x}\mathbf{y}^T$. In particular, $\mathbf{h}^{(1)} = [\mathbf{h}_1,\ldots,\mathbf{h}_m]^T$ and $\mathbf{h}^{(2)} = [\mathbf{h}_{m+1},\ldots,\mathbf{h}_{m+n}]^T$ satisfy
\begin{equation}
    \eta + \mathbf{x} \mathbf{y}^T\mathbf{h}^{(2)} = \frac{1}{\mathbf{h}^{(1)}}, \qquad \qquad \eta + \mathbf{y}\mathbf{x}^T\mathbf{h}^{(1)} = \frac{1}{\mathbf{h}^{(2)}}, \label{eq: coupled Dyson eq imag axis rank one}
\end{equation}
where $\mathbf{x}$ and $\mathbf{y}$ are the scaling factors of $S$ from~\eqref{eq: scaling factors def}.
Note that $\mathbf{h}$ depends only on $\mathbf{x}$ and $\mathbf{y}$, whereas $\mathbf{g}$ from~\eqref{eq: Dyson eq imag axis} may additionally depend on $\widetilde{S}$. Next, analogously to~\eqref{eq: setting alpha=1}, we settle the scalar ambiguity in the definition of $\mathbf{x}$ and $\mathbf{y}$ by requiring (without loss of generality) that
\begin{equation}
    \mathbf{x}^T \mathbf{h}^{(1)} = \mathbf{y}^T \mathbf{h}^{(2)}. \label{eq: setting alpha=1 for general variance matrices}
\end{equation}
Note that $\mathbf{g} = \mathbf{h}$ if $S=\mathbf{x} \mathbf{y}^T$. Therefore, Proposition~\ref{prop: formula for x and y in terms of g} implies that 
\begin{equation}
    \mathbf{x} = \frac{1}{\sqrt{m - \eta \Vert \mathbf{h}^{(1)}\Vert_1 }} \left( \frac{1}{\mathbf{h}^{(1)}} - \eta \right), 
    \qquad
    \mathbf{y} =  \frac{1}{\sqrt{n - \eta \Vert \mathbf{h}^{(2)} \Vert_1}} \left( \frac{1}{\mathbf{h}^{(2)}} - \eta \right).
 \label{eq: formulas for x and y in terms of h}
\end{equation}
 In the previous section, we established that $\mathbf{g}$ can be estimated accurately from the data matrix $Y$ using $\hat{\mathbf{g}}$ from~\eqref{eq: g_hat def}. Importantly, the estimation accuracy of $\mathbf{g}$ does not depend on the structure of the noise variance matrix $S$ (see Theorem~\ref{thm: robustness of the resolvent} and Corollaries~\ref{cor: convergence of g_tilde to g in l_1} and~\ref{cor: convergence of g_tilde to g in l_infty}, which do not rely on Assumption~\ref{assump: rank one variance matrix}).  Hence, in cases where $\mathbf{g}$ is close to $\mathbf{h}$, it is natural to employ the formulas in~\eqref{eq: x_hat and y_hat def} to estimate $\mathbf{x}$ and $\mathbf{y}$. Consequently, our primary focus in this section is to bound the discrepancy $\Vert \mathbf{g} - \mathbf{h} \Vert_\infty$ and describe how this discrepancy controls the  accuracy of recovering $\mathbf{x}$ and $\mathbf{y}$. As we shall see, even though the estimators $\hat{\mathbf{x}}$ and $\hat{\mathbf{y}}$ in~\eqref{eq: x_hat and y_hat def} were derived in the case where ${S}$ is of rank one, they can be used to accurately estimate the scaling factors $\mathbf{x}$ and $\mathbf{y}$ of $S$ in many other cases.

Let us define the positive vectors $\mathbf{w}^{(1)} \in \mathbb{R}^{m}$ and $\mathbf{w}^{(2)} \in \mathbb{R}^{n}$ according to
\begin{equation}
     \mathbf{w}^{(1)} = D\{\mathbf{x} \} \mathbf{h}^{(1)} = \frac{\mathbf{x}}{\eta + a\mathbf{x}}, \qquad \mathbf{w}^{(2)} = D\{\mathbf{y} \} \mathbf{h}^{(2)} = \frac{\mathbf{y}}{\eta + a\mathbf{y} },
    \label{eq: W_1 and W_2 def}
\end{equation}
where $a = \mathbf{x}^T \mathbf{h}^{(1)} = \mathbf{y}^T \mathbf{h}^{(2)}$; see~\eqref{eq: coupled Dyson eq imag axis rank one} and~\eqref{eq: setting alpha=1 for general variance matrices}.
Note that $\mathbf{w}^{(1)}$ and $\mathbf{w}^{(2)}$ depend only on $\mathbf{x}$ and $\mathbf{y}$ and not on $\widetilde{S}$. To further clarify the correspondence between the pairs $(\mathbf{x},\mathbf{y})$ and $(\mathbf{w}^{(1)},\mathbf{w}^{(2)})$, we have the following proposition, whose proof can be found in Appendix~\ref{appendix: proof of relation between x,y and w_1,w_2}.
\begin{prop} \label{prop: relation between x,y and w_1,w_2}
The sign of $\mathbf{w}_i^{(1)} - \mathbf{w}_j^{(1)}$ ($\mathbf{w}_i^{(2)} - \mathbf{w}_j^{(2)}$) is identical to the sign of $\mathbf{x}_i - \mathbf{x}_j$ ($\mathbf{y}_i - \mathbf{y}_j$), and
\begin{equation}
    \left\vert \frac{\mathbf{w}_i^{(1)} - \mathbf{w}_j^{(1)}}{\mathbf{w}_j^{(1)}} \right\vert \leq \left\vert \frac{\mathbf{x}_i - \mathbf{x}_j}{\mathbf{x}_j} \right\vert, \qquad 
    \left\vert \frac{\mathbf{w}_i^{(2)} - \mathbf{w}_j^{(2)}}{\mathbf{w}_j^{(2)}} \right\vert \leq \left\vert \frac{\mathbf{y}_i - \mathbf{y}_j}{\mathbf{y}_j} \right\vert,
\end{equation}
for all $i,j\in[m]$ for the first inequality and $i,j\in[n]$ for the second inequality. Moreover, equality holds in the two inequalities above only if $x_i = x_j$ and $y_i = y_j$, respectively.
\end{prop}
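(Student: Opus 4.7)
The proof reduces to elementary analysis of a scalar monotone function, since the entries of $\mathbf{w}^{(1)}$ and $\mathbf{w}^{(2)}$ are obtained by applying the same function entrywise to $\mathbf{x}$ and $\mathbf{y}$, respectively. The plan is to study $f(t) = t/(\eta + at)$ for $t>0$, where $a = \mathbf{x}^T \mathbf{h}^{(1)} = \mathbf{y}^T \mathbf{h}^{(2)}$. Note first that $a>0$ because $\mathbf{x},\mathbf{y},\mathbf{h}^{(1)},\mathbf{h}^{(2)}$ are all positive vectors (the positivity of $\mathbf{h}^{(1)}$ and $\mathbf{h}^{(2)}$ follows from Proposition~\ref{prop: solution to Dyson equation} applied with $S$ replaced by $\mathbf{x}\mathbf{y}^T$). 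Hence, by definition~\eqref{eq: W_1 and W_2 def}, $\mathbf{w}_i^{(1)} = f(\mathbf{x}_i)$ and $\mathbf{w}_i^{(2)} = f(\mathbf{y}_i)$ with the \emph{same} scalar $a$ and $\eta$ in both cases.

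Differentiating gives $f'(t) = \eta/(\eta + at)^2 > 0$, so $f$ is strictly increasing on $(0,\infty)$. This immediately yields the first claim: $\operatorname{sgn}(\mathbf{w}_i^{(1)} - \mathbf{w}_j^{(1)}) = \operatorname{sgn}(\mathbf{x}_i - \mathbf{x}_j)$ and similarly for $\mathbf{w}^{(2)}$ versus $\mathbf{y}$.

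For the quantitative inequalities, I would compute the difference directly by common denominator:
\begin{equation*}
    \mathbf{w}_i^{(1)} - \mathbf{w}_j^{(1)} = \frac{\mathbf{x}_i}{\eta + a\mathbf{x}_i} - \frac{\mathbf{x}_j}{\eta + a\mathbf{x}_j} = \frac{\eta(\mathbf{x}_i - \mathbf{x}_j)}{(\eta + a\mathbf{x}_i)(\eta + a\mathbf{x}_j)}.
\end{equation*}
Dividing by $\mathbf{w}_j^{(1)} = \mathbf{x}_j/(\eta + a\mathbf{x}_j)$ and taking absolute values produces
\begin{equation*}
    \left\vert \frac{\mathbf{w}_i^{(1)} - \mathbf{w}_j^{(1)}}{\mathbf{w}_j^{(1)}} \right\vert = \frac{\eta}{\eta + a\mathbf{x}_i}\left\vert \frac{\mathbf{x}_i - \mathbf{x}_j}{\mathbf{x}_j} \right\vert.
\end{equation*}
Since $a\mathbf{x}_i > 0$, the prefactor $\eta/(\eta + a\mathbf{x}_i)$ lies strictly in $(0,1)$, giving the required inequality. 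Moreover, the only way both sides can be equal is if they both vanish, which requires $\mathbf{x}_i = \mathbf{x}_j$ (the prefactor itself can never equal $1$). The argument for $\mathbf{w}^{(2)}$ against $\mathbf{y}$ is verbatim identical with $\mathbf{x}$ replaced by $\mathbf{y}$ throughout.

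There is no genuine obstacle here; the proof is a direct computation once one recognizes that the two maps $\mathbf{x} \mapsto \mathbf{w}^{(1)}$ and $\mathbf{y} \mapsto \mathbf{w}^{(2)}$ are both of the form $f(t) = t/(\eta + at)$ with the same $a$ (because of the normalization~\eqref{eq: setting alpha=1 for general variance matrices}), and that this $f$ is a strictly increasing, concave, sign-preserving contraction in the relative-error metric.
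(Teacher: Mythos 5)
Your proof is correct, and after dividing through it arrives at exactly the same identity the paper uses, namely $\bigl(\mathbf{w}_i^{(1)} - \mathbf{w}_j^{(1)}\bigr)/\mathbf{w}_j^{(1)} = \frac{\eta}{\eta + a\mathbf{x}_i}\cdot\frac{\mathbf{x}_i - \mathbf{x}_j}{\mathbf{x}_j}$, from which the sign claim, the inequality, and the equality condition all follow as you state. The framing via the strictly increasing map $f(t) = t/(\eta + at)$ is a pleasant packaging, but the substance is the same direct computation as in the paper's argument.
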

Proposition~\ref{prop: relation between x,y and w_1,w_2} shows that the entries of $\mathbf{w}^{(1)}$ ($\mathbf{w}^{(2)}$) preserve the same ranking as the entries of $\mathbf{x}$ ($\mathbf{y}$). Moreover, the pairwise relative differences between the entries of $\mathbf{w}^{(1)}$ ($\mathbf{w}^{(2)}$) are always smaller or equal to the corresponding relative pairwise differences between the entries of $\mathbf{x}$ ($\mathbf{y}$). In this sense, the entries of $\mathbf{w}^{(1)}$ and $\mathbf{w}^{(2)}$ are always closer to being constant than the entries of $\mathbf{x}$ and $\mathbf{y}$, respectively.

We now have the following lemma, which bounds the discrepancy between $\mathbf{g}$ and $\mathbf{h}$ in terms of $\widetilde{S}$ and the vectors $\mathbf{w}^{(1)}$ and $\mathbf{w}^{(2)}$, where $\langle \cdot \rangle$ denotes the average entry in a vector.  
\begin{lem} \label{lem: g is close to h under incoherence}
Under Assumptions~\ref{assump: noise moment bound} and~\ref{assump: variance boundedness}, there exists a constant $C^{'}>0$ such that
\begin{equation}
    \Vert \mathbf{g} - \mathbf{h} \Vert_\infty \leq C^{'} \max\left\{ \frac{1}{\sqrt{n}}\left\Vert \left( \widetilde{S} -1\right)  \frac{\mathbf{w}^{(2)} - \langle \mathbf{w}^{(2)} \rangle}{\Vert \mathbf{w}^{(2)} \Vert_2} \right\Vert_\infty, \frac{\sqrt{m}}{n}\left\Vert \left( \widetilde{S} -1\right)^T \frac{\mathbf{w}^{(1)} - \langle \mathbf{w}^{(1)} \rangle}{\Vert \mathbf{w}^{(1)} \Vert_2} \right\Vert_\infty 
     \right\}. \label{eq: g-h upper bound}
\end{equation}
\end{lem}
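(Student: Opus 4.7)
My plan is to linearize the Dyson equation for $\mathbf{g}$ around its rank-one surrogate $\mathbf{h}$ and identify the forcing term as $\widetilde{S}-\mathbf{1}_{m\times n}$ applied to centered versions of $\mathbf{w}^{(1)}$ and $\mathbf{w}^{(2)}$, then invert the resulting linearized operator in $\ell_\infty$. Subtracting the coupled pairs~\eqref{eq: coupled Dyson eq imag axis} and~\eqref{eq: coupled Dyson eq imag axis rank one}, and using $S = D\{\mathbf{x}\}\widetilde{S} D\{\mathbf{y}\}$, the cancellation $\mathbf{x}\mathbf{y}^T\mathbf{h}^{(2)} = a\mathbf{x}$ with $a = \mathbf{x}^T\mathbf{h}^{(1)} = \mathbf{y}^T\mathbf{h}^{(2)}$, and $D\{\mathbf{y}\}\mathbf{h}^{(2)} = \mathbf{w}^{(2)}$, I decompose $S\mathbf{g}^{(2)} - a\mathbf{x}$ as $D\{\mathbf{x}\}\widetilde{S} D\{\mathbf{y}\}\Delta^{(2)} + D\{\mathbf{x}\}(\widetilde{S}\mathbf{w}^{(2)} - a\mathbf{1}_m)$, where $\Delta^{(k)} = \mathbf{g}^{(k)} - \mathbf{h}^{(k)}$. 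The key algebraic observation is that the doubly regular identity $\widetilde{S}\mathbf{1}_n = n\mathbf{1}_m$ combined with $n\langle\mathbf{w}^{(2)}\rangle = \mathbf{y}^T\mathbf{h}^{(2)} = a$ collapses this residual exactly into $D\{\mathbf{x}\}(\widetilde{S} - \mathbf{1}_{m\times n})(\mathbf{w}^{(2)} - \langle\mathbf{w}^{(2)}\rangle\mathbf{1}_n)$, which vanishes identically when $\widetilde{S} = \mathbf{1}_{m\times n}$. Combined with $1/\mathbf{g}^{(1)} - 1/\mathbf{h}^{(1)} = -\Delta^{(1)}/(\mathbf{g}^{(1)}\mathbf{h}^{(1)})$ and the symmetric computation for the second coupled equation, this yields a $2\times 2$ block linear system $(I + M)\Delta = \mathbf{p}$ whose off-diagonal blocks are $A = D\{\mathbf{g}^{(1)}\mathbf{h}^{(1)}\mathbf{x}\}\widetilde{S} D\{\mathbf{y}\}$ and $B = D\{\mathbf{g}^{(2)}\mathbf{h}^{(2)}\mathbf{y}\}\widetilde{S}^T D\{\mathbf{x}\}$, and whose forcing $\mathbf{p}$ is, modulo bounded diagonal weights, exactly the two vectors appearing inside the $\ell_\infty$ norms on the right-hand side of~\eqref{eq: g-h upper bound}.

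The second step is to establish a uniform bound $\Vert (I+M)^{-1}\Vert_{\infty\to\infty} \leq C'$. By Schur complement, this reduces to bounding $\Vert (I-AB)^{-1}\Vert_{\infty\to\infty}$ and $\Vert (I-BA)^{-1}\Vert_{\infty\to\infty}$. The rank-one Dyson equation supplies the identity $a\mathbf{x}\mathbf{h}^{(1)} = \mathbf{1}_m - \eta\mathbf{h}^{(1)}$ (and symmetrically for $\mathbf{h}^{(2)}$), which together with $\widetilde{S}\mathbf{1}_n = n\mathbf{1}_m$ lets me show that, after conjugation by an appropriate diagonal weight, $AB$ is sub-stochastic with a uniform spectral gap. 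A Neumann series argument, combined with the uniform lower bounds on $\mathbf{g},\mathbf{h},\mathbf{x},\mathbf{y}$ from Lemmas~\ref{lem: boundedness of g} and~\ref{lem: boundedness of x and y}, then yields the required bound. Propagating through the forcing and using the scalings $\mathbf{x}_i = O(m^{-1/2})$, $\mathbf{y}_j = O(\sqrt{m}/n)$ together with $\Vert \mathbf{w}^{(1)}\Vert_2 \asymp 1$ and $\Vert \mathbf{w}^{(2)}\Vert_2 \asymp \sqrt{m/n}$ converts the diagonal prefactors $D\{\mathbf{x}\}$ and $D\{\mathbf{y}\}$ into the factors $1/\sqrt{n}$ and $\sqrt{m}/n$ displayed in~\eqref{eq: g-h upper bound}.

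The main obstacle is clearly the $\ell_\infty$-stability of $(I-AB)^{-1}$; this is the core of quadratic Dyson equation stability theory. While the contraction on the constant direction follows cleanly from the identity $a\mathbf{x}\mathbf{h}^{(1)} = \mathbf{1}_m - \eta\mathbf{h}^{(1)}$, controlling the orthogonal directions requires a careful Perron--Frobenius-type argument that leverages both the doubly regular structure of $\widetilde{S}$ and the strictly positive lower bounds on the solution. The remaining steps are routine bookkeeping using the explicit rank-one formulas for $\mathbf{h}$ and the uniform entrywise bounds cited from the appendix lemmas.
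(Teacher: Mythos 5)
Your Step 1 is sound: subtracting~\eqref{eq: coupled Dyson eq imag axis} and~\eqref{eq: coupled Dyson eq imag axis rank one} and using $\widetilde{S}\mathbf{1}_n = n\mathbf{1}_m$ together with $n\langle \mathbf{w}^{(2)}\rangle = \mathbf{y}^T\mathbf{h}^{(2)} = a$ does produce exactly the residual $D\{\mathbf{x}\}(\widetilde{S}-1)(\mathbf{w}^{(2)}-\langle \mathbf{w}^{(2)}\rangle)$ (and its transpose analogue), which is the same forcing term the paper isolates; the final bookkeeping with $\Vert\mathbf{x}\Vert_\infty \lesssim m^{-1/2}$, $\Vert\mathbf{y}\Vert_\infty \lesssim \sqrt{m}/n$ and the upper bounds on $\Vert\mathbf{w}^{(1)}\Vert_2,\Vert\mathbf{w}^{(2)}\Vert_2$ is also the paper's (for general $S$ you need Lemma~\ref{lem: boundedness of x and y for general S}, not the rank-one Lemma~\ref{lem: boundedness of x and y}). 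The divergence is in how the residual is converted into a bound on $\mathbf{g}-\mathbf{h}$: the paper does \emph{not} linearize. It observes that $\mathbf{h}$ is an exact positive solution of the true Dyson equation up to the additive error $\hat{\mathbf{e}}$, and then applies the nonlinear stability result Lemma~\ref{lem: stability of Dyson equation}, whose proof is a maximum-principle/monotonicity case analysis on $\max_i \mathbf{h}_i/\mathbf{g}_i$ and $\min_i \mathbf{h}_i/\mathbf{g}_i$; crucially, that lemma needs no smallness of $\hat{\mathbf{e}}$ and no invertibility of any linearized operator.

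Your Step 2 is where the gap lies, and as stated it would fail. You need $\Vert (I+M)^{-1}\Vert_{\infty\to\infty}\le C'$ uniformly, where the blocks of $M$ are $D\{\mathbf{g}^{(1)}\mathbf{h}^{(1)}\}S$ and $D\{\mathbf{g}^{(2)}\mathbf{h}^{(2)}\}S^T$, and you claim a uniform sub-stochasticity/spectral gap for $AB$ after diagonal conjugation. But the only row-sum identities available are $D\{\mathbf{g}\}\mathcal{S}\mathbf{g} = 1-\eta\mathbf{g}$ (true equation) and $a\,\mathbf{x}\,\mathbf{h}^{(1)} = 1-\eta\mathbf{h}^{(1)}$ (rank-one equation); nothing controls $\mathcal{S}\mathbf{h}$, whose excess over $1/\mathbf{h}-\eta$ is precisely the forcing $\hat{\mathbf{e}}$. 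For instance, testing $M$ against the weight $\sqrt{\mathbf{g}\mathbf{h}}$ and using Cauchy--Schwarz gives a contraction factor of the form $\sqrt{(1-\eta\mathbf{g}_i)\,(1-\eta\mathbf{h}_i + \mathbf{h}_i\hat{\mathbf{e}}_i)}$, which exceeds $1$ once $\hat{\mathbf{e}}$ is of order one --- and the lemma must hold without assuming $\hat{\mathbf{e}}$ is small. So the uniform spectral gap you assert is not a consequence of Assumptions~\ref{assump: noise moment bound} and~\ref{assump: variance boundedness} alone; establishing it essentially requires $\mathbf{h}\approx\mathbf{g}$, which is circular. The argument can be repaired by a dichotomy (if $\Vert\hat{\mathbf{e}}\Vert_\infty$ exceeds a fixed constant, the bound is trivial because $\Vert\mathbf{g}-\mathbf{h}\Vert_\infty\le 2/\eta$ while the right-hand side of~\eqref{eq: g-h upper bound} is bounded below by a constant multiple of $\Vert\hat{\mathbf{e}}\Vert_\infty$; otherwise the weighted Neumann series converges), but you neither make this reduction nor carry out the Perron--Frobenius step you yourself flag as the main obstacle --- whereas the paper's Lemma~\ref{lem: stability of Dyson equation} disposes of it unconditionally.
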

The proof of Lemma~\ref{lem: g is close to h under incoherence} is found in Appendix~\ref{appendix: proof of g is close to h under incoherence} and relies on a stability analysis of the Dyson equation on the imaginary axis, which may be of independent interest; see Lemma~\ref{lem: stability of Dyson equation} in Appendix~\ref{appendix: stability of Dyson equation}. First, Lemma~\ref{lem: g is close to h under incoherence} shows that the error $\Vert \mathbf{g} - \mathbf{h} \Vert_\infty$ is small if $\widetilde{S}$ is sufficiently close to the matrix of all ones $\mathbf{1}_{m\times n}$, or alternatively, if $\mathbf{w}^{(1)}$ and $\mathbf{w}^{(2)}$ are sufficiently close to being constant vectors (i.e., vectors whose entries are nearly identical). In particular, if $\mathbf{x}$ and $\mathbf{y}$ are multiples of the all-ones vectors $\mathbf{1}_m$ and $\mathbf{1}_n$, respectively, then according to Proposition~\ref{prop: relation between x,y and w_1,w_2}, $\mathbf{w}^{(1)}$ and $\mathbf{w}^{(1)}$ are also multiples of the all-ones vectors $\mathbf{1}_m$ and $\mathbf{1}_n$, in which case we have $\mathbf{g} = \mathbf{h}$ by Lemma~\ref{lem: g is close to h under incoherence}. In other words, we have $\mathbf{g} = \mathbf{h}$ whenever $S$ is a scalar multiple of a doubly regular matrix, regardless of its rank. Second, Lemma~\ref{lem: g is close to h under incoherence} shows that the error $\Vert \mathbf{g} - \mathbf{h} \Vert_\infty$ is small 
if $\mathbf{w}^{(1)}$ and $\mathbf{w}^{(2)}$ are sufficiently incoherent with respect to $(\widetilde{S}-1)$. Below we provide two examples demonstrating this scenario using random priors imposed on $\widetilde{S}$.
\begin{example} \label{example: random S example 1}
Consider a matrix $\widetilde{S}$ given by
\begin{equation}
    \widetilde{S}_{ij} = 1 + z_{ij}  - \frac{1}{m}\sum_{i=1}^m z_{ij} - \frac{1}{n}\sum_{j=1}^n z_{ij} + \frac{1}{mn}\sum_{i=1}^m \sum_{j=1}^n z_{ij}, \label{eq: example for S_tilde}
\end{equation}
where $\{z_{ij}\} \in [b_1-1,b_2-1]$ are independent random variables with zero means and $b_2>1>b_1>0$ are fixed constants. By construction, $\widetilde{S}$ is always doubly regular and is also positive with high probability for all sufficiently large $m$ and $n$. Importantly, if $\mathbf{x}$ and $\mathbf{y}$ are either deterministic or random but generated independently of $\{z_{ij}\}$, then by standard concentration arguments~\cite{hoeffding1963probability} together with the union bound, we have
\begin{align}
    \frac{1}{\sqrt{n}}\left\Vert \left( \widetilde{S} -1\right)  \frac{\mathbf{w}^{(2)} - \langle \mathbf{w}^{(2)} \rangle}{\Vert \mathbf{w}^{(2)} \Vert_2} \right\Vert_\infty 
    &\leq C^{''} \frac{ \left\Vert \mathbf{w}^{(2)} - \langle \mathbf{w}^{(2)} \rangle \right\Vert_2}{\Vert \mathbf{w}^{(2)} \Vert_2} \sqrt{\frac{{\log n}}{{n}}} \leq C^{''} \sqrt{\frac{{\log n}}{{n}}} , \label{eq: example 1 random S_tilde bound 1}\\ 
    \frac{\sqrt{m}}{n}\left\Vert \left( \widetilde{S} -1\right)^T \frac{\mathbf{w}^{(1)} - \langle \mathbf{w}^{(1)} \rangle}{\Vert \mathbf{w}^{(1)} \Vert_2} \right\Vert_\infty 
    &\leq C^{''} \frac{ \left\Vert \mathbf{w}^{(1)} - \langle \mathbf{w}^{(1)} \rangle \right\Vert_2}{\Vert \mathbf{w}^{(1)} \Vert_2} \sqrt{\frac{\log n}{n}}  
    \leq C^{''} \sqrt{\frac{{\log n}}{{n}}}, \label{eq: example 1 random S_tilde bound 2}
\end{align}
with probability that approaches $1$ as $m,n \rightarrow \infty$, where $C^{''}>0$ is a global constant. Here, we also used the fact that $m\leq n$, $\left\Vert \mathbf{w}^{(2)} - \langle \mathbf{w}^{(2)} \rangle \right\Vert_2 \leq \left\Vert \mathbf{w}^{(2)} \right\Vert_2$, and $\left\Vert \mathbf{w}^{(1)} - \langle \mathbf{w}^{(1)} \rangle \right\Vert_2 \leq \left\Vert \mathbf{w}^{(1)} \right\Vert_2$.
We see that in this case, the upper bound in~\eqref{eq: g-h upper bound} is of the order of $\sqrt{\log n / n}$ with high probability, regardless of $\mathbf{w}^{(1)}$ and $\mathbf{w}^{(2)}$. If, in addition, the entries of $\mathbf{w}^{(1)}$ and $\mathbf{w}^{(2)}$ are approximately constant, then the quantities ${ \left\Vert \mathbf{w}^{(2)} - \langle \mathbf{w}^{(2)} \rangle \right\Vert_2}/{\Vert \mathbf{w}^{(2)} \Vert_2}$ and ${ \left\Vert \mathbf{w}^{(1)} - \langle \mathbf{w}^{(1)} \rangle \right\Vert_2}/{\Vert \mathbf{w}^{(1)} \Vert_2}$ will be small and the upper bound in~\eqref{eq: g-h upper bound} will further improve. According to Proposition~\ref{prop: relation between x,y and w_1,w_2}, the entries of $\mathbf{w}^{(1)}$ and $\mathbf{w}^{(2)}$ are approximately constant if the entries of $\mathbf{x}$ and $\mathbf{y}$ are approximately constant, respectively.
\end{example}

\begin{example} \label{example: random S example 2}
Consider a matrix $\widetilde{S}$ given by
\begin{equation}
    \widetilde{S} = 1 + \sum_{k=1}^{\rho-1} \left(\nu_k - \langle \nu_k \rangle\right)   \left(\xi_k - \langle \xi_k \rangle\right)^T,
\end{equation}
where $\{\nu_k\}_{k=1}^{\rho-1}\in\mathbb{R}^m$ and $\{\xi_k\}_{k=1}^{\rho-1}\in\mathbb{R}^n$ are random vectors whose entries are independent, have zero means, and are bounded such that $\sum_{k}(\nu_k \xi_k^T)_{ij} \in [b_1-1,b_2-1]$ for all $i\in[m]$ and $j\in[n]$ for some fixed constants $b_2>1>b_1>0$ and $\rho \in \{1,\ldots,m\}$. As in Example~\ref{example: random S example 1}, $\widetilde{S}$ is doubly regular by construction and is positive with high probability for sufficiently large $m$ and $n$. In this case, if $\mathbf{x}$ and $\mathbf{y}$ are either deterministic or random but generated independently of $\{\nu_k\}$ and $\{\xi_k\}$, then by the same concentration arguments used in Example~\ref{example: random S example 1}, the bounds~\eqref{eq: example 1 random S_tilde bound 1} and~\eqref{eq: example 1 random S_tilde bound 2} also hold here (with probability approaching $1$ as $m,n \rightarrow \infty$). 
\end{example}

The main distinction between the two examples above is that in Example~\ref{example: random S example 2}, the matrix  $\widetilde{S}$ is constructed to be low-rank and its entries are highly dependent, whereas in Example~\ref{example: random S example 1}, the variables $\{z_{ij}\}$ are independent and the resulting $\widetilde{S}$ is full-rank with probability one.
Examples~\ref{example: random S example 1} and~\ref{example: random S example 2} show that the error $\Vert \mathbf{g} - \mathbf{h}\Vert_\infty$ is small for a wide range of noise variance matrices $S = D\{\mathbf{x}\} \widetilde{S} D\{\mathbf{y}\}$ for which $\widetilde{S}$ is sufficiently generic and incoherent with respect to $\mathbf{x}$ and $\mathbf{y}$. Moreover, in these cases, the bound on the error $\Vert \mathbf{g} - \mathbf{h}\Vert_\infty$ is further reduced if $\mathbf{w}^{(1)}$ and $\mathbf{w}^{(2)}$ are close to being constant vectors, which is determined directly by the variability of the entries of $\mathbf{x}$ and $\mathbf{y}$ according to Proposition~\ref{prop: relation between x,y and w_1,w_2}. 

Lemma~\ref{lem: g is close to h under incoherence} can also be applied to variance matrices $S$ that are more structured across the rows and columns. For instance, the example below shows that  $\mathbf{g}$ is close to $\mathbf{h}$ for certain random variance matrices with a block structure, as long as the dimensions of the blocks are not too large.
\begin{example} \label{example: random S example 3}
    Suppose that $\widetilde{S}$ is a block matrix with identical entries in each block. Specifically, the rows (columns) of $\widetilde{S}$ can be partitioned into $M$ ($N$) disjoint subsets $\Omega^{\text{r}}_1,\ldots,\Omega^{\text{r}}_M$ ($\Omega^{\text{c}}_1,\ldots,\Omega^{\text{c}}_N$) such that $\widetilde{S}_{ij} = \overline{S}_{kl}$ for all $i\in \Omega^{\text{r}}_k$ and $j\in \Omega^{\text{c}}_\ell$, where $k\in[M]$ and $\ell\in [N]$. Suppose further that $\mathbf{x}$ and $\mathbf{y}$ have identical entries across the same subsets of rows and columns, where $\mathbf{x}_i = \overline{\mathbf{x}}_k$ for all $i\in \Omega^{\text{r}}_k$ and $\mathbf{y}_j = \overline{\mathbf{y}}_\ell$ for all $j\in \Omega^{\text{c}}_\ell$. Thus, $S = D\{\mathbf{x}\} \widetilde{S} D\{\mathbf{y}\}$ is a block matrix. We denote the cardinality of $\Omega^{\text{r}}_k$ and $\Omega^{\text{c}}_\ell$ as $m_k = \vert \Omega^{\text{r}}_k \vert$ and $n_\ell = \vert \Omega^{\text{c}}_\ell \vert$, respectively. In this case, Lemma~\ref{lem: g is close to h under incoherence} asserts that
    \begin{align}
    \Vert \mathbf{g} - \mathbf{h} \Vert_\infty \leq C^{'} \max\bigg\{ &\frac{1}{\sqrt{n}}\left\Vert \left( \overline{S} -1\right) D\{[n_1,\ldots,n_N]\} \frac{\overline{\mathbf{w}}^{(2)} - \langle \overline{\mathbf{w}}^{(2)} \rangle}{\Vert \overline{\mathbf{w}}^{(2)} \Vert_2} \right\Vert_\infty, \nonumber \\
    &\frac{\sqrt{m}}{n}\left\Vert \left( \overline{S} -1\right)^T D\{[m_1,\ldots,m_M]\} \frac{\overline{\mathbf{w}}^{(1)} - \langle \overline{\mathbf{w}}^{(1)} \rangle}{\Vert \overline{\mathbf{w}}^{(1)} \Vert_2} \right\Vert_\infty 
     \bigg\},
\end{align}
where $\overline{S} = (\overline{S}_{k\ell}) \in \mathbb{R}^{M\times N}$, and $\overline{\mathbf{w}}^{(1)}\in\mathbb{R}^M$ and $\overline{\mathbf{w}}^{(2)}\in\mathbb{R}^N$ are defined analogously to ${\mathbf{w}}^{(1)}$ and ${\mathbf{w}}^{(2)}$ from~\eqref{eq: W_1 and W_2 def} when replacing $\mathbf{x}$ and $\mathbf{y}$ with $\overline{\mathbf{x}} = [\overline{\mathbf{x}}_1,\ldots,\overline{\mathbf{x}}_M]^T$ and $\overline{\mathbf{y}} = [\overline{\mathbf{y}}_1, \ldots, \overline{\mathbf{y}}_N]^T$, respectively. If the entries of $\overline{S}$ are generated randomly analogously to the entries of $S$ in Example~\ref{example: random S example 1} or~\ref{example: random S example 2} (while $\overline{\mathbf{x}}$ and $\overline{\mathbf{y}}$ are deterministic or random but independent of $\overline{S}$), and if the number of blocks across the rows and columns is increasing, i.e., $M,N \rightarrow \infty$ as $m,n\rightarrow\infty$, then similarly to~\eqref{eq: example 1 random S_tilde bound 1} and~\eqref{eq: example 1 random S_tilde bound 2}, it can be verified that
\begin{align}
    \frac{1}{\sqrt{n}}\left\Vert \left( \overline{S} -1\right) D\{[n_1,\ldots,n_N]\} \frac{\overline{\mathbf{w}}^{(2)} - \langle \overline{\mathbf{w}}^{(2)} \rangle}{\Vert \overline{\mathbf{w}}^{(2)} \Vert_2} \right\Vert_\infty 
    &\leq C^{'''} \frac{\max_{\ell\in [N]}\{n_\ell\}}{\sqrt{n}} \sqrt{\log (\max\{M,N\})}, \\
    \frac{\sqrt{m}}{n}\left\Vert \left( \overline{S} -1\right)^T D\{[m_1,\ldots,m_M]\} \frac{\overline{\mathbf{w}}^{(1)} - \langle \overline{\mathbf{w}}^{(1)} \rangle}{\Vert \overline{\mathbf{w}}^{(1)} \Vert_2} \right\Vert_\infty 
    &\leq C^{'''} \frac{\max_{k\in [M]}\{m_k\}}{\sqrt{n}} \sqrt{\log (\max\{M,N\})},
\end{align}
with probability approaching $1$ as $m,n\rightarrow \infty$, where $C^{'''}>0$ is some global constant. Consequently, if the dimensions of the blocks are sufficiently small compared to $\sqrt{n}$, e.g., if they are growing with some fractional power of $n$ smaller than $1/2$, then $\Vert \mathbf{g} - \mathbf{h} \Vert_\infty$ is guaranteed to be small for sufficiently large $m$ and $n$ with high probability.
\end{example}

In cases where $\mathbf{g}$ is close to $\mathbf{h}$, we expect the formulas in~\eqref{eq: x_hat and y_hat def} to provide accurate estimates of $\mathbf{x}$ and $\mathbf{y}$. To guarantee the convergence of $\hat{\mathbf{x}}$ and $\hat{\mathbf{y}}$ from~\eqref{eq: x_hat and y_hat def} to $\mathbf{x}$ and $\mathbf{y}$, respectively, and provide corresponding rates, it is convenient to make the assumption that $\mathbf{g}$ approaches $\mathbf{h}$ at least with some fractional power of $n$. Specifically, we assume that
\begin{assump} \label{assump: decay rate of g-h}
There exist constants $\widetilde{C}_3,\delta_3>0$ such that $\Vert \mathbf{g} - \mathbf{h} \Vert_\infty \leq \widetilde{C}_3 n^{-\delta_3}$.
\end{assump}
For instance, if $m$ and $n$ are sufficiently large and the noise variance matrix $S$ is generated according to Example~\ref{example: random S example 1} or Example~\ref{example: random S example 2}, then Assumption~\ref{assump: decay rate of g-h} is satisfied with high probability using, e.g., $\delta_3 = 0.49$. If $S$ is generated according to Example~\ref{example: random S example 3} and the block dimensions grow with some fractional power of $n$ smaller than $1/2$, then Assumption~\ref{assump: decay rate of g-h} is satisfied with any $\delta_3$ that is less than $1/2$ minus that power. Aside from these examples, Assumption~\ref{assump: decay rate of g-h} allows for more general classes of variance matrices $S$ where $\delta_3$ can be an arbitrarily small positive constant. 

To state our main result in this section, which extends Theorem~\ref{thm: convergence of estimated variance factors}, we replace Assumption~\ref{assump: growth of m} from Section~\ref{sec: rank one case} with the following assumption, which similarly requires that $m$ grows sufficiently quickly with $n$. 
\begin{assump} \label{assump: growth of m for general S}
There exist constants $\widetilde{C}_4,\delta_4 >0$ such that $m \geq \widetilde{C}_4 \max \{n^{1/2 + \delta_4}, n^{1-\delta_1 + \delta_4}, n^{1-\delta_3 + \delta_4}\}$.
\end{assump}
Recall that $\delta_1 > 0$ is from Assumption~\ref{assump: signal delocalization}, which controls the delocalization level of the signal's singular vector matrices $\widetilde{U}$ and $\widetilde{V}$ across the rows. Similarly to Assumption~\ref{assump: growth of m} in the previous section, Assumption~\ref{assump: growth of m for general S} always holds if $m$ is proportional to $n$ (since one can take any $\delta_4 < \min\{\delta_1,\delta_3,0.5\}$). Now, we can extend Theorem~\ref{thm: convergence of estimated variance factors} to cover general variance matrices $S$ as long as $\mathbf{g}$ is close to $\mathbf{h}$ for large $m$ and $n$.

\begin{thm} \label{thm: convergence of estimated scaling fators for generl S}
Under Assumptions~\ref{assump: noise moment bound},\ref{assump: rank growth rate},\ref{assump: signal delocalization},\ref{assump: variance boundedness},\ref{assump: decay rate of g-h}, for any $\epsilon\in (0,1/2-\delta_0)$ there exist $C^{'},c^{'}(t)>0$ such that for all $t>0$, with probability at least $1-c^{'}(t) n^{-t}$ we have that
\begin{equation}
    \left\Vert \frac{\hat{\mathbf{x}} - \mathbf{x}}{\mathbf{x}} \right\Vert_\infty \leq C^{'} \max\left\{n^{\epsilon-1/2}, n^{-\delta_1}, n^{-\delta_3} \right\}, \label{eq: relative estimation error of x for general rank}
\end{equation}
If additionally Assumption~\ref{assump: growth of m for general S} holds and $\epsilon<\delta_4$, then with probability at least $1-c^{'}(t) n^{-t}$ we have that
\begin{equation}
    \left\Vert \frac{\hat{\mathbf{y}} - \mathbf{y}}{\mathbf{y}} \right\Vert_\infty \leq C^{'} \max\left\{\frac{n^{\epsilon+1/2}}{m}, \frac{n^{1-\delta_1}}{m}, \frac{n^{1-\delta_3}}{m} \right\}. \label{eq: relative estimation error of y for general rank}
\end{equation}
\end{thm}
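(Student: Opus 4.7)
The plan is to reduce the general-rank case to the rank-one analysis already carried out in Theorem~\ref{thm: convergence of estimated variance factors}, by using the surrogate vector $\mathbf{h}$ in place of $\mathbf{g}$ throughout. The algebraic key is~\eqref{eq: formulas for x and y in terms of h}, which shows that the true scaling factors $\mathbf{x}$ and $\mathbf{y}$ have \emph{the same} formula in terms of $\mathbf{h}$ as they had in terms of $\mathbf{g}$ in the rank-one setting, irrespective of the actual rank of $S$. Since the estimators $\hat{\mathbf{x}},\hat{\mathbf{y}}$ defined in~\eqref{eq: x_hat and y_hat def} are built from $\hat{\mathbf{g}}$, it is enough to control the entrywise deviation $\|\hat{\mathbf{g}} - \mathbf{h}\|_\infty$ and then feed it into the same perturbation calculation used in the rank-one proof.

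The first step is a triangle inequality
\begin{equation}
\|\hat{\mathbf{g}} - \mathbf{h}\|_\infty \;\leq\; \|\hat{\mathbf{g}} - \mathbf{g}\|_\infty + \|\mathbf{g} - \mathbf{h}\|_\infty.
\end{equation}
Corollary~\ref{cor: convergence of g_tilde to g in l_infty} bounds the first summand by $C' \max\{n^{\epsilon-1/2}, n^{-\delta_1}\}$ with probability at least $1 - c'(t) n^{-t}$; crucially, Corollary~\ref{cor: convergence of g_tilde to g in l_infty} (and the underlying Theorem~\ref{thm: robustness of the resolvent}) does \emph{not} use Assumption~\ref{assump: rank one variance matrix}, so it applies verbatim here. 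Assumption~\ref{assump: decay rate of g-h} bounds the second summand by $\widetilde{C}_3 n^{-\delta_3}$. The combined bound is of order $\max\{n^{\epsilon-1/2}, n^{-\delta_1}, n^{-\delta_3}\}$ on $\|\hat{\mathbf{g}} - \mathbf{h}\|_\infty$ with high probability.

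The second step is to propagate this bound through the estimator formulas, exactly as in the proof of Theorem~\ref{thm: convergence of estimated variance factors}. Writing the ratio $\hat{\mathbf{x}}_i / \mathbf{x}_i$ as the product of a local factor $(1/\hat{\mathbf{g}}_i^{(1)} - \eta)/(1/\mathbf{h}_i^{(1)} - \eta)$ and a global normalization factor $\sqrt{m - \eta \|\mathbf{h}^{(1)}\|_1}/\sqrt{m - \eta \|\hat{\mathbf{g}}^{(1)}\|_1}$, each factor is shown to be close to $1$. The local factor is handled because both $\hat{\mathbf{g}}^{(1)}$ and $\mathbf{h}^{(1)}$ are bounded above and below away from $0$ and away from $1/\eta$: the lower bound on $\mathbf{h}$ follows by applying Lemma~\ref{lem: boundedness of g} to the Dyson equation~\eqref{eq: coupled Dyson eq imag axis rank one} with the surrogate variance matrix $\mathbf{x}\mathbf{y}^T$ (using that $\mathbf{x}\mathbf{y}^T$ inherits the order-$n^{-1}$ magnitudes from Assumption~\ref{assump: variance boundedness} and the factorization of Proposition~\ref{prop: matrix scaling}), and the corresponding bound on $\hat{\mathbf{g}}$ follows from the bound on $\|\hat{\mathbf{g}} - \mathbf{h}\|_\infty$ combined with Lemma~\ref{lem: boundedness of g}. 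The global factor is controlled via $|\|\hat{\mathbf{g}}^{(1)}\|_1 - \|\mathbf{h}^{(1)}\|_1| \leq m \|\hat{\mathbf{g}}^{(1)} - \mathbf{h}^{(1)}\|_\infty$, yielding the rate in~\eqref{eq: relative estimation error of x for general rank}. The analogous calculation for $\hat{\mathbf{y}}_j/\mathbf{y}_j$ is identical, except that one bounds $\|\hat{\mathbf{g}}^{(2)} - \mathbf{h}^{(2)}\|_1 \leq n \|\hat{\mathbf{g}}^{(2)} - \mathbf{h}^{(2)}\|_\infty$, which produces the extra factor $n/m$ in~\eqref{eq: relative estimation error of y for general rank}. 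Assumption~\ref{assump: growth of m for general S} (with its new $n^{1-\delta_3+\delta_4}$ requirement) is precisely what ensures that this new $n^{1-\delta_3}/m$ contribution goes to zero.

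The main obstacle is the stability of the normalizers $\sqrt{m - \eta \|\hat{\mathbf{g}}^{(1)}\|_1}$ and $\sqrt{n - \eta \|\hat{\mathbf{g}}^{(2)}\|_1}$, which must be shown to be bounded below by constants times $\sqrt{m}$ and $\sqrt{n}$ respectively. The natural route is to prove deterministic lower bounds $m - \eta \|\mathbf{h}^{(1)}\|_1 \gtrsim m$ and $n - \eta \|\mathbf{h}^{(2)}\|_1 \gtrsim n$ directly from~\eqref{eq: coupled Dyson eq imag axis rank one} and the analogue of Proposition~\ref{prop: formula for x and y in terms of g} for $\mathbf{h}$ (i.e., that $m - \eta\|\mathbf{h}^{(1)}\|_1 = a \, \mathbf{x}^T\mathbf{h}^{(1)}$ up to a positive factor, where $a = \mathbf{x}^T\mathbf{h}^{(1)}$), then transfer these to $\hat{\mathbf{g}}$ via the already-established $\|\hat{\mathbf{g}} - \mathbf{h}\|_\infty$ bound. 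Once these lower bounds are in hand, the remainder of the argument is a line-by-line replay of the proof of Theorem~\ref{thm: convergence of estimated variance factors} with $\mathbf{g}$ replaced by $\mathbf{h}$ and with the additional $n^{-\delta_3}$ term inserted wherever the $\hat{\mathbf{g}}$-to-$\mathbf{h}$ deviation enters.
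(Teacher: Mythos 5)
Your overall route is exactly the paper's: bound $\Vert \hat{\mathbf{g}} - \mathbf{h}\Vert_\infty$ by the triangle inequality, using Corollary~\ref{cor: convergence of g_tilde to g in l_infty} (which, as you note, never invokes Assumption~\ref{assump: rank one variance matrix}) for $\Vert \hat{\mathbf{g}} - \mathbf{g}\Vert_\infty$ and Assumption~\ref{assump: decay rate of g-h} for $\Vert \mathbf{g} - \mathbf{h}\Vert_\infty$, and then replay the proof of Theorem~\ref{thm: convergence of estimated variance factors} with $\mathbf{h}$ in place of $\mathbf{g}$, exploiting~\eqref{eq: formulas for x and y in terms of h} and replacing the rank-one boundedness lemma (Lemma~\ref{lem: boundedness of x and y}) by its general-$S$ counterpart (Lemma~\ref{lem: boundedness of x and y for general S}), which is precisely your observation that $\mathbf{x}\mathbf{y}^T$ inherits entries of order $n^{-1}$ from Assumption~\ref{assump: variance boundedness} and Proposition~\ref{prop: matrix scaling}. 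The rates you state are the correct ones.

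One claim in your final paragraph is wrong, however, and as written that step would fail: the deterministic bound $n - \eta\Vert\mathbf{h}^{(2)}\Vert_1 \gtrsim n$ is false whenever $m = o(n)$, a regime explicitly permitted by Assumption~\ref{assump: growth of m for general S}. The very identity you invoke, namely $n - \eta\Vert\mathbf{h}^{(2)}\Vert_1 = (\mathbf{x}^T\mathbf{h}^{(1)})(\mathbf{y}^T\mathbf{h}^{(2)})$, combined with Lemma~\ref{lem: boundedness of x and y for general S} ($\mathbf{y}_j \asymp \sqrt{m}/n$, $\mathbf{x}_i \asymp 1/\sqrt{m}$) and the two-sided boundedness of $\mathbf{h}$, shows this quantity is of order $m$, not $n$. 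Relatedly, your bookkeeping of the $n/m$ factor in~\eqref{eq: relative estimation error of y for general rank} is incomplete: it does not arise only from bounding $\Vert\hat{\mathbf{g}}^{(2)} - \mathbf{h}^{(2)}\Vert_1 \leq n\Vert\hat{\mathbf{g}}^{(2)} - \mathbf{h}^{(2)}\Vert_\infty$. It also enters through the local factor, since under the normalization~\eqref{eq: setting alpha=1 for general variance matrices} one has $1 - \eta\mathbf{h}_j^{(2)} = \mathbf{y}_j\mathbf{h}_j^{(2)}\left(\mathbf{y}^T\mathbf{h}^{(2)}\right) \asymp m/n$, so $\mathbf{h}_j^{(2)}$ is \emph{not} bounded away from $1/\eta$ when $m = o(n)$, and through the global factor whose denominator is of order $m$ rather than $n$. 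Once you correct these two points (which is exactly what the paper's rank-one argument does via the bounds $1-\eta\mathbf{g}_j^{(2)} \geq c_3\, m/n$ and $n-\eta\Vert\mathbf{g}^{(2)}\Vert_1 \geq c_4\, m$), your plan coincides with the paper's proof and delivers the stated bounds, with Assumption~\ref{assump: growth of m for general S} controlling the new $n^{1-\delta_3}/m$ term as you indicate.
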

The proof of Theorem~\ref{thm: convergence of estimated scaling fators for generl S} is an immediate extension of the proof of Theorem~\ref{thm: convergence of estimated variance factors} under Assumptions~\ref{assump: decay rate of g-h} and~\ref{assump: growth of m for general S}; see Appendix~\ref{appendix: proof of convergence of estimated scaling fators for generl S}. 
Theorem~\ref{thm: convergence of estimated scaling fators for generl S} is identical to Theorem~\ref{thm: convergence of estimated variance factors} except that the bounds here depend additionally on $\delta_3$, which controls the closeness of $\mathbf{g}$ and $\mathbf{h}$. Intuitively, the bound on the estimation accuracy improves if the solution $\mathbf{g}$ of the true Dyson equation~\eqref{eq: coupled Dyson eq imag axis} is close to the solution $\mathbf{h}$ of the rank-one Dyson equation~\eqref{eq: coupled Dyson eq imag axis rank one}. According to Lemma~\ref{lem: g is close to h under incoherence} and Proposition~\ref{prop: relation between x,y and w_1,w_2}, $\mathbf{g}$ and $\mathbf{h}$ are close if $\widetilde{S}$ is sufficiently incoherent with respect to $\mathbf{x}$ and $\mathbf{y}$, or if the entries of $\mathbf{x}$ and $\mathbf{y}$ are approximately constant (i.e., $\mathbf{x}$ and $\mathbf{y}$ are close to multiples of the all-ones vector). As noted after Assumption~\ref{assump: decay rate of g-h}, if ${S}$ is generated as described in Examples~\ref{example: random S example 1} or~\ref{example: random S example 2} for sufficiently large $m$ and $n$, then Assumption~\ref{assump: decay rate of g-h} holds with high probability for $\delta_3 = 0.49$. In this case, the bounds in Theorem~\ref{thm: convergence of estimated scaling fators for generl S} imply almost the same rates as in Theorem~\ref{thm: convergence of estimated variance factors}. 

In Figure~\ref{fig: convergense of estimated scaling factors, general rank case}, we illustrate the relative errors in the left-hand sides of~\eqref{eq: relative estimation error of x for general rank} and~\eqref{eq: relative estimation error of y for general rank} for the same experimental setup used for Figure~\ref{fig: convergense of estimated scaling factors, rank-one case}, except that the variance matrix is $S = D\{\mathbf{x}\} \widetilde{S} D\{\mathbf{y}\}$, where $\widetilde{S}$ was generated according to Example~\ref{example: random S example 1}. Specifically, $\{z_{ij}\}$ are i.i.d and sampled from $5(\operatorname{Bernoulli}(0.1) - 0.1)$, i.e., $z_{ij}$ takes the value of $-0.5$ with probability $0.9$ and the value of $4.5$ with probability $0.1$. In this case, as mentioned previously, Assumption~\ref{assump: decay rate of g-h} is satisfied with high probability with $\delta_3 \sim 0.5$ for sufficiently large $m$ and $n$. Therefore, according to Theorem~\ref{thm: convergence of estimated scaling fators for generl S}, we expect the convergence rates to be the same as for the corresponding rates from Figure~\ref{fig: convergense of estimated scaling factors, rank-one case}. Indeed, we observe from Figure~\ref{fig: convergense of estimated scaling factors, general rank case} that in this case, the empirical error rates conform to the same bounds from Figure~\ref{fig: convergense of estimated scaling factors, rank-one case}.
\begin{figure} 
  \centering
  	{
  	\subfloat[][$m = \lceil n/2 \rceil$, $s^2/n = 10$, $\delta_1\sim 1$, $\delta_3 \sim 0.5$]
  	{
    \includegraphics[width=0.38\textwidth]{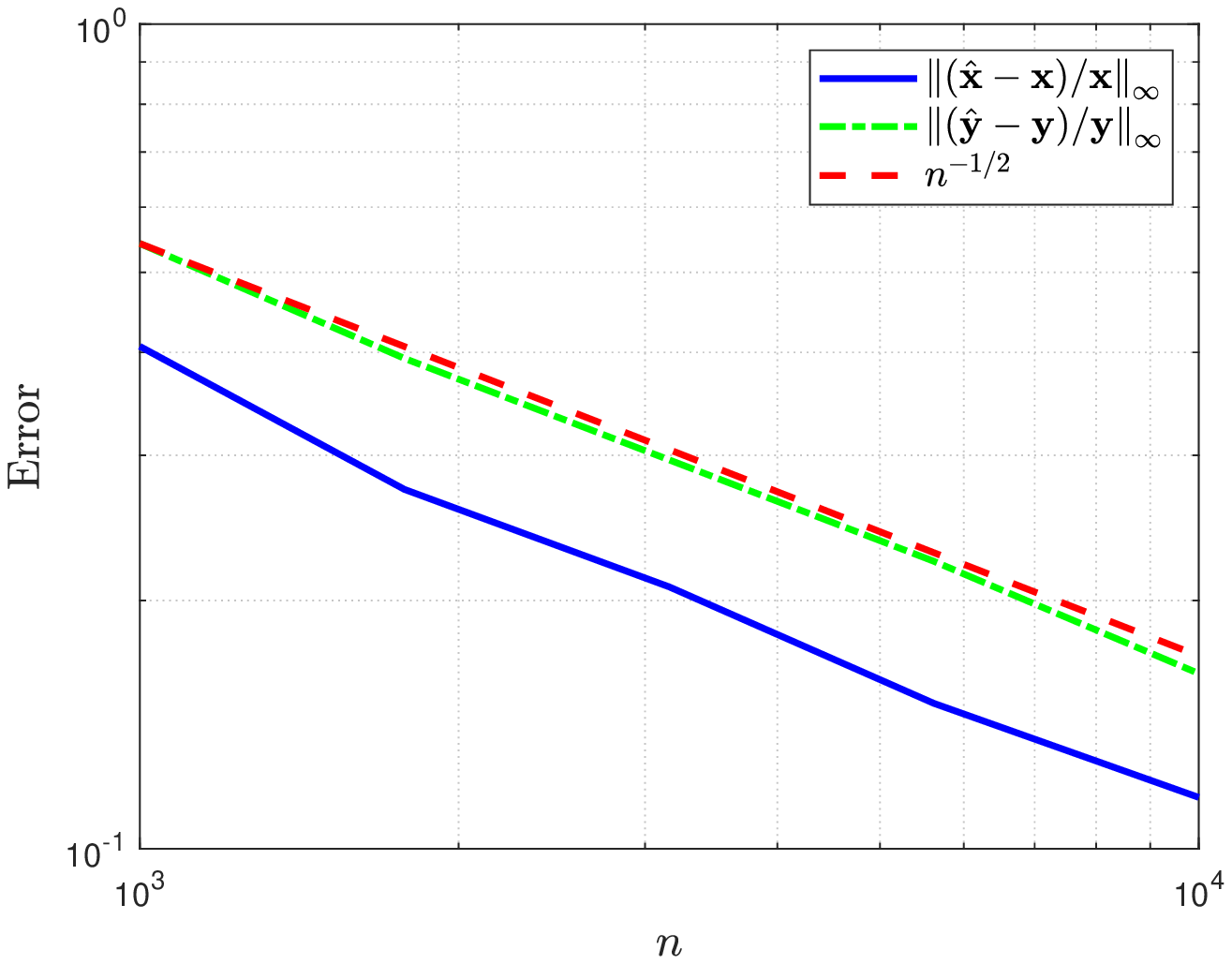} 
    }
    \subfloat[][$m = \lceil n/2 \rceil$, $s^2/n = 10 n$, $\delta_1\sim 1$, $\delta_3 \sim 0.5$] 
  	{
    \includegraphics[width=0.38\textwidth]{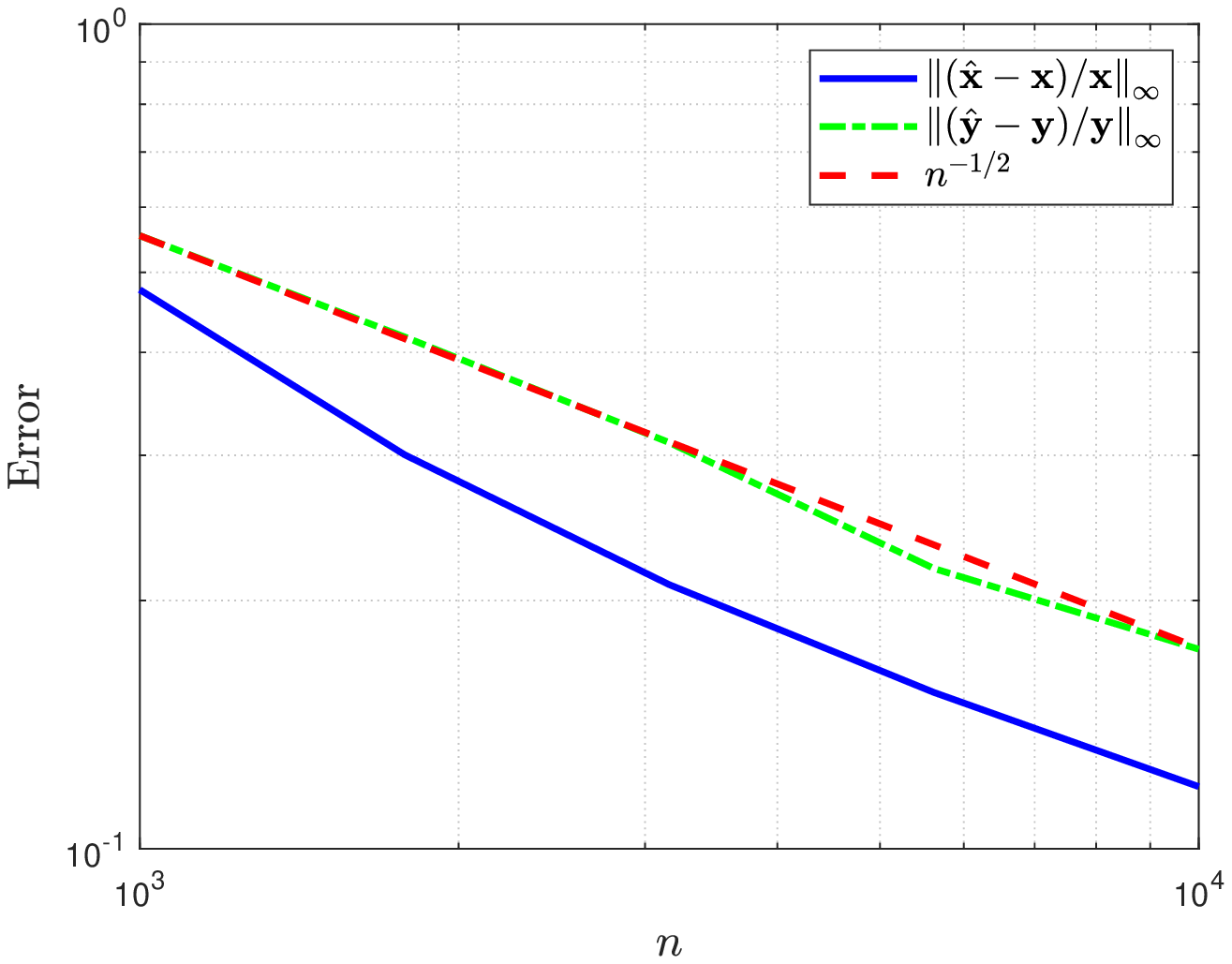} 
    }
    \\
    \subfloat[][$m = \lceil 3 n^{3/4} \rceil$, $s^2/n = 10$, $\delta_1\sim 1$, $\delta_3 \sim 0.5$]  
  	{
    \includegraphics[width=0.38\textwidth]{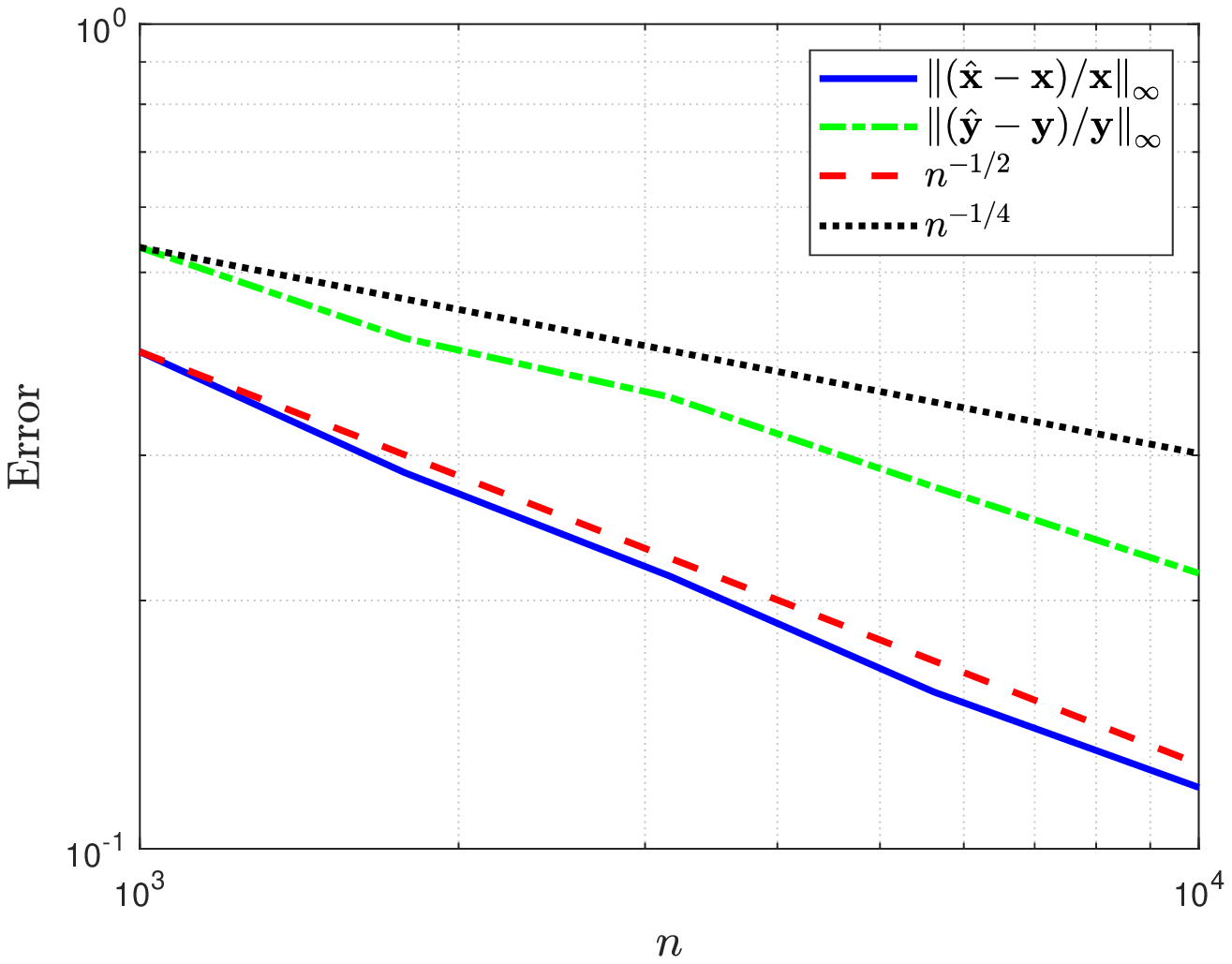}  
    }
    \subfloat[][$m = \lceil n/2 \rceil$, $s^2/n = 10$, $\delta_1\sim 2/3$, $\delta_3 \sim 0.5$] 
  	{
    \includegraphics[width=0.38\textwidth]{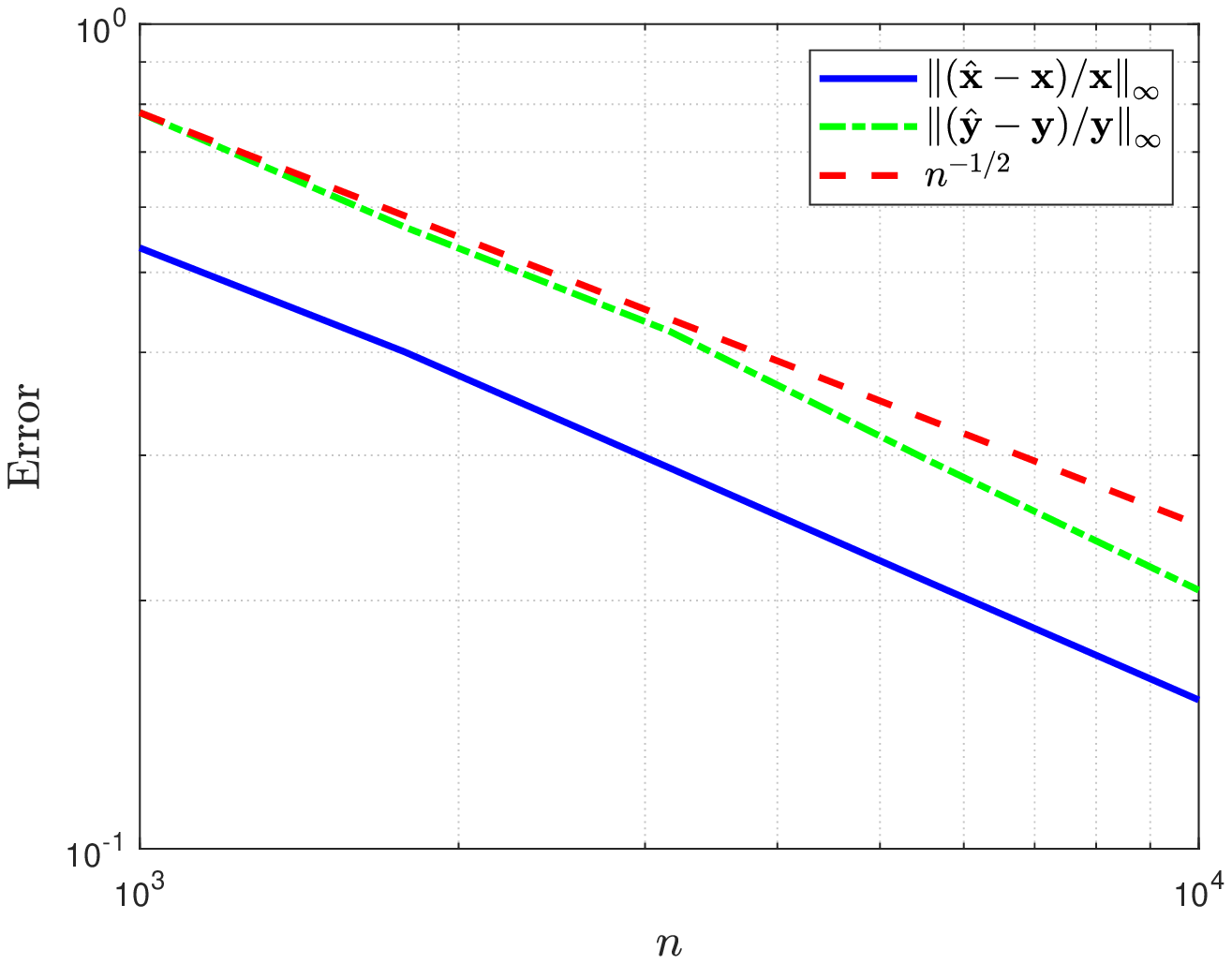} 
    } 

    }
    \caption
    {Maximal relative errors between $\hat{\mathbf{x}}$ and $\mathbf{x}$ and between $\hat{\mathbf{y}}$ and $\mathbf{y}$, as functions of $n$ in four different scenarios. The variance matrix here is given by $S = D\{\mathbf{x}\} \widetilde{S} D\{\mathbf{y}\}$, where $\widetilde{S}$ was generated according to Example~\ref{example: random S example 1}. 
    } \label{fig: convergense of estimated scaling factors, general rank case}
    \end{figure} 

\section{Application to signal detection and recovery} \label{sec: application to signal detection and recovery}
\subsection{Signal identification and rank estimation} \label{sec: rank estimation}
We begin by reviewing the spectral behavior of homoskedastic noise. Letting $\Sigma =  {E} {E}^T/n$, we define the \textit{Empirical Spectral Distribution} (ESD) of $\Sigma$ as
\begin{equation}
    F_{\Sigma} (\tau)= \frac{1}{m} \sum_{i=1}^m \mathbbm{1}\left(\lambda_i \{ \Sigma \} \leq \tau \right), \label{eq: empirical spectral distribution}
\end{equation}
where $\mathbbm{1}(\cdot)$ is the indicator function and $\lambda_i \{ \Sigma \}$ is the $i$th largest eigenvalue of $\Sigma$. If the noise is homoskedastic, i.e., $S = \sigma^2 \mathbf{1}_{m\times n}$ for some $\sigma>0$, then as $m,n\rightarrow \infty$ and $m/n\rightarrow \gamma \in (0,1]$, the empirical spectral distribution $F_{\Sigma}(\tau)$ converges almost surely to the Marchenko-Pastur (MP) distribution $F_{\gamma,\sigma}(\tau)$~\cite{marvcenko1967distribution}, which is the cumulative distribution function of the MP density
\begin{equation}
    dF_{\gamma,\sigma}(\tau) = \frac{\sqrt{(\beta_+ - \tau)(\tau - \beta_-)}}{2\pi \sigma^2 \gamma \tau} \mathbbm{1}\left( \beta_- \leq \tau \leq \beta_+\right), \label{eq:MP density}
\end{equation}
where $\beta_{\pm} = \sigma^2 (1\pm \sqrt{\gamma})^2$. Moreover, for many types of noise distributions~\cite{geman1980limit,yin1988limit}, the largest eigenvalue of $\Sigma$ converges almost surely (in the same asymptotic regime) to the upper edge of the MP density, i.e., $\lambda_1\{\Sigma\} \overset{a.s.}{\longrightarrow} \beta_+$.

If the noise variance matrix $S$ is of rank one, i.e., $S = \mathbf{x}\mathbf{y}^T$, then the normalization~\eqref{eq: scaling rows and columns} makes the noise completely homoskedastic, imposing the standard spectral behavior described above on $\widetilde{\Sigma} = \widetilde{E}\widetilde{E}^T/n$. For more general variance matrices, the normalization~\eqref{eq: scaling rows and columns} makes the noise variance matrix doubly regular (see Proposition~\ref{prop: matrix scaling} and Definition~\ref{def: doubly regular matrix}). Importantly, it was shown in~\cite{landa2022biwhitening} (see Proposition 4.1 therein), that under mild regularity conditions, this normalization is sufficient to enforce the standard spectral of homoskedastic noise.
Hence, we propose to use our estimates of $\mathbf{x}$ and $\mathbf{y}$ (described and analyzed in Section~\ref{sec: method derivation and analysis}) to normalize the rows and columns of the data, thereby making the noise variance matrix close to doubly regular. Specifically, we define $\hat{Y}\in\mathbb{R}^{m\times n}$, $\hat{X}\in\mathbb{R}^{m\times n}$, and $\hat{E}\in\mathbb{R}^{m\times n}$ according to
\begin{equation}
    \hat{Y} = (D\{\mathbf{\hat{x}}\})^{-1/2} Y (D\{\mathbf{\hat{y}}\})^{-1/2} = \hat{X} + \hat{E}, \label{eq: scaling rows and columns with estimated scaling factors}
\end{equation}
where $\hat{X} = (D\{\mathbf{\hat{x}}\})^{-1/2} X (D\{\mathbf{\hat{y}}\})^{-1/2}$ and $\hat{E} = (D\{\mathbf{\hat{x}}\})^{-1/2} E (D\{\mathbf{\hat{y}}\})^{-1/2}$. 
We now have the following theorem.
\begin{thm} \label{thm: MP law for E_hat}
Suppose that the assumptions of Theorem~\ref{thm: convergence of estimated scaling fators for generl S} hold and consider the asymptotic regime $m,n \rightarrow\infty$ with $m/n\rightarrow\gamma \in (0,1]$. Then, the empirical spectral distribution of $\hat{\Sigma} = \hat{E} \hat{E}^T / n$, given by $F_{\hat{\Sigma}}(\tau)$ (see~\eqref{eq: empirical spectral distribution}), converges almost surely to the Marchenko-Pastur distribution with parameter $\gamma$ and variance $1$, i.e., $F_{\gamma,1}(\tau)$, for all $\tau \in \mathbb{R}$. Moreover, the largest eigenvalue of $\hat{\Sigma}$ converges almost surely to the upper edge of the MP density $\beta_+ = (1 + \sqrt{\gamma})^2$. 
\end{thm}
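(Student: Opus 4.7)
The plan is to split the analysis into a deterministic normalization piece and a small multiplicative perturbation. Let $\widetilde{E} = (D\{\mathbf{x}\})^{-1/2} E (D\{\mathbf{y}\})^{-1/2}$ denote the noise matrix scaled by the \emph{true} scaling factors of $S$, and let $\widetilde{\Sigma} = \widetilde{E}\widetilde{E}^T/n$. By Proposition~\ref{prop: matrix scaling} and the definition of $\mathbf{x},\mathbf{y}$, the variance matrix of $\widetilde{E}$ is $\widetilde{S} = (D\{\mathbf{x}\})^{-1} S (D\{\mathbf{y}\})^{-1}$, which is doubly regular in the sense of Definition~\ref{def: doubly regular matrix}. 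Under Assumptions~\ref{assump: noise moment bound} and~\ref{assump: variance boundedness}, the entries of $\widetilde{E}$ are independent, zero mean, have uniformly bounded moments (after rescaling by $\sqrt{n}$), and $\widetilde{S}$ has entries lying between fixed positive constants divided by $n$ (this uses the boundedness of $\mathbf{x}$ and $\mathbf{y}$ from Lemma~\ref{lem: boundedness of x and y}). Invoking Proposition~4.1 of~\cite{landa2022biwhitening}, it follows that $F_{\widetilde{\Sigma}}(\tau) \to F_{\gamma,1}(\tau)$ almost surely for every $\tau\in\mathbb{R}$, and $\lambda_1(\widetilde{\Sigma}) \to \beta_+ = (1+\sqrt{\gamma})^2$ almost surely.

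Next, I would relate $\hat{E}$ to $\widetilde{E}$ multiplicatively by writing
\begin{equation*}
\hat{E} = D_L \widetilde{E} D_R, \qquad D_L = D\left\{\sqrt{\mathbf{x}/\hat{\mathbf{x}}}\right\}, \qquad D_R = D\left\{\sqrt{\mathbf{y}/\hat{\mathbf{y}}}\right\}.
\end{equation*}
By Theorem~\ref{thm: convergence of estimated scaling fators for generl S}, the bounds $\|(\hat{\mathbf{x}}-\mathbf{x})/\mathbf{x}\|_\infty \leq C' n^{-\delta}$ and $\|(\hat{\mathbf{y}}-\mathbf{y})/\mathbf{y}\|_\infty \leq C' n^{-\delta'}$ (for some positive $\delta,\delta'$ that depend on the various exponents in the assumptions) hold with probability at least $1 - c'(t) n^{-t}$ for every $t>0$. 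Choosing $t>1$ and applying the Borel--Cantelli lemma converts these into almost sure bounds. Consequently, $\|D_L - I\|_{\mathrm{op}} \to 0$ and $\|D_R - I\|_{\mathrm{op}} \to 0$ almost surely as $m,n\to\infty$.

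For the main conclusion, I would transfer the spectral statements from $\widetilde{\Sigma}$ to $\hat{\Sigma} = \hat{E}\hat{E}^T/n = D_L \widetilde{E} D_R^2 \widetilde{E}^T D_L$ via a standard multiplicative perturbation argument. Writing $\rho_n = \max\{\|D_L\|_{\mathrm{op}}^2\|D_R\|_{\mathrm{op}}^2,\|D_L^{-1}\|_{\mathrm{op}}^2\|D_R^{-1}\|_{\mathrm{op}}^2\}$, the Courant--Fischer characterization yields, for every $i\in[m]$,
\begin{equation*}
\rho_n^{-1}\,\lambda_i(\widetilde{\Sigma}) \;\le\; \lambda_i(\hat{\Sigma}) \;\le\; \rho_n\,\lambda_i(\widetilde{\Sigma}),
\end{equation*}
and $\rho_n \to 1$ almost surely by the preceding step. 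This squeeze yields $\lambda_1(\hat{\Sigma}) \to \beta_+$ almost surely, and for the bulk it shows that the ESDs $F_{\hat{\Sigma}}$ and $F_{\widetilde{\Sigma}}$ have Kolmogorov (or Lévy) distance tending to zero almost surely, so $F_{\hat{\Sigma}}(\tau)\to F_{\gamma,1}(\tau)$ for every continuity point $\tau$, and by continuity of $F_{\gamma,1}$, for every $\tau\in\mathbb{R}$.

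The main obstacle I anticipate is the careful verification that Proposition~4.1 of~\cite{landa2022biwhitening} is applicable in our setting: one must ensure that its regularity hypotheses (the moment bounds under Assumption~\ref{assump: noise moment bound}, the uniform positivity and boundedness of $\widetilde{S}_{ij}$ after rescaling by $n$, and the double-regularity of $\widetilde{S}$) are all met. The second, more subtle point is the passage from the high-probability rates in Theorem~\ref{thm: convergence of estimated scaling fators for generl S} to almost sure convergence uniformly across the random index $m,n$, which is handled by choosing a sufficiently large $t$ so that the failure probabilities are summable and invoking Borel--Cantelli along any chosen scaling of $m$ with $n$ consistent with $m/n\to\gamma$.
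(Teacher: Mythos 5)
Your proposal is correct, and it shares the paper's overall skeleton: introduce the exactly-scaled noise $\widetilde{E}$, obtain the MP law and edge convergence for $\widetilde{\Sigma}$ from Proposition~4.1 of~\cite{landa2022biwhitening}, control the discrepancy between the estimated and true scaling factors via Theorem~\ref{thm: convergence of estimated scaling fators for generl S}, and upgrade the high-probability bounds to almost sure statements by taking $t>1$ and Borel--Cantelli. Where you genuinely diverge is in the perturbation-transfer step. The paper writes $\hat{E}$ \emph{additively} as $\widetilde{E}$ plus correction terms carrying the relative errors $\zeta^{(1)},\zeta^{(2)}$, bounds $n^{-1/2}\Vert \hat{E}-\widetilde{E}\Vert_2$ by first establishing $\Vert E \Vert_2 = \mathcal{O}_n(1)$ through Theorem~2.4 of~\cite{alt2017local} (which requires verifying its conditions (A)--(D)), and then transfers eigenvalues by Weyl's inequality for singular values. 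You instead exploit the exact \emph{multiplicative} identity $\hat{E}=D_L\widetilde{E}D_R$ and an Ostrowski-type congruence squeeze $\rho_n^{-1}\lambda_i(\widetilde{\Sigma})\le\lambda_i(\hat{\Sigma})\le\rho_n\lambda_i(\widetilde{\Sigma})$ with $\rho_n\to 1$; combined with the almost sure boundedness of $\lambda_1(\widetilde{\Sigma})$ (already supplied by the cited Proposition~4.1), this gives uniform eigenvalue closeness, hence the edge limit and Lévy-distance convergence of the ESDs, and pointwise convergence follows since $F_{\gamma,1}$ is continuous for $\gamma\in(0,1]$. The trade-off: your route avoids invoking~\cite{alt2017local} for an operator-norm bound on $E$ and is somewhat more self-contained, while the paper's additive route yields an explicit quantitative rate $\mathcal{O}_n(\max\{n^{\epsilon-1/2},n^{-\delta_1},n^{-\delta_3}\})$ for the eigenvalue perturbation, which the multiplicative squeeze gives only up to the (a.s. bounded but not explicitly rated) factor $\lambda_1(\widetilde{\Sigma})$.
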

The proof of Theorem~\ref{thm: MP law for E_hat} can be found in Appendix~\ref{appendix: proof of MP law for E_hat} and closely follows the proof of Theorem 2.6 in~\cite{landa2022biwhitening}. Theorem~\ref{thm: MP law for E_hat} establishes that under suitable conditions, $\hat{E}$ enjoys the same asymptotic spectral behavior described above for homoskedastic noise, namely the same limiting spectral distribution and the limit of the largest eigenvalue. This fact allows for a simple procedure for rank estimation. If no signal is present, i.e., if $X$ is the zero matrix, then the largest eigenvalue of $\hat{Y}\hat{Y}^T/n$ should be close to $(1+\sqrt{m/n})^2$ for sufficiently large $m$ and $n$. Alternatively, if an eigenvalue of $\hat{Y}\hat{Y}^T/n$ exceeds the threshold $(1+\sqrt{m/n})^2$ by a small positive constant for sufficiently large $m$ and $n$, then we can deduce that a signal is present in the data. Moreover, since the rank of $\hat{X}$ is identical to the rank of ${X}$ (as they differ by a positive diagonal scaling), we can estimate the rank of $X$ by counting how many eigenvalues of $\hat{Y}\hat{Y}^T/n$ exceed $(1+\sqrt{m/n})^2$ by a small positive constant $\varepsilon$, i.e., 
\begin{equation}
    \hat{r}_\varepsilon = \left\vert \left\{i\in[m]: \; \lambda_i\{\frac{1}{n} \hat{Y} \hat{Y}^T\} > \left( 1 + \sqrt{\frac{m}{n}}\right)^2  + \varepsilon  \right\} \right\vert.
\end{equation}
Here, $\varepsilon$ accounts for finite-sample fluctuations of the largest eigenvalue of $\hat{E}\hat{E}^T/n$, which diminish as $m,n \rightarrow\infty$.
It can be shown that for any fixed $\varepsilon>0$, under the conditions in Theorem~\ref{thm: convergence of estimated scaling fators for generl S}, the rank estimator $\hat{r}_\varepsilon$ provides a consistent lower bound on the true rank $r$ as $n\rightarrow \infty$, namely
\begin{equation}
    \operatorname{Pr}\{ r < \hat{r}_\varepsilon \} \underset{n \rightarrow \infty}{\longrightarrow} 0,
\end{equation}
see Theorem 2.3 in~\cite{landa2022biwhitening} and its proof, which we do not repeat here for the sake of brevity. In other words, all signal components that are detected in this way are true signal components asymptotically. In what follows, we take $\varepsilon \rightarrow 0$ for simplicity and denote $\hat{r} = \hat{r}_0$. Algorithm~\ref{alg:rank estimation} summarizes our proposed rank estimation procedure, noting that comparing the eigenvalues of $\hat{Y}\hat{Y}^T/n$ to $\beta_+$ is equivalent to comparing the singular values of $\hat{Y}$ to $\sqrt{m} + \sqrt{n}$. 

\begin{algorithm}
\caption{Rank estimation}\label{alg:rank estimation}
\begin{algorithmic}[1]
\Statex{\textbf{Input:} Data matrix $Y\in \mathbb{R}^{m\times n}$ with $m\leq n$ and no rows or columns that are entirely zero.}
\State \label{alg: step 1}Apply Algorithm~\ref{alg:noise standardization} to obtain $\hat{Y}$.
\State \label{alg: step 2}Compute $\hat{r}$  by counting the number of singular values of $\hat{Y}$ that exceed $\sqrt{m} + \sqrt{n}$.
\end{algorithmic}
\end{algorithm}

An important advantage of our proposed rank estimation technique is that the normalization of the rows and columns can enhance the spectral signal-to-noise ratio, namely, improve the ratios between the singular values of the signal and the operator norm of the noise. Specifically, it was shown in~\cite{leeb2021matrix} (see Proposition 6.3 therein) that in the case of $E_{ij} \sim \mathcal{N}(0,S_{ij})$, $S = \mathbf{x}\mathbf{y}^T$, and if the singular vectors of $X$ satisfy certain genericity and weighted orthogonality conditions (see~\cite{leeb2021matrix} for more details), then
\begin{equation}
    \frac{\sigma_i^2\{\widetilde{X}\}}{\Vert \widetilde{E} \Vert_2^2} \geq \tau \frac{\sigma_i^2\{{X}\}}{\Vert {E} \Vert_2^2},
\end{equation}
almost surely as $m,n\rightarrow\infty$ with $m/n\rightarrow\gamma$, for all $i=1,\ldots,r$, where $\sigma_i\{\cdot\}$ denotes the $i$th largest singular value of a matrix and
\begin{equation}
    \tau = \left( \frac{1}{m} \sum_{i=1}^m \mathbf{x}_i \right) \left( \frac{1}{m} \sum_{i=1}^m \frac{1}{\mathbf{x}_i} \right) \left( \frac{1}{n} \sum_{j=1}^n \mathbf{y}_j \right) \left( \frac{1}{n} \sum_{j=1}^n \frac{1}{\mathbf{y}_j} \right).
\end{equation}
Here, we always have $\tau \geq 1$, where $\tau = 1$ if and only if $\mathbf{x}$ and $\mathbf{y}$ are constant vectors (i.e., vectors with identical entries); see~\cite{leeb2021matrix}. In this setting, if the noise variances are not identical across rows and/or columns, the normalization~\eqref{eq: scaling rows and columns} will improve the signal-to-noise ratio for each signal component. Moreover, the improvement increases with the level of heteroskedasticity, i.e., with the amount of variability of the entries in $\mathbf{x}$ and $\mathbf{y}$ as encoded by $\tau$. Importantly, even if some of the signal components are originally below the spectral noise level, i.e., ${\sigma_i\{{X}\}} < {\Vert {E} \Vert_2}$, the corresponding signal components after the normalization~\eqref{eq: scaling rows and columns} can exceed it if $\tau$ is sufficiently large. Under the conditions in Theorem~\ref{thm: convergence of estimated variance factors} in Section~\ref{sec: rank one case}, which allow us to estimate $\mathbf{x}$ and $\mathbf{y}$ accurately for large matrix dimensions, our proposed normalization~\eqref{eq: scaling rows and columns with estimated scaling factors} will enhance the spectral signal-to-noise ratio analogously.

More generally, the normalization~\eqref{eq: scaling rows and columns} can significantly reduce the operator norm of the noise with respect to the average noise variance in the data. This advantage holds for general noise distributions and variance patterns regardless of the signal.
To explain this advantage, suppose for simplicity that the average noise variance across all entries in the original data is one, i.e., $\sum_{ij}S_{ij}/(mn) = 1$. This property also holds after the normalization~\eqref{eq: scaling rows and columns} since the corresponding noise variance matrix is doubly regular. On the one hand, before the normalization, we have the inequality
\begin{equation}
    \frac{1}{n} \mathbb{E} \Vert {E} \Vert^2_2 \geq \max \left\{ \max_{i\in[n]} \left\{ \frac{1}{n} \sum_{j=1}^n S_{ij} \right\}, \left( \frac{m}{n}\right) \max_{j\in [m]} \left\{ \frac{1}{m} \sum_{i=1}^m S_{ij} \right\}\right\},
\end{equation}
which follows from the fact that the operator norm of a matrix is lower-bounded by the Euclidean norm of any of its rows or columns. Hence, the quantity $\mathbb{E}\Vert {{E}}  \Vert^2_2/n$ will be large if the average noise variance in at least one of the rows or columns is large. For instance, if $m/n = \gamma = 0.5$ and the average noise variance in one of the rows or columns is $100$, then $\mathbb{E}\Vert {{E}}  \Vert^2_2/n$ must be at least $50$ (see Figure~\ref{fig: spectrum before and after normalization, outlier rows anc columns} in the introduction for an illustration). On the other hand, after the normalization~\eqref{eq: scaling rows and columns}, the quantity $\mathbb{E}\Vert \widetilde{{E}}  \Vert^2_2/n = \mathbb{E}[ \lambda_1\{\widetilde{E} \widetilde{E}^T / n\} ]$ approaches the MP upper edge $(1+\sqrt{\gamma})^2$ in the asymptotic regime $m,n\rightarrow\infty$ with $m/n\rightarrow\gamma$. Crucially, $(1+\sqrt{\gamma})^2$ is upper bounded by $4$ for all $\gamma \in (0,1]$. Therefore, the operator norm of the noise can be substantially reduced by the normalization~\eqref{eq: scaling rows and columns} while keeping the average noise variance in the data the same. This advantage holds similarly for the normalization~\eqref{eq: scaling rows and columns with estimated scaling factors} whenever $\mathbf{x}$ and $\mathbf{y}$ are accurately estimated by $\hat{\mathbf{x}}$ and $\hat{\mathbf{y}}$, respectively (see Section~\ref{sec: variance matrices with general rank}).

Figure~\ref{fig: spectrum before and after biwhitening} depicts the spectrum of a matrix $Y$ of size $1000 \times 2000$ before and after the normalizations~\eqref{eq: scaling rows and columns} and~\eqref{eq: scaling rows and columns with estimated scaling factors}, where the true scaling factors $\mathbf{x}$ and $\mathbf{y}$ were computed by the Sinkhorn-Knopp algorithm~\cite{sinkhorn1967diagonal,sinkhorn1967concerning} applied to $S$. Here, the signal matrix $X$ is of rank $r=10$ with singular values $s_1,\ldots,s_{10} = \sqrt{10n}$, where the singular vectors were drawn from random Gaussian vectors followed by orthonormalization. The noise entries $E_{ij}$ were sampled independently from $\mathcal{N}(0,S_{ij})$, where $S$ is of rank $30$  and was generated according to $S = AB$, $A\in\mathbb{R}^{m\times 30}$, $B\in\mathbb{R}^{30\times n}$, $A_{ij}, B_{ij} \sim \operatorname{exp}\{\mathcal{N}(0,t^2)\}$, and $t$ is the heteroskedasticity parameter ($t=0$ corresponds to homoskedastic noise). We then fixed $t=2$ and normalized $S$ by a scalar so that its average entry is $1$. Since the entries of $A$ and $B$ are generated from a log-normal distribution, the entries of $S$ can differ substantially across rows and columns. 

\begin{figure} 
  \centering
  	{
  	\subfloat[][Singular values of $Y$]
  	{
    \includegraphics[width=0.3\textwidth]{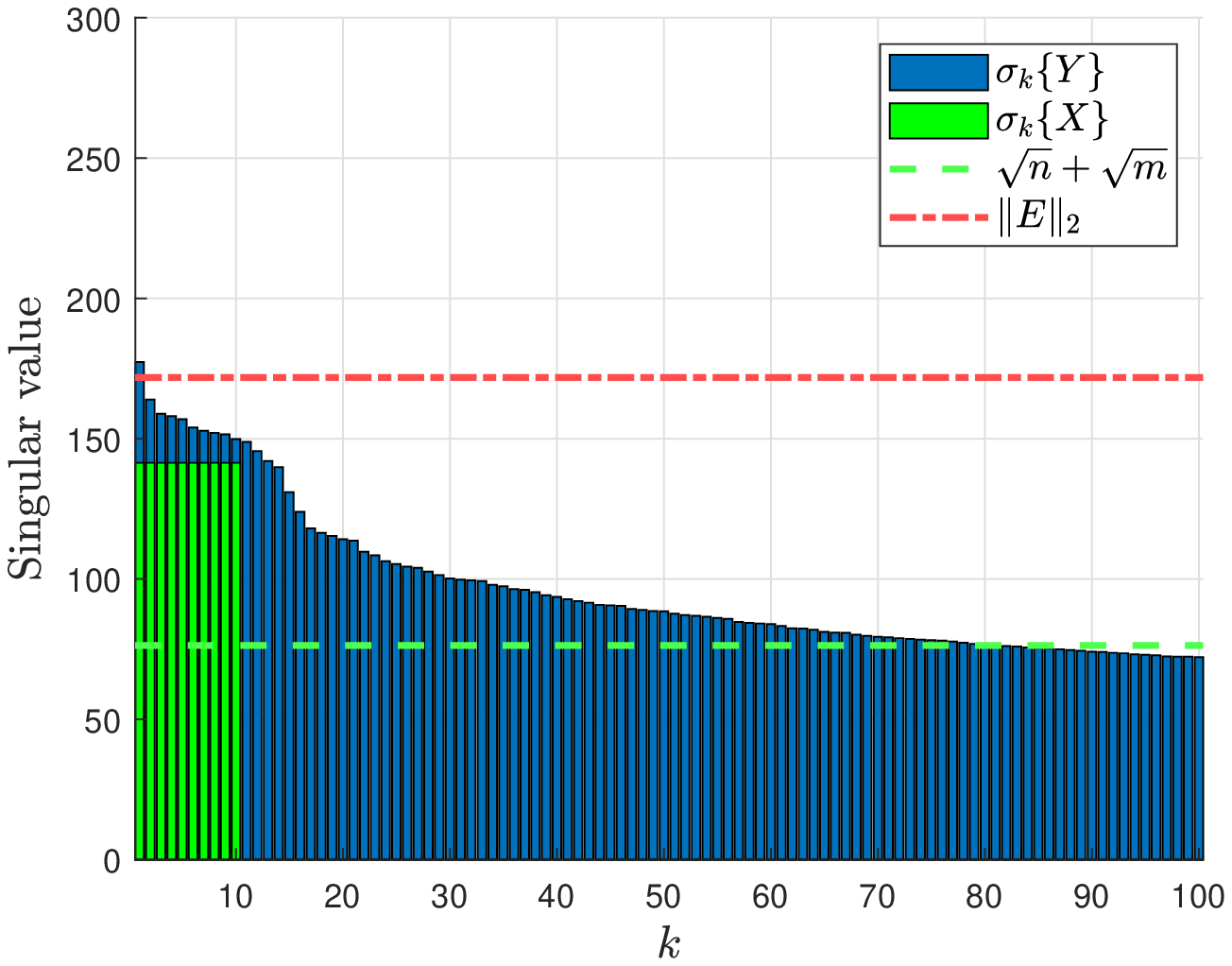} \label{fig: sorted eigenvalues before biwhitening}
    }
    \subfloat[][Singular values of $\widetilde{Y}$] 
  	{
    \includegraphics[width=0.3\textwidth]{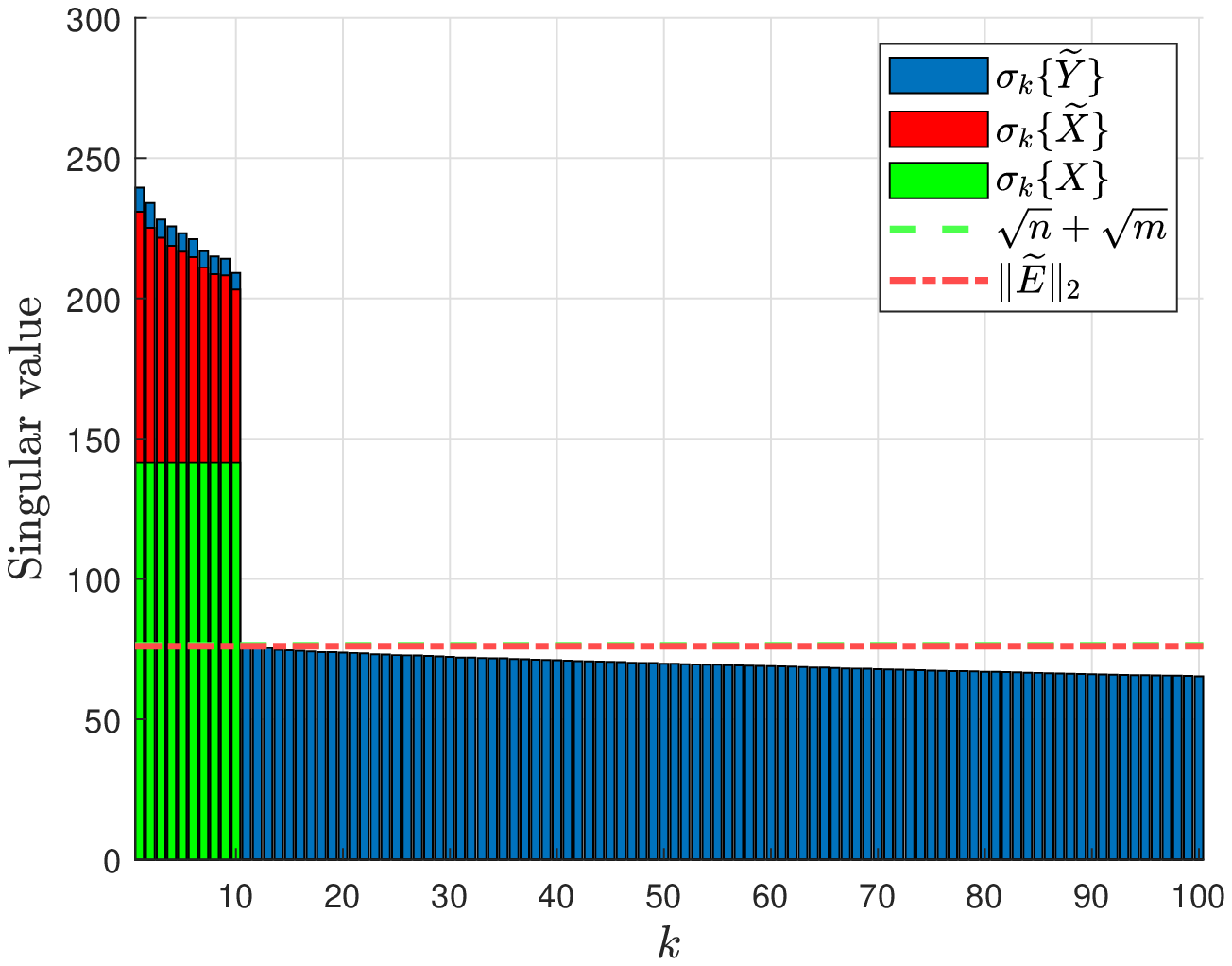} 
    }
     \subfloat[][Singular values of $\hat{Y}$] 
    {
    \includegraphics[width=0.3\textwidth]{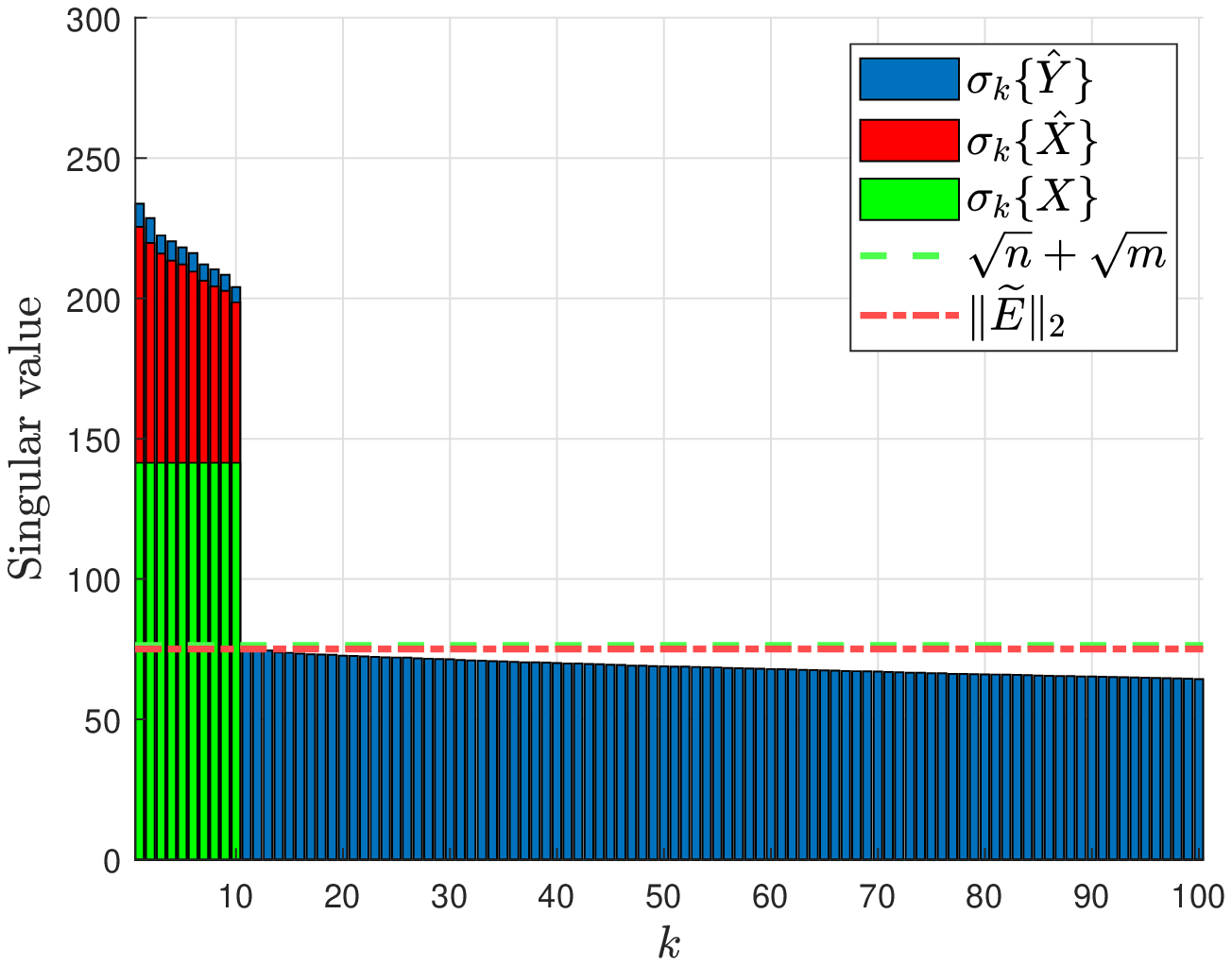} 
    }
    \\
    \subfloat[][Eigenvalue density of $Y Y^T/n$]  
  	{
    \includegraphics[width=0.3\textwidth]{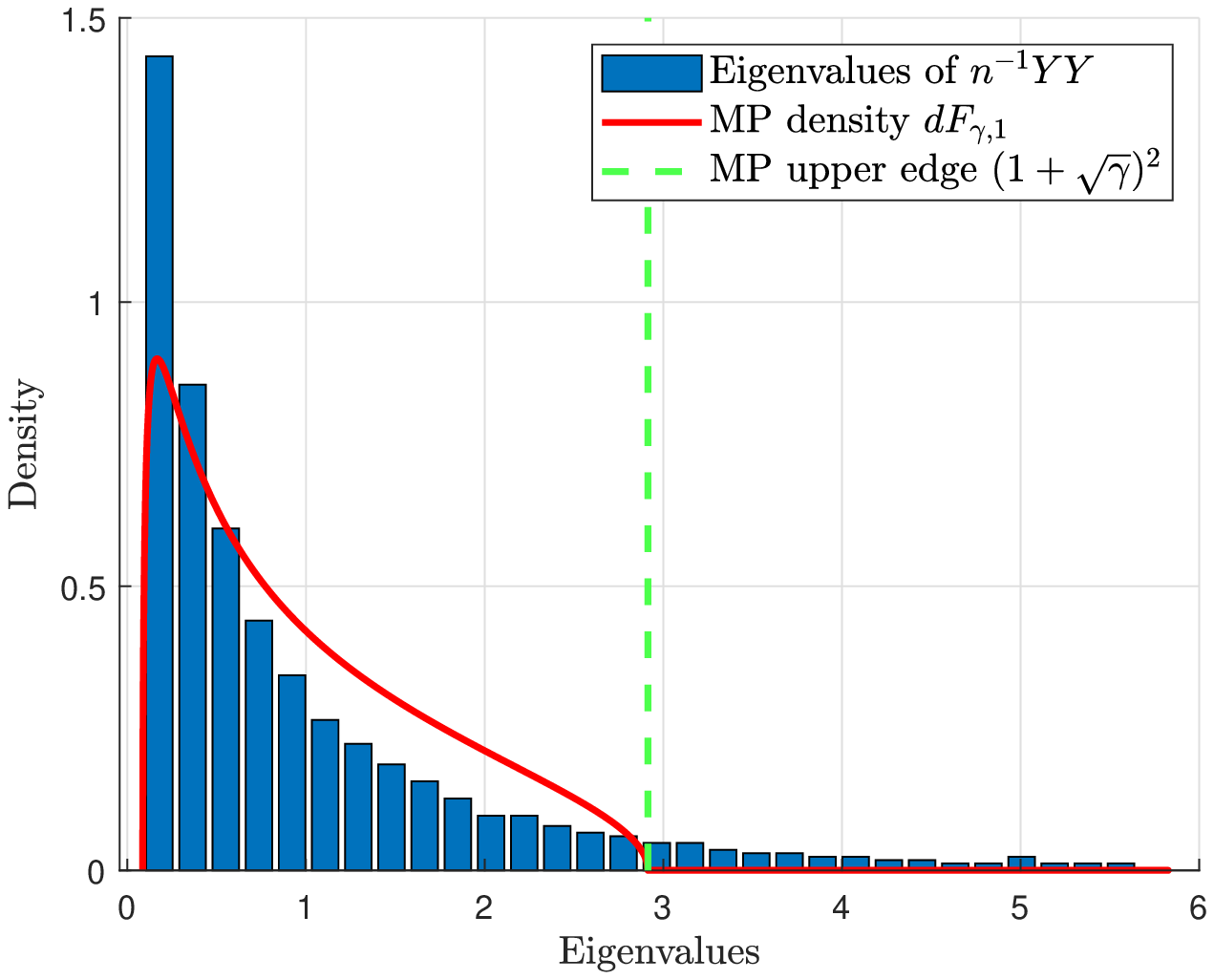}  
    }
    \subfloat[][Eigenvalue density of $\widetilde{Y} \widetilde{Y}^T/n$] 
  	{
    \includegraphics[width=0.3\textwidth]{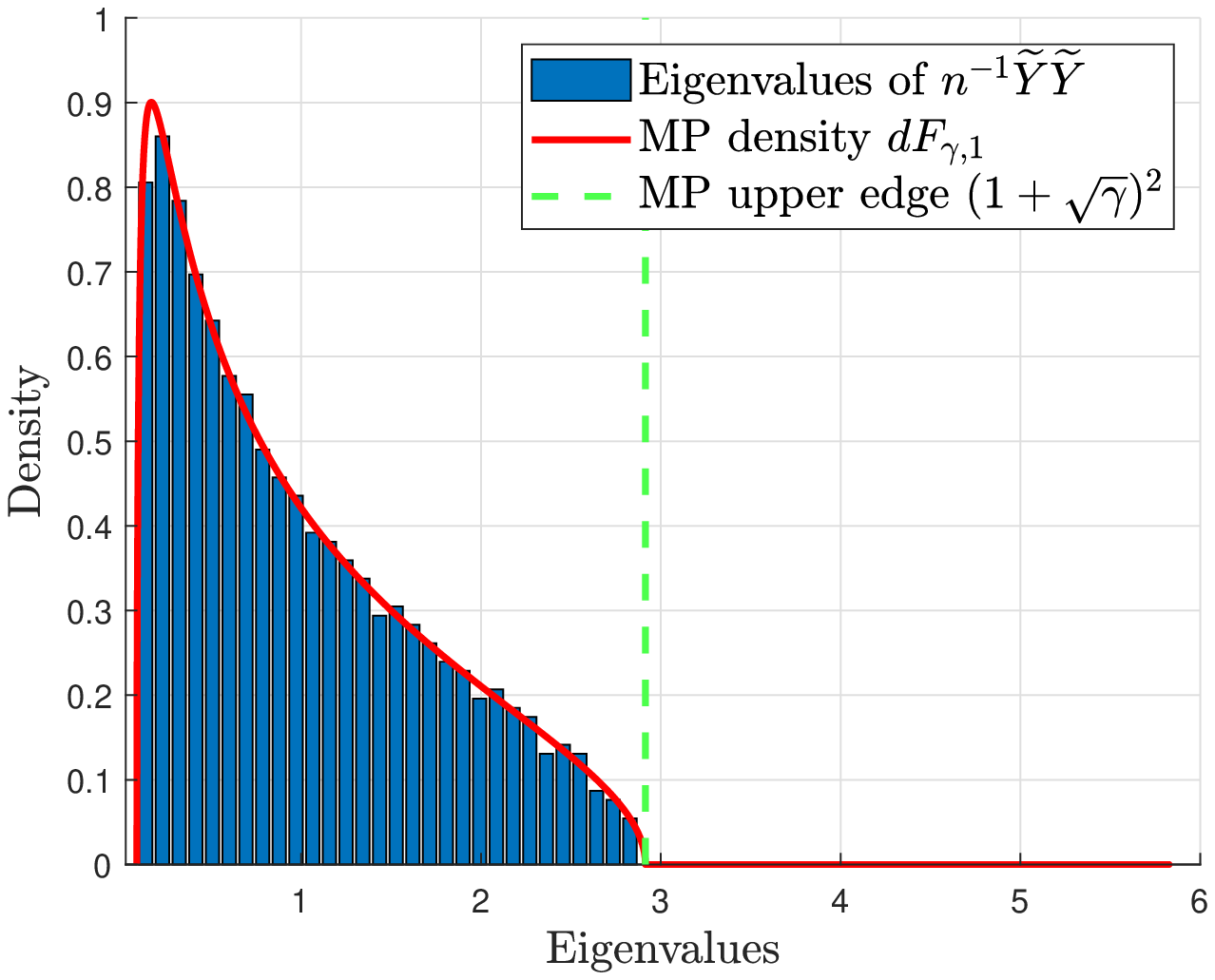} 
    } 
    \subfloat[][Eigenvalue density of $\hat{Y} \hat{Y}^T/n$] 
  	{
    \includegraphics[width=0.3\textwidth]{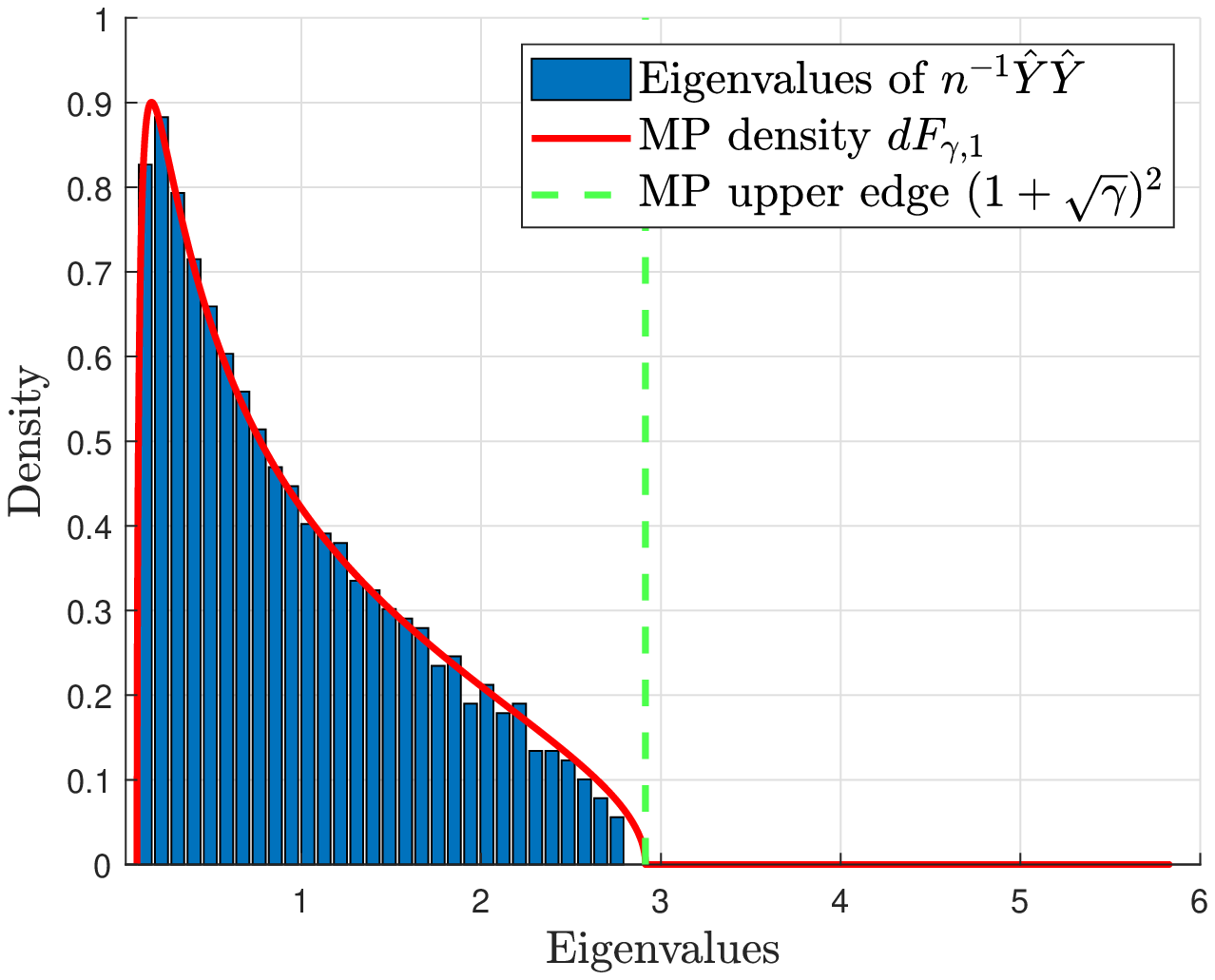} 
    } 

    }
    \caption
    {The spectrum of the original data matrix (panel (a)) and after applying the normalizations~\eqref{eq: scaling rows and columns} (panel(b)) and~\eqref{eq: scaling rows and columns with estimated scaling factors}, where $m=1000$, $n=2000$, and $r=10$. 
    } \label{fig: spectrum before and after biwhitening}
    \end{figure} 
    
Figure~\ref{fig: spectrum before and after biwhitening} shows that for the original data, the spectral noise level $\Vert E\Vert_2$ exceeds the signal singular values $\sigma_i\{X\}$. Moreover, the largest singular value of $Y$ corresponds to the noise level $\Vert {E} \Vert_2$ almost perfectly. Hence, the signal components are undetectable by traditional approaches that compare the singular values of $Y$ to the noise level. Moreover, the data singular values do not admit any noticeable gap after the $10$'th singular value, and the eigenvalue density of $Y Y^T/n$ does not fit the MP law. In particular, the eigenvalue density of $Y Y^T/n$ is much more spread out than the MP density, with a large number of eigenvalues that exceed the upper edge $\beta_+$. These large eigenvalues arise due to the heteroskedasticity of the noise. However, after our proposed normalizations, the signal singular values increase and the noise level decreases simultaneously, revealing the signal components and allowing for their easy detection. Specifically, we immediately see a large gap after the $10$'th singular value of $\widetilde{Y}$ and $\hat{Y}$, while the $11$'th singular values is just below the critical threshold $\sqrt{n} + \sqrt{m}$, which is very close to $\Vert {\widetilde{E}} \Vert_2$ and $\Vert {\hat{E}} \Vert_2$. Furthermore, the eigenvalue densities of $\widetilde{Y} \widetilde{Y}^T/n$ and $\hat{Y} \hat{Y}^T/n$ are very close to the MP law, even though the noise variances after the normalizations~\eqref{eq: scaling rows and columns} and~\eqref{eq: scaling rows and columns with estimated scaling factors} are not identical (since $S$ is not rank-one). 

The results above suggest that for heteroskedastic noise, traditional signal detection and rank estimation techniques that rely on inspecting the spectrum of the observed data without normalization can perform suboptimally. In particular, methods that compare the observed leading singular values of the data to the largest singular value of the noise (estimated by, e.g., parallel analysis and its variants~\cite{dobriban2020permutation,dobriban2019deterministic,hong2020selecting}) will not detect $9$ out of the $10$ signal components in the experiment outlined above (Figure~\ref{fig: sorted eigenvalues before biwhitening}); see also the experiments in~\cite{landa2022biwhitening} that further illustrate this phenomenon. On the other hand, our normalization stabilizes the spectral behavior of the noise and can substantially reduce its operator norm (relative to the average noise variance), thereby allowing for improved detection of weak signal components. 

\subsection{Low-rank signal recovery} \label{sec: weighted low-rank approximation}
In the previous section, we showed that normalizing the rows and columns of the data matrix according to~\eqref{eq: scaling rows and columns with estimated scaling factors} can be highly beneficial for signal detection. In particular, the normalization standardizes the spectral behavior of the noise and can improve the spectral signal-to-noise ratio. In this section, we propose to leverage these advantages for improved recovery of the low-rank signal matrix $X$. Specifically, instead of applying a low-rank approximation to the original data matrix $Y$, where the noise can be highly heteroskedastic, we advocate applying a low-rank approximation to the normalized matrix $\hat{Y}$ and then un-normalize the rows and columns of the resulting matrix. We motivate and justify this approach through maximum-likelihood estimation and weighted low-rank approximation. We then demonstrate the practical advantages of this approach in simulations.

Let us consider the case of Gaussian heteroskedastic noise $E_{ij}\sim \mathcal{N}(0,S_{ij})$. In this case, we have $Y_{ij} \sim \mathcal{N}(X_{ij},S_{ij})$, where we treat $X$ as a deterministic parameter to be estimated. The negative log-likelihood of observing $Y$ is given by
\begin{equation}
    \mathcal{L}_S(X) = \frac{1}{2} \sum_{i=1}^m\sum_{j=1}^n \left[ \frac{\vert X_{ij} - Y_{ij} \vert^2}{S_{ij}} + \log (2\pi S_{ij})\right]. \label{eq: negative log-likelihood}
\end{equation}
If the signal's rank $r$ is known, then the maximum-likelihood estimate of $X$ is obtained by minimizing $\mathcal{L}_S(\Theta)$ over all matrices $\Theta \in \mathbb{R}^{m\times n}$ of rank $r$. This is equivalent to minimizing
\begin{equation}
    \widetilde{\mathcal{L}}_S(\Theta) = \sum_{i=1}^m\sum_{j=1}^n \frac{\vert \Theta_{ij} - Y_{ij} \vert^2}{S_{ij}}, \label{eq: L_tilde maximum-likelihood def}
\end{equation}
over all rank-$r$ matrices, a problem known as weighted low-rank approximation~\cite{srebro2003weighted}. Specifically, the goal is to find an $m\times n$ matrix of rank $r$ that best fits the data $Y$ in a weighted least-squares sense, where the weights are inversely proportional to the noise variances in the data. 

If the noise variance matrix $S$ is of rank one, i.e., $S = \mathbf{x}\mathbf{y}^T$, then the weighted low-rank approximation problem admits a simple closed-form solution that can be written via the normalization~\eqref{eq: scaling rows and columns}. Specifically, the minimizer of $\widetilde{\mathcal{L}}_{\mathbf{x}\mathbf{y}^T}(\Theta)$ over all matrices $\Theta$ of rank $r$ is given by (see, e.g.,~\cite{irani2000factorization,srebro2003weighted})
\begin{equation}
    (D\{\mathbf{x}\})^{1/2} T_r\{\widetilde{Y}\} (D\{\mathbf{y}\})^{1/2}, \label{eq: soltuion to rank-one weighted low-rank approximation problem}
\end{equation}
where $\widetilde{Y}$ is from~\eqref{eq: scaling rows and columns} and $T_r\{\widetilde{Y}\}$ is the closest rank-$r$ matrix to $\widetilde{Y}$ in Frobenius norm. In other words, if $S=\mathbf{x}\mathbf{y}^T$, then the solution to the weighted low-rank approximation problem is given by first scaling the rows and columns of the data matrix $Y$ according to~\eqref{eq: scaling rows and columns}, then truncating the SVD of the normalized matrix to its $r$ largest components, and lastly, unscaling the rows and columns of the resulting rank-$r$ matrix. Since $\mathbf{x}$ and $\mathbf{y}$ are unknown, we replace them with their estimates $\hat{\mathbf{x}}$ and $\hat{\mathbf{y}}$ from Algorithm~\ref{alg:noise standardization}. Further, we replace the true rank $r$ with its estimate $\hat{r}$ from Section~\ref{sec: rank estimation} (see Algorithm~\ref{alg:rank estimation}). Overall, we obtain the signal estimate
\begin{equation}
    \overline{X} = (D\{\hat{\mathbf{x}}\})^{1/2} T_{\hat{r}}\{\hat{{Y}}\} (D\{\hat{\mathbf{y}}\})^{1/2}. \label{eq: estimated solution to weighted low-rank approximation problem}
\end{equation}

If the variance matrix $S$ is not rank-one, then the weighted low-rank approximation problem 
does not admit a closed-form solution~\cite{srebro2003weighted}. Since it is a non-convex optimization problem, solving it can be highly prohibitive from a computational perspective, particularly for large matrix dimensions. 
Therefore, it is advantageous to replace the variance matrix $S$ in the weighted low-rank approximation problem with a rank-one surrogate. As we show below, replacing $S$ in $\mathcal{L}_S$ from~\eqref{eq: negative log-likelihood} with $\mathbf{x}\mathbf{y}^T$, where $\mathbf{x}$ and $\mathbf{y}$ are the scaling factors of $S$ from Proposition~\ref{prop: matrix scaling}, has a favorable interpretation from the viewpoint of approximate maximum-likelihood estimation in the Gaussian noise model. Specifically, we have the following proposition, whose proof can be found in Appendix~\ref{appendix: proof of optimal approximate ML estimation}. 
\begin{prop} \label{prop: optimal approximate ML estimation}
For any fixed signal $X$, the matrix $\mathbf{x}\mathbf{y}^T$ is the unique minimizer of $\mathbb{E}[\mathcal{L}_A(X)]$ over all positive matrices $A\in\mathbb{R}^{m\times n}$ of rank one, where $\mathbf{x}$ and $\mathbf{y}$ are from Proposition~\ref{prop: matrix scaling} and the expectation is over $\{Y_{ij}\sim \mathcal{N}(X_{ij},S_{ij})\}$. If we alleviate the rank-one requirement, then the corresponding minimizer is $S$.
\end{prop}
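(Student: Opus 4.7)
The plan is to first reduce the stochastic optimization to a deterministic one. Since $Y_{ij}\sim\mathcal{N}(X_{ij},S_{ij})$, we have $\mathbb{E}[(Y_{ij}-X_{ij})^2]=S_{ij}$, so from~\eqref{eq: negative log-likelihood},
\begin{equation*}
\mathbb{E}[\mathcal{L}_A(X)] \;=\; \frac{1}{2}\sum_{i=1}^m\sum_{j=1}^n\left[\frac{S_{ij}}{A_{ij}} + \log(2\pi A_{ij})\right],
\end{equation*}
which, crucially, no longer depends on $X$. Minimizing the expected negative log-likelihood is therefore equivalent to minimizing $F(A)=\sum_{i,j}[S_{ij}/A_{ij}+\log A_{ij}]$ over the appropriate class of positive matrices $A$.

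For the unconstrained case, $F$ decouples across entries, and the substitution $t_{ij}=\log A_{ij}$ turns each term into $t\mapsto S_{ij}e^{-t}+t$, which has strictly positive second derivative $S_{ij}e^{-t}$ and hence a unique minimum at $A_{ij}=S_{ij}$. For the rank-one case, I would parametrize $A_{ij}=e^{p_i+q_j}$ with $p\in\mathbb{R}^m$ and $q\in\mathbb{R}^n$, reducing $F$ to
\begin{equation*}
G(p,q) \;=\; \sum_{i,j} S_{ij}e^{-(p_i+q_j)} \;+\; n\sum_i p_i \;+\; m\sum_j q_j.
\end{equation*}
The first-order conditions read $\sum_j S_{ij}e^{-(p_i+q_j)}=n$ for each $i$ and $\sum_i S_{ij}e^{-(p_i+q_j)}=m$ for each $j$. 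Plugging in the ansatz $e^{p_i}=\mathbf{x}_i$, $e^{q_j}=\mathbf{y}_j$ with $\mathbf{x},\mathbf{y}$ from Proposition~\ref{prop: matrix scaling}, and using $S_{ij}/(\mathbf{x}_i\mathbf{y}_j)=\widetilde{S}_{ij}$, these reduce to $\sum_j \widetilde{S}_{ij}=n$ and $\sum_i\widetilde{S}_{ij}=m$, which are precisely the doubly regular conditions of Definition~\ref{def: doubly regular matrix}. Hence $(\mathbf{x},\mathbf{y})$ is a critical point of $G$, and a minimizer once convexity is in place.

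The main obstacle is uniqueness in the rank-one case, since $F$ is not jointly convex on the manifold of rank-one positive matrices in the original coordinates. Working in the log parametrization above, a direct calculation shows that the Hessian of $G$ applied to $(\alpha,\beta)\in\mathbb{R}^m\times\mathbb{R}^n$ equals $\sum_{i,j}S_{ij}e^{-(p_i+q_j)}(\alpha_i+\beta_j)^2$, which is nonnegative and, because $S_{ij}>0$, vanishes iff $\alpha_i+\beta_j\equiv 0$ for all $i,j$. This forces $\alpha_i=c$ and $\beta_j=-c$ for some constant $c$, so the null space is one-dimensional and corresponds exactly to the gauge $p\mapsto p+c$, $q\mapsto q-c$, which leaves $A_{ij}=e^{p_i+q_j}$ invariant. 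Thus $G$ is strictly convex modulo the shift, the minimizing matrix $A$ is unique, and the first-order verification above identifies it as $\mathbf{x}\mathbf{y}^T$. The unconstrained uniqueness follows analogously from the entrywise strict convexity established above.
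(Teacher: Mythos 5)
Your proof is correct and follows essentially the same route as the paper: take the expectation to eliminate $Y$ (leaving $\sum_{ij}[S_{ij}/A_{ij}+\log A_{ij}]$ up to constants), handle the unconstrained case by entrywise strict convexity in the log variable, and in the rank-one case pass to an exponential parametrization where the objective is convex and the first-order conditions say exactly that $(D\{\mathbf{a}\})^{-1}S(D\{\mathbf{b}\})^{-1}$ is doubly regular, which is matched by the $(\mathbf{x},\mathbf{y})$ of Proposition~\ref{prop: matrix scaling}. The one place you diverge is the uniqueness argument: the paper simply invokes the uniqueness part of Proposition~\ref{prop: matrix scaling} (the Sinkhorn scaling pair is unique up to the scalar gauge, which leaves $\mathbf{a}\mathbf{b}^T$ unchanged), whereas you prove it directly by computing the Hessian quadratic form $\sum_{ij}S_{ij}e^{-(p_i+q_j)}(\alpha_i+\beta_j)^2$ and observing that, since $S$ is entrywise positive, it vanishes only along the gauge direction $(\alpha,\beta)=(c\mathbf{1},-c\mathbf{1})$, so the objective is strictly convex modulo the shift and the minimizing matrix $A$ is unique. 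Your version is more self-contained (it effectively re-derives the uniqueness of the scaling for positive $S$ rather than citing it), at the cost of a slightly longer argument; the paper's version is shorter because Proposition~\ref{prop: matrix scaling} is already in place. Both are complete: in your argument existence of a minimizer comes from convexity plus the verified critical point, exactly as in the paper.
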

In other words, among all positive rank-one matrices $A$ that act as surrogates for $S$, taking $A=\mathbf{x}\mathbf{y}^T$ in $\mathcal{L}_A(X)$ provides the best approximation, on average, to the true negative log-likelihood $\mathcal{L}_S(X)$ of observing the data in the Gaussian noise model. Hence, 
if we want to replace $S$ with a rank-one surrogate for approximate maximum-likelihood estimation, i.e., when minimizing $\mathcal{L}_A(\Theta)$ over all rank-$r$ matrices, it is natural to use $A=\mathbf{x}\mathbf{y}^T$. Note that $\mathbf{x}\mathbf{y}^T$ is generally not the best rank-one approximation to $S$ in Frobenius norm.

As explained above, in the Gaussian noise model, the weighted low-rank approximation problem of minimizing $\widetilde{\mathcal{L}}_{\mathbf{x}\mathbf{y}^T}(\Theta)$ over all matrices $\Theta$ of rank $r$ is a natural surrogate for the true maximum-likelihood estimation problem when $S$ is not rank-one. More broadly, the rationale for solving this problem is to penalize noisy rows and columns in the data while allowing for a simple closed-form solution to the weighted low-rank approximation problem. In this context, the noise levels in the rows and columns are encoded by the scaling factors $\mathbf{x}$ and $\mathbf{y}$ from Proposition~\ref{prop: matrix scaling}, which can be estimated accurately by $\hat{\mathbf{x}}$ and $\hat{\mathbf{y}}$ from~\eqref{eq: x_hat and y_hat def}, respectively, in a broad range of settings (see Section~\ref{sec: variance matrices with general rank}). 
Consequently, even if the noise is highly heteroskedastic with a general variance pattern, it is advantageous to use~\eqref{eq: estimated solution to weighted low-rank approximation problem} to estimate the low-rank signal $X$. Algorithm~\ref{alg:weighted low-rank approximation} summarizes our proposed approach for signal recovery. 

\begin{algorithm}
\caption{Adaptively weighted low-rank approximation}\label{alg:weighted low-rank approximation}
\begin{algorithmic}[1]
\Statex{\textbf{Input:} Data matrix $Y\in \mathbb{R}^{m\times n}$ with $m\leq n$ and no rows or columns that are entirely zero.}
\State \label{alg: step 1}Apply Algorithms~\ref{alg:noise standardization} and~\ref{alg:rank estimation} to $Y$, obtaining $\hat{Y}$, $\hat{\mathbf{x}}$, $\hat{\mathbf{y}}$, and $\hat{r}$.
\State \label{alg: step 3}Compute $T_{\hat{r}}\{\hat{{Y}}\}$ by truncating the SVD of $\hat{Y}$ to its $\hat{r}$ largest components.
\State \label{alg: step 4}Form the estimated signal matrix $\overline{X} = (D\{\hat{\mathbf{x}}\})^{1/2} T_{\hat{r}}\{\hat{{Y}}\} (D\{\hat{\mathbf{y}}\})^{1/2}$.
\end{algorithmic}
\end{algorithm}

The solution~\eqref{eq: soltuion to rank-one weighted low-rank approximation problem} to the weighted low-rank approximation problem with $S=\mathbf{x}\mathbf{y}^T$ involves the operator $T_r$, which truncates the SVD of the input matrix to its $r$ leading components. This operator can be replaced with a more general matrix denoising operator. For the Gaussian noise model with $S = \mathbf{x}\mathbf{y}^T$,~\cite{leeb2021matrix} proposed to use $\mathcal{T}_r\{A\} = \sum_{i=1}^r \sum_{j=1}^r u_i \theta_{ij} v_j$, where $\{u_i\}$ and $\{v_j\}$ are the left and right singular vectors of $A$, respectively, and $\{\theta_{ij}\}$ are tunable parameters. This matrix denoising operator generalizes singular value shrinkage~\cite{gavish2017optimal} (which manipulates the singular values of a matrix) by allowing for all possible cross-products of left and right singular vectors. \cite{leeb2021matrix} derived the optimally-tuned parameters $\{\theta_{ij}\}$ that minimize the weighted loss
\begin{equation}
    \left\Vert \left( D\{\mathbf{x}\} \right)^{1/2} \left( \mathcal{T}_r\{\widetilde{Y}\} - \widetilde{X} \right) \left( D\{\mathbf{y}\} \right)^{1/2}\right\Vert_F^2 = \left\Vert \left( D\{\mathbf{x}\} \right)^{1/2} \mathcal{T}_r\{\widetilde{Y}\} \left( D\{\mathbf{y}\} \right)^{1/2} - X \right\Vert_F^2
\end{equation}
asymptotically as $m,n\rightarrow \infty$ and $m/n\rightarrow\gamma$ under suitable conditions. The optimal parameters $\{\theta_{ij}\}$ can be computed by Algorithm 4.1 in~\cite{leeb2021matrix} (where they are denoted by the matrix $\hat{\mathbf{B}}\in\mathbb{R}^{r\times r}$).
The corresponding estimator of $X$ is given by \sloppy$(D\{\mathbf{x}\} )^{1/2} \mathcal{T}_r\{\widetilde{Y}\} (D\{\mathbf{y}\})^{1/2}$, which is analogues to~\eqref{eq: soltuion to rank-one weighted low-rank approximation problem} when replacing $T_r$ with $\mathcal{T}_r$. We refer to this approach as weighted-loss denoising (WLD). Analogously to our adaptively weighted low-rank approximation approach, we propose to replace $\mathbf{x}$ and $\mathbf{y}$ with our estimates $\hat{\mathbf{x}}$ and $\hat{\mathbf{y}}$ from~\eqref{eq: x_hat and y_hat def}, respectively. 

In Figure~\ref{fig: comparison of methods}, we demonstrate the advantages of the normalization~\eqref{eq: scaling rows and columns with estimated scaling factors} for recovering $X$ -- either by truncating the SVD after our normalization (Algorithm~\ref{alg:weighted low-rank approximation}) or via WLD with our estimated scaling factors $\hat{\mathbf{x}}$ and $\hat{\mathbf{y}}$. Specifically, we compare several methods for recovering $X$: a) Oracle singular value thresholding (SVT) of $Y$, i.e., $T_{r^{'}}\{Y\}$, where $r^{'}$ is set to minimize the error with respect to $X$ in Frobenius norm ($X$ is provided for optimally tuning $r^{'}$); b) Oracle singular value shrinkage (SVS) of $Y$, i.e., $\mathcal{T}_r\{Y\}$, where the parameters $\{\theta_{ij}\}$ are nonzero only for $i=j$ and are set to minimize the error with respect to $X$ in Frobenius norm ($X$ is provided for optimally tuning $\{\theta_{ii}\}$); c) WLD~\cite{leeb2021matrix}, where $\mathbf{x}$ and $\mathbf{y}$ are estimated as described in~\cite{leeb2021matrix} from the Euclidean norms of the rows and columns of $Y$ (listed as ``traditional normalization'' in the legend of Figure~\ref{fig: comparison of methods}); d) SVT at rank $\hat{r}$ after our proposed normalization~\eqref{eq: scaling rows and columns with estimated scaling factors}, i.e., Algorithm~\ref{alg:weighted low-rank approximation}; and (e) WLD~\cite{leeb2021matrix} combined with our proposed normalization~\eqref{eq: scaling rows and columns with estimated scaling factors}, namely, replacing $T_{\hat{r}}$ in~\eqref{eq: estimated solution to weighted low-rank approximation problem} with $\mathcal{T}_{\hat{r}}$. In this experiment, the dimensions are $m=1000$, $n=2000$, and the noise $E$ is Gaussian heteroskedastic as described for Figure~\ref{fig: spectrum before and after biwhitening} (see details in Section~\ref{sec: rank estimation}). The signal $X$ has $r=20$ nonzero singular values equal to $s$ and the singular vectors are orthonormalized Gaussian random vectors whose support was restricted to half the rows and half the columns. Hence, the signal $X$ is localized to a quarter of the entries. We then evaluated the relative mean squared error (MSE) of the recovery for each method, i.e., the squared Frobenius norm of the difference between each method's output and $X$, divided by $\Vert X \Vert_F^2$, followed by averaging the results over $10$ randomized experiments.

In Figure~\ref{fig: MSE vs. signal level}, we depict the relative MSE versus the signal strength $s$ while the noise heteroskedasticity level is fixed at $t=2$ (see the details of the noise generation in Section~\ref{sec: rank estimation}). It is evident that our proposed normalization combined with WLD provides the best performance. Very similar performance is achieved by Algorithm~\ref{alg:weighted low-rank approximation}, i.e., when truncating the singular values after our normalization, with a slight advantage to WLD for weak signals. However, when applying WLD with the traditional normalization proposed in~\cite{leeb2021matrix}, which relies on $(Y_{ij}^2)$ to estimate $\mathbf{x}$ and $\mathbf{y}$, the performance deteriorates significantly for strong signals since the corresponding estimators for $\mathbf{x}$ and $\mathbf{y}$ become highly inaccurate. Moreover, we see that truncating or shrinking the singular values of the original data $Y$, even if optimally tuned using an oracle that provides the true signal $X$, is always suboptimal to normalizing the data. This is most notable for weak signals, e.g., $s=2\sqrt{n}$, where the difference between the relative MSEs before and after normalization is substantial. 

In Figure~\ref{fig: MSE vs. hetero level}, we depict the relative MSE versus the noise heteroskedasticity level $t$ while the signal strength is fixed at $s=3\sqrt{n}$. It is evident that when no heteroskedasticity is present, i.e., $t=0$, all methods perform similarly. In this case, oracle SVS is almost identical to our normalization + WLD, while oracle SVT is almost identical to our normalization + SVT. Indeed, if the noise is homoskedastic, then no normalization should be performed, i.e., $\hat{\mathbf{x}}$ and $\hat{\mathbf{y}}$ should be close to vectors of all-ones (up to a possible global factor). Hence, we see that our normalization does not degrade the performance in this case. However, as $t$ increases, the noise becomes more heteroskedastic, and all normalization-based approaches significantly outperform traditional SVT or SVS applied to the original (un-normalized) data. 

The empirical results above demonstrate that for heteroskedastic noise, normalizing the data before applying thresholding or shrinkage of the singular value can provide a significant improvement in signal recovery. Intuitively, the reason for this advantage is that the normalization can reduce the spectral noise level while increasing the spectral gap between the signal components and the noise (see Section~\ref{sec: rank estimation} for an example and discussion), thereby enhancing the accuracy of the singular vectors, especially those corresponding to the small singular values (weak signals) that are below or near the original spectral noise level.  

\begin{figure} 
  \centering
  	{
  	\subfloat[][Relative MSE vs. signal strength]  
  	{
    \includegraphics[width=0.48\textwidth]{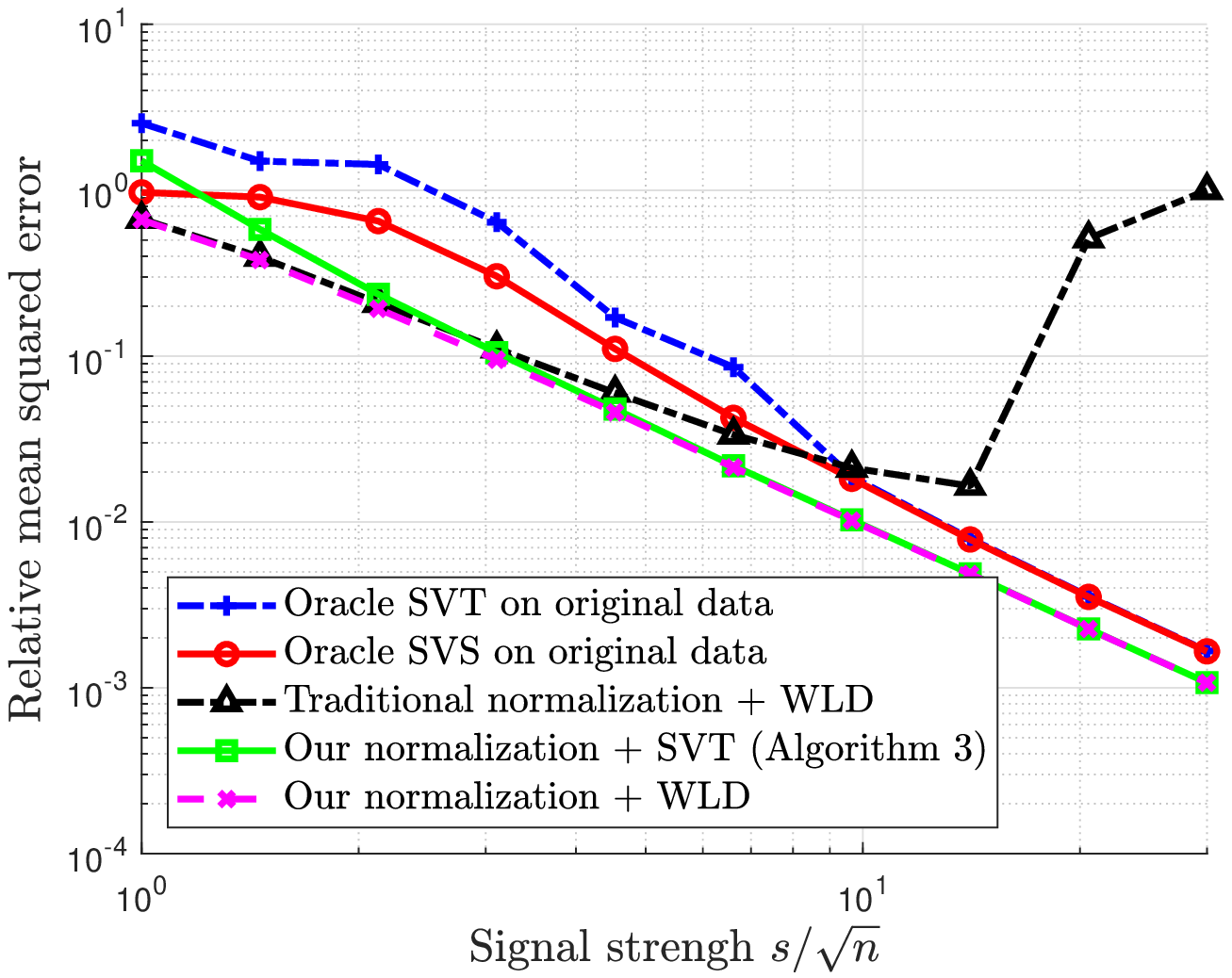} \label{fig: MSE vs. signal level}
    } 
    \subfloat[][Relative MSE vs. heteroskedasticity level] 
  	{
    \includegraphics[width=0.48\textwidth]{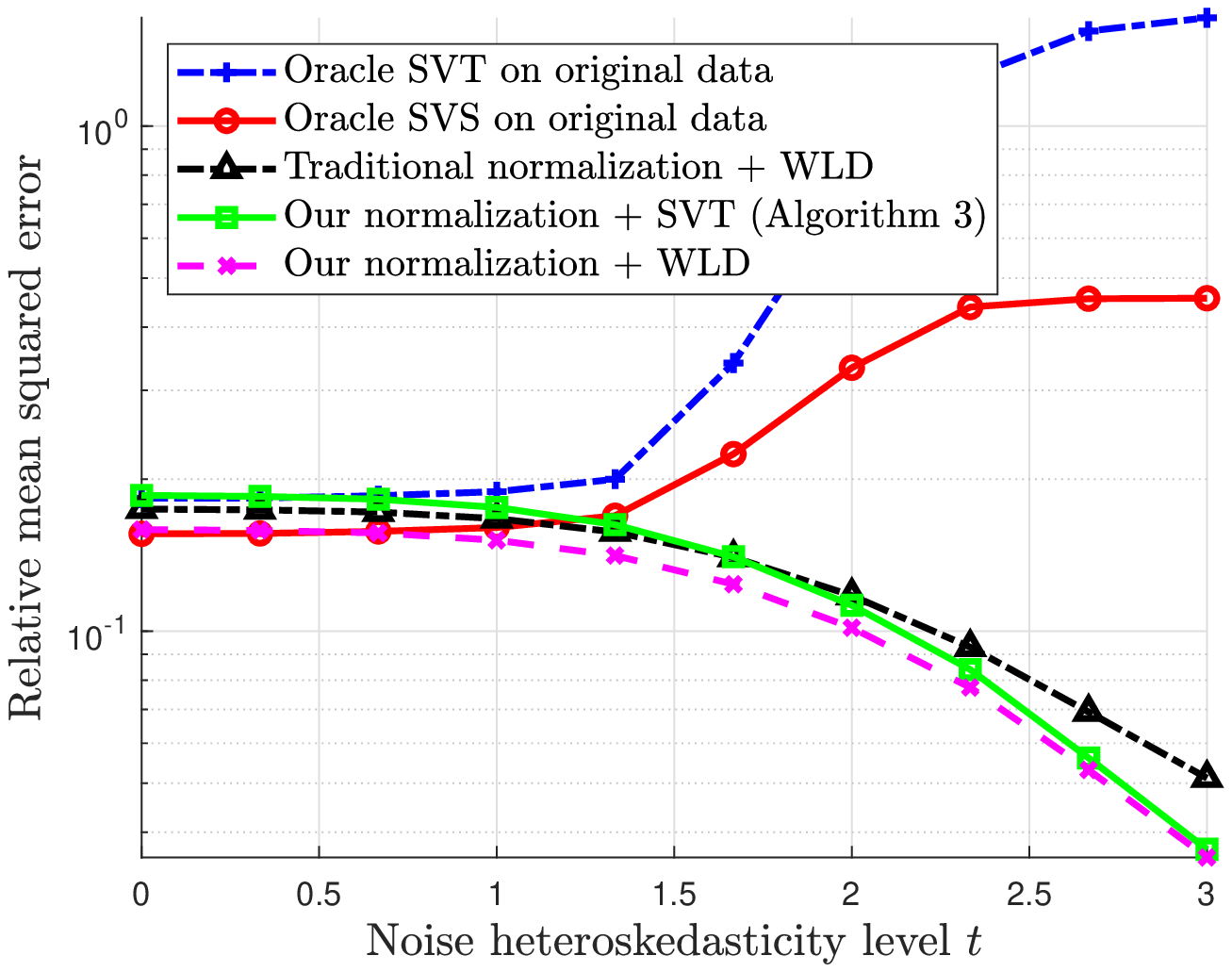} \label{fig: MSE vs. hetero level}
    }
    }
    \caption
    {Comparison of the relative MSE of recovering $X$ using different methods, where $m=1000$, $n=2000$, and $r=20$. The noise here is Gaussian heteroskedastic, generated as described in Section~\ref{sec: rank estimation} for Figure~\ref{fig: spectrum before and after biwhitening}. Panel (a) depicts the relative MSE versus the signal strength $s/\sqrt{n}$ while fixing the heteroskedasticity level at $t=2$. Panel (b) depicts the relative MSE versus the heteroskedasticity level $t$ while fixing the signal strength at $s=3 \sqrt{n}$.}  \label{fig: comparison of methods}
    \end{figure} 

\section{Examples on real data} \label{sec: examples on real data}
Next, we demonstrate the effect of our proposed normalization on the spectrum of experimental genomics data from Single-Cell RNA Sequencing (scRNA-seq) and Spatial Transcriptomics (ST). Specifically, we used the scRNA-seq dataset of purified peripheral blood mononuclear cells (PBMCs) of Zheng et al.~\cite{zheng2017massively} and the Visium ST dataset of human dorsolateral prefrontal cortex from Maynard et. al.~\cite{maynard2021transcriptome}. We compare our approach to the method of~\cite{landa2022biwhitening}, which assumes that the data satisfies a quadratic variance function (QVF), i.e., $S_{ij} = a + b X_{ij} + c X_{ij}^2$ with some coefficients $a,b,c$, for estimating the scaling factors $\mathbf{x}$ and $\mathbf{y}$. 
Since the entries of scRNA-seq and ST data are integers, such data are typically modeled by count random variables such as the Poisson or negative binomial, which satisfy the QVF assumption (with $a=0$, $b = 1$, $c=0$ for Poisson and $a=0$, $b=1$, and some $c>0$ for negative binomial).

We applied several preliminary preprocessing steps to the datasets to control their size and sparsity. For the scRNA-seq data, which originally contained $\sim 95000$ rows (cells) and $\sim 33000$ columns (genes), we randomly sampled $5000$ rows and then removed all rows and columns with less than $50$ nonzero entries, resulting in a matrix of size $\sim 5000\times 7500$. For the ST data, which originally contained $\sim 3600$ rows (pixels) and $\sim 33000$ columns (genes), we did not apply any downsampling, but we did apply an analogous sparsity filtering step with a threshold of $20$ nonzeros per row and column, resulting in a matrix of size $\sim 3600 \times 15000$.

Figures~\ref{fig: raw scRNA-seq},~\ref{fig: raw scRNA-seq + QVF scaling}, and~\ref{fig: raw scRNA-seq + Algorithm 1}, depict the empirical eigenvalue densities of the raw scRNA-seq data, the corresponding data after the normalization of~\cite{landa2022biwhitening} (where we used the procedure described therein to adaptively find the QVF parameters), and the data after Algorithm~\ref{alg:noise standardization}, respectively. Analogously, Figures~\ref{fig: raw ST},~\ref{fig: raw ST + QVF scaling}, and~\ref{fig: raw ST + Algorithm 1}, depict the corresponding empirical eigenvalue densities for the ST data. We mention that for visualization purposes, we normalized the raw scRNA-seq and ST data by a scalar to match the median of the eigenvalues of $Y Y^T/n$ to the median of the MP law. It is evident that the eigenvalue densities of the raw scRNA-seq and ST data are very different from the MP density and are much more spread out. On the other hand, after normalization of the rows and columns using the estimated scaling factors of either~\cite{landa2022biwhitening} or our proposed approach here, we obtain accurate fits to the MP law. 

Next, we applied a sequence of two common transformations to the scRNA-seq and ST data matrices. First, we normalized each row of the data matrices so that their entries sum to $1$. This step is known as library normalization and is used to mitigate the influence of technical variability of counts (also known as ``read depth'') across cell populations~\cite{vieth2019systematic,cole2019performance}. Second, we removed the empirical mean of each column (gene), which is a common step used for principal components analysis. The resulting data matrices contain negative entries, so we cannot apply the method of~\cite{landa2022biwhitening} directly. Instead, we used the absolute values of the transformed data entries to estimate the scaling factors $\mathbf{x}$ and $\mathbf{y}$ via the method of~\cite{landa2022biwhitening}. 

Figures~\ref{fig: transformed scRNA-seq},~\ref{fig: transformed scRNA-seq + QVF scaling},~\ref{fig: transformed scRNA-seq + Algorithm 1},~\ref{fig: transformed ST},~\ref{fig: transformed ST + QVF scaling}, and~\ref{fig: transformed ST + Algorithm 1} depict the empirical eigenvalue densities of the transformed data analogously to Figures~\ref{fig: raw scRNA-seq},~\ref{fig: raw scRNA-seq + QVF scaling},~\ref{fig: raw scRNA-seq + Algorithm 1},~\ref{fig: raw ST},~\ref{fig: raw ST + QVF scaling}, and~\ref{fig: raw ST + Algorithm 1}. 
Similarly to the raw data, we see that the eigenvalue density of the transformed data does not fit the MP law and is much more spread out. Yet, for the transformed data, the method of~\cite{landa2022biwhitening} no longer provides accurate fits to the MP law, arguably because the transformed data does not satisfy the required QVF condition. On the other hand, our proposed approach still provides excellent fits to the MP law for the transformed data of scRNA-seq and ST. It is worthwhile to note that the transformations applied to the data (row-normalization and mean subtraction) may introduce some dependence between the noise entries. Therefore, the empirical results also suggest that our approach is robust to certain violations of the model assumptions. In this case, the transformations applied to the data rely on empirical averages along the rows or the columns of the matrix, which are robust to independent noise and should converge to their population counterparts for large matrix dimensions. Hence, when using these quantities for the data transformation (library normalization and mean centering), the amount of induced statistical dependence between the noise entries is small. Finally, in Figure~\ref{fig: robustness to choice of eta} in Appendix~\ref{appendix: adapting to unknown global scaling of the noise}, we demonstrate that our approach is robust to the choice of $\eta$ in Algorithm~\ref{alg:noise standardization}, and in particular that other quantiles can be used instead of the median. Figure~\ref{fig: robustness to choice of eta} also shows that the estimated scaling factors $\hat{\mathbf{x}}$ and $\hat{\mathbf{y}}$ for the transformed purified PBMC dataset vary by orders of magnitude, suggesting that the data is severely heteroskedastic. 

The results above demonstrate that our method is applicable to real genomics data with strong heteroskedastic noise such as scRNA-seq and ST data, and can support common transformations of the data for downstream analysis. The main advantage of our approach is that it does not rely on the particular distributions of the noise entries nor their relation to the signal, whereas~\cite{landa2022biwhitening} requires a quadratic relation between the mean and the variance. Instead, our approach requires some degree of delocalization for the signal's singular vectors (see Assumption~\ref{assump: signal delocalization}), and variance matrices $S = D\{\mathbf{x}\} \widetilde{S} D\{\mathbf{y}\}$ that are either approximately rank-one or exhibit a degree of incoherence between $\widetilde{S}$ and the scaling factors $\mathbf{x}$ and $\mathbf{y}$ (see Assumption~\ref{assump: decay rate of g-h} and the discussion preceding it). 

We conclude this section by briefly discussing the above-mentioned assumptions in the context of scRNA-seq data to illustrate how they might be satisfied. To this end, consider a simple prototypical model where the data entries $Y_{ij}$ are sampled independently from $\operatorname{Poisson}(X_{ij})$ for some (entrywise) positive Poisson parameter matrix $X$. In this case, we have $X = S = D\{\mathbf{x}\} \widetilde{S} D\{\mathbf{y}\}$, where $\widetilde{S}$ is doubly regular (see Definition~\ref{def: doubly regular matrix}). The scaling factors $\mathbf{x}$ and $\mathbf{y}$ are associated with a quantity known as \textit{sequencing depth}~\cite{choudhary2022comparison,chen2019single}, describing the technical variation of count levels across cells (rows) or genes (columns), which depends on experimental conditions.
The matrix $\widetilde{S}$ describes the underlying normalized expression profiles of the cells across the genes and vice-versa, irrespective of experimental conditions that may vary the sequencing depth. Consequently, it is reasonable to expect $\widetilde{S}$ to be incoherent with respect to the factors $\mathbf{x}$ and $\mathbf{y}$. Since the Poisson parameter matrix $X$ is commonly assumed to be low-rank~\cite{linderman2022zero,kharchenko2021triumphs}, the normalized matrix $\widetilde{S}$ should also be low-rank. For instance, if $\widetilde{S}$ was generated randomly according to the low-rank model in Example~\ref{example: random S example 2} in Section~\ref{sec: variance matrices with general rank} while $\mathbf{x}$ and $\mathbf{y}$ are deterministic and depend only on the particular experimental setup, then both Assumptions~\ref{assump: signal delocalization} and~\ref{assump: decay rate of g-h} would be satisfied with high probability for a sufficiently large number of sequenced cells and genes.

\begin{figure} 
  \centering
  	{
  	\subfloat[][Raw scRNA-seq data]
  	{
    \includegraphics[width=0.3\textwidth]{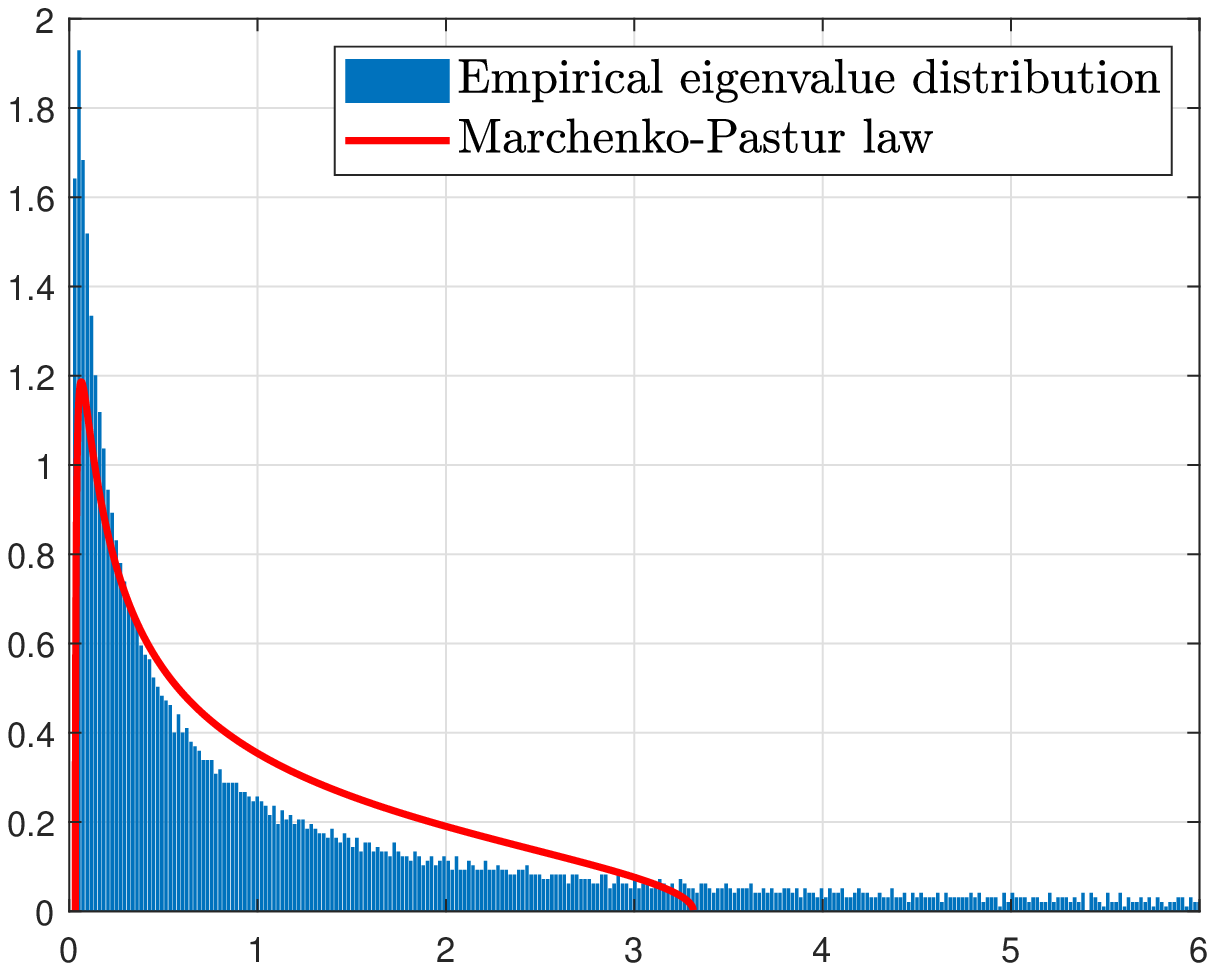} \label{fig: raw scRNA-seq}
    } \hspace{5pt}
    \subfloat[][Raw scRNA-seq + QVF scaling~\cite{landa2022biwhitening}] 
  	{
    \includegraphics[width=0.3\textwidth]{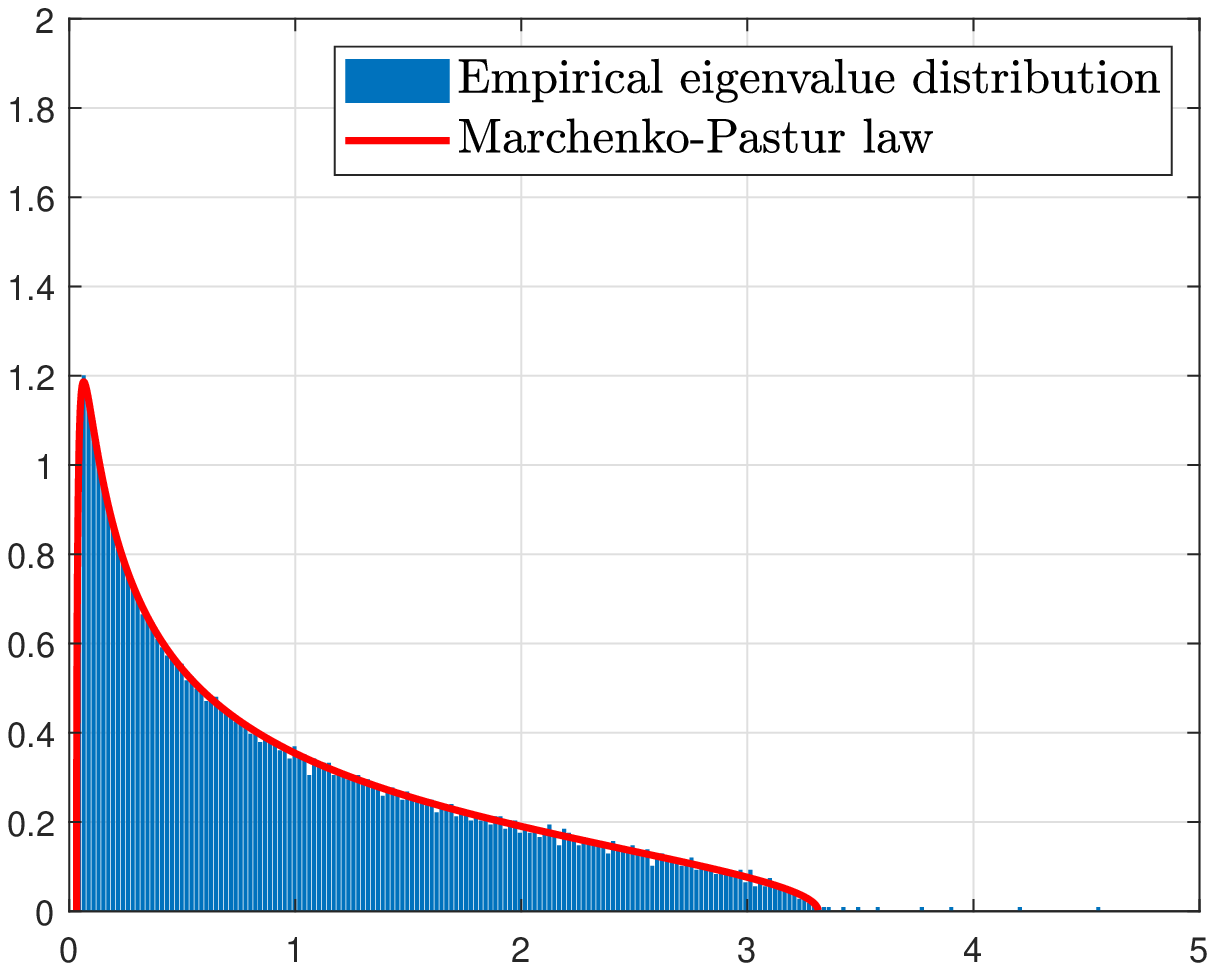} \label{fig: raw scRNA-seq + QVF scaling}
    } \hspace{5pt}
     \subfloat[][Raw scRNA-seq + Algorithm~\ref{alg:noise standardization}] 
    {
    \includegraphics[width=0.3\textwidth]{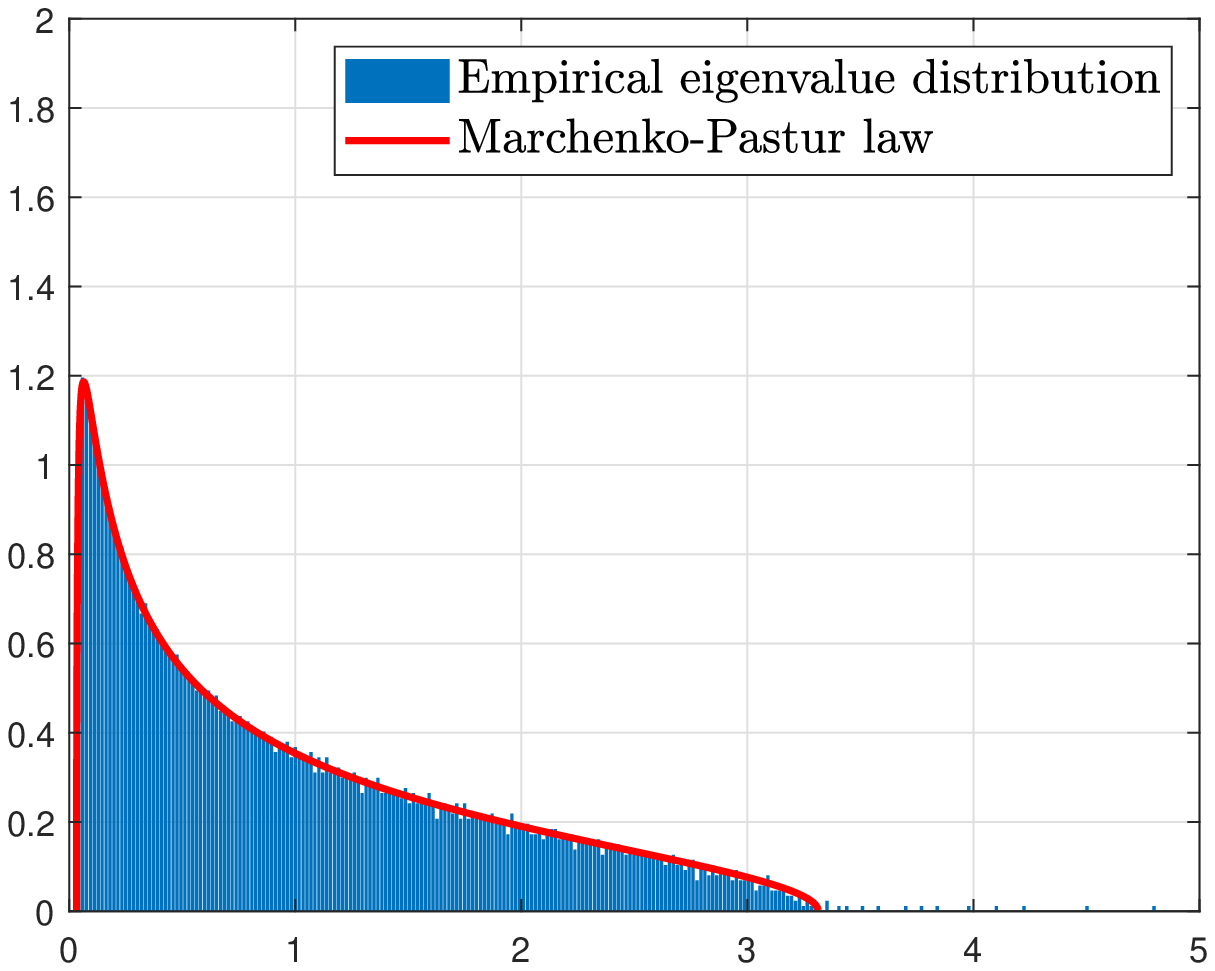} \label{fig: raw scRNA-seq + Algorithm 1}
    }
    \\
    \subfloat[][Raw ST data]  
  	{
    \includegraphics[width=0.3\textwidth]{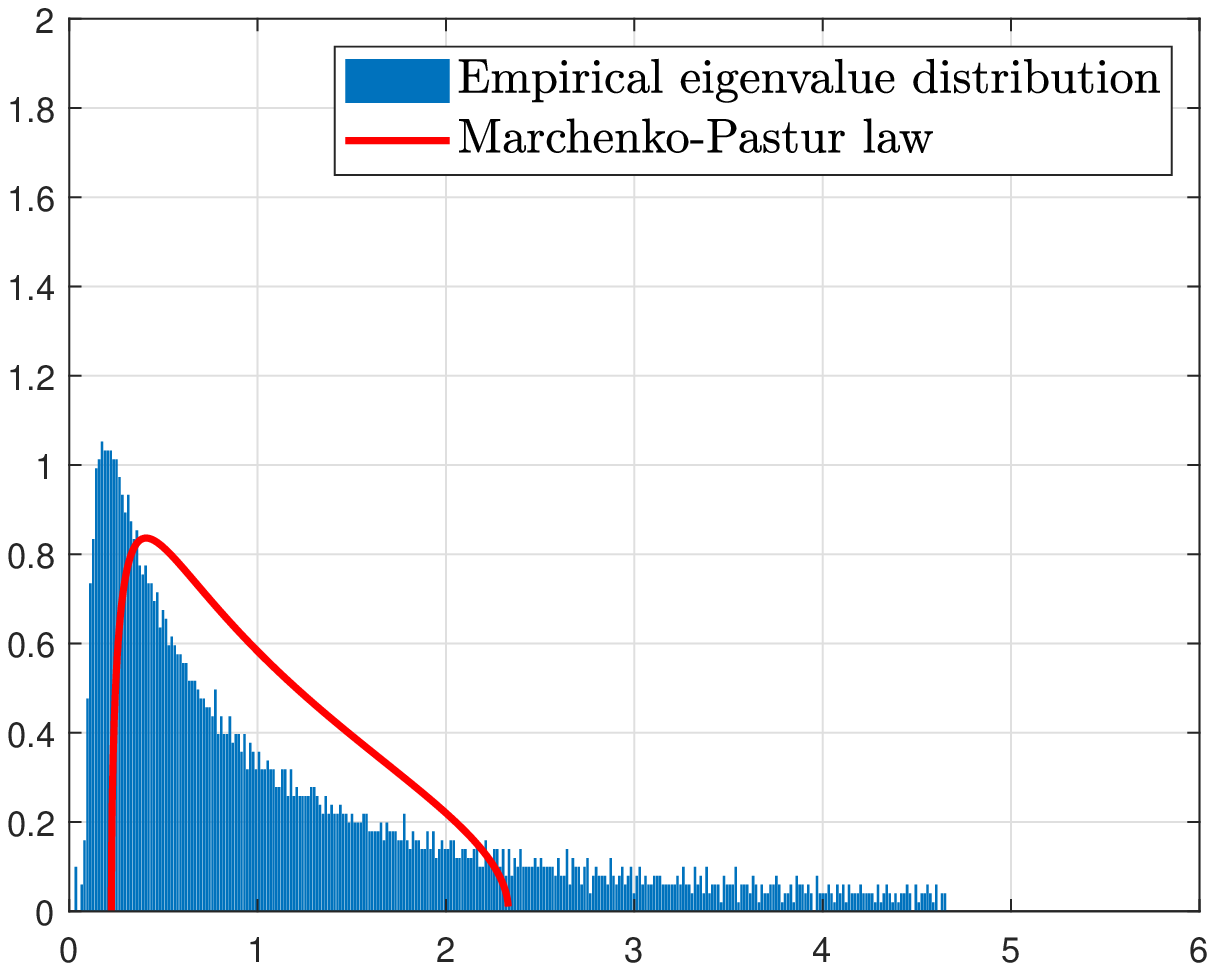}  \label{fig: raw ST}
    } \hspace{5pt}
    \subfloat[][Raw ST + QVF scaling~\cite{landa2022biwhitening}] 
  	{
    \includegraphics[width=0.3\textwidth]{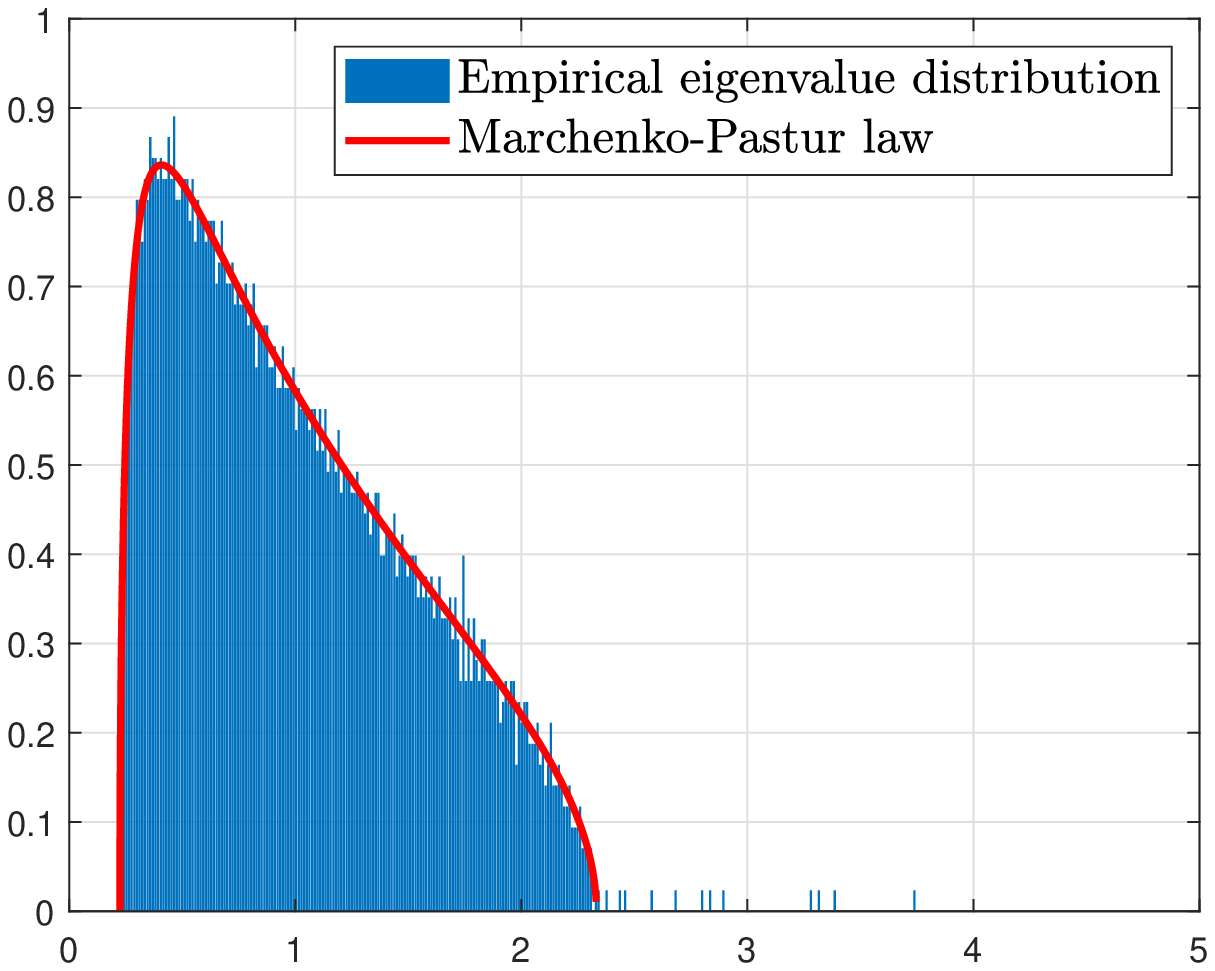} \label{fig: raw ST + QVF scaling}
    } \hspace{5pt}
    \subfloat[][Raw ST + Algorithm~\ref{alg:noise standardization}] 
  	{
    \includegraphics[width=0.3\textwidth]{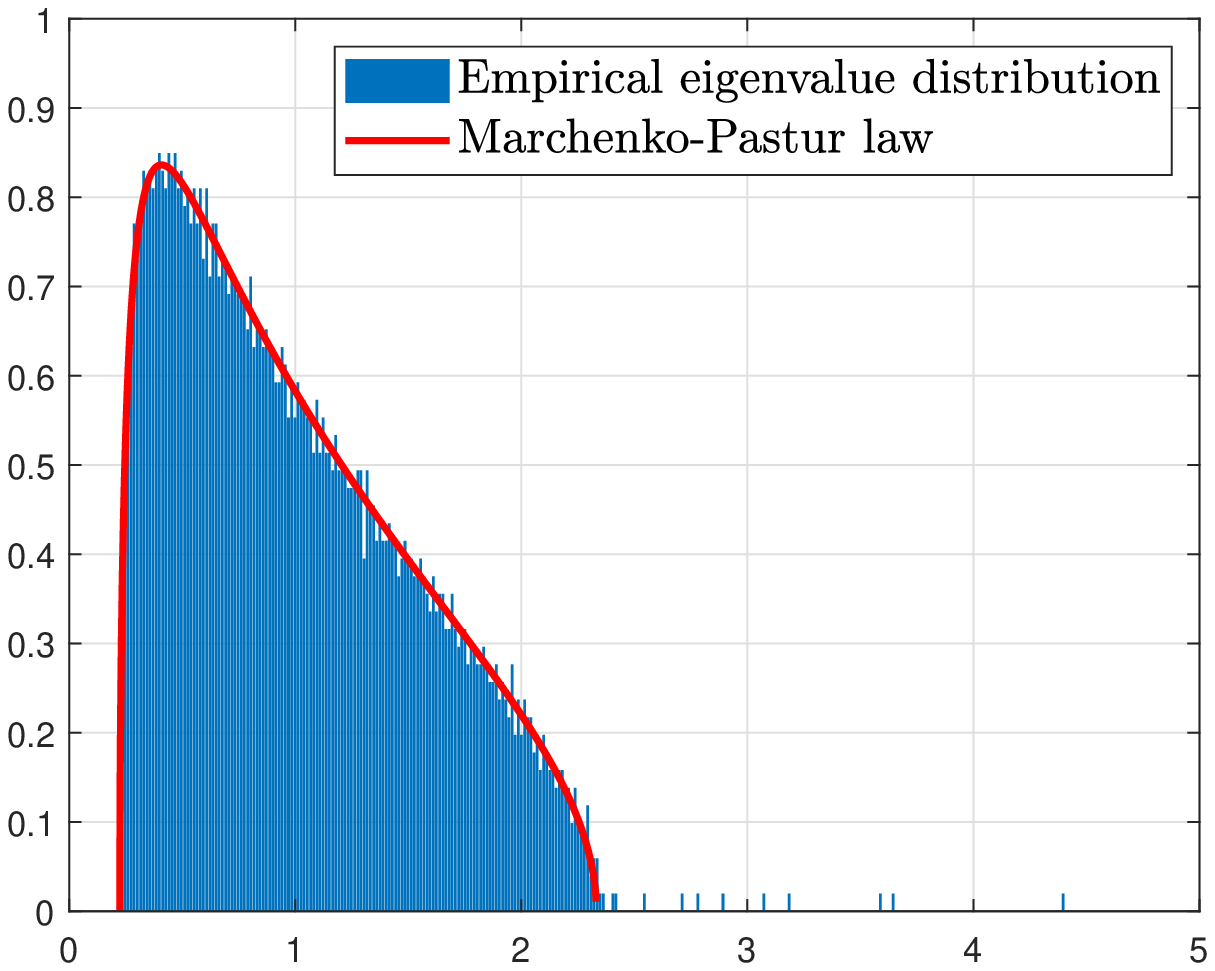} \label{fig: raw ST + Algorithm 1}
    } 
    \\
    \subfloat[][Transformed scRNA-seq data]  
  	{
    \includegraphics[width=0.3\textwidth]{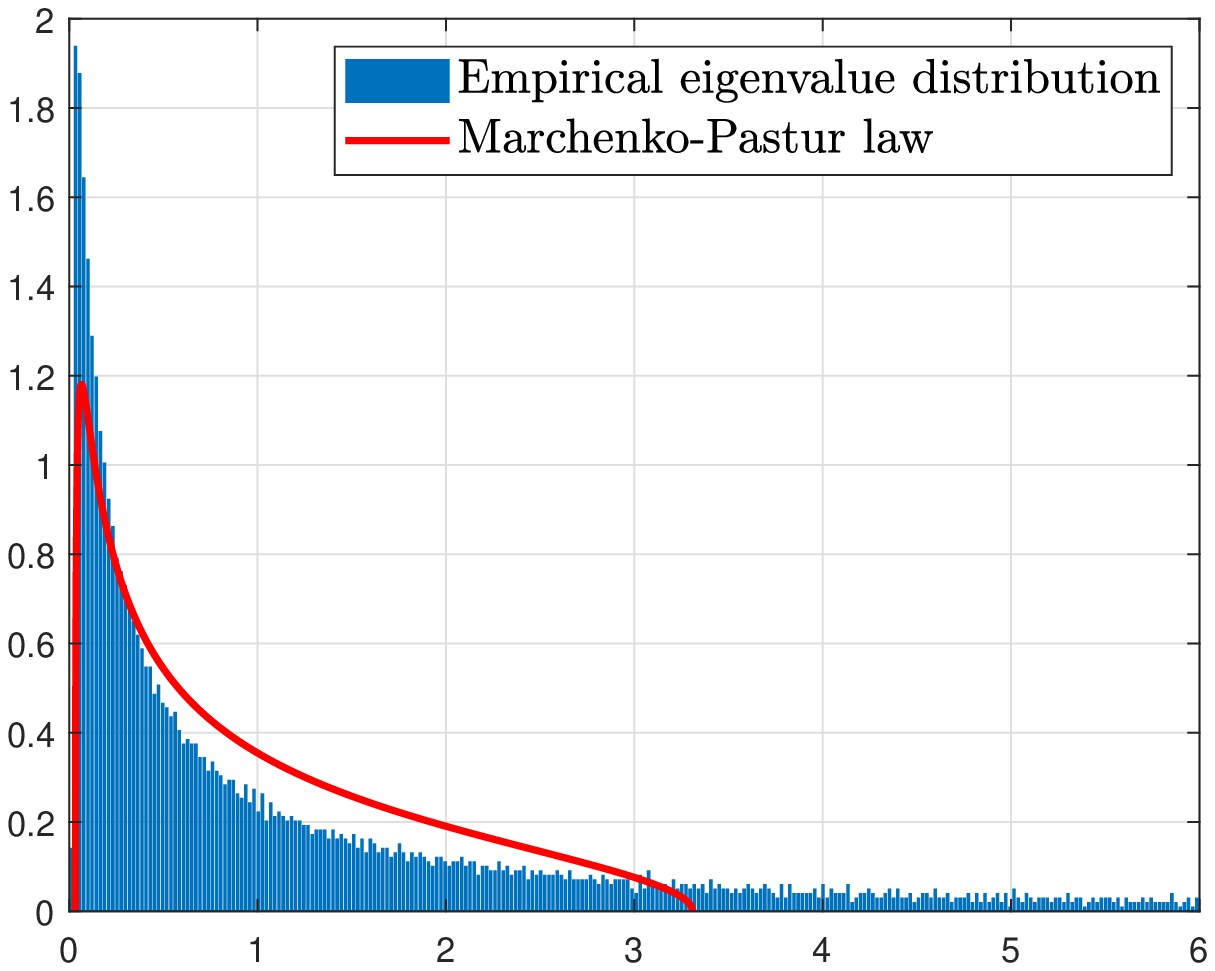}  \label{fig: transformed scRNA-seq}
    } \hspace{5pt}
    \subfloat[][Transformed scRNA-seq + QVF scaling~\cite{landa2022biwhitening}] 
  	{
    \includegraphics[width=0.3\textwidth]{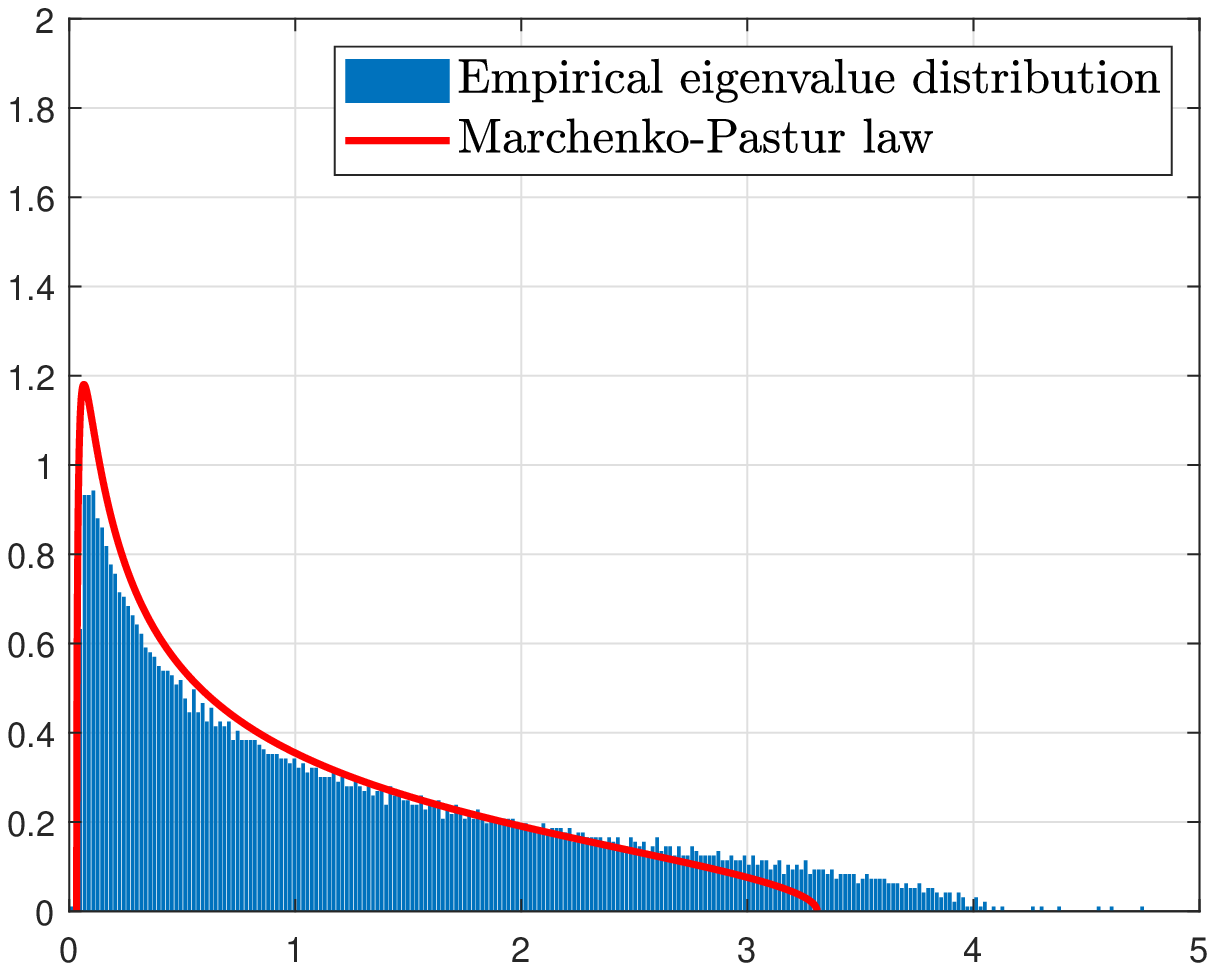} \label{fig: transformed scRNA-seq + QVF scaling}
    }  \hspace{5pt}
    \subfloat[][Transformed scRNA-seq + Algorithm~\ref{alg:noise standardization}] 
  	{
    \includegraphics[width=0.3\textwidth]{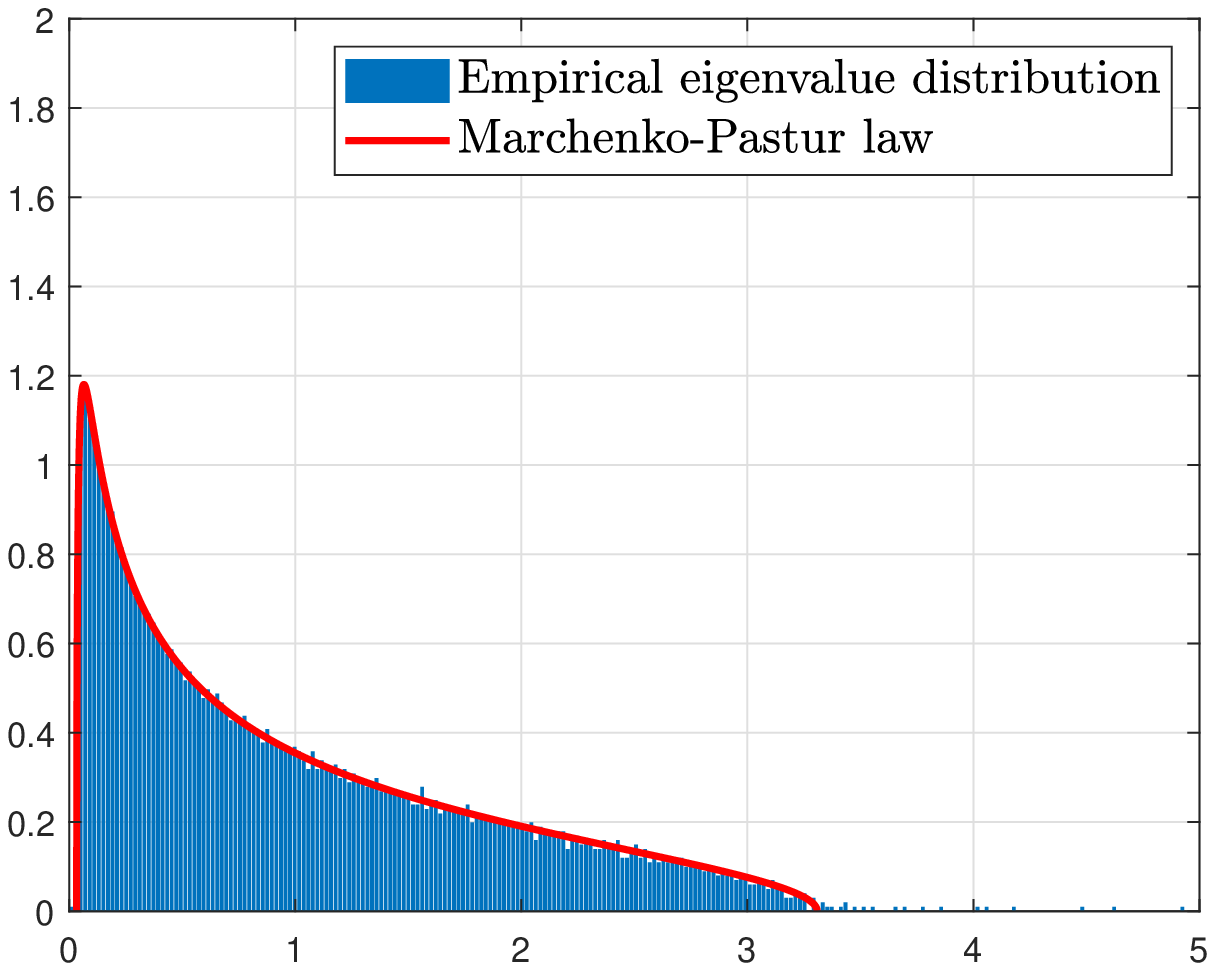} \label{fig: transformed scRNA-seq + Algorithm 1}
    } 
    \\
    \subfloat[][Transformed ST data]  
  	{
    \includegraphics[width=0.3\textwidth]{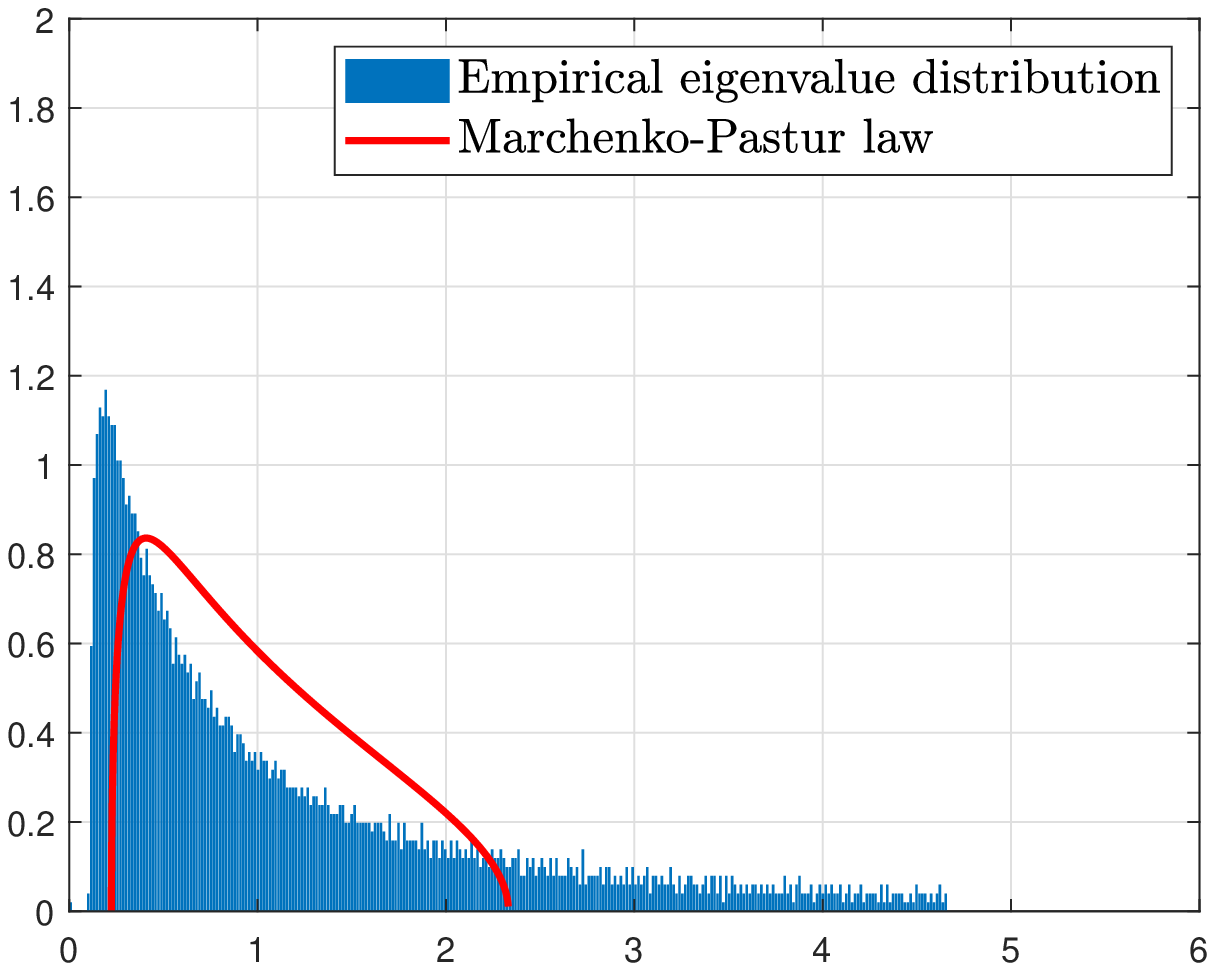}   \label{fig: transformed ST}
    } \hspace{5pt}
    \subfloat[][Transformed ST + QVF scaling~\cite{landa2022biwhitening}] 
  	{
    \includegraphics[width=0.3\textwidth]{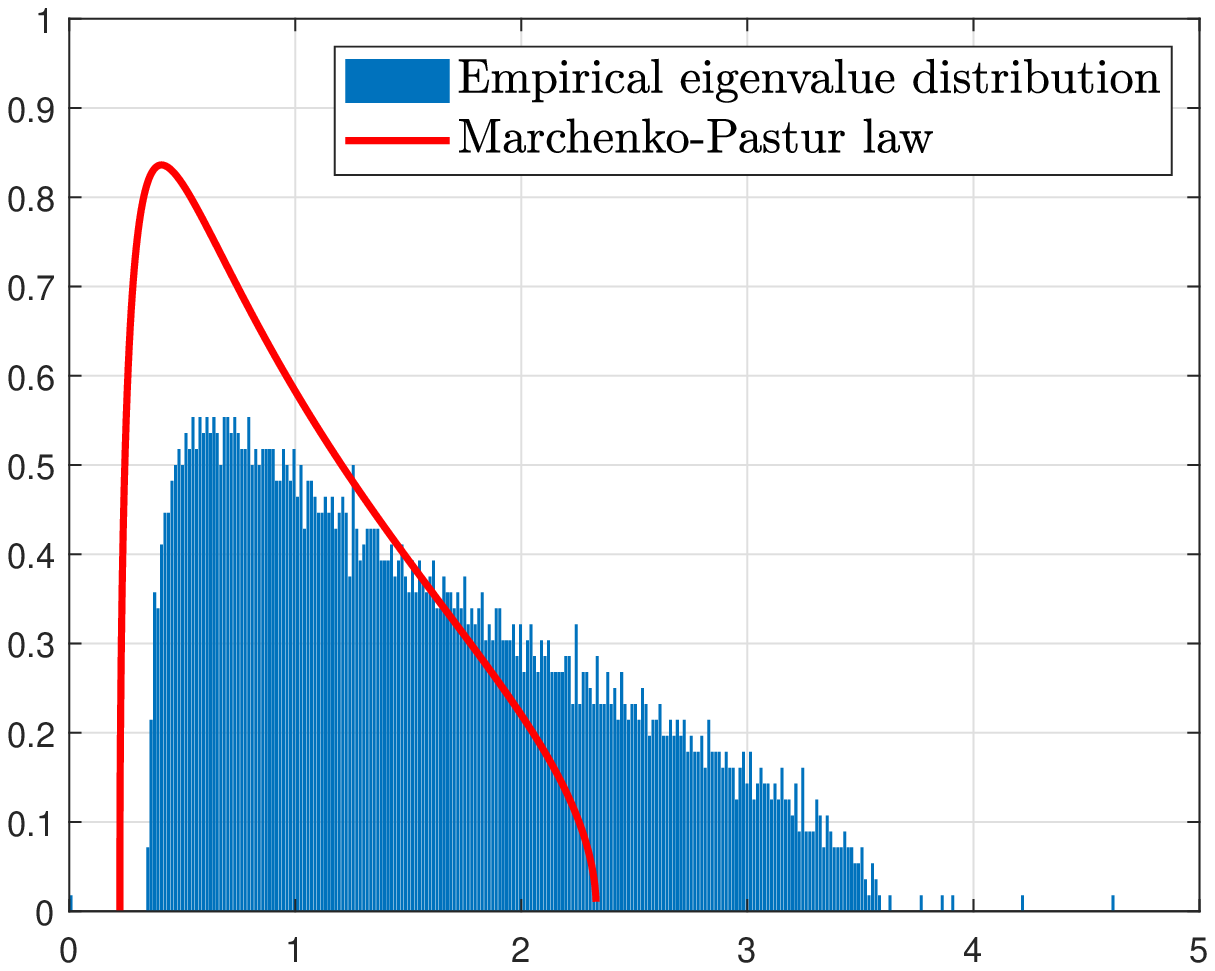} \label{fig: transformed ST + QVF scaling}
    }  \hspace{5pt}
    \subfloat[][Transformed ST + Algorithm~\ref{alg:noise standardization}] 
  	{
    \includegraphics[width=0.3\textwidth]{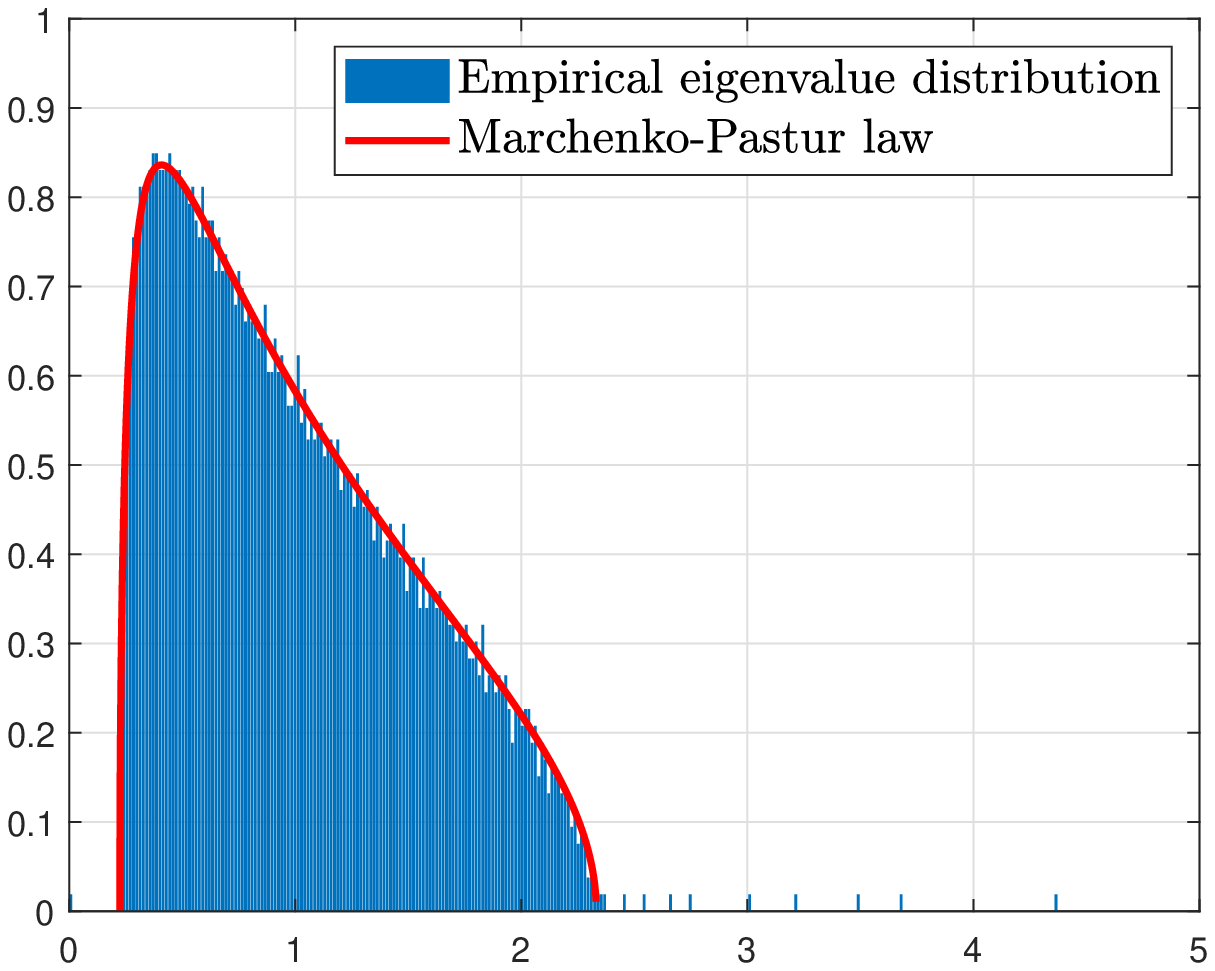} \label{fig: transformed ST + Algorithm 1}
    } 

    }
    \caption
    {Empirical eigenvalue densities of scRNA-seq~\cite{zheng2017massively} and ST~\cite{maynard2021transcriptome} data, before and after the normalization of the rows and columns. The scaling factors for the normalization were estimated using two approaches: 1) the QVF-based method of~\cite{landa2022biwhitening}; and 2) our proposed method (see Algorithm~\ref{alg:noise standardization}). The data was used either in its raw count form (panels (a) -- (f)) or after a standard transformation (library normalization followed by mean subtraction; see the corresponding text). } \label{fig:MP fit for real data}
    \end{figure}

\section{Summary and discussion} \label{sec: discussion}
In this work, we developed an adaptive bi-diagonal scaling procedure for equalizing the average noise variance across the rows and columns of a given data matrix. We analyzed the accuracy of our proposed procedure in a wide range of settings and provided theoretical and empirical evidence of its advantages for signal detection and recovery under general heteroskedastic noise. Our approach is particularly appealing from a practical standpoint: it is fully automatic and data-driven, it supports general noise distributions and a broad range of signals and noise variance structures, and perhaps most importantly, it provides an empirical validation of our model assumptions via the accuracy of the spectrum's fit to the MP law after normalization.

It is worthwhile to briefly discuss our delocalization assumption on the signal's singular vectors (see Assumption~\ref{assump: signal delocalization} in Section~\ref{sec: rank one case} and the subsequent text). Currently, this assumption prohibits highly localized singular vectors, e.g., with a finite number of nonzeros or a logarithmically-growing number of them (with respect to the growing dimensions of the matrix). A natural question is whether this assumption is required in practice and whether it can be relaxed. We conjecture that some amount of delocalization of the signal singular vectors is always necessary to treat heteroskedastic noise with a general variance matrix $S$. Otherwise, a single row or column in the data with abnormally strong noise can always be considered as a rank-one component in the signal (see, e.g., Figure~\ref{fig: spectrum before and after normalization, outlier rows anc columns}). In this case, there is no fundamental way of telling whether such a row or column belongs to the signal or the noise. Therefore, our approach implicitly assumes that highly localized singular vectors (i.e., whose energy is concentrated in a very small number of entries) belong to the noise and are thus suppressed by the normalization of the rows and columns. We believe that this is a desirable property for many applications.

Another question of interest is whether it is possible to accurately estimate the scaling factors $\mathbf{x}$ and $\mathbf{y}$ for general variance matrices $S$ without relying on their incoherence with respect to $\widetilde{S}$ (see Assumption~\ref{assump: decay rate of g-h} and Lemma~\ref{lem: g is close to h under incoherence} in Section~\ref{sec: variance matrices with general rank}). One potential direction is to consider an iterative application of our proposed normalization, i.e., to apply Algorithm~\ref{alg:noise standardization} consecutively to its own output. Our results in Section~\ref{sec: variance matrices with general rank} already show that the estimation error of the scaling factors improves if the scaling factors are close to vectors of all-ones. This fact implies that an iterative application of our normalization may improve the estimation accuracy if the previous round of normalization made the variance matrix closer to being doubly regular. We conducted preliminary numerical experiments of this idea, not reported in this work, which suggest that iterative application of Algorithm~\ref{alg:noise standardization} can indeed improve the quality of normalization in challenging regimes where $S$ does not abide by our incoherence requirements. However, a comprehensive investigation of this idea, especially the numerical stability and convergence of the iterative procedure, is beyond the scope of this work. 

Lastly, we discuss several possible future research directions. First, it is of interest to further investigate the advantages of the normalization~\eqref{eq: scaling rows and columns} for signal detection and recovery in general heteroskedastic settings. In particular, it is desirable to investigate the influence of this normalization on the spectral signal-to-noise ratio beyond the case of Gaussian noise with $S=\mathbf{x}\mathbf{y}^T$ (which was considered in~\cite{leeb2021matrix}) and characterize settings where signal detection and rank estimation improve. Second, it is of interest to analyze the recovery accuracy of the signal $X$ by singular value thresholding or shrinkage before and after the normalization~\eqref{eq: scaling rows and columns}, identifying scenarios where the improvement in recovery accuracy is substantial. Finally, it is desirable to further refine the presented theoretical results and identify potential for improved algorithms. One such direction is to refine Theorem~\ref{thm: MP law for E_hat} to characterize the fluctuations of the largest eigenvalue around $\beta_+$. To this end, a promising direction is to use the results derived in~\cite{ding2022tracy} to establish the Tracy-Widom law, allowing for a refined rank estimation procedure. Another direction of interest is to investigate the dependence of the estimation errors in $\hat{\mathbf{x}}$ and $\hat{\mathbf{y}}$ on the parameter $\eta$ and explore whether the results can be enhanced by optimizing over $\eta$, possibly also using multiple values of $\eta$ simultaneously. We leave such extensions for future work.

\section*{Acknowledgements}
The authors acknowledge funding support from NIH grants R01GM131642, UM1PA051410, R33DA047037, U54AG076043, U54AG079759, U01DA053628, P50CA121974, and R01GM135928.

\section*{Data availability statement}
No new data were generated or analyzed in support of this research.

\begin{appendices}

\section{Experiment reproducibility details} \label{appendix: reproducibility details}
\subsection{Figure~\ref{fig: spectrum before and after normalization, outlier rows anc columns}}
In this figure, the signal $X$ is of rank $20$ with $10$ identical singular values given by $\sqrt{10^3 n}$ and $10$ identical singular values given by $\sqrt{3n}$. The singular vectors were obtained by orthonormalizing independent random Gaussian vectors of suitable size. The noise matrix $E$ was generated as Gaussian homoskedastic with variance $1$, except that we amplified its last $5$ rows and $5$ columns by factors of $\sqrt{10}$ and $10$, respectively. Specifically, $E_{ij} \sim \mathcal{N}(0,S_{ij})$, $S = \mathbf{x}\mathbf{y}^T$, where $\mathbf{x} = [1,\ldots,1,10,10,10,10,10]^T$ and $\mathbf{y} = [1,\ldots,1,100,100,100,100,100]^T$. 

\subsection{Figure~\ref{fig: spectrum before and after normalization, general heteroskedastic noise}}
In this figure, the signal $X$ was generated in the same way as described for Figure~\ref{fig: spectrum before and after normalization, outlier rows anc columns}. The noise was generated according to $E_{ij} \sim \mathcal{N}(0,S_{ij})$, $S = A B$, where $A\in\mathbb{R}^{m\times 10}$, $B\in\mathbb{R}^{10\times n}$, and  $A_{ij},B_{ij}\sim \operatorname{exp}\{\mathcal{N}(0,2)\}$. We then normalized $S$ by a scalar so that its average entry is $1$. 

\section{Adaptive choice of $\eta$} \label{appendix: adapting to unknown global scaling of the noise}

The analysis in Section~\ref{sec: method derivation and analysis} relies on Assumptions~\ref{assump: noise moment bound} and~\ref{assump: variance boundedness}, which require a specific scaling of the moments of $E_{ij}$ with respect to the dimensions of the matrix. In particular, they require that the noise variances $S_{ij}$ scale like $n^{-1}$. If we further assume that $ 0 < d_1 \leq m/n \leq d_2 <1 $ for some global constants $d_1,d_2>0$, then with high probability, all singular values of $E$ are bounded from above and from below by positive global constants, i.e.,
\begin{equation}
    c_1 \leq \sigma_i\{E\} \leq c_2, \label{eq: singular value boundedness of the noise}
\end{equation}
for some constants $c_1,c_2>0$; see, e.g.,~\cite{alt2017local,ding2022tracy} and references therein. 

The scaling requirement on the noise variances $S_{ij} \propto n^{-1}$ may not naturally hold in applications. To account for other scalings of the noise, suppose that instead of $Y$, we observe
\begin{equation}
    Y^{'} = \alpha(m,n) Y,
\end{equation}
where $\alpha(m,n)$ is a scalar with an unknown dependence on the matrix dimensions $m$ and $n$.

Since $\alpha(m,n)$ is unknown, we propose to estimate it from the data. According to~\eqref{eq: singular value boundedness of the noise}, the singular values of the scaled noise $\alpha(m,n) E$ are bounded with high probability from above and from below by constant multiples of $\alpha(m,n)$. Hence, we can correct for the unknown scaling $\alpha(m,n)$, up to a global constant, if we know the singular values of $\alpha(m,n) E$. Since we do not have access to these singular values, we estimate $\alpha(m,n)$ directly from the scaled data $Y^{'}$. To this end, we propose to exploit the fact that the signal $X$ is low-rank by employing the median singular value of $Y^{'}$, which is robust to low-rank perturbations of $Y^{'}$ for sufficiently large $m$ and $n$. Specifically, since $Y^{'} = \alpha(m,n)X + \alpha(m,n)E$, where the rank of $X$ is of fixed and independent of $m$ and $n$, Weyl's inequality for singular values of sums of matrices (see Theorem 3.3.16 in~\cite{horn1994topics}) together with~\eqref{eq: singular value boundedness of the noise} imply that
\begin{equation}
    c_1 \alpha(m,n) \leq \operatorname{Median}\{\sigma_1\{Y^{'}\},\ldots,\sigma_m\{Y^{'}\}\} \leq c_2 \alpha(m,n), \label{eq: boundeness of the median singular value}
\end{equation}
with probability approaching $1$ as $n\rightarrow\infty$. Consequently, the median singular value of $Y^{'}$ is proportional to $\alpha(m,n)$ with high probability for large $m$ and $n$. We can then divide $Y{'}$ by its median singular value to attain the scaling of the noise required for the analysis in Section~\ref{sec: method derivation and analysis}. Alternatively, we can simply take 
\begin{equation}
    \eta = \operatorname{Median}\left\{\sigma_1\{Y^{'}\},\ldots,\sigma_m\{Y^{'}\}\right\},
\end{equation}
and proceed as usual -- treating $Y^{'}$ as $Y$. By doing so, $\hat{\mathbf{x}}$ and $\hat{\mathbf{y}}$ from~\eqref{eq: x_hat and y_hat def} would automatically account for the factor $\alpha(m,n)$, and would concentrate around $\alpha(m,n) \mathbf{x}$ and $\alpha(m,n) \mathbf{y}$, respectively. This follows from the fact that according to~\eqref{eq: formulas for g_hat_1 and g_hat_2 in terms of the SVD}, if we multiply $Y$ and $\eta$ each by $\alpha(m,n)$, then $\hat{\mathbf{g}}^{(1)}$ and $\hat{\mathbf{g}}^{(2)}$ would be divided by $\alpha(m,n)$, and consequently, $\hat{\mathbf{x}}$ and $\hat{\mathbf{y}}$ would each be multiplied by $\alpha(m,n)$. Overall, this discussion justifies Step~\ref{alg: step 2} in Algorithm~\ref{alg:noise standardization}.

We remark that instead of the median of the singular values, any other quantile can be used, as long as it is bounded from below away from $0$ and bounded from above away from $1$. Such quantiles would also satisfy the boundedness property~\eqref{eq: boundeness of the median singular value}. Therefore, our justification in this section for taking $\eta$ as the median of the singular values of the data would also go though for other quantiles. Our choice of using the median in Algorithm~\ref{alg:noise standardization} is primarily motivated by its simplicity and its popularity in the literature to estimate the global noise level (see, e.g.,~\cite{gavish2014optimal}). From a practical perspective, the median is a natural choice for making the method less sensitive to finite-sample fluctuations near the edges of the spectral distribution, as well as the influence of signal components or rows/columns with anomalous noise variances (which may lead to abnormally small or large singular values).

In Figure~\ref{fig: robustness to choice of eta} we illustrate the robustness of our approach to the choice of $\eta$. Specifically, we evaluated the estimated scaling factors $\hat{\mathbf{x}}$ and $\hat{\mathbf{y}}$ and the resulting fit to the MP law for the transformed purified PBMC data (see Section~\ref{sec: examples on real data}) for two different choices of $\eta$: one using the quantile $q=0.5$ (median) of the data's singular values and another using the quantile $q=0.25$. It is evident that an excellent fit to the MP law is attained similarly for the two choices of $\eta$. Moreover, the estimated scaling factors are very similar at the level of their individual entries, which is to be expected from our theoretical guarantees on the convergence of the estimated scaling factors to the true scaling factors. 

\begin{figure} [!t]
  \centering
  	{
  	\subfloat[][Fit to MP law, $q=0.5$]
  	{
    \includegraphics[width=0.38\textwidth]{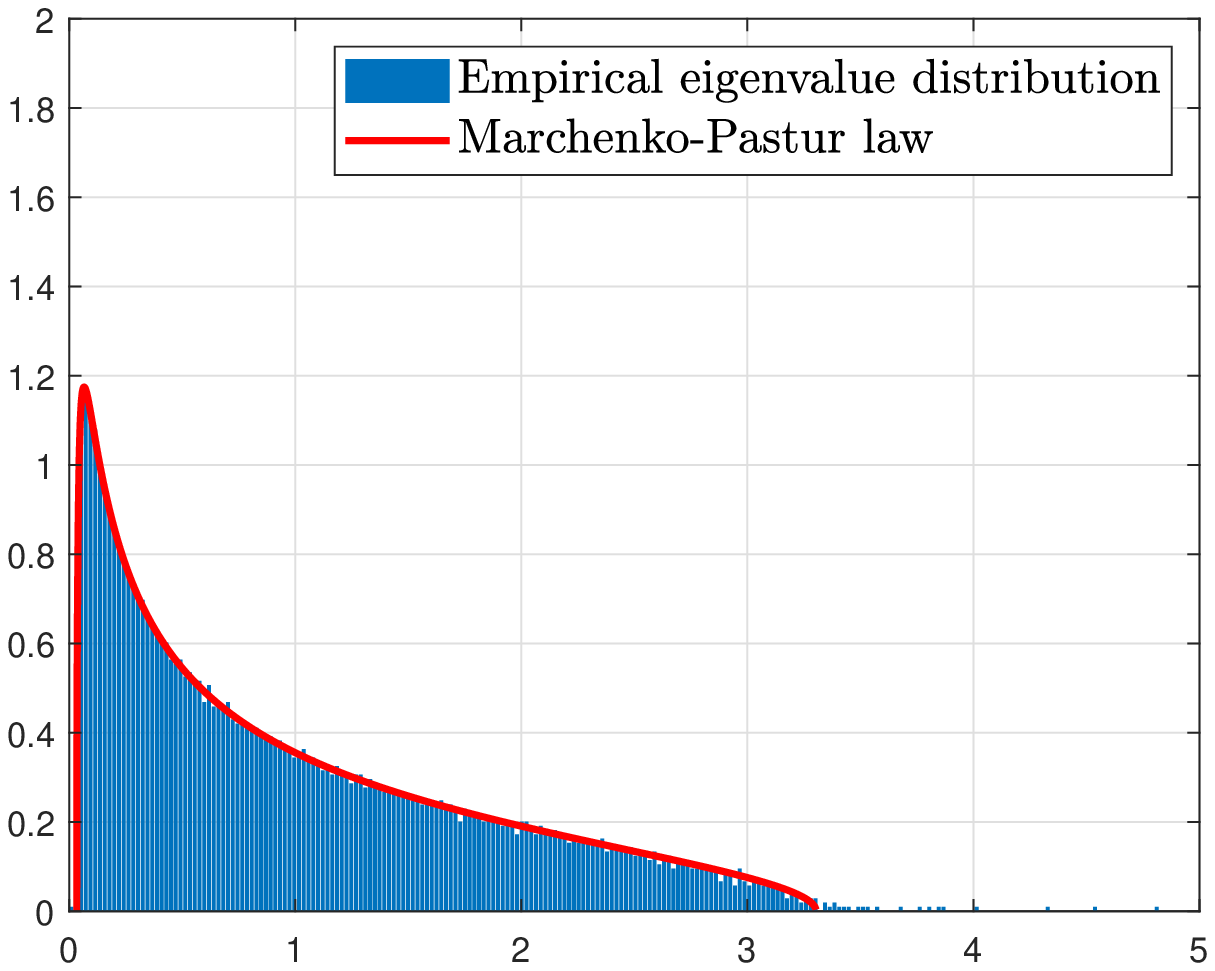} 
    } \hspace{20pt}
    \subfloat[][Fit to MP law, $q=0.25$] 
  	{
    \includegraphics[width=0.38\textwidth]{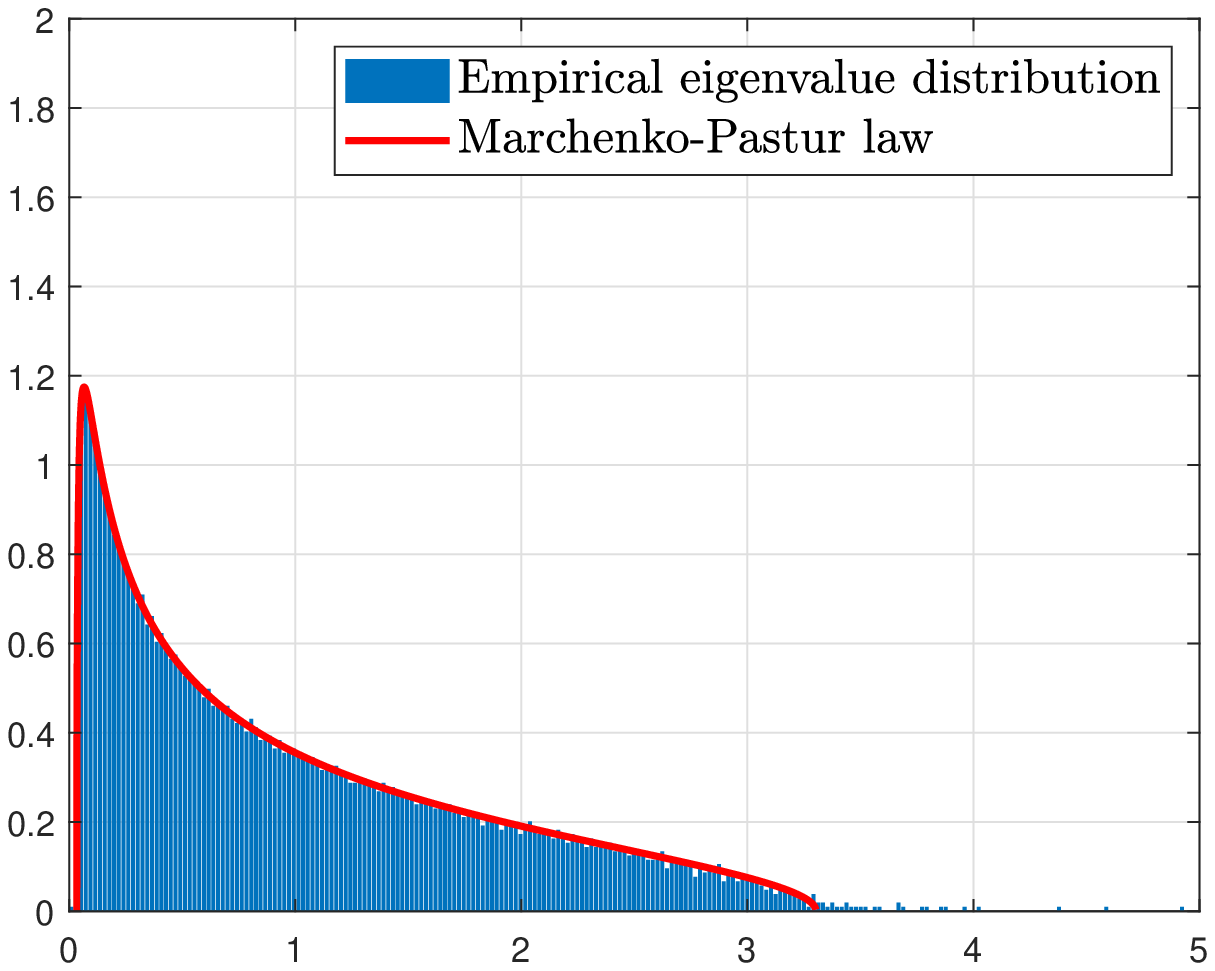} 
    }
    \vspace{20pt}
    \\
    \subfloat[][$\hat{\mathbf{x}}$ for $q=0.5$ vs. $\hat{\mathbf{x}}$ for $q=0.25$]  
  	{
    \includegraphics[width=0.38\textwidth]{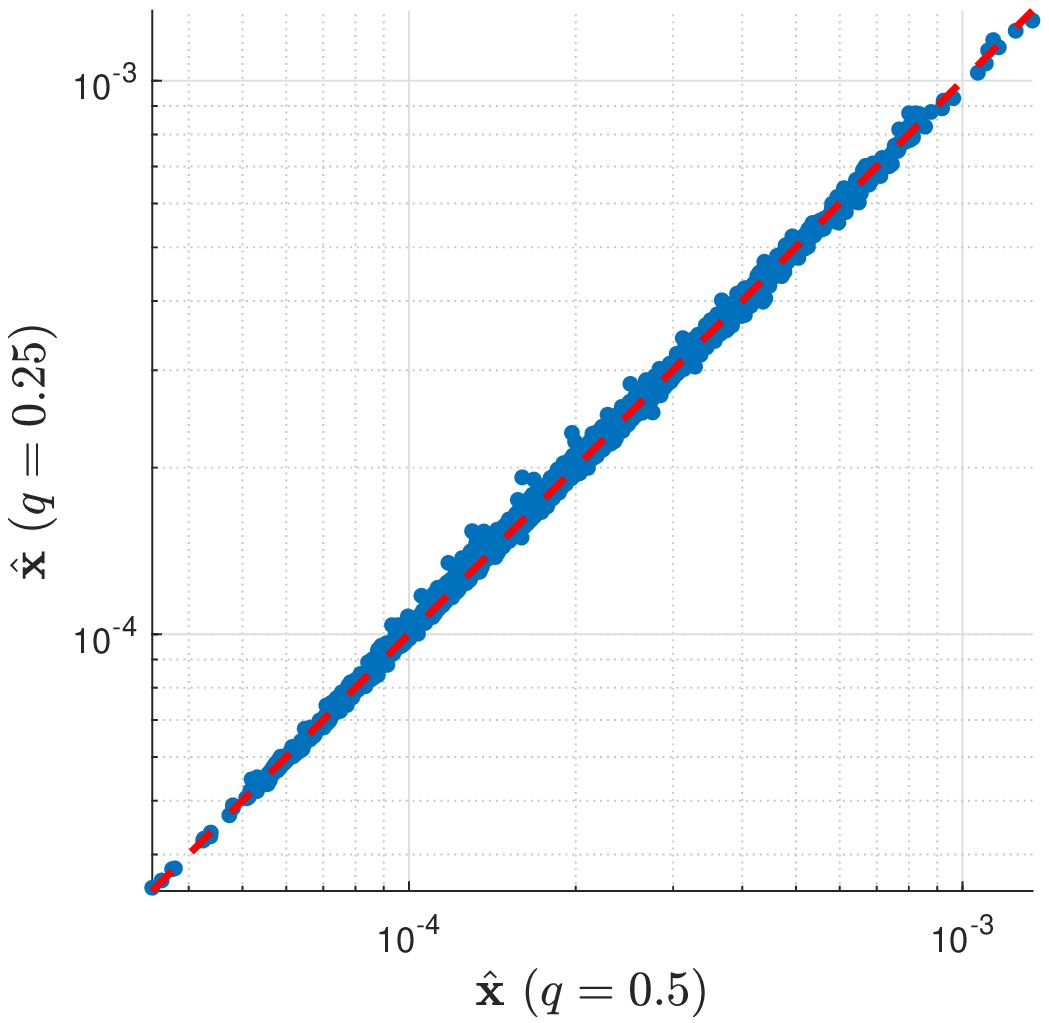}  
    } \hspace{20pt}
    \subfloat[][$\hat{\mathbf{y}}$ for $q=0.5$ vs. $\hat{\mathbf{y}}$ for $q=0.25$] 
  	{
    \includegraphics[width=0.38\textwidth]{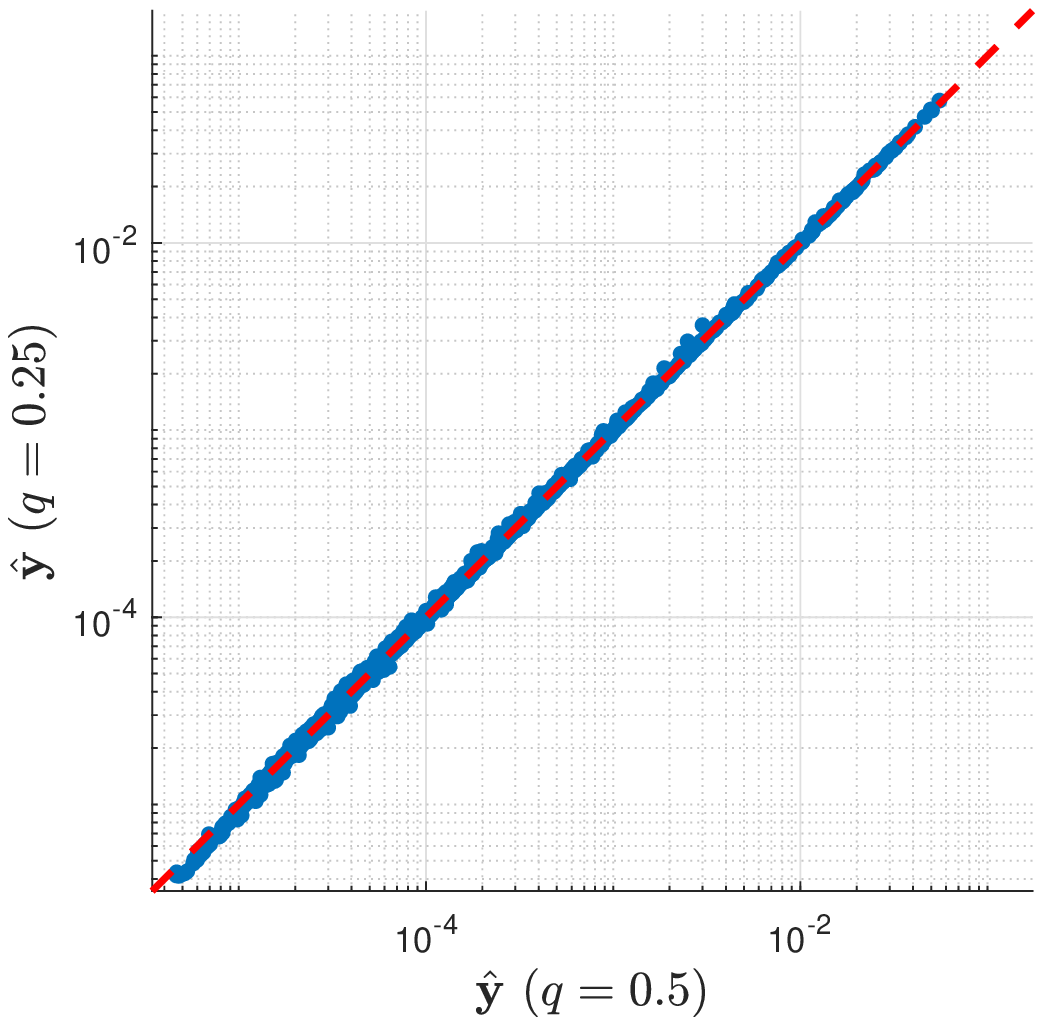} 
    } 
    }
    \caption
    {\textbf{Top row:} Empirical eigenvalue density vs. the MP law (solid red line) for the transformed scRNA-seq data~\cite{zheng2017massively} (see Section~\ref{sec: examples on real data}), where $\eta$ is chosen according to the $q$th quantile of the empirical singular values, for $q=0.5$ (left panel) and $q=0.25$ (right panel). \textbf{Bottom row:} Comparison of the entries of the estimated scaling factors $\hat{\mathbf{x}}$ (left panel) and $\hat{\mathbf{y}}$ (right panel) in logarithmic scale using the above two choices of $\eta$ (x-axis for $q=0.5$ and y-axis for $q=0.25$). The dashed red line (y-axis = x-axis) describes ideal perfect correspondence.
    } \label{fig: robustness to choice of eta}
    \end{figure} 

Finally, we briefly discuss the case of $m=n$ (or if $m$ is sufficiently close to $n$). In this case, eq.~\eqref{eq: singular value boundedness of the noise} does not hold in general, and some of the singular values of the noise $\sigma_i\{E\}$ may approach zero. Therefore, instability may arise in Algorithm~\ref{alg:noise standardization} if $\eta$ is close to zero. Note that in our analysis in Sections~\ref{sec: analysis for rank-one variance matrices} and~\ref{sec: variance matrices with general rank}, where we assume that the noise variances $S_{ij}$ scale like $n^{-1}$, we consider $\eta$ as a fixed global constant. Therefore, as long as $\eta$ is bounded from below away from zero for all sufficiently large matrix dimensions, our results would hold also for the case of $m=n$. Our proposed choice of $\eta$ via the median of the empirical singular values (or other quantiles as noted above) would enforce this property automatically, even if $m=n$, as long as the median of the noise singular values is bounded from below away from zero under the correct noise scaling $S_{ij} \propto n^{-1}$, in which case eq.~\eqref{eq: boundeness of the median singular value} would still hold.

\section{Auxiliary results and definitions}
\subsection{Boundedness of $\mathbf{g}$} \label{appendix: boundedness of g}
\begin{lem} \label{lem: boundedness of g}
Let $m\leq n$ and suppose that there exists a global constant $C > 0$ such that $ S_{ij} \leq Cn^{-1}$ for all $i,j$. Then,
\begin{equation}
    \left( \eta + \frac{C}{\eta} \right)^{-1} < \mathbf{g}_{i} < \frac{1}{\eta} \label{eq: boundedness of g_1 and g_2},
\end{equation}
for all $i\in[m+n]$, where $\mathbf{g}$ is the solution to~\eqref{eq: Dyson eq imag axis}.
\end{lem}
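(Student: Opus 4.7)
The plan is to work directly from the componentwise form of the Dyson equation on the imaginary axis, namely $\eta + (\mathcal{S}\mathbf{g})_i = 1/\mathbf{g}_i$ for every $i\in[m+n]$, and to establish the two-sided bound in two stages: first the upper bound $\mathbf{g}_i<1/\eta$, then the lower bound, which uses the upper bound as an input.

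For the upper bound, I would invoke Proposition~\ref{prop: solution to Dyson equation} together with the definition $\mathbf{g}(\eta)=\operatorname{Im}\{\mathbf{f}(\imath\eta)\}$ to conclude that $\mathbf{g}$ is strictly positive entrywise. Since $\mathcal{S}$ has nonnegative entries and is not identically zero in any row (by the model assumption $S_{ij}>0$), the vector $\mathcal{S}\mathbf{g}$ is strictly positive. Rearranging the Dyson equation as $\mathbf{g}_i=(\eta+(\mathcal{S}\mathbf{g})_i)^{-1}$ then immediately yields $\mathbf{g}_i<1/\eta$.

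For the lower bound, the idea is to control $(\mathcal{S}\mathbf{g})_i$ from above using the row-sum bound on $\mathcal{S}$ together with the upper bound on $\mathbf{g}$ just obtained. For $i\in[m]$ the relevant row of $\mathcal{S}$ has entries $S_{ij}\le Cn^{-1}$ for $j\in[n]$, so
\begin{equation}
(\mathcal{S}\mathbf{g})_i = \sum_{j=1}^n S_{ij}\,\mathbf{g}_{m+j} < \sum_{j=1}^n \frac{C}{n}\cdot\frac{1}{\eta} = \frac{C}{\eta},
\end{equation}
where the strict inequality comes from $\mathbf{g}_{m+j}<1/\eta$ strictly together with $S_{ij}>0$. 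For $i\in\{m+1,\ldots,m+n\}$ the relevant row has only $m$ nonzero entries, and the analogous computation gives $(\mathcal{S}\mathbf{g})_i < mC/(n\eta) \le C/\eta$, where the last inequality uses the hypothesis $m\le n$. In either case, plugging back into $1/\mathbf{g}_i=\eta+(\mathcal{S}\mathbf{g})_i$ gives $1/\mathbf{g}_i<\eta+C/\eta$, which is exactly the desired lower bound.

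The whole argument is elementary once the Dyson equation~\eqref{eq: Dyson eq imag axis} and positivity of $\mathbf{g}$ from Proposition~\ref{prop: solution to Dyson equation} are available; I do not anticipate a serious obstacle. The only subtle point is ensuring that the inequalities remain strict, which is why I record that $\mathcal{S}$ has strictly positive off-diagonal blocks (inherited from the model assumption $S_{ij}>0$) and that the upper bound $\mathbf{g}_j<1/\eta$ already proved is strict; these together keep the second-stage inequality $(\mathcal{S}\mathbf{g})_i<C/\eta$ strict rather than just $\le$, and hence deliver the strict lower bound claimed in the statement.
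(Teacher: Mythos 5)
Your proposal is correct and follows essentially the same route as the paper: the upper bound $\mathbf{g}_i<1/\eta$ from positivity of $\mathcal{S}\mathbf{g}$ in the rearranged Dyson equation, and the lower bound by feeding that upper bound back in, using $S_{ij}\le Cn^{-1}$ and $m\le n$ to bound the row sums $(\mathcal{S}\mathbf{g})_i<C/\eta$. The paper phrases the second step via $(\mathcal{S}\mathbf{1}_{m+n})_i\le C\max\{1,m/n\}\le C$, which is the same computation you carry out block by block.
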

\begin{proof}
According to~\eqref{eq: Dyson eq imag axis} we have
\begin{equation}
    \eta < \eta + (\mathcal{S} \mathbf{g})_i = \frac{1}{\mathbf{g}_i}, \label{eq: boundedness of g}
\end{equation}
for all $i\in [m+n]$, where we used the fact that $\mathcal{S}\mathbf{g}$ is a positive vector. Therefore, we immediately obtain the upper bound in~\eqref{eq: boundedness of g_1 and g_2}. Plugging this upper bound in~\eqref{eq: Dyson eq imag axis} we obtain the lower bound
\begin{equation}
     \mathbf{g}_i  > \left({\eta + \frac{(\mathcal{S}\mathbf{1}_{m+n})_i}{\eta}} \right)^{-1}, 
\end{equation}
for all $i=1,\ldots,m+n$, where $\mathbf{1}_{m+n}$ is a vector of $m+n$ ones. Lastly, the assumption $S_{ij} \leq C n^{-1}$ implies that $(\mathcal{S} \mathbf{1}_{m+n})_i \leq C \max\{1,m/n\} \leq C $ for all $i=1,\ldots,m+n$ (since $m\leq n$), which completes the proof.
\end{proof}

\subsection{Boundedness of $\mathbf{x}$ and $\mathbf{y}$ for $S = \mathbf{x}\mathbf{y}^T$} \label{appendix: proof of boundedness of x and y}
\begin{lem} \label{lem: boundedness of x and y}
Let $m\leq n$ and $S = \mathbf{x} \mathbf{y}^T$. Suppose that there exists global constants $C,c>0$ such that $c n^{-1} \leq S_{ij} \leq C n^{-1}$ for all $i\in[m]$ and $j\in [n]$. Then, under the normalization~\eqref{eq: setting alpha=1}, we have 
\begin{align}
    \frac{1}{\sqrt{m}} \left( \frac{c}{\sqrt{C}} \right) \left( 1 + \frac{C}{\eta^2} \right)^{-3/2} &\leq \mathbf{x}_i \leq \frac{1}{\sqrt{m}} \left( \frac{C}{\sqrt{c}} \right) \left( 1 + \frac{C}{\eta^2} \right)^{3/2}, \\
    \frac{\sqrt{m}}{n} \left( \frac{c}{\sqrt{C}} \right) \left( 1 + \frac{C}{\eta^2} \right)^{-3/2} &\leq \mathbf{y}_j \leq \frac{\sqrt{m}}{n} \left( \frac{C}{\sqrt{c}} \right) \left( 1 + \frac{C}{\eta^2} \right)^{3/2},
\end{align}
for all $i\in[m]$ and $j\in [n]$.
\end{lem}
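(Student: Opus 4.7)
The plan is to derive everything from the coupled Dyson system~\eqref{eq: coupled Dyson eq imag axis} specialized to $S = \mathbf{x}\mathbf{y}^T$ together with Proposition~\ref{prop: formula for x and y in terms of g} under the normalization $\alpha=1$, combined with the two-sided bounds on $\mathbf{g}$ supplied by Lemma~\ref{lem: boundedness of g}. Writing $p = \mathbf{x}^T\mathbf{g}^{(1)}$ and $q=\mathbf{y}^T\mathbf{g}^{(2)}$, eq.~\eqref{eq: setting alpha=1} gives $p=q$, and the rank-one structure reduces the Dyson equations to the pointwise identities
\begin{equation}
    \frac{1}{\mathbf{g}^{(1)}_i} - \eta = \mathbf{x}_i q, \qquad \frac{1}{\mathbf{g}^{(2)}_j} - \eta = \mathbf{y}_j p,
\end{equation}
so that $\mathbf{x}_i = q^{-1}(1/\mathbf{g}^{(1)}_i-\eta)$ and $\mathbf{y}_j = q^{-1}(1/\mathbf{g}^{(2)}_j-\eta)$; this is the same formula as in Proposition~\ref{prop: formula for x and y in terms of g}, with the normalizing scalar identified as $q = \sqrt{m-\eta\|\mathbf{g}^{(1)}\|_1} = \sqrt{n-\eta\|\mathbf{g}^{(2)}\|_1}$.

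Next I would bound the numerators and then $q$. Using $(S\mathbf{g}^{(2)})_i = \sum_j S_{ij}\mathbf{g}^{(2)}_j$ together with the hypothesis $cn^{-1}\le S_{ij}\le Cn^{-1}$ and the inequalities $\eta/(\eta^2+C) < \mathbf{g}^{(2)}_j < 1/\eta$ from Lemma~\ref{lem: boundedness of g}, I get
\begin{equation}
    \frac{c\eta}{\eta^2+C} \le \frac{1}{\mathbf{g}^{(1)}_i}-\eta \le \frac{C}{\eta},
\end{equation}
and analogously, since $(S^T\mathbf{g}^{(1)})_j = \mathbf{y}_j (\mathbf{x}^T\mathbf{g}^{(1)}) = \mathbf{y}_j q$ and $\sum_i \mathbf{g}^{(1)}_i$ is bounded between $m\eta/(\eta^2+C)$ and $m/\eta$,
\begin{equation}
    \frac{cm\eta}{n(\eta^2+C)} \le \mathbf{y}_j q \le \frac{Cm}{n\eta}.
\end{equation}
Summing the last display weighted by $\mathbf{g}^{(2)}_j$ and again invoking the Lemma bounds on $\mathbf{g}^{(2)}$ turns $q = \sum_j \mathbf{y}_j \mathbf{g}^{(2)}_j$ into a self-bounding inequality for $q^2$, yielding
\begin{equation}
    \frac{\sqrt{cm}\,\eta}{\eta^2+C} \le q \le \frac{\sqrt{Cm}}{\eta}.
\end{equation}

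Finally, dividing the numerator bounds by the appropriate side of the $q$ bound gives the four inequalities in the statement after rewriting $\eta^2/(\eta^2+C) = (1+C/\eta^2)^{-1}$. The extra power in the exponent $3/2$ versus the sharper $1$ that falls out naturally is harmless since $1+C/\eta^2\ge 1$; I would just use $(1+C/\eta^2)\le (1+C/\eta^2)^{3/2}$ to match the stated form. The main non-routine step is the self-referential bound on $q$: $q$ appears on both sides of the relation $q = \sum_j \mathbf{y}_j\mathbf{g}^{(2)}_j$ (through $\mathbf{y}_j q$), so one must first multiply through to obtain a quadratic inequality in $q$ and only then extract $q$. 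Once this is handled, the bounds on $\mathbf{x}_i$ and $\mathbf{y}_j$ follow by direct substitution, with the $\sqrt{m}$ (respectively $\sqrt{m}/n$) prefactors emerging from the $q$ bounds and the $c/\sqrt{C}$ and $C/\sqrt{c}$ ratios emerging from the ratio of numerator and denominator bounds.
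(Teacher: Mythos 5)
Your argument is correct and follows essentially the same route as the paper's proof: both exploit the rank-one Dyson identities together with the normalization $\mathbf{x}^T\mathbf{g}^{(1)}=\mathbf{y}^T\mathbf{g}^{(2)}$ and the entrywise bounds on $\mathbf{g}$ from Lemma~\ref{lem: boundedness of g}, and both control the normalizing scalar via a self-referential quadratic inequality before dividing entrywise. The only difference is bookkeeping: the paper pivots through two-sided bounds on $\sum_i\mathbf{x}_i$ (resp.\ $\sum_j\mathbf{y}_j$), which is what produces the exponent $3/2$, whereas your direct bound on $q=\mathbf{x}^T\mathbf{g}^{(1)}$ gives the slightly sharper exponent $1$, which you correctly relax to match the stated form.
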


\begin{proof}
According to~\eqref{eq: setting alpha=1} and Lemma~\ref{lem: boundedness of g}, we have
\begin{equation}
     c \left( \eta + \frac{C}{\eta} \right)^{-1}  \leq \frac{c}{n} \sum_{j=1}^n\mathbf{g}_j^{(2)} \leq \left( S \mathbf{g}^{(2)}\right)_i = \mathbf{x}_i \left( \mathbf{y}^T \mathbf{g}^{(2)} \right) =  \mathbf{x}_i \left( \mathbf{x}^T \mathbf{g}^{(1)} \right) \leq \frac{\mathbf{x}_i}{\eta} \sum_{k=1}^m \mathbf{x}_k.
\end{equation}
Summing the above over $i=1,\ldots,m$ and taking the square root of both sides gives
\begin{equation}
    \sum_{i=1}^m \mathbf{x}_i \geq m^{1/2}\sqrt{\frac{c\eta }{\eta + C \eta^{-1}}}.
\end{equation}
Similarly, we have
\begin{equation}
    \left( \eta + \frac{C}{\eta} \right)^{-1} \mathbf{x}_i \sum_{k=1}^m \mathbf{x}_k\leq \mathbf{x}_i \left( \mathbf{x}^T \mathbf{g}^{(1)} \right) = \mathbf{x}_i \left( \mathbf{y}^T \mathbf{g}^{(2)} \right) =  \left( S \mathbf{g}^{(2)}\right)_i \leq \frac{C}{n} \sum_{j=1}^n \mathbf{g}_j^{(2)} \leq  \frac{C}{\eta},
\end{equation}
implying that
\begin{equation}
    \sum_{i=1}^m \mathbf{x}_i \leq m^{1/2} \sqrt{\frac{C}{\eta} \left( \eta + \frac{C}{\eta} \right) }.
\end{equation}
Combining all of the above and after some manipulation, we obtain that
\begin{equation}
    \frac{1}{\sqrt{m}} \left( \frac{c}{\sqrt{C}} \right) \left( 1 + \frac{C}{\eta^2} \right)^{-3/2} \leq \mathbf{x}_i \leq \frac{1}{\sqrt{m}} \left( \frac{C}{\sqrt{c}} \right) \left( 1 + \frac{C}{\eta^2} \right)^{3/2},
\end{equation}
for all $i\in[m]$. Next, we derive analogous bounds for $\mathbf{y}_j$. First, we have 
\begin{equation}
     c\left(\frac{m}{n} \right) \left( \eta + \frac{C}{\eta} \right)^{-1}  \leq \frac{c}{n} \sum_{i=1}^m\mathbf{g}_i^{(1)} \leq \left( [\mathbf{g}^{(1)}]^T S\right)_j = \left( [\mathbf{g}^{(1)}]^T \mathbf{x} \right) \mathbf{y}_j =  \left( [\mathbf{g}^{(2)}]^T \mathbf{y} \right) \mathbf{y}_j \leq \frac{\mathbf{y}_j}{\eta} \sum_{k=1}^n \mathbf{y}_k,
\end{equation}
which asserts that
\begin{equation}
    \sum_{j=1}^n \mathbf{y}_j \geq m^{1/2}\sqrt{\frac{c\eta }{\eta + C \eta^{-1}}}.
\end{equation}
Second, we can write
\begin{equation}
\left( \eta + \frac{C}{\eta} \right)^{-1} \mathbf{y}_j \sum_{k=1}^n \mathbf{y}_k\leq   \left( [\mathbf{g}^{(2)}]^T \mathbf{y} \right) \mathbf{y}_j = \left( [\mathbf{g}^{(1)}]^T \mathbf{x} \right) \mathbf{y}_j = \left( [\mathbf{g}^{(1)}]^T S\right)_j \leq \left(\frac{C}{n} \right) \sum_{i=1}^m \mathbf{g}_i^{(1)} \leq  \frac{C}{\eta} \left( \frac{m}{n} \right),
\end{equation}
which leads to
\begin{equation}
    \sum_{j=1}^n \mathbf{y}_j \leq m^{1/2} \sqrt{\frac{C}{\eta} \left( \eta + \frac{C}{\eta} \right) }.
\end{equation}
Finally, combining the last four equations gives
\begin{equation}
    \frac{\sqrt{m}}{n} \left( \frac{c}{\sqrt{C}} \right) \left( 1 + \frac{C}{\eta^2} \right)^{-3/2} \leq \mathbf{y}_j \leq \frac{\sqrt{m}}{n} \left( \frac{C}{\sqrt{c}} \right) \left( 1 + \frac{C}{\eta^2} \right)^{3/2},
\end{equation}
for all $j\in[n]$.
\end{proof}

\subsection{Boundedness of $\mathbf{x}$ and $\mathbf{y}$ for general $S$} \label{appendix: proof of boundedness of x and y for general S}
\begin{lem} \label{lem: boundedness of x and y for general S}
Let $m\leq n$ and suppose there exists global constants $C,c>0$ such that $c n^{-1} \leq S_{ij} \leq C n^{-1}$ for all $i\in[m]$ and $j\in [n]$. Then, under the normalization~\eqref{eq: setting alpha=1 for general variance matrices}, we have 
\begin{align}
    \frac{1}{\sqrt{m}} \left( \frac{c^{5/2}}{{C^2}} \right) \left( 1 + \frac{C^2}{c\eta^2} \right)^{-3/2} &\leq \mathbf{x}_i \leq \frac{1}{\sqrt{m}} \left( \frac{C^{5/2}}{{c^2}} \right) \left( 1 + \frac{C^2}{c\eta^2} \right)^{3/2}, \\
    \frac{\sqrt{m}}{n} \left( \frac{c^{5/2}}{{C^2}} \right) \left( 1 + \frac{C^2}{c\eta^2} \right)^{-3/2} &\leq \mathbf{y}_j \leq \frac{\sqrt{m}}{n} \left( \frac{C^{5/2}}{{c^2}} \right) \left( 1 + \frac{C^2}{c\eta^2} \right)^{3/2},
\end{align}
for all $i\in[m]$ and $j\in [n]$.
\end{lem}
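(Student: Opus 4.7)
The plan is to reduce this lemma to the rank-one case already handled in Lemma~\ref{lem: boundedness of x and y} by: (i) first proving that the rank-one surrogate matrix $\mathbf{x}\mathbf{y}^T$ enjoys entrywise upper and lower bounds of the same form as $S$ (with slightly worse constants), and then (ii) running the same argument as in the proof of Lemma~\ref{lem: boundedness of x and y}, but applied to the Dyson equation~\eqref{eq: coupled Dyson eq imag axis rank one} for $\mathbf{h}$ with the rank-one ``variance'' $\mathbf{x}\mathbf{y}^T$, together with the normalization convention $\mathbf{x}^T\mathbf{h}^{(1)}=\mathbf{y}^T\mathbf{h}^{(2)}$ from~\eqref{eq: setting alpha=1 for general variance matrices}.

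For step (i), the main tool is the double regularity of $\widetilde{S}$ (Definition~\ref{def: doubly regular matrix}). Writing $S=D\{\mathbf{x}\}\widetilde{S}D\{\mathbf{y}\}$ and summing $\sum_j S_{ij}=\mathbf{x}_i\sum_j \widetilde{S}_{ij}\mathbf{y}_j$, I would bound $\sum_j \widetilde{S}_{ij}\mathbf{y}_j$ between $n\mathbf{y}_{\min}$ and $n\mathbf{y}_{\max}$ using $\sum_j \widetilde{S}_{ij}=n$. Combined with $c\leq\sum_j S_{ij}\leq C$, this yields
\begin{equation}
    \frac{c}{n\mathbf{y}_{\max}}\leq \mathbf{x}_i\leq\frac{C}{n\mathbf{y}_{\min}},\qquad \frac{c}{n\mathbf{x}_{\max}}\leq \mathbf{y}_j\leq\frac{C}{n\mathbf{x}_{\min}},\nonumber
\end{equation}
where the second inequality follows from the symmetric computation using $\sum_i \widetilde{S}_{ij}=m$ and $cm/n\leq\sum_i S_{ij}\leq Cm/n$. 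These inequalities immediately give $\mathbf{x}_{\max}/\mathbf{x}_{\min}\leq C/c$ and $\mathbf{y}_{\max}/\mathbf{y}_{\min}\leq C/c$, and hence the two-sided bounds $c^2/(Cn)\leq \mathbf{x}_i\mathbf{y}_j\leq C^2/(cn)$, i.e., $\mathbf{x}\mathbf{y}^T$ satisfies Assumption-style bounds with constants $c^{'}=c^2/C$ and $C^{'}=C^2/c$ in place of $c$ and $C$.

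For step (ii), I would invoke Lemma~\ref{lem: boundedness of g} applied to the Dyson equation with variance matrix $\mathbf{x}\mathbf{y}^T$ to obtain $(\eta+C^{'}/\eta)^{-1}<\mathbf{h}_i<1/\eta$, and then replay the chain of inequalities in the proof of Lemma~\ref{lem: boundedness of x and y} verbatim with $(S,\mathbf{g},c,C)$ replaced by $(\mathbf{x}\mathbf{y}^T,\mathbf{h},c^{'},C^{'})$. Specifically, using the identity $(\mathbf{x}\mathbf{y}^T\mathbf{h}^{(2)})_i=\mathbf{x}_i(\mathbf{y}^T\mathbf{h}^{(2)})=\mathbf{x}_i(\mathbf{x}^T\mathbf{h}^{(1)})$, I would sandwich $\mathbf{x}_i(\mathbf{x}^T\mathbf{h}^{(1)})$ between $c^{'}\eta^2/(\eta^2+C^{'})$ and a matching upper bound, sum over $i$ to control $\sum_i\mathbf{x}_i$ from above and below by $m^{1/2}$ times explicit expressions in $c^{'},C^{'},\eta$, and then combine to isolate $\mathbf{x}_i$. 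The symmetric argument using $(\mathbf{y}\mathbf{x}^T\mathbf{h}^{(1)})_j=\mathbf{y}_j(\mathbf{x}^T\mathbf{h}^{(1)})$ yields the $\mathbf{y}_j$ bounds with the extra $\sqrt{m}/n$ prefactor coming from summing over the $n$ columns while balancing against $\sum_i\mathbf{h}^{(1)}_i\leq m/\eta$. Substituting $c^{'}=c^2/C$ and $C^{'}=C^2/c$ produces exactly the constants $c^{5/2}/C^2$, $C^{5/2}/c^2$, and $1+C^2/(c\eta^2)$ in the stated bounds.

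The main obstacle is step (i): neither the entries of $\widetilde{S}$ nor the products $\mathbf{x}_i\mathbf{y}_j$ are directly bounded by the hypothesis on $S$, and one really needs both row-sum and column-sum regularity of $\widetilde{S}$ simultaneously, cross-coupled through the inequalities above, to conclude that $\mathbf{x}_{\max}/\mathbf{x}_{\min}$ and $\mathbf{y}_{\max}/\mathbf{y}_{\min}$ are simultaneously bounded by $C/c$. Once this is in place, step (ii) is a mechanical rerun of the rank-one argument.
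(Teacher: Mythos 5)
Your overall strategy matches the paper's: establish two-sided entrywise bounds $c^2/(Cn)\leq \mathbf{x}_i\mathbf{y}_j\leq C^2/(cn)$ for the rank-one surrogate and then rerun Lemma~\ref{lem: boundedness of x and y} with $\mathbf{h}$ in place of $\mathbf{g}$, the normalization~\eqref{eq: setting alpha=1 for general variance matrices}, and $c,C$ replaced by $c^2/C,C^2/c$, which indeed reproduces the stated constants; the paper does precisely this, except that it obtains the product bounds by invoking Lemma~4.1 of~\cite{landa2022scaling} rather than proving them inline. The gap is in your step (i), the very step you flag as the main obstacle: the displayed inequalities do \emph{not} ``immediately give'' $\mathbf{x}_{\max}/\mathbf{x}_{\min}\leq C/c$ and $\mathbf{y}_{\max}/\mathbf{y}_{\min}\leq C/c$. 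From $\frac{c}{n\mathbf{y}_{\max}}\leq\mathbf{x}_i\leq\frac{C}{n\mathbf{y}_{\min}}$ you only get $\mathbf{x}_{\max}/\mathbf{x}_{\min}\leq (C/c)\,\mathbf{y}_{\max}/\mathbf{y}_{\min}$, and symmetrically $\mathbf{y}_{\max}/\mathbf{y}_{\min}\leq (C/c)\,\mathbf{x}_{\max}/\mathbf{x}_{\min}$; this coupled system is vacuous, being satisfied with both ratios arbitrarily large (e.g.\ $\mathbf{x}$ and $\mathbf{y}$ both proportional to $(1/M,M)$ with the products $\mathbf{x}_{\max}\mathbf{y}_{\min}$ and $\mathbf{x}_{\min}\mathbf{y}_{\max}$ pinned to $c/n$), so no finite ratio bound follows from those inequalities alone.

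The fix is to keep the full sums rather than sandwiching $\sum_j\widetilde{S}_{ij}\mathbf{y}_j$ by $n\mathbf{y}_{\min}$ and $n\mathbf{y}_{\max}$: row regularity applied to $\widetilde{S}_{ij}=S_{ij}/(\mathbf{x}_i\mathbf{y}_j)$ gives $\mathbf{x}_i=\frac{1}{n}\sum_j S_{ij}/\mathbf{y}_j$, so every $\mathbf{x}_i$ lies between $\frac{c}{n^2}\sum_j 1/\mathbf{y}_j$ and $\frac{C}{n^2}\sum_j 1/\mathbf{y}_j$ --- the same $i$-independent quantity --- which yields $\mathbf{x}_{\max}/\mathbf{x}_{\min}\leq C/c$ at once; likewise $\mathbf{y}_j=\frac{1}{m}\sum_i S_{ij}/\mathbf{x}_i$ gives $\mathbf{y}_{\max}/\mathbf{y}_{\min}\leq C/c$. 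With these ratio bounds in hand, your chain $\mathbf{x}_{\max}\mathbf{y}_{\max}\leq(C/c)\,\mathbf{x}_{\max}\mathbf{y}_{\min}\leq C^2/(cn)$ and $\mathbf{x}_{\min}\mathbf{y}_{\min}\geq(c/C)\,\mathbf{x}_{\min}\mathbf{y}_{\max}\geq c^2/(Cn)$ is correct, and your step (ii) --- replaying the rank-one proof for the Dyson solution $\mathbf{h}$ of~\eqref{eq: coupled Dyson eq imag axis rank one} under~\eqref{eq: setting alpha=1 for general variance matrices} with $c^{'}=c^2/C$ and $C^{'}=C^2/c$ --- is exactly the paper's argument and does produce $c^{5/2}/C^2$, $C^{5/2}/c^2$, and $1+C^2/(c\eta^2)$.
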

\begin{proof}
According to~\eqref{eq: scaling factors def} we can write
\begin{equation}
    \widetilde{S}_{ij} = \frac{S_{ij}}{\mathbf{x}_i \mathbf{y}_j},
\end{equation}
where the sum of each row of $\widetilde{S}$ is $n$ and the sum of each column of $\widetilde{S}$ is $m$.
Therefore, applying Lemma 4.1 in~\cite{landa2022scaling} together with the assumptions in Lemma~\ref{lem: boundedness of x and y for general S}, we obtain 
\begin{equation}
    \frac{c}{C^2} n \leq \frac{1}{\mathbf{x}_i\mathbf{y}_j} \leq  \frac{C}{c^2} n,
\end{equation}
implying that
\begin{equation}
    \frac{c^2}{C} n^{-1} \leq {\mathbf{x}_i\mathbf{y}_j} \leq  \frac{C^2}{c} n^{-1}.
\end{equation}
Note that the normalization~\eqref{eq: setting alpha=1 for general variance matrices} is equivalent to~\eqref{eq: setting alpha=1} under Assumption~\ref{assump: rank one variance matrix} ($S = \mathbf{x}\mathbf{y}^T$). Since the proof of Lemma~\ref{lem: boundedness of x and y} relies on the assumption $cn^{-1} \leq \mathbf{x}_i \mathbf{y}_j \leq Cn^{-1}$, it also holds in the setting of this lemma if we replace $c$ and $C$ in the statement of Lemma~\ref{lem: boundedness of x and y} with $c^2/C$ and $C^2/c$, respectively, which provides the required result.
\end{proof}

\subsection{Stability of the Dyson equation on the imaginary axis} \label{appendix: stability of Dyson equation}
\begin{lem} \label{lem: stability of Dyson equation}
Let $\widetilde{\mathbf{g}}\in\mathbb{R}^{m+n}$ be a positive vector satisfying
\begin{equation}
    \frac{1}{\widetilde{\mathbf{g}}} = \eta + \mathcal{S} \widetilde{\mathbf{g}} + \mathbf{e}, \label{eq: g_tilde and e def}
\end{equation}
for some $\mathbf{e}\in\mathbb{R}^{m+n}$. Then, 
\begin{equation}
    \left\Vert \frac{\widetilde{\mathbf{g}} - \mathbf{g}}{\mathbf{g}} \right\Vert_\infty \leq \Vert \mathbf{e} \Vert_\infty \max_i \{ \widetilde{\mathbf{g}}_i\} \max \left\{ 1, \frac{2}{\eta \min_{i} \{ \widetilde{\mathbf{g}}_i \}} \right\},
\end{equation}
where $\mathbf{g}$ is the solution to~\eqref{eq: Dyson eq imag axis}.
\end{lem}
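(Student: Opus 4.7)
\medskip\noindent\textbf{Proof plan.} The plan is to set up a maximum-principle argument on the relative deviation $\phi := (\widetilde{\mathbf{g}}-\mathbf{g})/\mathbf{g}$. Subtracting $1/\mathbf{g} = \eta + \mathcal{S}\mathbf{g}$ from~\eqref{eq: g_tilde and e def} and using $1/\widetilde{\mathbf{g}}_i - 1/\mathbf{g}_i = -\phi_i/\widetilde{\mathbf{g}}_i$ (which follows from $\widetilde{\mathbf{g}}_i = \mathbf{g}_i(1+\phi_i)$) yields the key identity
\begin{equation*}
\frac{\phi_i}{\widetilde{\mathbf{g}}_i} + \bigl(\mathcal{S}(\mathbf{g}\phi)\bigr)_i = -\mathbf{e}_i, \qquad i\in[m+n].
\end{equation*}
Because $\mathcal{S}$ has non-negative entries and $\mathbf{g}>0$, evaluating this identity at indices that attain the extrema of $\phi$ will give useful one-sided control on the middle term.

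Writing $M := \max_i \phi_i$, $m := \min_i \phi_i$, and $R := \|\phi\|_\infty = \max(M,-m)$, I would distinguish two regimes. In the one-sided regime $M\leq 0$ or $m\geq 0$, the term $(\mathcal{S}(\mathbf{g}\phi))_\ast$ at the extremizer realizing $R$ has a definite sign that can be discarded in favor of an upper bound, giving immediately $R/\widetilde{\mathbf{g}}_\ast \leq \|\mathbf{e}\|_\infty$ and hence $R \leq (\max_i \widetilde{\mathbf{g}}_i)\|\mathbf{e}\|_\infty$, which already covers the ``$1$'' case inside the outer max in the statement.

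The main case is the mixed-sign regime $m<0<M$. I would invoke the identity at $i^\ast \in \arg\max_i \phi_i$, bounding $(\mathcal{S}(\mathbf{g}\phi))_{i^\ast} \geq m(\mathcal{S}\mathbf{g})_{i^\ast}$, and at $j^\ast \in \arg\min_i \phi_i$, bounding $(\mathcal{S}(\mathbf{g}\phi))_{j^\ast} \leq M(\mathcal{S}\mathbf{g})_{j^\ast}$, which both follow from $m \leq \phi_j \leq M$ combined with $\mathcal{S}\mathbf{g}\geq 0$. Substituting $(\mathcal{S}\mathbf{g})_i = 1/\mathbf{g}_i - \eta = (1+\phi_i)/\widetilde{\mathbf{g}}_i - \eta$, then multiplying the two resulting inequalities by $\widetilde{\mathbf{g}}_{i^\ast}$ and $\widetilde{\mathbf{g}}_{j^\ast}$ respectively and adding them, the nonlinear polynomial terms in $M$ and $|m|$ cancel exactly, leaving the clean estimate
\begin{equation*}
\eta\bigl(|m|\,\widetilde{\mathbf{g}}_{i^\ast} + M\,\widetilde{\mathbf{g}}_{j^\ast}\bigr) \;\leq\; \bigl(\widetilde{\mathbf{g}}_{i^\ast} + \widetilde{\mathbf{g}}_{j^\ast}\bigr)\|\mathbf{e}\|_\infty.
\end{equation*}
Whichever of $M,|m|$ equals $R$ forces the left-hand side to be at least $\eta R (\min_i \widetilde{\mathbf{g}}_i)$, while the right-hand side is at most $2(\max_i \widetilde{\mathbf{g}}_i)\|\mathbf{e}\|_\infty$, so $R \leq 2(\max_i \widetilde{\mathbf{g}}_i)\|\mathbf{e}\|_\infty / (\eta \min_i \widetilde{\mathbf{g}}_i)$. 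Taking the larger of the two case bounds delivers the stated inequality.

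The main obstacle I anticipate is identifying the algebraic cancellation in the mixed-sign case. Applying the identity at only one of the extremizers leaves a self-referential estimate that, after the substitution $1/\mathbf{g}_\ast = (1+\phi_\ast)/\widetilde{\mathbf{g}}_\ast$, collapses to a quadratic-in-$R$ inequality of the form $R^2/\widetilde{\mathbf{g}}_\ast \geq R\eta - \|\mathbf{e}\|_\infty$, which is trivially satisfied for large $R$ and therefore useless as an upper bound. The correctly weighted pairing of the maximum and minimum extremal inequalities, via multiplication by $\widetilde{\mathbf{g}}_{i^\ast}$ and $\widetilde{\mathbf{g}}_{j^\ast}$ before summing, is precisely what kills the quadratic terms and yields a genuinely linear control on $R$.
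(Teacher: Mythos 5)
Your proposal is correct and follows essentially the same route as the paper's own proof: both run a maximum-principle argument at the argmax and argmin of the relative deviation, pair the two extremal inequalities weighted by $\widetilde{\mathbf{g}}_{i^*}$ and $\widetilde{\mathbf{g}}_{j^*}$ so that the nonlinear cross terms cancel (giving the $2/(\eta\min_i\{\widetilde{\mathbf{g}}_i\})$ factor), and handle the one-sided configurations separately by a sign/monotonicity argument (giving the factor $1$). The only difference is cosmetic: you work with $\phi=(\widetilde{\mathbf{g}}-\mathbf{g})/\mathbf{g}$ and subtract the two Dyson equations, while the paper works directly with the ratio $\widetilde{\mathbf{g}}_i/\mathbf{g}_i$ after multiplying the perturbed equation by $D\{\widetilde{\mathbf{g}}\}$.
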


\begin{proof}
According to~\eqref{eq: g_tilde and e def} we can write
\begin{align}
    1 - D\{\widetilde{\mathbf{g}}\} \mathbf{e} 
    &= \eta \widetilde{\mathbf{g}} + D\{\widetilde{\mathbf{g}}\} \mathcal{S} \widetilde{\mathbf{g}} 
    = \eta \widetilde{\mathbf{g}} + D\left\{\frac{\widetilde{\mathbf{g}}}{\mathbf{g}}\right\} D\{{\mathbf{g}}\} \mathcal{S} D\{{\mathbf{g}}\} \left[ \frac{\widetilde{\mathbf{g}}}{\mathbf{g}} \right] \nonumber \\
    &\geq \eta \widetilde{\mathbf{g}} + \min_i \left\{ \frac{\widetilde{\mathbf{g}}_i}{\mathbf{g}_i} \right\} D\left\{\frac{\widetilde{\mathbf{g}}}{\mathbf{g}}\right\} D\{{\mathbf{g}}\} \mathcal{S} \mathbf{g} \nonumber \\
    &= \eta \widetilde{\mathbf{g}} \left( 1 - \min_i \left\{ \frac{\widetilde{\mathbf{g}}_i}{\mathbf{g}_i} \right\} \right) + \left[ \frac{\widetilde{\mathbf{g}}}{\mathbf{g}} \right] \min_i \left\{ \frac{\widetilde{\mathbf{g}}_i}{\mathbf{g}_i} \right\}, \label{eq: 1 - g_tilde e lower bound}
\end{align}
where we used $\mathcal{S} \mathbf{g} = 1/\mathbf{g} - \eta$ (see~\eqref{eq: Dyson eq imag axis}) in the last transition. Similarly, we have 
\begin{equation}
    1 - D\{\widetilde{\mathbf{g}}\} \mathbf{e} \leq \eta \widetilde{\mathbf{g}} \left( 1 - \max_i \left\{ \frac{\widetilde{\mathbf{g}}_i}{\mathbf{g}_i} \right\} \right) + \left[ \frac{\widetilde{\mathbf{g}}}{\mathbf{g}} \right] \max_i \left\{ \frac{\widetilde{\mathbf{g}}_i}{\mathbf{g}_i} \right\}. \label{eq: 1 - g_tilde e upper bound}
\end{equation}
Let us denote
\begin{equation}
    k = \operatorname{argmax}_{i\in [m+n]}\left\{ \frac{\widetilde{\mathbf{g}}_i}{\mathbf{g}_i} \right\}, \qquad \ell = \operatorname{argmin}_{i\in [m+n]}\left\{ \frac{\widetilde{\mathbf{g}}_i}{\mathbf{g}_i} \right\}.
\end{equation}
Using~\eqref{eq: 1 - g_tilde e lower bound} we obtain 
\begin{equation}
    1 + \widetilde{\mathbf{g}}_k \vert \mathbf{e}_k\vert 
    \geq 1 - \widetilde{\mathbf{g}}_k \mathbf{e}_k 
    \geq \eta \widetilde{\mathbf{g}}_k \left( 1 - \min_{i} \left\{ \frac{\widetilde{\mathbf{g}}_i}{\mathbf{g}_i} \right\} \right) +  \max_{i} \left\{ \frac{\widetilde{\mathbf{g}}_i}{\mathbf{g}_i} \right\} \min_{i} \left\{ \frac{\widetilde{\mathbf{g}}_i}{\mathbf{g}_i} \right\},
\end{equation}
and analogously, using~\eqref{eq: 1 - g_tilde e upper bound} we have 
\begin{equation}
    1 - \widetilde{\mathbf{g}}_\ell \vert \mathbf{e}_\ell\vert 
    \leq 1 - \widetilde{\mathbf{g}}_\ell \mathbf{e}_\ell 
    \leq \eta \widetilde{\mathbf{g}}_\ell \left( 1 - \max_{i} \left\{ \frac{\widetilde{\mathbf{g}}_i}{\mathbf{g}_i} \right\} \right) +  \max_{i} \left\{ \frac{\widetilde{\mathbf{g}}_i}{\mathbf{g}_i} \right\} \min_{i} \left\{ \frac{\widetilde{\mathbf{g}}_i}{\mathbf{g}_i} \right\}.
\end{equation}
Subtracting one of the two equations above from the other implies that
\begin{equation}
    \widetilde{\mathbf{g}}_k \left( 1 - \min_{i} \left\{ \frac{\widetilde{\mathbf{g}}_i}{\mathbf{g}_i} \right\} \right) + \widetilde{\mathbf{g}}_\ell \left( \max_{i} \left\{ \frac{\widetilde{\mathbf{g}}_i}{\mathbf{g}_i} \right\} - 1 \right) \leq  \frac{\widetilde{\mathbf{g}}_k \vert \mathbf{e}_k\vert + \widetilde{\mathbf{g}}_\ell \vert  \mathbf{e}_\ell\vert}{\eta}, 
\end{equation}
and consequently,
\begin{equation}
    \left( 1 - \min_{i} \left\{ \frac{\widetilde{\mathbf{g}}_i}{\mathbf{g}_i} \right\} \right) + \left(  \max_{i} \left\{ \frac{\widetilde{\mathbf{g}}_i}{\mathbf{g}_i} \right\} - 1 \right) 
    \leq \frac{2 \Vert \mathbf{e} \Vert_\infty}{\eta} \left( \frac{\max_{i} \{ \widetilde{\mathbf{g}}_i \}}{\min_{i} \{ \widetilde{\mathbf{g}}_i \}} \right). \label{eq: varpesdef}
\end{equation}
Next, we consider four different cases that correspond to the possible signs of the two summands in the left-hand side of~\eqref{eq: varpesdef}:
\begin{enumerate}
    \item \underline{$\min_{i} \left\{ {\widetilde{\mathbf{g}}_i}/{\mathbf{g}_i} \right\} \leq 1$ and $\max_{i} \left\{ {\widetilde{\mathbf{g}}_i}/{\mathbf{g}_i} \right\} \geq 1$:} In this case, both summands in the left-hand side of~\eqref{eq: varpesdef} are nonnegative, thus
    \begin{equation}
        - \frac{2 \Vert \mathbf{e} \Vert_\infty}{\eta} \left( \frac{\max_{i} \{ \widetilde{\mathbf{g}}_i \}}{\min_{i} \{ \widetilde{\mathbf{g}}_i \}} \right) \leq \min_{i} \left\{ \frac{\widetilde{\mathbf{g}}_i}{\mathbf{g}_i} \right\} -1 \leq \frac{\widetilde{\mathbf{g}}_i}{\mathbf{g}_i} - 1
        \leq \max_{i} \left\{ \frac{\widetilde{\mathbf{g}}_i}{\mathbf{g}_i} \right\} -1 \leq  \frac{2 \Vert \mathbf{e} \Vert_\infty}{\eta} \left( \frac{\max_{i} \{ \widetilde{\mathbf{g}}_i \}}{\min_{i} \{ \widetilde{\mathbf{g}}_i \}} \right),
    \end{equation}
    for all $i\in[m+n]$.
    \item \underline{$\min_{i} \left\{ {\widetilde{\mathbf{g}}_i}/{\mathbf{g}_i} \right\} \leq 1$ and $\max_{i} \left\{ {\widetilde{\mathbf{g}}_i}/{\mathbf{g}_i} \right\} < 1$:} In this case we have $\widetilde{\mathbf{g}}_i<\mathbf{g}_i$ for all $i\in [m+n]$, and therefore we can use~\eqref{eq: Dyson eq imag axis} to write
    \begin{equation}
        \frac{1}{\mathbf{g}} = \eta + \mathcal{S}\mathbf{g} > \eta + \mathcal{S}\widetilde{\mathbf{g}} = \frac{1}{\widetilde{\mathbf{g}}} - \mathbf{e},
    \end{equation}
    where we used~\eqref{eq: g_tilde and e def} in the last transition. Consequently, we obtain
    \begin{equation}
        1 > \frac{\widetilde{\mathbf{g}}_i}{\mathbf{g}_i} > 1 - \widetilde{\mathbf{g}}_i \mathbf{e}_i \geq 1 - \Vert \mathbf{e} \Vert_\infty \max_i \{ \widetilde{\mathbf{g}}_i\},
    \end{equation}
    for all $i\in[m+n]$.
    \item \underline{$\min_{i} \left\{ {\widetilde{\mathbf{g}}_i}/{\mathbf{g}_i} \right\} > 1$ and $\max_{i} \left\{ {\widetilde{\mathbf{g}}_i}/{\mathbf{g}_i} \right\} \geq 1$:}
    In this case we have $\widetilde{\mathbf{g}}_i>\mathbf{g}_i$ for all $i\in [m+n]$, and therefore we can use~\eqref{eq: Dyson eq imag axis} to write
    \begin{equation}
        \frac{1}{\mathbf{g}} = \eta + \mathcal{S}\mathbf{g} < \eta + \mathcal{S}\widetilde{\mathbf{g}} = \frac{1}{\widetilde{\mathbf{g}}} - \mathbf{e},
    \end{equation}
    where we used~\eqref{eq: g_tilde and e def} in the last transition. Consequently, we obtain
    \begin{equation}
        1 < \frac{\widetilde{\mathbf{g}}_i}{\mathbf{g}_i} < 1 - \widetilde{\mathbf{g}}_i \mathbf{e}_i \leq 1 + \Vert \mathbf{e} \Vert_\infty \max_i \{ \widetilde{\mathbf{g}}_i\},
    \end{equation}
    for all $i\in[m+n]$.
    \item \underline{$\min_{i} \left\{ {\widetilde{\mathbf{g}}_i}/{\mathbf{g}_i} \right\} > 1$ and $\max_{i} \left\{ {\widetilde{\mathbf{g}}_i}/{\mathbf{g}_i} \right\} < 1$:} This case is clearly infeasible.
\end{enumerate}
Overall, considering all possible cases mentioned above, we conclude that
\begin{equation}
    \left\vert \frac{\widetilde{\mathbf{g}}_i}{\mathbf{g}_i} -1 \right\vert \leq \Vert \mathbf{e} \Vert_\infty \max_i \{ \widetilde{\mathbf{g}}_i\} \max \left\{ 1, \frac{2}{\eta \min_{i} \{ \widetilde{\mathbf{g}}_i \}} \right\},
\end{equation}
for all $i\in [m+n]$.

\end{proof}

\subsection{Operator norm bound on the inverse of certain complex matrices}
The following lemma is useful for the proof of Theorem~\ref{thm: robustness of the resolvent}.
\begin{lem} \label{lem: operator norm bound on inverse of complex matrix}
    Let $A_R,A_I\in\mathbb{R}^{N\times N}$ be symmetric and non-singular matrices, where $A_I$ is positive definite, and define the complex-valued matrix $A = A_R + \imath A_I \in \mathbb{C}^{N\times N}$. Then, $A$ is invertible and
    \begin{equation}
        \Vert A^{-1}\Vert_2 \leq \frac{1}{\sqrt{\Vert A_R^{-1} \Vert_2^{-2} + \Vert A_I^{-1} \Vert_2^{-2}}},
    \end{equation}
    where $\Vert \cdot \Vert_2$ denotes the operator norm over $\mathbb{C}^N$.
\end{lem}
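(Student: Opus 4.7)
The plan is to establish invertibility of $A$ first, then to derive the norm bound by applying Cauchy--Schwarz to the Cartesian decomposition of the quadratic form $x^{*}Ax$.

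For invertibility, suppose $Ax = 0$ for some $x \in \mathbb{C}^{N}$. Then $x^{*}Ax = 0$; since $A_R$ and $A_I$ are real symmetric (hence Hermitian), both $x^{*}A_R x$ and $x^{*}A_I x$ are real, so $x^{*}Ax = x^{*}A_R x + \imath\, x^{*}A_I x$ is a Cartesian decomposition. In particular $x^{*}A_I x = \operatorname{Im}(x^{*}Ax) = 0$, and positive definiteness of $A_I$ forces $x = 0$. Hence $A$ is invertible.

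For the norm bound, I would prove the equivalent estimate
\begin{equation*}
\|Ax\|_2^2 \;\geq\; \bigl(\|A_R^{-1}\|_2^{-2} + \|A_I^{-1}\|_2^{-2}\bigr)\,\|x\|_2^2, \qquad x \in \mathbb{C}^{N},
\end{equation*}
from which the bound on $\|A^{-1}\|_2$ follows by the variational characterization of the operator norm. The starting point is the identity $|x^{*}Ax|^2 = (x^{*}A_R x)^2 + (x^{*}A_I x)^2$, which holds because $x^{*}A_R x$ and $x^{*}A_I x$ are both real. Cauchy--Schwarz then gives $|x^{*}Ax| \leq \|x\|_2 \|Ax\|_2$, so
\begin{equation*}
\|Ax\|_2^2 \;\geq\; \frac{(x^{*}A_R x)^{2} + (x^{*}A_I x)^{2}}{\|x\|_2^{2}}.
\end{equation*}
Lower-bounding each squared quadratic form by $\lambda_{\min}(A_R^2)\,\|x\|_2^4$ and $\lambda_{\min}(A_I^2)\,\|x\|_2^4$ respectively, and using the identities $\lambda_{\min}(A_R^2) = \|A_R^{-1}\|_2^{-2}$ and $\lambda_{\min}(A_I^2) = \|A_I^{-1}\|_2^{-2}$ (valid because $A_R$ and $A_I$ are real symmetric non-singular), closes the estimate.

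The hard part will be the pointwise lower bound $(x^{*}A_R x)^2 \geq \lambda_{\min}(A_R^2)\,\|x\|_2^4$ when $A_R$ is sign-indefinite. The analogous bound for $A_I$ is immediate from $A_I \succ 0$ via $|x^{*}A_I x| \geq \lambda_{\min}(A_I)\,\|x\|_2^2 > 0$. For indefinite symmetric $A_R$, however, the quadratic form $x^{*}A_R x$ can vanish on nonzero isotropic vectors, so the pointwise step fails in isolation and one must recover the needed estimate by exploiting the joint structure with $A_I$---for instance, by decomposing $x$ along the positive and negative eigenspaces of $A_R$ and trading off the two contributions to $(x^{*}A_R x)^2 + (x^{*}A_I x)^2$ against each other using the positive-definiteness of $A_I$.
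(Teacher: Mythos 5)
Your invertibility argument is fine, but the norm bound has a genuine gap, and the gap is not just the step you flag as "hard" -- the whole route through the quadratic form $x^{*}Ax$ cannot deliver the stated estimate. Every bound you produce along the chain passes through $|x^{*}Ax|^{2} = (x^{*}A_Rx)^{2} + (x^{*}A_Ix)^{2}$, so at a vector $x$ that is isotropic for $A_R$ (i.e.\ $x^{*}A_Rx = 0$, which exists whenever $A_R$ is indefinite) the most your chain can certify is $\Vert Ax\Vert \geq x^{*}A_Ix/\Vert x\Vert \geq \lambda_{\min}(A_I)\Vert x\Vert$, and no "trade-off between the positive and negative eigenspaces of $A_R$" can recover the missing $\Vert A_R^{-1}\Vert_2^{-2}$ term, because the quantity $(x^{*}A_Rx)^{2}+(x^{*}A_Ix)^{2}$ itself is genuinely that small there. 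Concretely, take $N=2$, $A_R = \operatorname{diag}(1,-1)$, $A_I = \epsilon I$, $x = (1,1)^{T}/\sqrt{2}$: then $|x^{*}Ax| = \epsilon$, while the target inequality at this same $x$ demands $\Vert Ax\Vert \geq \sqrt{1+\epsilon^{2}}$ (and indeed $\Vert Ax\Vert = \sqrt{1+\epsilon^{2}}$, so the lemma is tight here). Cauchy--Schwarz at this $x$ certifies only $\Vert Ax\Vert \geq \epsilon$, so the intermediate inequality you would need, $(x^{*}A_Rx)^{2}+(x^{*}A_Ix)^{2} \geq \bigl(\Vert A_R^{-1}\Vert_2^{-2}+\Vert A_I^{-1}\Vert_2^{-2}\bigr)\Vert x\Vert_2^{4}$, is simply false, and so is any weakening of it that still factors through $|x^{*}Ax|$. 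In other words, the distance from the origin to the numerical range of $A$ can be strictly smaller than $\sigma_{\min}(A)$, and this lemma lives exactly in that discrepancy.

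What is needed instead is an argument that sees how $A_R$ contributes to $\Vert Ax\Vert$ even when its quadratic form vanishes. The paper does this by a simultaneous congruence diagonalization: using $A_I \succ 0$, one finds an invertible $T$ (in the paper, $T = PV$ with $P = Q\Lambda^{-1/2}$ from $A_I = Q\Lambda Q^{T}$ and $V$ diagonalizing $P^{T}A_RP$) such that $T^{T}A_IT = I$ and $T^{T}A_RT = D$ diagonal, whence $A^{-1} = T(D+\imath I)^{-1}T^{T}$ explicitly; the bound then follows from $\Vert A^{-1}\Vert_2 \leq \Vert T\Vert_2^{2}\,\bigl(\min_i D_{ii}^{2}+1\bigr)^{-1/2}$ together with $\min_i D_{ii}^{2} \geq \Vert T^{-1}\Vert_2^{-4}\Vert A_R^{-1}\Vert_2^{-2}$ and $\Vert A_I^{-1}\Vert_2^{-1} = \Vert T\Vert_2^{-2}$. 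If you want to salvage a more "direct" proof, you would have to work with $\Vert Ax\Vert^{2} = \Vert A_Rx\Vert^{2} + \Vert A_Ix\Vert^{2} + \imath x^{*}(A_RA_I - A_IA_R)x$ and control the commutator cross term, which again requires exploiting $A_I \succ 0$ in a structural way rather than through the scalar quadratic form.
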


\begin{proof}
    Since $A_I$ is positive definite and $A_R$ is symmetric, we can simultaneously diagonalize $A_R$ and $A_I$. To this end, we first write $A_I = Q \Lambda Q^T$, where $Q$ is orthogonal and $\Lambda$ is a diagonal matrix containing the (positive) eigenvalues of $A_I$. We then define $P = Q \Lambda^{-1/2}$. Since $A_R$ is symmetric, the matrix $P^T A_R P$ is also symmetric, and we write its eigen-decomposition as $ V D V^T = P^T A_R P$, where $V$ is orthogonal and $D$ is a diagonal matrix containing the eigenvalues of $P^T A_R P$. Overall, the matrices $A_R$ and $A_I$ are simultaneously diagonalizable using the matrix $PV$ as
    \begin{equation}
        (PV)^T A_I (PV) = I_N, \qquad (PV)^T A_R (PV) = D, \label{eq: simultaneous diagonalization}
    \end{equation}
    where $I_N$ is the $N\times N$ identity matrix. Note that $PV = Q \Lambda^{-1/2} V$ is invertible with $(PV)^{-1} = V^T \Lambda^{1/2} Q^T$. Therefore
    \begin{equation}
        A_I = (PV)^{-T} (PV)^{-1}, \qquad A_R = (PV)^{-T} D (PV)^{-1}, \label{eq: simultaneous diagonalization inverse}
    \end{equation}
    where $(PV)^{-T}$ denotes the inverse transpose of $PV$.
    Consequently, $A$ is invertible according to
    \begin{equation}
        A^{-1} = (A_R + \imath A_I)^{-1} = \left[ (PV)^{-T} (D+\imath I_N) (PV)^{-1} \right]^{-1} = (PV) (D+\imath I_N)^{-1} (PV)^T.
    \end{equation}
We can now write
    \begin{equation}
        \Vert A^{-1} \Vert_2 = \Vert (PV) (D+\imath I_N)^{-1} (PV)^T \Vert_2 \leq \Vert PV \Vert_2^2 \Vert (D+\imath I_N)^{-1} \Vert_2 = \frac{\Vert \Lambda^{-1/2} \Vert_2^2}{ \sqrt{\min_i \{ D_{i,i}^2 \} + 1}},
    \end{equation}
where we used the fact that $PV =  Q \Lambda^{-1/2} V$ with $Q$ and $V$ orthogonal, and also the fact that $(D+\imath I_N)^{-1}$ is a diagonal matrix whose operator norm is the largest absolute value on the main diagonal. 
Since $D = (PV)^T A_R (PV)$ with $A_R$ and $PV$ non-singular, it follows that $D$ is non-singular and
\begin{equation}
    \min_i \{ D_{i,i}^2 \} = \frac{1}{\Vert D^{-1} \Vert_2^2} \geq \frac{1}{\Vert (PV)^{-1} \Vert_2^4 \Vert A_R^{-1} \Vert_2^2} = \frac{1}{\Vert \Lambda^{1/2} \Vert_2^4 \Vert A_R^{-1} \Vert_2^2} = \frac{\Vert \Lambda^{-1/2} \Vert_2^4}{\Vert A_R^{-1} \Vert_2^2}. 
\end{equation}
Finally, by combining the previous two inequalities, we obtain
\begin{equation}
    \Vert A^{-1} \Vert_2 \leq \frac{\Vert \Lambda^{-1/2} \Vert_2^2}{ \sqrt{\Vert \Lambda^{-1/2} \Vert_2^4 \Vert A_R^{-1} \Vert_2^{-2}  + 1}} = \frac{1}{\sqrt{\Vert A_R^{-1} \Vert_2^{-2} + \Vert \Lambda^{-1/2} \Vert_2^{-4}}} = \frac{1}{\sqrt{\Vert A_R^{-1} \Vert_2^{-2} + \Vert A_I^{-1} \Vert_2^{-2}}},
\end{equation}
where we used the fact that $\Vert A_I^{-1} \Vert^{-2}_2 = \Vert Q^T \Lambda^{-1} Q \Vert^{-2}_2 = \Vert \Lambda^{-1} \Vert^{-2}_2 = \Vert \Lambda^{-1/2} \Vert^{-4}_2$.
\end{proof}

\subsection{Order with high probability}
For simplicity of presentation and brevity of our proofs, we will make use of the following definition.
\begin{defn} \label{def: order with high probability}
Let $X$ be a complex-valued random variable. We say that $X = \mathcal{O}_{n}\left(f(m,n)\right)$ if there exist $C^{'}, c^{'}(t)> 0$ (which may depend on $\eta$ and other global constants), such that for all $t>0$, with probability at least $1-c^{'}(t) n^{-t}$, we have 
\begin{equation}
    |X| \leq C^{'} f(m,n).
\end{equation}
\end{defn}

The following two facts make it easy to analyze expressions involving multiple variables satisfying Definition~\ref{def: order with high probability} and functions thereof.
\begin{enumerate}
    \item Let $X_1,X_2,\ldots,X_{P(n)}$ be random variables satisfying $X_i = \mathcal{O}_{n}(f(m,n))$, where $P(n)$ is a polynomial in $n$. Then, by Definition~\ref{def: order with high probability}, applying the union bound $P(n)$ times yields
\begin{equation}
    \max_{i=1,\ldots,P(n)} \vert X_i \vert = \mathcal{O}_{n}(f(m,n)). \label{eq: definition order with high probability property 1}
\end{equation}
\item Let $X$ be a real-valued random variable satisfying $X = \mathcal{O}_{n}(f(m,n))$ and define $Y = g(X)$, where $g:\mathbb{R} \rightarrow \mathbb{R}$ is a differentiable function in a neighborhood of zero. Suppose that $\lim_{n\rightarrow \infty} f(m,n) = 0$ and that the first derivative of $g(\cdot)$ is uniformly bounded by a global constant in a neighborhood of zero. Then, by Definition~\ref{def: order with high probability} and the Taylor expansion of $g(\cdot)$ around zero, it follows that
\begin{equation}
    Y = g(0) + \mathcal{O}_{n}(f(m,n)). \label{eq: definition order with high probability property 2}
\end{equation}
\end{enumerate}
We will use the above two properties of Definition~\ref{def: order with high probability} seamlessly throughout our proofs below.

\section{Proof of Lemma~\ref{lem: concentration of bilinear forms}} \label{appendix: proof of concentration of biliniear forms}
Under Assumption~\ref{assump: noise moment bound}, we can apply Theorem 2.1 in~\cite{erdHos2019random}, noting that Assumptions $A$ and $CD$ in~\cite{erdHos2019random} are satisfied in our setting since the noise entries $E_{ij}$ are independent and have zero means. Theorem 2.1 in~\cite{erdHos2019random} states that for any $\epsilon>0$ there exists $\delta_1, C_0, C_1(\epsilon,t)>0$ such that for all $t>0$, $i\in [m+n]$, and $z\in \{z: \operatorname{Im}(z) \geq (m+n)^{-\delta_1}, \;  \vert z \vert \leq (m+n)^{C_0}\}$, with probability at least $1 - C_1(\epsilon,D)n^{-t}$ we have that
\begin{equation}
    \vert \mathbf{a}^T R(z) \mathbf{b} - \mathbf{a}^T M \mathbf{b} \vert \leq \frac{(m+n)^{\epsilon-1/2}}{(1+\vert z\vert)^2}. \label{eq: bound for quadratic form of resolvent for general matrix-valued Dyson}
\end{equation}
Here, $C_0 \geq 100$ is universal constant, $C_1(\epsilon,t)$ is a constant depending on $\epsilon$, $t$, and on the universal constants in Assumption~\ref{assump: noise moment bound}, and $M\in \mathbb{C}^{(m+n)\times(m+n)}$ is a deterministic matrix that solves the matrix-valued Dyson equation
\begin{equation}
    \left(z I_{m+n} + \mathbb{E}[\mathcal{E} M \mathcal{E}]\right)M = -I_{m+n}. \label{eq: matrix-valued Dysin}
\end{equation}
The solution $M$ to this equation is a deterministic approximation to the resolvent $R(z)$ of $\mathcal{E}$. Since $E_{ij}$ are independent with zero means, it can be easily verified that the solution $M$ is a diagonal matrix whose main diagonal is precisely the solution $\mathbf{f}$ to the vector-valued Dyson equation~\eqref{eq: Dyson eq}, i.e., $M = D\{\mathbf{f}(z)\}$. In particular, setting $M$ as a diagonal matrix reduces the matrix-valued equation~\eqref{eq: matrix-valued Dysin} to the vector-valued equation~\eqref{eq: Dyson eq} for the main diagonal of $M$, which is solved by the vector $\mathbf{f}$. If we take $z = \imath\eta$, then $\mathbf{f}(z) = \imath \mathbf{g}(\eta)$, hence~\eqref{eq: bound for quadratic form of resolvent for general matrix-valued Dyson} becomes
\begin{equation}
    \vert \mathbf{a}^T R(z) \mathbf{b} - \mathbf{a}^T D\{\imath \mathbf{g}\} \mathbf{b} \vert 
    \leq \frac{(m+n)^{\epsilon-1/2}}{(1+\eta)^2} 
    \leq \frac{(2n)^{\epsilon-1/2}}{(1+\eta)^2} = \mathcal{O}_n(n^{\varepsilon-1/2}),
\end{equation}
where we used the fact that $m \leq n$.
Note that the aforementioned condition $z\in \{z: \operatorname{Im}(z) \geq (m+n)^{-\delta_1}, \;  \vert z \vert \leq (m+n)^{C_0}\}$ is satisfied for any fixed $\eta>0$ if $n$ is sufficiently large, namely if $n\geq n_0$ for some constant $n_0$ that may depend on $\eta$ and $\epsilon$ (since $\delta_1$ is determined by $\epsilon$). We can always represent the requirement $n\geq n_0$ in our Definition~\ref{def: order with high probability} by taking  $c{'}(t)$ to be large enough so that $1-c^{'}(t) n^{-t} \leq 0$ for all $n<n_0$, e.g., $c^{'}(t) = \max\{n_0^t, C_1(\epsilon,t)\}$. Consequently, the required result follows by our Definition~\ref{def: order with high probability}.

\section{Proof of Proposition~\ref{prop: formula for x and y in terms of g} } \label{appendix: proof of formula for x and y in terms of g}
Plugging $S=\mathbf{x}\mathbf{y}^T$ into the system of coupled equations~\eqref{eq: coupled Dyson eq imag axis}, we can write
\begin{equation}
    \mathbf{x} = \frac{1}{\mathbf{y}^T \mathbf{g}^{(2)}} \left( \frac{1}{\mathbf{g}^{(1)}} - \eta \right), \qquad\qquad \mathbf{y} = \frac{1}{\mathbf{x}^T \mathbf{g}^{(1)}} \left( \frac{1}{\mathbf{g}^{(2)}} - \eta \right).
\end{equation}
Therefore, by transposing the two equations above and multiplying them by $\mathbf{g}^{(1)}$ and $\mathbf{g}^{(2)}$, respectively, we obtain
\begin{equation}
    \mathbf{x}^T \mathbf{g}^{(1)} = \frac{n - \eta \Vert \mathbf{g}^{(1)} \Vert_1}{\mathbf{y}^T \mathbf{g}^{(2)}},\qquad\qquad 
    \mathbf{y}^T \mathbf{g}^{(2)} = \frac{m - \eta \Vert \mathbf{g}^{(2)} \Vert_1}{\mathbf{x}^T \mathbf{g}^{(1)}},
\end{equation}
implying that 
\begin{equation}
    (\mathbf{x}^T \mathbf{g}^{(1)}) (\mathbf{y}^T \mathbf{g}^{(2)}) = n - \eta \Vert \mathbf{g}^{(1)} \Vert_1 = m - \eta \Vert \mathbf{g}^{(2)} \Vert_1. \label{eq: x_T g_1 y_T g_2 formula}
\end{equation}
Consequently, using the definition of $\alpha$ from~\eqref{eq: formulas for x and y}, we can rewrite $\mathbf{x}$ and $\mathbf{y}$ as
\begin{equation}
    \mathbf{x} = \frac{\alpha}{\sqrt{\left( \mathbf{y}^T \mathbf{g}^{(2)}\right) \left( \mathbf{x}^T \mathbf{g}^{(1)} \right) }} \left( \frac{1}{\mathbf{g}^{(1)}} - \eta \right), \qquad\qquad 
    \mathbf{y} = \frac{\alpha^{-1}}{\sqrt{\left( \mathbf{y}^T \mathbf{g}^{(2)}\right) \left( \mathbf{x}^T \mathbf{g}^{(1)} \right) }} \left( \frac{1}{\mathbf{g}^{(2)}} - \eta \right),
\end{equation}
which together with~\eqref{eq: x_T g_1 y_T g_2 formula} completes the proof.

\section{Proof of Proposition~\ref{prop: formula for g_1_tilde and g_2_tilde}} \label{appendix: proof of formula for g_1_tilde and g_2_tilde}
Let $V = [V^{(1)},V^{(2)}]$, where $V^{(1)}\in\mathbb{R}^{n\times m}$ and $V^{(2)}\in\mathbb{R}^{n\times (n-m)}$, and denote $\Sigma = D\{\sigma_1,\ldots,\sigma_m\} \in \mathbb{R}^{m\times m}$. We can write the eigen-decomposition of $\mathcal{Y}$ as
\begin{equation}
    \mathcal{Y} = 
    \begin{bmatrix}
    \frac{1}{\sqrt{2}} U & \frac{1}{\sqrt{2}} U & \mathbf{0} \\
    \frac{1}{\sqrt{2}} V^{(1)} & - \frac{1}{\sqrt{2}} V^{(1)} & {V}^{(2)} 
    \end{bmatrix}
    \begin{bmatrix}
    \Sigma & \mathbf{0} & \mathbf{0} \\
    \mathbf{0} & -\Sigma & \mathbf{0} \\
    \mathbf{0} & \mathbf{0} & \mathbf{0}
    \end{bmatrix}
    \begin{bmatrix}
    \frac{1}{\sqrt{2}} U & \frac{1}{\sqrt{2}} U & \mathbf{0} \\
    \frac{1}{\sqrt{2}} V^{(1)} & - \frac{1}{\sqrt{2}} V^{(1)} & {V}^{(2)} 
    \end{bmatrix}^T,
\end{equation}
where $\mathbf{0}$ is a block of zeros of suitable size. Consequently, the resolvent of $\mathcal{Y}$ from~\eqref{eq: resolent of data} is given by
\begin{equation}
    \mathcal{R}(z) = \begin{bmatrix}
    \frac{1}{\sqrt{2}} U & \frac{1}{\sqrt{2}} U & \mathbf{0} \\
    \frac{1}{\sqrt{2}} V^{(1)} & - \frac{1}{\sqrt{2}} V^{(1)} & {V}^{(2)} 
    \end{bmatrix}
    \begin{bmatrix}
    (\Sigma-zI)^{-1} & \mathbf{0} & \mathbf{0} \\
    \mathbf{0} & (-\Sigma-zI)^{-1} & \mathbf{0} \\
    \mathbf{0} & \mathbf{0} & -z^{-1}I
    \end{bmatrix}
    \begin{bmatrix}
    \frac{1}{\sqrt{2}} U & \frac{1}{\sqrt{2}} U & \mathbf{0} \\
    \frac{1}{\sqrt{2}} V^{(1)} & - \frac{1}{\sqrt{2}} V^{(1)} & {V}^{(2)} 
    \end{bmatrix}^T.
\end{equation}
If we denote the top left $m\times m$ block of $\mathcal{R}$ by $\mathcal{R}^{(11)}$, then a direct calculation shows that
\begin{equation}
    \hat{\mathbf{g}}^{(1)}_i = \operatorname{Im}\{ \mathcal{R}^{(11)}_{ii}(\imath\eta) \} = \frac{1}{2}\sum_{k=1}^m U_{ik}^2 \operatorname{Im}\left\{ \frac{1}{\sigma_k - \imath \eta} + \frac{1}{-\sigma_k - \imath \eta} \right\} 
    = \sum_{k=1}^m U_{ik}^2 \frac{\eta}{\sigma_k^2 + \eta^2},
\end{equation}
for all $i\in [m]$. Similarly, denoting the bottom right $n \times n$ block $\mathcal{R}$ by $\mathcal{R}^{(22)}$, we have 
\begin{align}
    \hat{\mathbf{g}}^{(2)}_j &= \operatorname{Im}\{ \mathcal{R}^{(22)}_{jj}(\imath\eta) \} = \frac{1}{2}\sum_{k=1}^m (V_{jk}^{(1)})^2 \operatorname{Im}\left\{ \frac{1}{\sigma_k - \imath \eta} + \frac{1}{-\sigma_k - \imath \eta} \right\} + \frac{1}{\eta}\sum_{k=1}^{n-m} (V_{jk}^{(2)})^2 \\
    &= \frac{1}{\eta} + \sum_{k=1}^m V_{jk}^2 \left(\frac{\eta}{\sigma_k^2 + \eta^2} - \frac{1}{\eta}\right),
\end{align}
for all $j\in[n]$, where we used the fact that $\sum_{k=1}^{n-m} (V_{jk}^{(2)})^2 = 1 - \sum_{k=1}^m (V_{jk}^{(1)})^2$.

\section{Proof of Proposition~\ref{prop: x and y nonnegative} } \label{appendix: proof of x and y nonnegative}
Since $0 \leq \sigma_k < \infty$ and $U$ and $V$ are orthogonal matrices, it follows from~\eqref{eq: x_hat and y_hat def} that $0 < \hat{\mathbf{g}}_i^{(1)} \leq 1/\eta$ and $0 < \hat{\mathbf{g}}_j^{(2)} \leq 1/\eta$ for all $i\in[m]$ and $j\in[n]$. 
Therefore, according to~\eqref{eq: x_hat and y_hat def}, the only situation where the vectors $\hat{\mathbf{x}}$ and $\hat{\mathbf{y}}$ are ill-posed is if $\hat{\mathbf{g}}^{(1)} = \mathbf{1}_m/\eta$ or $\hat{\mathbf{g}}^{(2)} = \mathbf{1}_n/\eta$, respectively. According to~\eqref{eq: formulas for g_hat_1 and g_hat_2 in terms of the SVD}, this can only happen if $\sigma_1=\ldots=\sigma_m = 0$, i.e., if $Y$ is the zero matrix $\mathbf{0}_{m\times n}$. Otherwise, we must have $\hat{\mathbf{x}}_i \geq 0$ and $\hat{\mathbf{y}}_j \geq 0$ for all $i\in[m]$ and $j\in[n]$. We now consider the positivity of $\hat{\mathbf{x}}$ and $\hat{\mathbf{y}}$. Let us fix $i\in[m]$. If there exists $k\in [m]$ such that $\sigma_k >0$ and $U_{ik}^2>0$, then we obtain the strict inequality $\hat{\mathbf{g}}_i^{(1)} < 1/\eta$, which implies that $\hat{\mathbf{x}}_i >0$. Otherwise, we must have that $U_{ik} = 0$ for all $k\in[m]$ for which $\sigma_k>0$, asserting, by the SVD of $Y$, that $Y_{ij} = 0$ for all $j\in [n]$, i.e., the $i$th row of $Y$ is entirely zero. Conversely, if the $i$th row of $Y$ is entirely zero, then $U_{ik}=0$ for all $k\in [m]$ with $\sigma_k>0$, implying that $\hat{\mathbf{g}}^{(1)}_i = 1/\eta$ and hence $\hat{\mathbf{x}}_i = 0$. An analogous argument establishes that $\hat{\mathbf{y}}_j = 0$ if and only if the $j$th column of $Y$ is zero.

\section{Proof of Theorem~\ref{thm: robustness of the resolvent}} \label{appendix: proof of robustness of the resolvent}
We define the matrices $\mathcal{X}\in \mathbb{R}^{(m+n)\times(m+n)}$, $\mathcal{U}\in\mathbb{R}^{(m+n)\times 2r}$, and $\Sigma\in\mathbb{R}^{2r \times 2r}$ as
\begin{equation}
    \mathcal{X} = 
    \begin{bmatrix}
    \mathbf{0} & X \\
    X^T & \mathbf{0}
    \end{bmatrix}, 
    \quad 
    \mathcal{U} = \frac{1}{\sqrt{2}}
    \begin{bmatrix}
    \widetilde{U} & \widetilde{U} \\
    \widetilde{V} & -\widetilde{V}
    \end{bmatrix}, 
    \quad
    \Sigma = 
    \begin{bmatrix}
    D\{s_1,\ldots,s_r\} & \mathbf{0} \\
    \mathbf{0} & -D\{s_1,\ldots,s_r\}
    \end{bmatrix}.
\end{equation}
We can therefore write $\mathcal{X} = \mathcal{U} \Sigma \mathcal{U}^T$, and we have
\begin{equation}
    \mathcal{R}(z) = \left( \mathcal{X} + \mathcal{E}  - z I \right)^{-1} = \left( \mathcal{E} - z I + \mathcal{U} \Sigma \mathcal{U}^T\right)^{-1} = \left( R^{-1}(z) + \mathcal{U} \Sigma \mathcal{U}^T\right)^{-1}.
\end{equation}
Using the Woodbury matrix identity, we obtain
\begin{equation}
    \mathcal{R}(z) = R(z) + R(z)\mathcal{U} \left( \Sigma^{-1} + \mathcal{U}^T R(z) \mathcal{U} \right)^{-1} \mathcal{U}^T R(z). \label{eq: Woodbury expansion of R_cal}
\end{equation}
Denoting $\mathbf{e}_i$ as the $i$th standard basis vector in $\mathbb{R}^{m+n}$, we can write
\begin{equation}
    \mathcal{R}_{ii}(z) = \mathbf{e}_i^T\mathcal{R}(z) \mathbf{e}_i = \mathbf{e}_i^T{R}(z) \mathbf{e}_i + \left(\mathbf{e}_i^T R(z)\mathcal{U} \right) \left( \Sigma^{-1} + \mathcal{U}^T R(z) \mathcal{U} \right)^{-1} \left(\mathbf{e}_i^T R(z)\mathcal{U} \right)^T.
\end{equation}
Consequently, for $z=\imath \eta$, we have
\begin{align}
    \vert\hat{\mathbf{g}}_i - \mathbf{g}_i \vert = \vert \operatorname{Im}\{ \mathcal{R}_{ii}(\imath \eta) \} - \mathbf{g}_i \vert 
    \leq \vert \operatorname{Im}\{ {R}_{ii}(\imath \eta) \} - \mathbf{g}_i \vert 
    + \left\Vert \mathbf{e}_i^T R(\imath \eta)\mathcal{U} \right\Vert_2^2 \left\Vert \left( \Sigma^{-1} + \mathcal{U}^T R(\imath \eta) \mathcal{U} \right)^{-1} \right\Vert_2. \label{eq: woodbury expansion error}
\end{align}
Utilizing Lemma~\ref{lem: concentration of bilinear forms}, we can write
\begin{align}
    \mathbf{e}_i^T {R}(\imath \eta) \mathbf{e}_i &= \mathbf{e}_i^T D\{ \imath \mathbf{g} \} \mathbf{e}_i + \mathcal{O}_n(n^{\epsilon-1/2}), \label{eq: Woodbury expansion part 1}\\
    \mathbf{e}_i^T R(\imath \eta) \mathcal{U} &= \mathbf{e}_i^T D\{ \imath \mathbf{g} \} \mathcal{U} + \psi_i,
     \label{eq: Woodbury expansion part 2}\\
    \mathcal{U}^T R(\imath \eta) \mathcal{U} &= \mathcal{U}^T D\{\imath \mathbf{g} \} \mathcal{U} + \Psi, \label{eq: Woodbury expansion part 3} 
\end{align}
for all $i\in [m+n]$, where $\psi_{i}\in\mathbb{C}^{1\times 2r}$ and $\Psi\in\mathbb{C}^{2r\times 2r}$ satisfy $\psi_{ij}=\mathcal{O}_n(n^{\epsilon-1/2})$ and $\Psi_{ij}=\mathcal{O}_n(n^{\epsilon-1/2})$ for all $i,j\in [m+n]$. From~\eqref{eq: Woodbury expansion part 1}, we have
\begin{equation}
    \vert \operatorname{Im}\{ {R}_{ii}(\imath \eta) \} - \mathbf{g}_i \vert = \mathcal{O}_n(n^{\epsilon-1/2}),
\end{equation}
for all $i\in [m+n]$.
Additionally, From~\eqref{eq: Woodbury expansion part 2},
\begin{align}
    \left\Vert \mathbf{e}_i^T R(\imath \eta)\mathcal{U} \right\Vert_2 
    &\leq \left\Vert \mathbf{e}_i^T D\{ \imath \mathbf{g} \} \mathcal{U} \right\Vert_2 + \Vert \psi_i \Vert_2
    \leq \left\Vert \mathbf{e}_i^T D\{ \imath \mathbf{g} \} \mathcal{U} \right\Vert_2 + \sqrt{r} \max_j\{\vert \psi_{ij} \vert\} 
    \nonumber \\
    &= \sqrt{r} \max_j\{\vert \psi_{ij} \vert\} + \mathbf{g}_i\times 
    \begin{dcases}
         \sqrt{\sum_{k=1}^{r} \widetilde{U}_{ik}^2} , & 1\leq i \leq m, \\
       \sqrt{\sum_{k=1}^{r} \widetilde{V}_{\ell k}^2} , & m+1\leq i \leq m+n,
    \end{dcases}
    \nonumber \\
    &\leq \mathcal{O}_n\left( n^{\varepsilon - (1 - \delta_0)/2}\right) + \frac{1}{\eta} \times
    \begin{dcases}
        \sqrt{\sum_{k=1}^{r} \widetilde{U}_{ik}^2},   & 1\leq i \leq m, \\
        \sqrt{\sum_{k=1}^{r} \widetilde{V}_{\ell k}^2} , & m+1\leq i \leq m+n, 
    \end{dcases} 
\end{align}
where $\ell = i - m$ and we used Assumption~\ref{assump: rank growth rate} and Lemma~\ref{lem: boundedness of g} in the last inequality. We obtain
\begin{align}
    \left\Vert \mathbf{e}_i^T R(\imath \eta)\mathcal{U} \right\Vert_2^2 
    \leq \mathcal{O}_n\left( n^{2\varepsilon - 1 + \delta_0}\right) + \frac{2}{\eta^2} \times
    \begin{dcases}
        {\sum_{k=1}^{r} \widetilde{U}_{ik}^2},   , & 1\leq i \leq m, \\
        {\sum_{k=1}^{r} \widetilde{V}_{\ell k}^2} , & m+1\leq i \leq m+n. 
    \end{dcases} \label{eq: bound on second term in Woodbury expansion error}
\end{align}
Next, we define the matrices $A_R,A_I\in\mathbb{R}^{2r\times 2r}$ and $A\in\mathbb{C}^{2r\times 2r}$ as
\begin{equation}
    A_R = \Sigma^{-1}, \qquad A_I = \mathcal{U}^T D\{\mathbf{g}\} \mathcal{U}, \qquad A 
    = A_R + \imath A_I. \label{eq: A def in proof of data resolvent}
\end{equation}
Since the entries of $\mathbf{g}$ are strictly positive and the columns of $\mathcal{U}$ are orthonormal, the matrix $A_I$ is positive definite. Hence, we can apply Lemma~\ref{lem: operator norm bound on inverse of complex matrix} to $A$, obtaining that
\begin{equation}
        \Vert A^{-1}\Vert_2 \leq \frac{1}{\sqrt{\Vert A_R^{-1} \Vert_2^{-2} + \Vert A_I^{-1} \Vert_2^{-2}}},
\end{equation}
with
\begin{equation}
    \Vert A_R^{-1} \Vert_2^{-2} = \Vert \Sigma \Vert_2^{-2} = \frac{1}{s_1^2},
\end{equation}
and 
\begin{align}
    \Vert A_I^{-1} \Vert_2^{-2} 
    &= \Vert (\mathcal{U}^T D\{\mathbf{g}\} \mathcal{U})^{-1} \Vert_2^{-2} 
    = \frac{1}{\lambda^2_{\max} \left\{ (\mathcal{U}^T D\{\mathbf{g}\} \mathcal{U})^{-1} \right\} } 
    =  \lambda^2_{\min} \left\{ \mathcal{U}^T D\{\mathbf{g}\} \mathcal{U} \right\} \nonumber \\
    &= \left( \min_{\mathbf{a}\in\mathbb{R}^{2r},\;\Vert \mathbf{a}\Vert_2 
    = 1} \left\{ \mathbf{a}^T \mathcal{U}^T D\{\mathbf{g}\} \mathcal{U}\mathbf{a} \right\} \right)^2 
    \geq \left( \min_{\mathbf{b}\in\mathbb{R}^{m+n},\;\Vert \mathbf{b}\Vert_2 
    = 1} \left\{ \mathbf{b}^T D\{\mathbf{g}\} \mathbf{b} \right\} \right)^2 \nonumber \\
    &= \min_{i\in [m+n]} \mathbf{g}_i^2 \geq \left( \eta + \frac{C}{\eta} \right)^{-2},
\end{align}
where $\lambda_{\max}\{\cdot\}$ ($\lambda_{\min}\{\cdot\}$) denote the maximal (minimal) eigenvalue of a matrix, and we used Lemma~\ref{lem: boundedness of g} to obtain the last inequality. Consequently, we have 
\begin{equation}
    \Vert A^{-1}\Vert_2 \leq \frac{1}{\sqrt{s_1^{-2} + \left( \eta + \frac{C}{\eta} \right)^{-2}}}.
\end{equation}
Note that the above bound also implies that $\sigma_{\min} \{A\} \geq \sqrt{s_1^{-2} + \left( \eta + \frac{C}{\eta} \right)^{-2}}$, where $\sigma_{\min}\{\cdot\}$ denotes the smallest singular values of a matrix.
According to~\eqref{eq: Woodbury expansion part 3}, we can write
\begin{equation}
    \left\Vert \left( \Sigma^{-1} + \mathcal{U}^T R(\imath \eta) \mathcal{U} \right)^{-1} \right\Vert_2
    = \left\Vert \left( A + \Psi \right)^{-1} \right\Vert_2 = \frac{1}{\sigma_{\min}\left\{A + \Psi\right\}},
\end{equation}
and
\begin{align}
\sigma_{\min}\left\{A + \Psi\right\} 
&\geq \sigma_{\min}\{A\} - \sigma_{\max}\{\Psi\}
    \geq \sigma_{\min}\{A\} - \Vert \Psi \Vert_F \nonumber \\
    &\geq { \sigma_{\min}\{A\} - 2r \max_{ij}\{ \Psi_{ij}\} } 
    \geq \sqrt{s_1^{-2} + \left( \eta + \frac{C}{\eta} \right)^{-2}} + \mathcal{O}_n\left( n^{\varepsilon-1/2 + \delta_0} \right),
\end{align}
where $\Vert\cdot\Vert_F$ denotes the Frobenius norm and we used Assumption~\ref{assump: rank growth rate} and the notation $\sigma_{\max}\{\cdot\}$ to denote the largest singular value of a matrix. Since $n^{\varepsilon -1/2 + \delta_0} \rightarrow 0$ as $n\rightarrow \infty$ (according to Assumption~\ref{assump: rank growth rate}) and $\sqrt{s_1^{-2} + ( \eta + {C}/{\eta} )^{-2}} > (\eta + {C}/{\eta})^{-1} > 0$ for all $s_1>0$, we obtain that
\begin{equation}
    \left\Vert \left( \Sigma^{-1} + \mathcal{U}^T R(\imath \eta) \mathcal{U} \right)^{-1} \right\Vert_2 
    \leq \frac{1}{\sqrt{s_1^{-2} + \left( \eta + \frac{C}{\eta} \right)^{-2}} + \mathcal{O}_n\left( n^{\varepsilon-1/2 + \delta_0} \right)}
    \leq \frac{\mathcal{O}_n\left( 1 \right)}{\sqrt{s_1^{-2} + \left( \eta + \frac{C}{\eta} \right)^{-2}}}.
    \label{eq: bound on third term in Woodbury expansion error}
\end{equation}
Finally, plugging~\eqref{eq: bound on third term in Woodbury expansion error},~\eqref{eq: bound on second term in Woodbury expansion error}, and~\eqref{eq: Woodbury expansion part 1} into~\eqref{eq: woodbury expansion error}, and absorbing all constants into $\mathcal{O}_n\left(\cdot \right)$ terms, we obtain 
\begin{align}
    \vert \hat{\mathbf{g}}_i - \mathbf{g}_i \vert 
    &\leq \mathcal{O}_n\left( n^{\varepsilon-1/2}\right) + \mathcal{O}_n\left( n^{2\varepsilon - 1 + \delta_0}\right) \nonumber \\
    &\phantom{\leq \mathcal{O}_n\left( n^{\varepsilon-1/2}\right)\;} +\frac{\mathcal{O}_n\left( 1 \right)} {\sqrt{s_1^{-2} + \left( \eta + \frac{C}{\eta} \right)^{-2}}} \times 
    \begin{dcases}
        {\sum_{k=1}^{r} \widetilde{U}_{ik}^2}, & 1\leq i \leq m, \\
        {\sum_{k=1}^{r} \widetilde{V}_{\ell k}^2} , & m+1\leq i \leq m+n, 
    \end{dcases} \nonumber \\
    &= \mathcal{O}_n\left( n^{\varepsilon-1/2}\right) + 
    \frac{\mathcal{O}_n\left( 1 \right)} {\sqrt{s_1^{-2} + \left( \eta + \frac{C}{\eta} \right)^{-2}}} \times 
    \begin{dcases}
        {\sum_{k=1}^{r} \widetilde{U}_{ik}^2}, & 1\leq i \leq m, \\
        {\sum_{k=1}^{r} \widetilde{V}_{\ell k}^2} , & m+1\leq i \leq m+n, 
    \end{dcases}
\end{align}
for all $i\in [m+n]$, where $\ell = i-m$ and we used the fact that $2\varepsilon - 1 +\delta_0 < \varepsilon - 1/2$ for all $\varepsilon < 1/2 - \delta_0$ (see conditions in Theorem~\ref{thm: robustness of the resolvent}), recalling also that $\delta_0 \in [0,1/2)$ according to Assumption~\ref{assump: rank growth rate}. 

\section{Proof of Theorem~\ref{thm: convergence of estimated variance factors}} \label{appendix: proof of convergence of estimated variance factors}
Let us define $\hat{\mathbf{e}}\in\mathbb{R}^{m+n}$ according to
\begin{equation}
    \hat{\mathbf{g}}_i = \mathbf{g}_i + \hat{\mathbf{e}}_i, \label{eq: e_1 def}
\end{equation}
for all $i\in[m+n]$, where
\begin{equation}
    \Vert \hat{\mathbf{e}} \Vert_\infty  = \mathcal{O}_{n}\left( n^{\epsilon-1/2} + n^{-\delta_1}\right), \label{eq: bound on e}
\end{equation}
for any fixed $\epsilon>0$ according to Corollary~\ref{cor: convergence of g_tilde to g in l_infty}. We begin by proving the required bound on $\Vert  (\hat{\mathbf{x}} - \mathbf{x})/\mathbf{x} \Vert_\infty$. 
Using~\eqref{eq: x_hat and y_hat def} and~\eqref{eq: setting alpha=1}, we can write
\begin{align}
    \frac{\hat{\mathbf{x}}_i}{\mathbf{x}_i} = \left( \frac{\mathbf{g}_i^{(1)}}{\hat{\mathbf{g}}_i^{(1)}} \right) \left( \frac{1 - \eta \hat{\mathbf{g}}_i^{(1)}}{1 - \eta {\mathbf{g}_i^{(1)}}} \right) \sqrt{\frac{m - \eta \Vert {\mathbf{g}}^{(1)}\Vert_1}{m - \eta \Vert \hat{\mathbf{g}}^{(1)}\Vert_1 }}. \label{eq: relative error multiplicative term}
\end{align}
We now evaluate each of the three multiplicative terms in the expression above one by one, showing that they concentrate around $1$ with an error of $\mathcal{O}_{n}( n^{\epsilon-1/2} + n^{-\delta_1})$. Let us denote $\hat{\mathbf{e}}^{(1)} = [\hat{\mathbf{e}}_1,\ldots,\hat{\mathbf{e}}_m]^T$.
Using Definition~\ref{def: order with high probability} and its properties together with the boundedness of $\mathbf{g}$ from Lemma~\ref{lem: boundedness of g}, it can be verified that
\begin{equation}
    \frac{\mathbf{g}_i^{(1)}}{\hat{\mathbf{g}}_i^{(1)}} = \frac{\mathbf{g}^{(1)}_i}{\mathbf{g}^{(1)}_i + \hat{\mathbf{e}}_i^{(1)}} = 1 - \frac{\hat{\mathbf{e}}^{(1)}_i}{\mathbf{g}^{(1)}_i + \hat{\mathbf{e}}^{(1)}_i} = {1} + \mathcal{O}_{n}\left( n^{\epsilon-1/2} + n^{-\delta_1}\right), \label{eq: first multiplicative term bound}
\end{equation}
for all $i\in [m]$.
For the next term in~\eqref{eq: relative error multiplicative term}, let us write
\begin{equation}
    \frac{1 - \eta \hat{\mathbf{g}}_i^{(1)}}{1 - \eta {\mathbf{g}_i^{(1)}}} = 1 - \frac{\eta \hat{\mathbf{e}}_i^{(1)}}{1 - \eta {\mathbf{g}_i^{(1)}}}.
\end{equation}
According to Proposition~\ref{prop: formula for x and y in terms of g} (recalling that $\alpha=1$), we have 
\begin{equation}
    \mathbf{x}^T \mathbf{g}^{(1)} = \sqrt{m - \eta \Vert \mathbf{g}^{(1)} \Vert_1}. \label{eq: m-eta g_1_norm_1 expression}
\end{equation}
Plugging this back into the expression for $\mathbf{x}$ in~\eqref{eq: formulas for x and y} while utilizing Lammas~\ref{lem: boundedness of g} and~\ref{lem: boundedness of x and y}, gives
\begin{equation}
    1 - \eta \mathbf{g}_i^{(1)} = \mathbf{x}_i \mathbf{g}^{(1)}_i \left( {\mathbf{x}^T \mathbf{g}^{(1)}} \right) \geq c_1,
\end{equation}
where $c_1 > 0$ is a global constant. Overall, we obtain that
\begin{equation}
    \left\vert \frac{1 - \eta \hat{\mathbf{g}}_i^{(1)}}{1 - \eta {\mathbf{g}_i^{(1)}}} - 1 \right\vert 
    \leq \frac{\eta}{c_1} \vert \hat{\mathbf{e}}^{(1)}_i \vert = \mathcal{O}_{n}\left( n^{\epsilon-1/2} + n^{-\delta_1}\right), \label{eq: second multiplicative term bound}
\end{equation}
for all $i\in [m]$.
For the last term in~\eqref{eq: relative error multiplicative term}, we write its reciprocal as
\begin{equation}
    \sqrt{\frac{m - \eta \Vert \hat{\mathbf{g}}^{(1)}\Vert_1 }{m - \eta \Vert {\mathbf{g}}^{(1)}\Vert_1 }} 
    = \sqrt{1 + \eta \frac{\Vert {\mathbf{g}}^{(1)}\Vert_1 - \Vert \hat{\mathbf{g}}^{(1)}\Vert_1}{m - \eta \Vert {\mathbf{g}}^{(1)}\Vert_1 }}.
\end{equation}
According to Lemma~\ref{lem: boundedness of g} and~\eqref{eq: bound on e},
\begin{equation}
    \Vert {\mathbf{g}}^{(1)}\Vert_1 - \Vert \hat{\mathbf{g}}^{(1)}\Vert_1 = \sum_{i=1}^m {\mathbf{g}}_i^{(1)} - \sum_{i=1}^m \left( {\mathbf{g}}_i^{(1)} + \hat{\mathbf{e}}_i^{(1)}\right) = \sum_{i=1}^m \hat{\mathbf{e}}_i^{(1)} = \mathcal{O}_{n}\left( m (n^{\epsilon - 1/2} + n^{-\delta_1})\right),
\end{equation}
where we used the nonnegativity of $\mathbf{g}$ and $\hat{\mathbf{g}}$. In addition,~\eqref{eq: m-eta g_1_norm_1 expression} and Lemma~\ref{lem: boundedness of x and y} imply that
\begin{equation}
    m - \eta \Vert \mathbf{g}^{(1)} \Vert_1 = \left( \mathbf{x}^T \mathbf{g}^{(1)} \right)^2 \geq c_2 m,
\end{equation}
for a global constant $c_2>0$. Therefore, according to Definition~\ref{def: order with high probability} and its properties,
\begin{equation}
    \sqrt{\frac{m - \eta \Vert \hat{\mathbf{g}}^{(1)}\Vert_1 }{m - \eta \Vert {\mathbf{g}}^{(1)}\Vert_1 }} 
    = \sqrt{1 + \mathcal{O}_{n}\left( n^{\epsilon - 1/2}+  n^{-\delta_1}\right)} = 1 + \mathcal{O}_{n}\left( n^{\epsilon - 1/2} +  n^{-\delta_1}\right).
\end{equation}
Thus, we have
\begin{equation}
    \sqrt{\frac{m - \eta \Vert {\mathbf{g}}^{(1)}\Vert_1}{m - \eta \Vert \hat{\mathbf{g}}^{(1)}\Vert_1 }} = \frac{1}{1 + \mathcal{O}_{n}\left( n^{\epsilon - 1/2} + n^{-\delta_1}\right)} = 1 + \mathcal{O}_{n}\left( n^{\epsilon - 1/2} +  n^{-\delta_1}\right), \label{eq: third multiplicative term bound}
\end{equation}
for all $i\in [m]$.
Plugging~\eqref{eq: first multiplicative term bound},~\eqref{eq: second multiplicative term bound}, and~\eqref{eq: third multiplicative term bound} into~\eqref{eq: relative error multiplicative term} proves the required bound on $\Vert  (\hat{\mathbf{x}} - \mathbf{x})/\mathbf{x} \Vert_\infty$ after applying the union bound over $i=1,\ldots,m$. 

Next, we prove the bound on $\Vert  (\hat{\mathbf{y}} - \mathbf{y})/\mathbf{y} \Vert_\infty$. Analogously to~\eqref{eq: relative error multiplicative term}, we have
\begin{align}
    \frac{\hat{\mathbf{y}}_j}{\mathbf{y}_j} = \left( \frac{\mathbf{g}_j^{(2)}}{\hat{\mathbf{g}}_j^{(2)}} \right) \left( \frac{1 - \eta \hat{\mathbf{g}}_j^{(2)}}{1 - \eta {\mathbf{g}_j^{(2)}}} \right) \sqrt{\frac{n - \eta \Vert {\mathbf{g}}^{(2)}\Vert_1}{n - \eta \Vert \hat{\mathbf{g}}^{(2)}\Vert_1 }}, \label{eq: relative error multiplicative term for y_j}
\end{align}
for all $j\in[n]$. Let us denote $\hat{\mathbf{e}}^{(2)} = [\hat{\mathbf{e}}_{m+1},\ldots,\hat{\mathbf{e}}_{m+n}]^T$. For the first term in~\eqref{eq: relative error multiplicative term for y_j},  Definition~\ref{def: order with high probability} and its properties together with~\eqref{eq: bound on e} and Lemma~\ref{lem: boundedness of g}, imply that
\begin{equation}
    \frac{\mathbf{g}_j^{(2)}}{\hat{\mathbf{g}}_j^{(2)}} = \frac{\mathbf{g}^{(2)}_j}{\mathbf{g}^{(2)}_j + \hat{\mathbf{e}}_j^{(2)}} = 1 - \frac{\hat{\mathbf{e}}^{(2)}_j}{\mathbf{g}^{(2)}_j + \hat{\mathbf{e}}^{(2)}_j} = {1} + \mathcal{O}_{n}\left( n^{\epsilon-1/2} + n^{-\delta_1}\right), \label{eq: first multiplicative term bound for y_j}
\end{equation}
for all $j\in [n]$. 
For the second term in~\eqref{eq: relative error multiplicative term}, we can write
\begin{equation}
    \frac{1 - \eta \hat{\mathbf{g}}_j^{(2)}}{1 - \eta {\mathbf{g}_j^{(2)}}} = 1 - \frac{\eta \hat{\mathbf{e}}_j^{(2)}}{1 - \eta {\mathbf{g}_j^{(2)}}}.
\end{equation}
Using Proposition~\ref{prop: formula for x and y in terms of g}, we obtain
\begin{equation}
    \mathbf{y}^T \mathbf{g}^{(2)} = \sqrt{n - \eta \Vert \mathbf{g}^{(2)} \Vert_1}. \label{eq: m-eta g_2_norm_1 expression}
\end{equation}
Plugging~\eqref{eq: m-eta g_2_norm_1 expression} into the expression for $\mathbf{y}$ in~\eqref{eq: formulas for x and y} while utilizing Lammas~\ref{lem: boundedness of g} and~\ref{lem: boundedness of x and y}, asserts that
\begin{equation}
    1 - \eta \mathbf{g}_j^{(2)} = \mathbf{y}_j \mathbf{g}^{(2)}_j \left( {\mathbf{y}^T \mathbf{g}^{(2)}} \right) \geq c_3 \frac{m}{n},
\end{equation}
for some global constant $c_3 > 0$. Consequently, we obtain
\begin{equation}
    \left\vert \frac{1 - \eta \hat{\mathbf{g}}_j^{(2)}}{1 - \eta {\mathbf{g}_j^{(2)}}} - 1 \right\vert \leq \frac{\eta }{c_3} \left( \frac{n}{m}\right) \vert \hat{\mathbf{e}}^{(2)}_j \vert = \mathcal{O}_{n}\left( \frac{n^{\epsilon+1/2}}{m} + \frac{n^{1-\delta_1}}{m}\right), \label{eq: second multiplicative term bound for y_j}
\end{equation}
for all $j\in [n]$.
Next, we write the reciprocal of the third term in~\eqref{eq: relative error multiplicative term for y_j} as
\begin{equation}
    \sqrt{\frac{n - \eta \Vert \hat{\mathbf{g}}^{(2)}\Vert_1 }{n - \eta \Vert {\mathbf{g}}^{(2)}\Vert_1 }} 
    = \sqrt{1 + \eta \frac{\Vert {\mathbf{g}}^{(2)}\Vert_1 - \Vert \hat{\mathbf{g}}^{(2)}\Vert_1}{n - \eta \Vert {\mathbf{g}}^{(2)}\Vert_1 }}.
\end{equation}
According to Lemma~\ref{lem: boundedness of g} and~\eqref{eq: bound on e},
\begin{equation}
    \Vert {\mathbf{g}}^{(2)}\Vert_1 - \Vert \hat{\mathbf{g}}^{(2)}\Vert_1 = \sum_{j=1}^n {\mathbf{g}}_j^{(2)} - \sum_{j=1}^n \left( {\mathbf{g}}_j^{(2)} + \hat{\mathbf{e}}_j^{(2)}\right) = \sum_{j=1}^n  \hat{\mathbf{e}}_j^{(2)} = \mathcal{O}_{n}\left( n^{\epsilon + 1/2} + n^{1-\delta_1}\right),
\end{equation}
while~\eqref{eq: m-eta g_2_norm_1 expression} and Lemma~\ref{lem: boundedness of x and y} imply that
\begin{equation}
    n - \eta \Vert \mathbf{g}^{(2)} \Vert_1 = \left( \mathbf{y}^T \mathbf{g}^{(2)} \right)^2 \geq c_4 m,
\end{equation}
for a global constant $c_4>0$. Therefore, according to Assumption~\ref{assump: growth of m} combined with Definition~\ref{def: order with high probability} and its properties, it follows that
\begin{equation}
    \sqrt{\frac{m - \eta \Vert \hat{\mathbf{g}}^{(1)}\Vert_1 }{m - \eta \Vert {\mathbf{g}}^{(1)}\Vert_1 }} 
    = \sqrt{1 + \mathcal{O}_{n}\left( \frac{n^{\epsilon+1/2}}{m} + \frac{n^{1-\delta_1}}{m}\right)} = 1 + \mathcal{O}_{n}\left( \frac{n^{\epsilon+1/2}}{m} + \frac{n^{1-\delta_1}}{m}\right).
\end{equation}
Consequently,
\begin{equation}
    \sqrt{\frac{n - \eta \Vert {\mathbf{g}}^{(2)}\Vert_1}{n - \eta \Vert \hat{\mathbf{g}}^{(2)}\Vert_1 }} = \frac{1}{1 + \mathcal{O}_{n}\left( \frac{n^{\epsilon+1/2}}{m} + \frac{n^{1-\delta_1}}{m}\right)} = 1 + \mathcal{O}_{n}\left( \frac{n^{\epsilon+1/2}}{m} + \frac{n^{1-\delta_1}}{m}\right), \label{eq: third multiplicative term bound for y_j}
\end{equation}
for all $j\in [n]$.
Plugging~\eqref{eq: first multiplicative term bound for y_j},~\eqref{eq: second multiplicative term bound for y_j}, and~\eqref{eq: third multiplicative term bound for y_j} into~\eqref{eq: relative error multiplicative term for y_j} proves the required bound on $\Vert  (\hat{\mathbf{y}} - \mathbf{y})/\mathbf{y} \Vert_\infty$ after applying the union bound over $j=1,\ldots,n$. 

\section{Proof of Lemma~\ref{lem: g is close to h under incoherence}} \label{appendix: proof of g is close to h under incoherence}
According to~\eqref{eq: coupled Dyson eq imag axis rank one} we have
\begin{align}
    \frac{1}{\mathbf{h}^{(1)}} &= \eta + \mathbf{x}\mathbf{y}^T\mathbf{h}^{(2)} = \eta + D\{\mathbf{x}\} \widetilde{S} D\{\mathbf{y}\} \mathbf{h}^{(2)} 
    - D\{\mathbf{x}\} \left( \widetilde{S} -1\right) D\{\mathbf{y}\}\mathbf{h}^{(2)} \nonumber \\ 
    &= \eta + S \mathbf{h}^{(2)} - D\{\mathbf{x}\} \left( \widetilde{S} -1\right) \mathbf{w}^{(2)}.
\end{align}
Similarly,
\begin{align}
    \frac{1}{\mathbf{h}^{(2)}} &= \eta + \mathbf{y}\mathbf{x}^T\mathbf{h}^{(1)} = \eta + D\{\mathbf{y}\} \widetilde{S}^T D\{\mathbf{x}\} \mathbf{h}^{(1)} 
    - D\{\mathbf{y}\} \left( \widetilde{S} -1\right)^T D\{\mathbf{x}\}\mathbf{h}^{(1)} \nonumber \\ 
    &= \eta + S^T \mathbf{h}^{(1)} - D\{\mathbf{y}\} \left( \widetilde{S} -1\right)^T \mathbf{w}^{(1)}.
\end{align}
Combining both equations above we obtain
\begin{equation}
    \frac{1}{\mathbf{h}} = \eta + \mathcal{S} \mathbf{h} + \hat{\mathbf{e}},
\end{equation}
where
\begin{align}
    \Vert \hat{\mathbf{e}} \Vert_\infty &= \left\Vert 
    \begin{bmatrix}
    D\{\mathbf{x}\} \left( \widetilde{S} -1\right) \mathbf{w}^{(2)} \\
    D\{\mathbf{y}\} \left( \widetilde{S} -1\right)^T \mathbf{w}^{(1)}
    \end{bmatrix}
    \right\Vert_\infty \nonumber \\
    &\leq C_1^{'} \max\left\{ \frac{1}{\sqrt{m}} \left\Vert \left( \widetilde{S} -1\right) \mathbf{w}^{(2)} \right\Vert_\infty, \frac{\sqrt{m}}{n}\left\Vert \left( \widetilde{S} -1\right)^T \mathbf{w}^{(1)} \right\Vert_\infty \right\},
\end{align}
for some constant $C_1^{'}>0$, where we used Lemma~\ref{lem: boundedness of x and y for general S} (under Assumption~\ref{assump: variance boundedness}) in the last transition. Invoking Lemma~\ref{lem: stability of Dyson equation}, we have
\begin{equation}
    \left\Vert \frac{\mathbf{h} - \mathbf{g}}{\mathbf{g}} \right\Vert_\infty \leq \Vert \hat{\mathbf{e}} \Vert_\infty \max_i \{ \mathbf{h}_i\} \max \left\{ 1, \frac{2}{\eta \min_{i} \{ \mathbf{h}_i \}} \right\}.
\end{equation}
Note that Lemma~\ref{lem: boundedness of x and y for general S} also asserts that $\mathbf{x}_i \mathbf{y}_j \leq C_2^{'} n^{-1}$ for some constant $C_2^{'}>0$ and all $i\in [m]$ and $j\in[n]$.
Therefore, since $\mathbf{h}$ is the solution to~\eqref{eq: Dyson eq imag axis} for $S = \mathbf{x} \mathbf{y}^T$, Lemma~\ref{lem: boundedness of g} can be applied with $C = C_2^{'}$ to provide upper and lower bounds on the entries of $\mathbf{h}$. Specifically, we obtain
\begin{equation}
    \left( \eta + \frac{C_2^{'}}{\eta}\right)^{-1} \leq \mathbf{h}_i \leq \frac{1}{\eta}, \label{eq: boundedness of h}
\end{equation}
for all $i\in [m+n]$. Combining the previous three equations together with the lower bound on $\mathbf{g}_i$ from Lemma~\ref{lem: boundedness of g} gives
\begin{equation}
    \left\Vert {\mathbf{g} - \mathbf{h}} \right\Vert_\infty \leq C_2^{'} \max\left\{ \frac{1}{\sqrt{m}} \left\Vert \left( \widetilde{S} -1\right) \mathbf{w}^{(2)} \right\Vert_\infty, \frac{\sqrt{m}}{n}\left\Vert \left( \widetilde{S} -1\right)^T \mathbf{w}^{(1)} \right\Vert_\infty \right\},
\end{equation}
for some constant $C_2^{'}>0$. Since $\mathbf{w}^{(1)} = D\{\mathbf{x}\} \mathbf{h}^{(1)}$ and $\mathbf{w}^{(2)} = D\{\mathbf{y}\} \mathbf{h}^{(2)}$ (see~\eqref{eq: W_1 and W_2 def}), we have 
\begin{equation}
    \Vert \mathbf{w}^{(1)} \Vert_2 \leq \sqrt{m} \Vert \mathbf{x} \Vert_\infty \Vert \mathbf{h}^{(1)} \Vert_\infty \leq C_3^{'}, \qquad \Vert \mathbf{w}^{(2)} \Vert_2 \leq \sqrt{n} \Vert \mathbf{y} \Vert_\infty \Vert \mathbf{h}^{(2)} \Vert_\infty \leq C_3^{'} \sqrt{\frac{m}{n}},
\end{equation}
for some constant $C_3^{'}>0$, where we used~\eqref{eq: boundedness of h} and Lemma~\ref{lem: boundedness of x and y for general S}. Finally, we obtain
\begin{align}
    \left\Vert {\mathbf{g} - \mathbf{h}} \right\Vert_\infty &\leq C_2^{'} \max\left\{ \frac{\Vert \mathbf{w}^{(2)} \Vert_2}{\sqrt{m}} \left\Vert \left( \widetilde{S} -1\right) \frac{\mathbf{w}^{(2)}}{\Vert \mathbf{w}^{(2)} \Vert_2} \right\Vert_\infty, \frac{\sqrt{m}\Vert \mathbf{w}^{(1)} \Vert_2}{n}\left\Vert \left( \widetilde{S} -1\right)^T \frac{\mathbf{w}^{(1)}}{\Vert \mathbf{w}^{(1)} \Vert_2} \right\Vert_\infty \right\} \nonumber \\
    &\leq C_4^{'} \max\left\{ \frac{1}{\sqrt{n}} \left\Vert \left( \widetilde{S} -1\right) \frac{\mathbf{w}^{(2)}}{\Vert \mathbf{w}^{(2)} \Vert_2} \right\Vert_\infty, \frac{\sqrt{m}}{n}\left\Vert \left( \widetilde{S} -1\right)^T \frac{\mathbf{w}^{(1)}}{\Vert \mathbf{w}^{(1)} \Vert_2} \right\Vert_\infty \right\} \nonumber \\
    &= C_4^{'} \max\left\{ \frac{1}{\sqrt{n}} \left\Vert \left( \widetilde{S} -1\right) \frac{\mathbf{w}^{(2)} - \langle \mathbf{w}^{(2)} \rangle}{\Vert \mathbf{w}^{(2)} \Vert_2} \right\Vert_\infty, \frac{\sqrt{m}}{n}\left\Vert \left( \widetilde{S} -1\right)^T \frac{\mathbf{w}^{(1)} -\langle \mathbf{w}^{(1)} \rangle}{\Vert \mathbf{w}^{(1)} \Vert_2} \right\Vert_\infty \right\},
\end{align}
where we also used the fact that the vector of ones $\mathbf{1}$ is in the left and right null spaces of $(\widetilde{S}-1)$, hence $(\widetilde{S}-1) (\mathbf{w}^{(2)} -  \langle\mathbf{w}^{(2)} \rangle ) = (\widetilde{S}-1)\mathbf{w}^{(2)}$ and $(\widetilde{S}-1)^T (\mathbf{w}^{(1)} -  \langle\mathbf{w}^{(1)} \rangle ) = (\widetilde{S}-1)^T\mathbf{w}^{(1)}$.

\section{Proof of Proposition~\ref{prop: relation between x,y and w_1,w_2}} \label{appendix: proof of relation between x,y and w_1,w_2}
Using~\eqref{eq: W_1 and W_2 def} we can write
\begin{equation}
    \frac{\mathbf{w}_i^{(1)}}{\mathbf{w}^{(1)}_j} - 1
    = \frac{\mathbf{x}_j^{-1}\eta + a}{\mathbf{x}_j^{-1}\eta + a} - 1 
    = \frac{\eta (\mathbf{x}_i - \mathbf{x}_j)}{\mathbf{x}_i \mathbf{x}_j (\mathbf{x}_i^{-1} \eta + a)}
    = \frac{\eta (\mathbf{x}_i - \mathbf{x}_j)}{ \mathbf{x}_j \eta + a \mathbf{x}_i}.
\end{equation}
Therefore, $\mathbf{w}_i^{(1)} - \mathbf{w}_j^{(1)}$ has the same sign as $\mathbf{x}_i - \mathbf{x}_j$. If $\mathbf{x}_i > \mathbf{x}_j$, then using the fact that $\mathbf{x}_i>0$ and $a>0$, we have
\begin{equation}
    0 < \frac{\mathbf{w}_i^{(1)}}{\mathbf{w}^{(1)}_j} - 1 < \frac{\mathbf{x}_i - \mathbf{x}_j}{\mathbf{x}_j}.
\end{equation}
Similarly, if $\mathbf{x}_i < \mathbf{x}_j$, we have
\begin{equation}
     0 > \frac{\mathbf{w}_i^{(1)}}{\mathbf{w}^{(1)}_j} - 1 > \frac{\mathbf{x}_i - \mathbf{x}_j}{\mathbf{x}_j}.
\end{equation}
Overall, we obtained that
\begin{equation}
    \left\vert \frac{\mathbf{w}_i^{(1)}}{\mathbf{w}^{(1)}_j} - 1 \right\vert \leq \left\vert \frac{\mathbf{x}_i - \mathbf{x}_j}{\mathbf{x}_j} \right\vert,
\end{equation}
for all $i,j\in[m]$.
An analogous argument holds if we replace $\mathbf{w}_i^{(1)},\mathbf{w}_j^{(1)},\mathbf{x}_i,\mathbf{x}_j$ above with $\mathbf{w}_i^{(2)},\mathbf{w}_j^{(2)},\mathbf{y}_i,\mathbf{y}_j$, respectively, and consider all $i,j\in [n]$. 

\section{Proof of Theorem~\ref{thm: convergence of estimated scaling fators for generl S}} \label{appendix: proof of convergence of estimated scaling fators for generl S}
According to Assumption~\ref{assump: decay rate of g-h} and Corollary~\ref{cor: convergence of g_tilde to g in l_infty}, we have 
\begin{equation}
    \Vert \hat{\mathbf{g}} - \mathbf{h} \Vert_\infty \leq \Vert \hat{\mathbf{g}} - \mathbf{g} \Vert_\infty + \Vert {\mathbf{g}} - \mathbf{h} \Vert_\infty = \mathcal{O}_n\left(\max\left\{n^{\epsilon-1/2},n^{-\delta_1},n^{-\delta_3}\right\}\right).
\end{equation}
Therefore, the proof of Theorem~\ref{thm: convergence of estimated variance factors} goes through if we replace the right-hand side of~\eqref{eq: bound on e} with $\mathcal{O}_n\left(\max\left\{n^{\epsilon-1/2},n^{-\delta_1},n^{-\delta_3}\right\}\right)$ and replace all uses of Lemma~\ref{lem: boundedness of x and y} with Lemma~\ref{lem: boundedness of x and y for general S} (which provides essentially the same result as Lemma~\ref{lem: boundedness of x and y} but with different constants and for general variance matrices $S$).

\section{Proof of Theorem~\ref{thm: MP law for E_hat}} \label{appendix: proof of MP law for E_hat}
According to Theorem~\ref{thm: convergence of estimated scaling fators for generl S}, we have
\begin{align}
    \frac{1}{\sqrt{\hat{\mathbf{x}}_i}}
    = \frac{1}{\sqrt{{\mathbf{x}}_i}} \sqrt{\frac{\mathbf{x}_i}{\hat{\mathbf{x}}_i}} 
    = \frac{1}{\sqrt{{\mathbf{x}}_i}} \left( \frac{1}{1 + \mathcal{O}_n \left(\max\left\{n^{\epsilon-1/2},n^{-\delta_1},n^{-\delta_3}\right\}\right)} \right)
    = \frac{1 + \zeta_i^{(1)}}{\sqrt{{\mathbf{x}}_i}},
\end{align}
for all $i\in[m]$, where $\zeta^{(1)}\in\mathbb{R}^{m}$ satisfies
\begin{equation}
    \Vert \zeta^{(1)} \Vert_\infty = \mathcal{O}_n (\max\{n^{\epsilon-1/2},n^{-\delta_1},n^{-\delta_3}\}),
\end{equation}
using the properties of Definition~\ref{def: order with high probability}.
Analogously, we have
\begin{equation}
    \frac{1}{\sqrt{\hat{\mathbf{y}}_j}} = \frac{1 + \zeta_j^{(2)}}{\sqrt{{\mathbf{y}}_j}},
\end{equation}
for all $j\in [n]$, where $\zeta^{(2)} \in \mathbb{R}^n$ satisfies
\begin{equation}
     \Vert \zeta^{(2)} \Vert_\infty 
     = \mathcal{O}_n \left(\max\left\{\frac{n^{\epsilon+1/2}}{m}, \frac{n^{1-\delta_1}}{m}, \frac{n^{1-\delta_3}}{m} \right\}\right) 
     = \max\{n^{\epsilon-1/2},n^{-\delta_1},n^{-\delta_3}\},
\end{equation}
where we used the fact that $m$ is proportional to $n$ (see the assumptions in Theorem~\ref{thm: MP law for E_hat}). Let us write
\begin{align}
    \hat{E} &= D\left\{\frac{1}{\sqrt{\hat{\mathbf{x}}}}\right\} E D\left\{\frac{1}{\sqrt{\hat{\mathbf{y}}}}\right\} 
    = D\left\{\frac{1}{\sqrt{{\mathbf{x}}}}\right\} E D\left\{\frac{1}{\sqrt{{\mathbf{y}}}}\right\} 
    + D\left\{\frac{\zeta^{(1)}}{\sqrt{\mathbf{x}}}\right\} E D\left\{\frac{1}{\sqrt{{\mathbf{y}}}}\right\} \nonumber \\
    &+ D\left\{\frac{1}{\sqrt{{\mathbf{x}}}}\right\} E D\left\{\frac{\zeta^{(2)}}{\sqrt{\mathbf{y}}}\right\} 
    + D\left\{\frac{\zeta^{(1)}}{\sqrt{\mathbf{x}}}\right\} E D\left\{\frac{\zeta^{(2)}}{\sqrt{\mathbf{y}}}\right\}.
\end{align}
It follows that
\begin{equation}
    \frac{1}{\sqrt{n}}\Vert \hat{E} - \widetilde{E} \Vert_2 \leq \Vert E \Vert_2 \frac{  {\Vert \zeta^{(1)} \Vert_\infty  } + { \Vert \zeta^{(2)} \Vert_\infty  } + \Vert \zeta^{(1)} \Vert_\infty \Vert \zeta^{(2)} \Vert_\infty }{\sqrt{n \min_i \{{\mathbf{x}_i}\} \min_j \{{\mathbf{y}_j}\}} },
\end{equation}
where we used the fact that the operator norm of a diagonal matrix is the maximal absolute value of its diagonal entries. To bound $\Vert E \Vert_2$ we will apply Theorem 2.4 part II in~\cite{alt2017local} to the matrix $E$. First, we show that conditions (A)--(D) in~\cite{alt2017local} hold. Condition (D) in~\cite{alt2017local} holds since $m,n\rightarrow \infty$ and $m/n\rightarrow \gamma \in (0,1]$, hence $ 0.9 \gamma \leq  m/n \leq 1.1\gamma$ for all sufficiently large $n$. Second, our Assumptions~\ref{assump: noise moment bound} and~\ref{assump: variance boundedness} imply that $cn^{-1} \leq S_{ij} \leq \mu_2n^{-1}$. This fact, together with the fact that $m$ is proportional to $n$, imply that Conditions (A) and (B) in~\cite{alt2017local} hold (see Remark 2.8 in~\cite{alt2017local}). Lastly, Condition (C) in~\cite{alt2017local} holds since our Assumptions~\ref{assump: noise moment bound} and~\ref{assump: variance boundedness} assert that
\begin{equation}
    \vert E_{ij} \vert^k \leq \mu_k n^{-k/2} \leq {\mu_k} c^{-k/2} (c n^{-1})^{k/2} \leq \widetilde{\mu}_k S_{ij}^{k/2},
\end{equation}
for all $k\in\mathbb{N}$, where $\widetilde{\mu}_k = {\mu_k} c^{-k/2} > 0$. Therefore, applying part II of Theorem 2.4 in~\cite{alt2017local} to the matrix $E$, we obtain that for any $\varepsilon^*>0$, 
\begin{equation}
    \Vert E \Vert_2^2 \leq 4 \mu_2 + \varepsilon*,
\end{equation}
with probability at least $1 - n^{-t}$, for all $t>0$ and $n \geq n_0(t)$, where $n_0(t)$ is some constant that depends also on $t$. To obtain this probabilistic bound, we also used part I of Lemma 2.1 in~\cite{alt2017local}, which states that the support of the measure appearing in Theorem 2.4 of~\cite{alt2017local} is confined to the interval $[0,4\mu_2]$. Note that if an event holds with probability at least $1 - n^{-t}$ for all $t>0$ and $n\geq n_0(t)$, then it also holds with probability at least $1 - c^{'}(t)n^{-t}$ for all $t>0$ and all $n$, where $c^{'}(t) = \max\{[n_0(t)]^t,1\}$ (since $1 - c^{'}(t) n^{-t} \leq 0 $ for all $n < n_0(t)$ and $1-n^{-t} \geq 1-c^{'}(t)n^{-t}$ for all $n\geq n_0(t)$). Consequently, using Definition~\ref{def: order with high probability}, we have
\begin{equation}
    \Vert E \Vert_2 = \mathcal{O}_n(1). \label{eq: E operator norm bound}
\end{equation}
Utilizing the above together with Lemma~\ref{lem: boundedness of x and y for general S}, we arrive at
\begin{equation}
    \frac{1}{\sqrt{n}}\Vert \hat{E} - \widetilde{E} \Vert_2 = \mathcal{O}_n(\max\{n^{\epsilon-1/2},n^{-\delta_1},n^{-\delta_3}\}).
\end{equation}
Using Weyl's inequality for singular values (see Theorem 3.3.16 in~\cite{horn1994topics}), we have
\begin{equation}
    \left\vert \sigma_i \{\frac{1}{\sqrt{n}}\hat{E}\} - \sigma_i \{\frac{1}{\sqrt{n}}\widetilde{E}\} \right\vert =  \mathcal{O}_n(\max\{n^{\epsilon-1/2},n^{-\delta_1},n^{-\delta_3}\}),
\end{equation}
for all $i\in [m]$, where $\sigma_i\{\cdot\}$ denotes the $i$th singular value of a matrix. Consequently, 
\begin{align}
    \lambda_i \{ \frac{1}{n}\hat{E}\hat{E}^T \} 
    &= \left( \sigma_i \{\frac{1}{\sqrt{n}}\hat{E}\} \right)^2 
    = \left( \sigma_i \{\frac{1}{\sqrt{n}}\widetilde{E}\} + \mathcal{O}_n(\max\{n^{\epsilon-1/2},n^{-\delta_1},n^{-\delta_3}\}) \right)^2 \nonumber \\
    &= \lambda_i \{ \frac{1}{n}\widetilde{E}\widetilde{E}^T \} + \sigma_i \{\frac{1}{\sqrt{n}}\widetilde{E}\} \mathcal{O}_n(\max\{n^{\epsilon-1/2},n^{-\delta_1},n^{-\delta_3}\}) + \mathcal{O}_n(\max\{n^{2\epsilon-1},n^{-4\delta_1},n^{-\delta_3}\}) \nonumber \\
    &= \lambda_i \{ \frac{1}{n}\widetilde{E}\widetilde{E}^T \} + \mathcal{O}_n(\max\{n^{\epsilon-1/2},n^{-\delta_1},n^{-\delta_3}\}),
\end{align}
where $\lambda_i\{\cdot\}$ denotes the $i$th largest eigenvalue of a symmetric matrix, and we also used the fact that according to~\eqref{eq: E operator norm bound} and Lemma~\ref{lem: boundedness of x and y for general S},
\begin{equation}
    \sigma_i \{\frac{1}{\sqrt{n}}\widetilde{E}\} 
    \leq \left\Vert \frac{1}{\sqrt{n}}\widetilde{E} \right\Vert_2 
    = \left\Vert \frac{1}{\sqrt{n}} D\{\frac{1}{\sqrt{\mathbf{x}}}\} E D\{\frac{1}{\sqrt{\mathbf{y}}}\} \right\Vert_2 \leq  \frac{  \Vert E \Vert_2 }{\sqrt{n \min_i \{{\mathbf{x}_i}\} \min_j \{{\mathbf{y}_j}\}} } = \mathcal{O}_n(1),
\end{equation}
for all $i\in[m]$. Overall, we have
\begin{equation}
    \max_{i\in[m]} \left\vert \lambda_i \{ \frac{1}{n}\hat{E}\hat{E}^T \}  - \lambda_i \{ \frac{1}{n}\widetilde{E}\widetilde{E}^T \} \right\vert 
    = \mathcal{O}_n(\max\{n^{\epsilon-1/2},n^{-\delta_1},n^{-\delta_3}\}) \underset{n\rightarrow\infty }{\overset{\text{a.s.}}{\longrightarrow}} 0, \label{eq: convergence of eigenvalues}
\end{equation}
where $\overset{\text{a.s.}}{\longrightarrow}$ refers to almost sure convergence, which follows from Definition~\ref{def: order with high probability} taking any $t>1$. Next, we apply Proposition 4.1 in~\cite{landa2022biwhitening} to the matrix $\sqrt{n} Y$. Note that the conditions in Proposition 4.1 in~\cite{landa2022biwhitening} are satisfied by our Assumptions~\ref{assump: noise moment bound} and~\ref{assump: variance boundedness}. Therefore, Proposition 4.1 in~\cite{landa2022biwhitening} asserts that $F_{\widetilde{\Sigma}} {\overset{\text{a.s.}}{\longrightarrow}} F_{\gamma,1}$ and $\lambda_1\{\widetilde{\Sigma}\} {\overset{\text{a.s.}}{\longrightarrow}} \beta_+ = (1+\sqrt{\gamma})^2$. Consequently, according to~\eqref{eq: convergence of eigenvalues}, we also have that $F_{\hat{\Sigma}} {\overset{\text{a.s.}}{\longrightarrow}} F_{\gamma,1}$ and $\lambda_1\{\hat{\Sigma}\} {\overset{\text{a.s.}}{\longrightarrow}} \beta_+ = (1+\sqrt{\gamma})^2$, which concludes the proof.

\section{Proof of Proposition~\ref{prop: optimal approximate ML estimation}} \label{appendix: proof of optimal approximate ML estimation}
We have 
\begin{equation}
    \mathbb{E}[\mathcal{L}_A(X)] 
    = \frac{1}{2} \sum_{i=1}^m\sum_{j=1}^n \left[ \frac{\mathbb{E}\vert X_{ij} - Y_{ij} \vert^2}{A_{ij}} + \log (2\pi A_{ij})\right] 
    = \frac{1}{2} \sum_{i=1}^m\sum_{j=1}^n \left[ \frac{S_{ij}}{A_{ij}} + \log (A_{ij}) \right] + \frac{mn}{2}\log(2\pi).
\end{equation}
For any matrix $A$ of rank one, we can write $A = \mathbf{a}\mathbf{b}^T$ for some $\mathbf{a}\in\mathbb{R}^m$ and $\mathbf{b}\in\mathbb{R}^n$. Therefore, minimizing $\mathbb{E}[\mathcal{L}_A(X)] $ over all positive rank-one matrices $A\in\mathbb{R}^{m\times n}$ is equivalent to minimizing
\begin{equation}
    J(\mathbf{a},\mathbf{b}) = \sum_{i=1}^m\sum_{j=1}^n \left[ \frac{S_{ij}}{\mathbf{a}_{i} \mathbf{b}_j} + \log (\mathbf{a}_{i} \mathbf{b}_j) \right] 
    = \sum_{i=1}^m\sum_{j=1}^n \frac{S_{ij}}{\mathbf{a}_{i} \mathbf{b}_j} + n \sum_{i=1}^m \log (\mathbf{a}_{i}) + m \sum_{j=1}^n\log (\mathbf{b}_j),
\end{equation}
over all positive vectors $\mathbf{a}\in\mathbb{R}^m$ and $\mathbf{b}\in\mathbb{R}^n$. Defining ${\mathbf{a}_i} = e^{-\eta_i}$, ${\mathbf{b}_j} = e^{-\xi_j}$, minimizing $J(\mathbf{a},\mathbf{b})$ is equivalent to minimizing
\begin{equation}
    \widetilde{J}(\eta,\xi) = \sum_{i=1}^m\sum_{j=1}^n e^{\eta_i} S_{ij} e^{\xi_j} - n \sum_{i=1}^m \eta_i - m \sum_{j=1}^n\xi_j,
\end{equation}
over all $\eta\in\mathbb{R}^m$ and $\xi\in\mathbb{R}^n$. It is well known that the minimization of $\widetilde{J}$ is closely related to the problem of matrix scaling; see, e.g.,~\cite{idel2016review} and references therein. Specifically, since $\widetilde{J}(\eta,\xi)$ is convex in $(\eta,\xi)\in\mathbb{R}^m\times \mathbb{R}^n$, it has a global minimum if there exist $\eta$ and $\xi$ that satisfy
\begin{equation}
    \frac{\partial \widetilde{J}}{\partial \eta_i} = \sum_{j=1}^n e^{\eta_i} S_{ij} e^{\xi_j} -n = 0, \qquad \frac{\partial \widetilde{J}}{\partial \xi_j} = \sum_{i=1}^m e^{\eta_i} S_{ij} e^{\xi_j} -m = 0,
\end{equation}
for all $i\in[m]$ and $j\in [n]$. This implies that $(\mathbf{a},\mathbf{b})$ is a global minimizer of $J(\mathbf{a},\mathbf{b})$ if it satisfies
\begin{equation}
    n = \sum_{j=1}^n \frac{S_{ij}}{\mathbf{a}_i\mathbf{b}_j} \qquad m = \sum_{i=1}^m \frac{S_{ij}}{\mathbf{a}_i\mathbf{b}_j},
\end{equation}
for all $i\in[m]$ and $j\in [n]$, or in other words, if $(D\{\mathbf{a}\})^{-1} S (D\{\mathbf{b}\})^{-1}$ is doubly regular (see Definition~\ref{def: doubly regular matrix}).
According to the existence and uniqueness statements in Proposition~\ref{prop: matrix scaling}, such $\mathbf{a}$ and $\mathbf{b}$ exist and must satisfy $\mathbf{a}\mathbf{b}^T = \mathbf{x}\mathbf{y}^T$, which is uniquely defined. Hence, we conclude that $\mathbf{x}\mathbf{y}^T$ is the unique minimizer of $\mathbb{E}[\mathcal{L}_A(X)] $ over all positive matrices $A\in\mathbb{R}^{m\times n}$ of rank one.

If we alleviate the rank-one requirement on $A$, then minimizing $\mathbb{E}[\mathcal{L}_A(X)] $ is equivalent to minimizing
\begin{equation}
    \frac{S_{ij}}{A_{ij}} + \log(A_{ij}),
\end{equation}
for each pair $i,j$ separately over $A_{ij}\in (0,\infty)$. Defining $A_{ij} = e^{-\zeta_{ij}}$, we obtain a strictly convex minimization problem in $\zeta_{ij} \in \mathbb{R}$. Then, a straight-forward calculation shows that the minimizer is $A_{ij} = e^{-\zeta_{ij}} = S_{ij}$.

\end{appendices}

\bibliographystyle{plain}
\bibliography{mybib}

\end{document}